\documentclass[11pt,reqno,a4paper]{amsart}

\usepackage[T1]{fontenc}
\usepackage[utf8]{inputenc}
\usepackage{lmodern}
\usepackage{microtype}
\usepackage{geometry}
\usepackage{amsmath,amssymb,amsthm,mathtools,mathrsfs}
\usepackage{enumitem}
\usepackage[numbers,sort&compress]{natbib}
\usepackage[colorlinks=true,linkcolor=blue,citecolor=blue,urlcolor=blue]{hyperref}
\usepackage{aliascnt}
\usepackage[nameinlink,noabbrev]{cleveref}

\allowdisplaybreaks
\setlist[itemize]{topsep=4pt,itemsep=2pt,parsep=1pt}
\setlist[enumerate]{topsep=4pt,itemsep=2pt,parsep=1pt}
\numberwithin{equation}{section}

\newtheorem{theorem}{Theorem}[section]
\newaliascnt{proposition}{theorem}
\newtheorem{proposition}[proposition]{Proposition}
\aliascntresetthe{proposition}
\newaliascnt{lemma}{theorem}
\newtheorem{lemma}[lemma]{Lemma}
\aliascntresetthe{lemma}
\newaliascnt{corollary}{theorem}
\newtheorem{corollary}[corollary]{Corollary}
\aliascntresetthe{corollary}
\newaliascnt{assumption}{theorem}
\newtheorem{assumption}[assumption]{Assumption}
\aliascntresetthe{assumption}
\theoremstyle{definition}
\newaliascnt{definition}{theorem}
\newtheorem{definition}[definition]{Definition}
\aliascntresetthe{definition}
\theoremstyle{remark}
\newaliascnt{remark}{theorem}
\newtheorem{remark}[remark]{Remark}
\aliascntresetthe{remark}

\Crefname{theorem}{Theorem}{Theorems}
\Crefname{proposition}{Proposition}{Propositions}
\Crefname{lemma}{Lemma}{Lemmas}
\Crefname{corollary}{Corollary}{Corollaries}
\Crefname{assumption}{Assumption}{Assumptions}
\Crefname{definition}{Definition}{Definitions}
\Crefname{remark}{Remark}{Remarks}

\newcommand{\R}{\mathbb R}
\newcommand{\C}{\mathbb C}
\newcommand{\Z}{\mathbb Z}
\newcommand{\N}{\mathbb N}
\newcommand{\T}{\mathbb T}
\newcommand{\E}{\mathbb E}
\newcommand{\Prob}{\mathbb P}
\newcommand{\dd}{\mathrm d}
\newcommand{\e}{\mathrm e}
\newcommand{\ii}{\mathrm i}
\newcommand{\one}{\mathbf 1}
\newcommand{\Id}{\operatorname{Id}}
\newcommand{\Ran}{\operatorname{Ran}}
\newcommand{\supp}{\operatorname{supp}}
\newcommand{\Rea}{\operatorname{Re}}
\newcommand{\spanop}{\operatorname{span}}
\newcommand{\cL}{\mathcal L}
\newcommand{\cK}{\mathcal K}

\newcommand{\cE}{\mathcal E}
\newcommand{\cD}{\mathcal D}
\newcommand{\cA}{\mathcal A}
\newcommand{\cS}{\mathcal S}

\newcommand{\cV}{\mathcal V}

\newcommand{\cR}{\mathcal R}
\newcommand{\norm}[1]{\left\lVert #1\right\rVert}
\newcommand{\abs}[1]{\left\lvert #1\right\rvert}

\newcommand{\la}{\langle}
\newcommand{\ra}{\rangle}
\newcommand{\law}{\mathrm{Law}}

\title[Polynomial mixing for cubic SNLS]{Polynomial mixing for the 3D damped cubic nonlinear Schr\"odinger equation with degenerate noise}

\author{Rongchang Liu}
\address{School of Mathematics, Sichuan University, Chengdu 610064, China}
\email{rcliu@scu.edu.cn}

\author{Kening Lu}
\address{School of Mathematics, Sichuan University, Chengdu 610064, China}
\email{keninglu@scu.edu.cn}

\author{Lin Shi}
\address{ School of Mathematical Sciences, 
University of Electronic Science and Technology of China,
Chengdu, Sichuan 611731, China}
\email{shilinlavender@163.com}

\date{}

\subjclass[2020]{Primary 60H15, 37A25; Secondary 35Q55, 60J25, 93B05}
\keywords{stochastic nonlinear Schr\"odinger equation, finite-rank Brownian forcing,
cubic saturation, Malliavin calculus, stable--compact coupling, polynomial mixing}

\thanks{This work was supported by the Fundamental Research Funds for
the Central Universities and the National Natural Science Foundation of China
(Grant Nos.~12090010, 12090013 and 12471154).}

\begin{document}

\begin{abstract}
We prove  polynomial mixing for the defocusing damped cubic stochastic nonlinear Schr\"odinger equation on the three-dimensional torus under saturating smooth finite rank Brownian forcing.  The mixing rate is measured in the $p$-Wasserstein metric induced by the $H^1$ distance for every $1\le p<\infty$. We also obtain sharp geometric characterizations of saturation.

The proof is based on a polynomial mixing criterion built on a stable--compact decomposition of the exact solution differences with
polynomial moment control of the logarithmic path amplification. Dense Malliavin range allows the compact defect to be compensated by
finite-dimensional Cameron--Martin shifts, producing a block
multiplier with negative mean logarithm.  A logarithmic transportation
gauge, combined with a renewal--reset coupling scheme, then yields
mixing at every prescribed polynomial order in the weaker $L^2$ distance. A stationary regularity gain to $H^{2-}$ then enables us to upgrade the convergence to $H^1$. 
\end{abstract}

\maketitle
\tableofcontents
\section{Introduction}

The cubic nonlinear Schr\"odinger equation (NLS) occupies a distinguished
place in the statistical theory of weak wave turbulence.  In the celebrated
Kolmogorov--Zakharov cascade picture, nonlinear interactions transfer
conserved quantities across scales, while under the weakly nonlinear kinetic scaling, the
wave spectrum evolves according to the wave kinetic equation, whose
nonequilibrium constant-flux stationary solutions are the 
Kolmogorov--Zakharov spectra
\cite{Kolmogorov1941,Frisch1995,ZakharovLvovFalkovich1992,
NewellRumpf2011}.  For the unforced cubic NLS, the wave kinetic equation has recently been
rigorously derived \cite{DengHani2023,DengHani2026},  providing a rigorous kinetic framework for the mathematical study of
the Kolmogorov--Zakharov spectra and cascade laws.

In experiments and numerical studies, stationary cascades are typically
realized in a forced--dissipated setting, where forcing is concentrated in
a narrow band around a low frequency injection scale and separated in scale from
the dissipative sinks, allowing nonlinear interactions to sustain the nonequilibrium stationary flux underlying
the Kolmogorov--Zakharov cascade picture. Damped--driven stochastic
NLS equations provide a natural microscopic model for this regime, and
the corresponding kinetic description has recently been rigorously derived in
\cite{GrandeHani2026}, see also \cite{KuksinMaiocchi2015,DymovKuksin2021} for earlier developments.

A complementary approach to the same statistical problem is to start
from the underlying dynamics itself and ask whether the evolution
selects a distinguished invariant probability measure with strong
mixing properties.  Such a measure would provide the natural dynamical
object from which the nonequilibrium flux laws predicted by the
Kolmogorov--Zakharov theory might ultimately be recovered under suitable
scaling limits \cite{Bedrossian2024}.  This dynamical viewpoint, rooted
in Kolmogorov's ergodic program and developed further through the work
of Sinai and Ruelle, regards invariant measures, mixing, Lyapunov
exponents, and entropy as intrinsic statistical objects of the dynamics
\cite{Sinai1989,Ruelle1978,EckmannRuelle1985}.  For damped--driven
stochastic NLS, a fundamental first question is whether randomness
injected into only a few low modes can be propagated by nonlinear
interactions so as to determine a unique mixing stationary statistical state.

We address this problem for the
defocusing damped cubic stochastic nonlinear Schr\"odinger equation (SNLS) on
the three-dimensional torus 
$\T^3=(\R/2\pi\Z)^3$,
\begin{align*}
 \dd u_t=-\bigl(\gamma u_t-\ii\Delta u_t
 +\ii\abs{u_t}^2u_t\bigr)\,\dd t+\sum_{j=1}^m b_j\dd W_t^j
\end{align*}
where $\gamma>0$, $W=(W^1,\ldots,W^m)$ is a standard real Brownian
motion, and $b_j\in C^\infty(\T^3;\C)$.

Our main result shows that, for any value of the damping intensity $\gamma>0$, cubic saturation suffices to imply that the Markov semigroup
admits a unique invariant probability measure $\mu$ with full support in
$H^1$ and concentrated in $H^{2-}$, and converges to $\mu$ at every prescribed
polynomial rate in the $p$-Wasserstein distance induced by the $H^1$
norm, for every $1\le p<\infty$.  We also obtain sharp geometric
characterizations of cubic saturation. In particular, we construct a saturating family of three smooth complex-valued noise profiles. The details are deferred to \Cref{sec:setting}.

A common theme behind several existing approaches to mixing for
dissipative SPDEs is that the obstruction to contraction is effectively
finite-dimensional.  For hypoelliptic  SPDEs of parabolic type, smoothing and
dissipation reduce the uncontrolled dynamics to finitely many effective
directions, while nonlinear propagation of degenerate noise controls
the remaining obstruction
\cite{HairerMattingly2006,HairerMattingly2008,HairerMattingly2011}.
Subsequent controllability and compactness-based approaches
\cite{KuksinNersesyanShirikyan2020,Nersesyan2022,
ChenXiangZhangZhao2025,ChenXiangZhang2026} demonstrate in several settings that this
obstruction need not be tied to a prescribed Fourier splitting.
These developments motivate treating the obstruction to contraction as a compact defect accessible to the stochastic directions. This viewpoint
led in our previous work \cite{LiuLuWave2026} to an abstract
stable--compact spectral gap criterion with an application to the
damped cubic wave equation.

In this work, building on these developments and the stable--compact viewpoint in
\cite{LiuLuWave2026}, we develop an abstract polynomial mixing criterion for Markov systems driven by Gaussian noise in a substantially weaker quantitative regime, thereby
placing the mixing mechanisms for hypoelliptic SPDEs of parabolic,
hyperbolic, and dispersive type within a common structural perspective.  A key principle shared by these approaches and captured by the present criterion is that the analytic mechanism producing compactness can be separated from the probabilistic mechanism compensating it. Indeed, the approaches mentioned above can all be viewed as exhibiting a
common structure of contraction modulo a compact defect. On each Lyapunov core, compactness reduces the obstruction to finitely
many effective directions.  In the fixed accuracy regime of the present
criterion, qualitative density of the stochastic directions is then
sufficient for their compensation.

The quantitative mixing regime is then governed by the strength of the
local coupling that remains after this compensation.  In
\cite{LiuLuWave2026}, positive power control of the path amplification,
together with a separate high energy contraction, yields contraction
in a power transportation metric and hence a spectral gap.  A
comparably strong local contraction arises in the parabolic and
bounded noise settings of
\cite{HairerMattingly2008,KuksinNersesyanShirikyan2020,
ChenXiangZhangZhao2025,ChenXiangZhang2026}, where the corresponding
role is played instead by parabolic smoothing and Foia\c{s}--Prodi
splitting, or by bounded noise controllability. The present work lies in a genuinely weaker quantitative regime: for the unbounded Gaussian forcing considered here, the available
energy--Strichartz estimates yield finite moments of the
logarithm of the path amplification.  We show that a negative mean
logarithmic distance multiplier nevertheless yields local contraction
in a carefully designed logarithmic transportation gauge, which a renewal--reset argument
globalizes to mixing at every prescribed polynomial order. Thus the
mixing scale is determined not by the analytic origin of the compact
obstruction, but by the quantitative strength of the compensated local
dynamics.

For the SNLS, the relevant compactness is encoded in the Bogoliubov operator structure of the exact difference equation. Retaining the diagonal real potential in the unitary part of the evolution, a Sylvester argument shows that the conjugate interaction contributes a compact defect, yielding an exact stable--compact structure at finite separation.  This is related in spirit to the asymptotic
compactness of the linearized dynamics used recently for one-dimensional
bounded noise NLS
\cite{ChenXiangZhangZhao2025,ChenXiangZhang2026}, but here the
compactness is produced directly by the operator structure of the
Bogoliubov equation and holds for exact solution differences.  The linearized flow
has the analogous structure, while the exact difference formulation here avoids the Taylor remainder in the state variable.

The stochastic counterpart is the propagation of the finite-rank
forcing through the cubic nonlinearity at the energy regularity.
After factoring out the invertible tangent flow, two adapted
Hilbert-valued covariations and a frozen short-time expansion generate
the cubic descendants in the reduced Malliavin range.  This construction
is related to the invertible tangent flow framework of
\cite{BaudoinTeichmann2005} and to the quadratic variation propagation
mechanisms of
\cite{MattinglyPardoux2006,BakhtinMattingly2007}, but is carried out
directly at the three-dimensional energy regularity.  The
compact obstruction viewpoint lowers the required hypoellipticity:
cubic saturation need only yield qualitative density of the endpoint
Malliavin range, rather than a quantitative lower bound on the
Malliavin covariance.  This also replaces the prescribed
determining mode nondegeneracy used in the classical coupling approach
to stochastic NLS
\cite{DebusscheOdasso2005,GuoLiu2025}.  The same saturation algebra
generates the short-time deterministic controls needed for uniform
irreducibility, in the spirit of
\cite{AgrachevSarychev2005,AgrachevSarychev2006,Sarychev2012,
GlattHoltzHerzogMattingly2018}.

A final ingredient is a stationary regularity gain arising from the dispersive structure of the equation.  Every invariant measure
is concentrated on $H^{2-}$, almost one derivative above the energy space. Related stationary regularization results were obtained in
\cite{ChenXiangZhang2026,GlattHoltzMartinezRichards2026}. Here, the mechanism is the four-wave resonance geometry: a stationary
four-point It\^o identity and the associated damped resonance multiplier
control the nonlinear Sobolev flux.  This structure is closely related
to resonant normal form and modified energy decompositions for
deterministic NLS
\cite{CollianderKeelStaffilaniTakaokaTao2008,CollianderKwonOh2012}
and to stationary correlation expansions in forced wave systems
\cite{RosenhausSmolkin2023}, but here the resonance
resolvent is generated directly by stationarity and is used to control
stationary Sobolev flux rather than a finite-time nonlinear remainder.
This stationary regularity result is of independent interest and, in
the mixing argument, provides precisely the high-frequency control
needed to upgrade the $L^2$ coupling to the
$p$-Wasserstein metric in $H^1$.

We conclude this discussion with a few further connections to the
stochastic NLS literature.  Existence of invariant measures for damped
stochastic NLS with additive forcing has been established in a variety
of settings
\cite{Kim2006,EkrenKukavicaZiane2017,BrzezniakFerrarioZanella2024}. On $\R^d$, $d\le3$, Brze\'zniak, Ferrario, and Zanella
\cite{BrzezniakFerrarioZanella2023} proved uniqueness under
sufficiently large damping for Hilbert--Schmidt additive noise, and
Nguyen and Seong \cite{NguyenSeong2025} subsequently obtained mixing
at every polynomial order under additional regularity of the noise.
These results require neither saturation nor nondegeneracy of the
forcing, since contraction is supplied instead by sufficiently strong
damping.

\section{Settings and main results}\label{sec:setting}
In this section, we give necessary functional settings and state the main result on polynomial mixing of SNLS. The abstract criterion on polynomial mixing with its proof is deferred to the next section.  The defocusing nonlinear Schr\"odinger equation we consider is 
\begin{align}\label{eq:intro-spde}
 \dd u_t=-\bigl(\gamma u_t-\ii\Delta u_t+\ii\abs{u_t}^2u_t\bigr)\,\dd t+B\,\dd W_t, \quad \text{ on }\mathbb T^3,
\end{align}
where $\gamma>0$ is the constant damping intensity and $W=(W^1,\ldots,W^m)$ is a standard $\R^m$-valued Brownian motion over the probability space $(\Omega,\mathcal F, (\mathcal F_t),\mathbb P)$, and the linear operator $Be_j=b_j,\,1\leq j\leq m$ with each $b_j$ smooth complex valued so that  
\begin{align*}
B\,\dd W_t=\sum_{j=1}^m b_j\,\dd W_t^j.
\end{align*}
For $s\in\R$, with $\Lambda=(I-\Delta)^{1/2}$, put
\begin{align*}
 H^s=H^s(\T^3;\C),\qquad
 \norm u_{H^s}=\norm{\Lambda^su}_{L^2}.
\end{align*}
All Sobolev spaces are regarded over the real scalar field unless otherwise stated.  The real inner product of $L^2$ is
\begin{align*}
 (u,v)_\R=\Rea\int_{\T^3}u\bar v\,\dd x.
\end{align*}
The equation is globally well posed in $H^1$, with trajectories in
$C([0,T];H^1)\cap L^{7/2}(0,T;W^{7/8,7/2})$ almost surely; see, for
example, \cite[Theorem~1.5(ii)]{CheungMosincat2019}.  The additional
linear damping is a bounded dissipative perturbation and does not
affect the standard energy-space theory. The phase space is $X=H^1$, while $H=L^2$ is the coupling space.

The Hamiltonian and its damping dissipation are
\begin{align}\label{eq:hamiltonian-dissipation}
 \cE(u)=\frac12\norm{\nabla u}_{L^2}^2+\frac14\norm u_{L^4}^4, \quad \mathsf D(u)=\norm{\nabla u}_{L^2}^2+\norm u_{L^4}^4.
\end{align}
In particular, $\mathsf D\ge2\cE$.  Since $\T^3$ has finite volume and  $H^1\hookrightarrow L^4$, 
\begin{align}\label{eq:energy-coercivity}
 \norm u_{H^1}^2\le 2\cE(u)+C\cE(u)^{1/2},\qquad
 \cE(u)\le C\bigl(\norm u_{H^1}^2+\norm u_{H^1}^4\bigr).
\end{align}
Thus the energy sublevels are bounded in
$H^1$ and compact in $L^2$.

\subsection{Cubic saturation}
Put $\mathsf N(z)=|z|^2z$ and $\mathsf Jz=\ii z$, and regard
$\mathsf N$ as a smooth map over the real scalar field.  Its third
derivative is the constant real trilinear map (the bar denotes the complex conjugation)
\begin{align}\label{eq:cubic-third-derivative}
 D^3\mathsf N[h,k,\ell]=2\bigl(hk\bar\ell+h\ell\bar k+k\ell\bar h\bigr).
\end{align}
In particular, for real $\phi,b,c$,
\begin{align*}
 \mathsf JD^3\mathsf N[\phi,b,c]=6\ii\phi bc,\qquad \mathsf JD^3\mathsf N[\ii\phi,b,c]=-2\phi bc.
\end{align*}
The following definition is the cubic saturation that will be used in the Malliavin propagation and approximate controllability. 

\begin{definition}[Cubic saturation]
\label{def:cubic-saturation}
Let
\begin{align*}
 \mathscr W_0=\Ran B=\spanop_\R\{b_1,\ldots,b_m\}
\end{align*}
and recursively define
\begin{align}\label{eq:cubic-recursion}
 \mathscr W_{n+1}=\spanop_\R\Bigl(
 \mathscr W_n\cup\{\mathsf JD^3\mathsf N[\phi,h,k]:
 \phi\in\mathscr W_n,\ h,k\in\mathscr W_0\}\Bigr).
\end{align}
The forcing is called \emph{cubic saturating} if
\begin{align*}
 \overline{\mathscr W_\infty}^{\,H^1}=H^1,
 \qquad \text{ where } \mathscr W_\infty=\bigcup_{n\ge0}\mathscr W_n.
\end{align*}
\end{definition}

\begin{remark}
Geometric characterizations of cubic saturation are given in
\Cref{subsec:saturation-geometry}.  In the complex phase-complete
Fourier class, saturation is characterized by generation of the full
difference lattice, with four wave vectors being minimal, corresponding
in this class to eight real forcing directions.  For instance, as in
\Cref{cor:fourier-minimal}, one may take
\begin{align*}
 \Ran_\R B
 =\spanop_\R\{1,\ii,\e^{\ii x_1},\ii\e^{\ii x_1},
 \e^{\ii x_2},\ii\e^{\ii x_2},
 \e^{\ii x_3},\ii\e^{\ii x_3}\}.
\end{align*}
Beyond the phase-complete Fourier class, more general smooth
complex-valued forcing may saturate with only three profiles; for
example, \eqref{eq:three-complex-profiles} gives, for any $R>r>0$,
\begin{align*}
 b_1(x)=1,\qquad
 b_2(x)=(R+r\cos x_3)\e^{\ii x_1},\qquad
 b_3(x)=(R+r\sin x_3)\e^{\ii x_2}.
\end{align*}
The main result remains valid on each invariant Fourier sector, with
saturation understood relative to that sector.  When the associated
difference lattice has rank $r=1$ or $2$, the restricted dynamics are
naturally identified with an $r$-dimensional damped cubic NLS with a
constant positive-definite quadratic dispersion, for which the same
polynomial mixing conclusion holds.
\end{remark}

\subsection{Main theorem}
Given $1\le p<\infty$ and a Polish space $X$ with metric $d$, let $\mathcal P_p(X)$ denote the set of probability measures on $X$ with finite $p$-th moment.  The induced $p$-Wasserstein metric on $\mathcal P_p(X)$ is
\begin{align*}
 \mathcal W_{p,d}(\mu_1,\mu_2)
 =\left(\inf_{\Gamma\in\mathsf C(\mu_1,\mu_2)}
   \int_{X\times X}d(x,y)^p\,\Gamma(\dd x,\dd y)\right)^{1/p},
\end{align*}
where $\mathsf C(\mu_1,\mu_2)$ denotes the set of couplings of $\mu_1$ and $\mu_2$; see, for example, \cite{Chen2004,Villani2008}.  We write $\mathcal W_{p,H^s}$ when $d(u,v)=\norm{u-v}_{H^s}$.  We also set
\begin{align*}
  H^{2-}:=\bigcap_{n\ge2}H^{2-1/n}=\bigcap_{s<2}H^s.
\end{align*}

We now state the main result.
\begin{theorem}
\label{thm:main}
Assume that the noise profiles are smooth and
cubic saturating in the sense of
\Cref{def:cubic-saturation}.  Then the Markov semigroup $P_t$ of \eqref{eq:intro-spde} has a unique
invariant probability measure $\mu$ with  $\supp\mu=H^1$, and 
\begin{align*}
  \int_{H^1}(1+\cE(u))^r\,\mu(\dd u)<\infty, \quad \text{ for every } 0<r<\infty. 
\end{align*}
Furthermore, $\mu(H^{2-})=1$ and for every $1\le p<\infty$ and $q>0$ there are
$M=M(p,q)\in\N$ and $C=C(p,q)<\infty$ such that
\begin{align}\label{eq:main-mixing}
 \mathcal W_{p,H^1}\bigl(P_t(u,\cdot),\mu\bigr)
 \le C\{1+\cE(u)^M\}(1+t)^{-q}.
\end{align}
\end{theorem}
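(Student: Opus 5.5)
The plan is to assemble \Cref{thm:main} from four ingredients, following the outline of the introduction: energy moments, an $L^2$ polynomial coupling, $H^{2-}$ stationary regularity, and an interpolation upgrade to $H^1$.

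\textbf{Step 1: moments and existence.} I would apply It\^o's formula to $\cE(u_t)$ and to $\norm{u_t}_{L^2}^2$. The damping gives $-\gamma\mathsf D\le-2\gamma\cE$, and the It\^o correction is bounded by $C(B)(1+\norm u_{L^2}^2)$. This yields, for every $r>0$,
\begin{align*}
 \E(1+\cE(u_t))^r\le \e^{-c_rt}(1+\cE(u))^r+C_r .
\end{align*}
Krylov--Bogoliubov then gives an invariant measure, using tightness in $L^2$ from compactness of energy sublevels together with Feller continuity. The same bound gives $\int(1+\cE)^r\,\dd\mu<\infty$. For existence as a measure on $H^1$, I would use weak lower semicontinuity of $\cE$.

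\textbf{Step 2: full support.} The saturation algebra $\mathscr W_\infty$ yields approximate controllability in $H^1$ through Agrachev--Sarychev type controls. I would iterate the cubic descendant construction: large controls $\delta^{-1/2}\phi$ applied over short times $\delta$ generate the directions $\mathsf JD^3\mathsf N[\phi,h,k]$. Combined with the support theorem, this shows that every $H^1$ ball is reached with positive probability from any point. Hence $\supp\mu=H^1$, and irreducibility is available for the coupling.

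\textbf{Step 3: polynomial $L^2$ mixing.} This is the core step, and I expect it to be the main obstacle. I would verify the hypotheses of the abstract criterion of the next section.

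\emph{Stable--compact splitting.} Write the difference $w=u-v$ as a Bogoliubov equation, with the diagonal real potential $2(|u|^2+|v|^2)$-type term kept in the unitary part. Then the conjugate term $u v\bar w$-type is compact by a Sylvester/Rellich argument. The stable part contracts by $\e^{-\gamma t}$ in $L^2$.

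\emph{Logarithmic moments of the amplification.} Finite polynomial moments of $\log$ of the path amplification follow from energy--Strichartz bounds, controlled by $\int_0^T\norm u_{W^{7/8,7/2}}^{7/2}$, together with the energy moments of Step 1.

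\emph{Dense Malliavin range.} Covariation propagation along the cubic recursion gives density of the reduced Malliavin range in $H^1$. Finite-dimensional Cameron--Martin shifts can then absorb the compact defect to a fixed accuracy on each energy core. This produces a block multiplier with negative mean logarithm.

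\emph{Globalization.} The criterion turns this into $\mathcal W_{1,\min(1,\|\cdot\|_{L^2})}$-type decay at any order $q$, via the logarithmic gauge and renewal--reset, with constants polynomial in $\cE(u)$. The hard part is making the compensation uniform on cores while keeping Girsanov costs controlled. Here I would first try restricting to bounded-energy returns, using the Lyapunov bound of Step 1 for the return times.

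\textbf{Step 4: regularity and upgrade.} For $\mu(H^{2-})=1$, I would apply the stationary four-point It\^o identity for $\norm{u}_{H^s}^2$ with $s<2$. In stationarity the Sobolev flux is rewritten through the damped resonance multiplier $(\gamma+\ii\Omega)^{-1}$. This is bounded by energy moments, after truncating and removing the truncation by monotone convergence.

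For the upgrade, take the $L^2$-coupling from Step 3 for $P_t(u,\cdot)$ against $\mu$, with the second marginal stationary. Interpolate
\begin{align*}
 \norm w_{H^1}\le\norm w_{L^2}^{\theta}\norm w_{H^{s}}^{1-\theta},\qquad s<2 .
\end{align*}
To control the $H^s$ moments of $P_t(u,\cdot)$, I would use a time-dependent version of the Step 4 estimate (smoothing after time $1$ via a Duhamel/normal-form bound with polynomial dependence on $\cE(u)$). Then apply H\"older in the coupling. Increasing $q$ in Step 3 compensates the loss $\theta$, which gives \eqref{eq:main-mixing} for all $p$. Uniqueness of $\mu$ follows from \eqref{eq:main-mixing}.
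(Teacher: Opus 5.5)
Your overall architecture matches the paper: Lyapunov moments, the Bogoliubov stable--compact splitting, log-moments of the amplification, reduced Malliavin density, renewal--reset, and a stationary four-wave regularity gain. The final upgrade from $L^2$ to $H^1$, however, has a genuine gap.

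\textbf{The upgrade relies on smoothing that the flow does not have.} You propose to bound $H^s$ moments, $s>1$, of $P_t(u,\cdot)$ polynomially in $\cE(u)$ by a time-dependent smoothing estimate. No such estimate can hold. Write $u_t=z_t+B\omega_t$ with $B\omega_t$ smooth. Then $z$ solves a forced NLS whose linear part is a group, and persistence of regularity, run forward and backward on $[0,t]$, gives $u_t\in H^s$ if and only if $u\in H^s$. Hence $P_t(u,H^s)=0$ for every $u\in H^1\setminus H^s$ and every $t$. Even for smooth data, the free part $\mathsf S(t)u$ has $H^s$ size $\e^{-\gamma t}\norm u_{H^s}$, which $\cE(u)$ does not control (take $u=N^{-1}\e^{\ii Nx_1}$). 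So the interpolation $\norm w_{H^1}\le\norm w_{L^2}^\theta\norm w_{H^s}^{1-\theta}$ cannot be applied on the $P_t(u,\cdot)$ marginal; the paper states explicitly that the stationary gain is not a finite-time smoothing statement. Splitting off $\mathsf S(t)u$ and proving nonlinear smoothing for the remainder would need a delicate finite-time estimate that the paper neither has nor needs.

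\textbf{What the paper does instead.} The missing idea is an energy-equation (Ball-type) argument that uses regularity of $\mu$ only.
\begin{enumerate}
\item The It\^o identity gives $\frac{\dd}{\dd t}P_t\cE+2\gamma P_t\cE=P_t\mathsf G$. Here $\mathsf G$ and $\norm u_{L^4}^4$ are H\"older in $L^2$ with polynomial energy weights.
\item Therefore the $L^2$ mixing at arbitrary order (via H\"older against moments) gives $P_t\cE\to\mu(\cE)$. With the exact mass balance, it also gives $P_t\norm{\cdot}_{H^1}^2\to\mu(\norm{\cdot}_{H^1}^2)$ at every polynomial rate.
\item Since $\norm{\Pi_N\cdot}_{H^1}^2$ is $N^2$-Lipschitz in $L^2$ (with energy weight), subtracting the low modes yields $P_t\norm{Q_N\cdot}_{H^1}^2(u)\le2\mu(\norm{Q_N\cdot}_{H^1}^2)+C_LN^2\{1+\cE(u)^{M_L}\}(1+t)^{-L}$.
\item Only now does the stationary $H^{3/2}$ moment enter, through $\mu(\norm{Q_N\cdot}_{H^1}^2)\lesssim N^{-1}$.
\item Split $\norm{u-v}_{H^1}^2\le CN^2\norm{u-v}_{L^2}^2+2\norm{Q_Nu}_{H^1}^2+2\norm{Q_Nv}_{H^1}^2$ and take $N=\lceil(1+t)^a\rceil$. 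This gives the $\mathcal W_{2,H^1}$ rate, and general $p$ follows by interpolating against $H^1$ moments.
\end{enumerate}

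\textbf{A smaller gap in the regularity step.} The stationary flux is not bounded by energy moments for all $s<2$.
\begin{itemize}
\item The resolvent form with multiplier $\delta_{s,R}/(4\gamma+\ii\Omega)$ is bounded only for $s<3/2+b$ when two inputs lie in $H^{1+b}$. This range is sharp, as the resonant rectangle $(Ne_1,N(e_1+e_2),Ne_2,0)$ shows.
\item The resulting bound is $\int\norm u_{L^\infty}^2\norm u_{H^{1+b}}^2\norm u_{H^1}^2\,\dd\nu$. This needs stationary $L^\infty$ moments of order $q\in(10/3,4)$, obtained from periodic Strichartz estimates plus stationarity.
\item Energy moments alone give only $s<3/2$. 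Reaching $H^{2-}$ requires iterating with $b<A(1-2/q)$, whose fixed point $q/4$ tends to $1$ as $q\uparrow4$.
\end{itemize}

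\textbf{Minor points.}
\begin{itemize}
\item The Malliavin range must be, and is, dense in $L^2$ (the coupling space), not $H^1$.
\item In Step 2, a translation of size $R$ must act for time of order $R^{-3}$ to produce an $O(1)$ cubic term $\mathsf N(\zeta)$; polarization then yields $\mathsf JD^3\mathsf N[\phi,h,k]$. Your scaling of $\delta^{-1/2}$ over time $\delta$ does not isolate these directions.
\end{itemize}
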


\section{A polynomial  mixing criterion}
\label{sec:abstract-criterion}

In this section we formulate and prove an abstract polynomial mixing
criterion in a form adapted to SPDE applications.  Its four assumptions are
a polynomial Lyapunov structure, a stable--compact structure with polynomial log-control, dense
Malliavin range, and uniform irreducibility. 

Let $X$ be a separable Banach space continuously embedded in a separable Hilbert space $H$, and let $(P_t)_{t\ge0}$ be a Feller Markov semigroup on $X$.  For every $T>0$, let $(E_T,\mathcal H_T,\boldsymbol{\gamma}_T)$ be an abstract Wiener space with Cameron--Martin embedding 
\begin{align*}
\iota_T:\mathcal H_T\hookrightarrow E_T
\end{align*}
and write $\mathcal H_T^{0}=\iota_T^*(E_T^*)\subset\mathcal H_T$, where the Riesz isomorphism is used to identify $\mathcal H_T^*$ with $\mathcal H_T$. 
Since $\iota_T$ is injective, $\mathcal H_T^{0}$ is dense in $\mathcal H_T$.  
We use $\Prob$ and $\E$ for probability and expectation on whatever
probability space carries the random variables under consideration.
When $W$ denotes the canonical Wiener variable on $E_T$ and no other
randomness is present, $\E f(W)$ means integration of $f(\omega),\omega\in E_T$ against $\boldsymbol{\gamma}_T$.

Let 
\begin{align*}
 \boldsymbol\Phi_T:X\times E_T\longrightarrow C([0,T];X),\qquad
 \Phi_T(x,\omega)=\boldsymbol\Phi_T(x,\omega)(T),
\end{align*}
be a Borel pathwise representation of $P_T$, compatible almost surely
with restriction and concatenation of independent Wiener blocks; the
exceptional sets may depend on the starting state.  Conditional on the
past state, each fresh block is independent and avoids its corresponding
exceptional set almost surely; hence these identities hold throughout
every countable block construction below.  Write the Malliavin derivative 
\begin{align*}
 \cA_T(x,\omega)=\cD\Phi_T(x,\omega)\in\cL(\mathcal H_T,H),
\end{align*}
and assume that $(x,\omega)\mapsto\cA_T(x,\omega)h$ is Borel for every $h\in\mathcal H_T^{0}$.  If $Y$ is a Banach space, a smooth cylindrical map $U:E_T\to Y$ means
\begin{align*}
 U(\omega)=\varphi\bigl(\ell_1(\omega),\ldots,\ell_N(\omega)\bigr)
\end{align*}
for some $\ell_1,\ldots,\ell_N\in E_T^*$ and a smooth map $\varphi:\R^N\to Y$.

Let $\cV:X\to[1,\infty)$ be measurable and locally bounded, and put
\begin{align*}
 \mathbb V_R=\{x\in X:\cV(x)\le R\},\qquad V_p=1+\cV^p.
\end{align*}
For two path inputs set
\begin{align*}
 \mathbf d_T\bigl((x,\omega),(y,\widetilde\omega)\bigr)
 =\sup_{0\le s\le T}
 \norm{\boldsymbol\Phi_T(x,\omega)(s)-\boldsymbol\Phi_T(y,\widetilde\omega)(s)}_H.
\end{align*}
Throughout this section an exact-marginal coupling means a Borel coupling kernel in the starting pair which can be concatenated at block stopping times with fresh Wiener increments.

The first assumption provides recurrence and moment control together
with compactness of the Lyapunov sublevels in the coupling topology.
\begin{assumption}\label{ass:poly-lyapunov}
The sets $\mathbb V_R$ are compact in the $H$-topology.  For every $p\ge1$, there are $c_p,C_p>0$ such that
\begin{align}\label{eq:poly-lyapunov}
 P_tV_p(x)\le C_p\e^{-c_pt}V_p(x)+C_p,\qquad t\ge0.
\end{align}
\end{assumption}

The second assumption combines the stable--compact endpoint structure
with polynomial control of the logarithmic path amplification.  This
is weaker than the corresponding regularity available for the wave
equation in \cite{LiuLuWave2026}, where the path amplification itself
admits the positive power control needed for a power metric contraction leading to a spectral gap.
\begin{assumption}\label{ass:stable-compact-regularity}
The following properties hold.

\begin{enumerate}[label=\textup{(\roman*)}]
\item For every sufficiently large $T$, there are strongly measurable $S_{T,x,y,\omega},K_{T,x,y,\omega}\in\cL(H)$ such that
\begin{align}\label{eq:abstract-stable-compact}
 \Phi_T(y,\omega)-\Phi_T(x,\omega)
 =\bigl(S_{T,x,y,\omega}+K_{T,x,y,\omega}\bigr)(y-x)
\end{align}
for $x,y\in X$ and $\boldsymbol{\gamma}_T$-almost every $\omega$, where
\begin{align*}
 \norm{S_{T,x,y,\omega}}\le\rho(T),\qquad
 \rho(T)\longrightarrow0,\qquad K_{T,x,y,\omega}\in\cK(H).
\end{align*}

\item For every $T>0$ there is a Borel functional $\mathcal G_T:X\times E_T\to[1,\infty]$ such that, for every $p\ge1$,
\begin{align}\label{eq:path-size-moments}
 \int_{E_T}\mathcal G_T(x,\omega)^p\,\boldsymbol{\gamma}_T(\dd\omega)
 \le C_{T,p}V_{M_{T,p}}(x),\qquad x\in X,
\end{align}
for some $C_{T,p},M_{T,p}<\infty$.  Moreover, there is a polynomial $Q_T$ with nonnegative coefficients such that with $\mathbf r_T = \norm{x-y}_H+\norm h_{\mathcal H_T}$, 
\begin{align}\label{eq:weak-path-growth}
 \mathbf d_T\bigl((x,\omega),(y,\omega+\iota_Th)\bigr)
 \le \mathbf r_T
 \exp\!\left\{Q_T\bigl(\mathcal G_T(x,\omega)+\mathcal G_T(y,\omega+\iota_Th)\bigr)\right\}
\end{align}
whenever the two growth functionals on the right are finite.

\item For every $R<\infty$, sufficiently large $T$, finite-dimensional $F\subset\mathcal H_T^{0}$, at each $(x_0,y_0)\in\mathbb V_R^2$ and for $\boldsymbol{\gamma}_T$-almost every $\omega_0\in E_T$, the map
\begin{align}\label{eq:finite-cm-map}
 (x,y,\omega,h)\longmapsto
 \bigl(K_{T,x,y,\omega},\Phi_T(y,\omega+\iota_Th)\bigr)
\end{align}
is defined and continuous on a product neighborhood in  $\mathbb V_R^2\times E_T\times F$, is continuously Fr\'echet differentiable in $h$, and has derivative
\begin{align*}
 \bigl(0,\cA_T(y,\omega+\iota_Th)|_F\bigr),
\end{align*}
which is jointly continuous there in operator norm. Here $\mathbb V_R$ is equipped with $H$ topology. 
\end{enumerate}
\end{assumption}

The third assumption provides the dense Malliavin directions needed
to compensate the compact endpoint defect by finitely many
Cameron--Martin shifts.  The exceptional Wiener set here is allowed to depend on the initial state, which  is sufficient for the finite factor construction below and is the
form naturally obtained in the SNLS application.  By contrast, a
uniform-in-state exceptional set is available in
\cite{LianLiuLu2026,LiuLuWave2026}.

\begin{assumption}\label{ass:dense-malliavin}
For every sufficiently large $T$ and every $x\in X$,
\begin{align*}
 \overline{\Ran\cA_T(x,\omega)}^{\,H}=H
\end{align*}
for $\boldsymbol{\gamma}_T$-almost every $\omega$.  No exceptional set uniform in $x$ is required.
\end{assumption}

The final assumption supplies a uniform irreducibility mechanism on
Lyapunov sublevels.  It provides the reset step in the renewal coupling.
\begin{assumption}
\label{ass:common-accessibility}
There exist $x_*\in H$ and $R_{*}<\infty$ such that, for every $R<\infty$ and $r>0$, there are $T=T(R,r)>0$ and $p=p(R,r)>0$ for which
\begin{align}\label{eq:common-accessibility}
 \inf_{x\in\mathbb V_R}
 P_T\bigl(x,\mathbb V_{R_*}\cap B_H(x_*,r)\bigr)\ge p.
\end{align}
\end{assumption}

\begin{remark}
In parabolic applications, a convenient stronger form of
\Cref{ass:common-accessibility} is the following: there exists
$x_*\in X$ such that, for every $R<\infty$ and $\varepsilon>0$, one
can find $T=T(R,\varepsilon)>0$ satisfying
\begin{align*}
 \inf_{x\in\mathbb V_R}
 P_T\bigl(x,B_X(x_*,\varepsilon)\bigr)>0.
\end{align*}
\end{remark}

Under these four assumptions we obtain the following mixing criterion.
\begin{theorem}[Polynomially controlled stable--compact mixing criterion]
\label{thm:stable-compact-polynomial-mixing}
Suppose that \Cref{ass:poly-lyapunov,ass:stable-compact-regularity,ass:dense-malliavin,ass:common-accessibility} hold.  Then, for every $q>0$ and $0<\delta\le1$, there are $M=M(q)<\infty$ and $C=C(q,\delta)<\infty$ such that
\begin{align}\label{eq:abstract-mixing-rate}
 \mathcal W_{1\wedge\norm{\cdot-\cdot}_H^\delta}
 \bigl(P_t(x,\cdot),P_t(y,\cdot)\bigr)
 \le C\{V_M(x)+V_M(y)\}(1+t)^{-q}
\end{align}
for all $x,y\in X$ and $t\ge0$.
\end{theorem}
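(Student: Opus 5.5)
We prove \Cref{thm:stable-compact-polynomial-mixing} in three stages: a local compensated coupling on each Lyapunov sublevel, a logarithmic transportation gauge in which this coupling contracts on average, and a renewal--reset argument that globalizes the contraction using \Cref{ass:poly-lyapunov,ass:common-accessibility}.

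\emph{Stage 1: local block multiplier.} Fix $R$ and a large $T$ with $\rho(T)\le\frac14$. For $(x,y)\in\mathbb V_R^2$ we couple the two copies by driving $y$ with the shifted noise $\omega+\iota_Th$, where $h\in F$ ranges over a finite-dimensional $F\subset\mathcal H_T^0$, and we correct the law by Girsanov. By \eqref{eq:abstract-stable-compact},
\[
\Phi_T(y,\omega+\iota_Th)-\Phi_T(x,\omega)=(S+K)(y-x)+\bigl[\Phi_T(y,\omega+\iota_Th)-\Phi_T(y,\omega)\bigr].
\]
Since $K$ is compact, there is a finite-rank projection $\Pi$ with $\norm{(I-\Pi)K}\le\frac14$. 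By \Cref{ass:dense-malliavin}, $\Pi\cA_T(y,\omega)$ restricted to some finite-dimensional $F$ is surjective onto $\Ran\Pi$. Here the exceptional set depends only on $y$, which is enough because we work at a fixed starting pair. Applying the implicit function theorem, justified by \Cref{ass:stable-compact-regularity}(iii), we can choose $h=h(x,y,\omega)$ with $\norm h\lesssim\norm{x-y}_H$ so that $\Pi$ annihilates the endpoint difference. We then obtain $\norm{\text{diff}}_H\le\frac12\norm{x-y}_H$ on an event of probability at least $1-\eta$, provided $\norm{x-y}_H\le\varepsilon(R)$.

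The continuity in (iii) together with the $H$-compactness of $\mathbb V_R$ makes the constants, the space $F$, and the good-event probability uniform in $(x_0,y_0)$. We get this from a finite cover and a measurable selection. The total variation cost of the shift is $O(\norm h_{\mathcal H_T})=O(\norm{x-y}_H)$. Whenever the coupling fails, including on the Girsanov mismatch, \eqref{eq:weak-path-growth} still bounds the distance by $\norm{x-y}_H\exp\{Q_T(\mathcal G_T+\mathcal G_T)\}$, and \eqref{eq:path-size-moments} gives polynomial moments of $\log$ of this factor. The outcome is a random multiplier $\Lambda$ with $\norm{\text{diff}}\le\Lambda\norm{x-y}$ and $\E\log\Lambda\le\log\frac12+C\eta<0$, with all moments of $\log^+\Lambda$ bounded by $V_M$. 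We expect this stage to be the main obstacle: the compensation must be made measurable and uniform in $\omega$ and in the starting pair, and the Girsanov mismatch must be controlled using only log-moment control rather than power control.

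\emph{Stage 2: logarithmic gauge.} Set $d_\delta=1\wedge\norm{\cdot}_H^\delta$ and work with a gauge of the form $\Psi(r)=(1+\log^+(\varepsilon/r))^{-k}$, or equivalently track $L=-\log\norm{x_n-y_n}$. Iterating blocks turns $L$ into a random walk with positive drift, $L_{n+1}\ge L_n-\log\Lambda_n$, whose increments have polynomial moments. A block failure occurs when the walk exits the small ball or the pair leaves $\mathbb V_R$. Standard estimates for random walks with polynomial moments then show that the probability of failure after $n$ blocks decays polynomially of every order. Since $d_\delta\le \e^{-\delta L}$, success yields exponential smallness in $L$, and so the distance decays polynomially along a successful run.

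\emph{Stage 3: renewal and reset.} After a failure we wait for a return to $\mathbb V_{R}$; by \eqref{eq:poly-lyapunov}, return times have all polynomial moments controlled by $V_M$. We then use \Cref{ass:common-accessibility}: with probability at least $p$, both copies, driven independently, land in $B_H(x_*,r)\cap\mathbb V_{R_*}$. With $r=\varepsilon/2$, this restarts Stage 2. The number of resets is geometric, and each cycle length has polynomial tails of every order. A renewal tail estimate, obtained by convolving the polynomial tails with Hölder's inequality and the moment bounds of \Cref{ass:poly-lyapunov}, shows that the time to the final successful run has tails $\lesssim V_M(x)+V_M(y)$ times $(1+t)^{-q'}$ for arbitrary $q'$. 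Combining this with Stage 2 gives \eqref{eq:abstract-mixing-rate}, after choosing the gauge order and $M$ in terms of $q$.
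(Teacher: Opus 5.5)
Your Stages 2--3 have a gap that prevents convergence. You declare a failure whenever the pair leaves $\mathbb V_R$, and after a failure you reset through \Cref{ass:common-accessibility} with independently driven copies. That reset discards all contraction gained so far. But whether a copy leaves $\mathbb V_R$ depends on its marginal alone, so the probability of this kind of failure per block does not decrease as $\norm{x-y}_H\to0$. Nothing in the assumptions makes it vanish; for the SNLS, with unbounded Gaussian forcing and invariant measures of full support (\Cref{cor:full-support}), exits recur. Runs are therefore ended by exits at a rate unrelated to the distance achieved, and there is no ``final successful run''. At a large time $t$ the distance reflects only what was gained since the last reset, so $\E[1\wedge\norm{X_t-Y_t}_H^\delta]$ does not tend to zero.

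The missing idea is to keep the coupling through Lyapunov excursions. After each core block the paper drives both copies with common noise until they return to the core. \Cref{lem:pair-return}, built on \eqref{eq:weak-path-growth}, \eqref{eq:path-size-moments} and \eqref{eq:poly-lyapunov}, bounds the logarithm of the excursion amplification uniformly: $\E\log B\le M_{\rm ret}$ in \eqref{eq:return-multiplier}. This amplification can be beaten only because the core block has an arbitrarily negative mean logarithm, $\E\log\mathfrak m_0\le-\mathsf A$ with $\mathsf A=M_{\rm ret}+3$. That is why \Cref{prop:negative-log-core} uses the contraction $q_1=\e^{-4\mathsf A-5}$ rather than a fixed factor $\frac12$. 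The whole-cycle multiplier $\mathfrak m=\mathfrak m_0B$ then satisfies $\E\log\mathfrak m\le-3$. Resets are triggered only by repair failure or by $r\mathfrak m>r_c$, whose probability $Cr+C_m\{\log(r_c/r)\}^{-m}$ is absorbed into the drift of $\ell_a$.

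Stage 1 also fails as stated, for three reasons.

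\begin{enumerate}
\item Making $\Pi$ annihilate the endpoint difference leaves the term $(I-\Pi)\cA_T(y,\omega)h$. This term is of order $\norm{(\Pi\cA_T|_F)^{-1}}\norm{x-y}_H$, and surjectivity does not control it. You need the full operator-norm approximation $\norm{K-\cA_T\mathscr R}\le\eta$ of \eqref{eq:finite-factor-compensation}.
\item More seriously, your shift $h(x,y,\omega)$ depends on the whole path through $K_{T,x,y,\omega}$ and $\cA_T(y,\omega)$, so it is anticipating and Girsanov does not apply. For a non-adapted shift, the law of $\omega+\iota_Th(\omega)$ is not controlled by $\norm h_{\mathcal H_T}$. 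For example, rounding one Gaussian coordinate to a grid of mesh $\epsilon$ is a shift of size $O(\epsilon)$ whose image law is singular. Since $K$ and $\cA_T$ are only assumed continuous in $\omega$, choosing $h$ by an implicit-function argument or a measurable selection gives no control of its Malliavin derivative. Step~2 of \Cref{lem:finite-factor-compensation} exists to fix exactly this. On compact good sets the finite-valued selector is replaced by a bounded smooth cylindrical field $\mathscr R_j$. The shift is taken linear, $h=-\mathscr R_j(\omega)(y-x)$, so $\norm{\cD h|_{F_j}}\le D_*r$. The finite-dimensional Gaussian transformation estimate then gives relative entropy $O(r^2)$, and Pinsker plus maximal coupling give a repair cost of $O(r)$.
\item On the repair-failure event, the genuine second path is not a Cameron--Martin shift of $\omega$, so \eqref{eq:weak-path-growth} gives no bound there. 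The paper instead sets the multiplier to $q_*$ on $\mathcal M^c$, resets, and pays $\Prob(\mathcal M^c)\le Cr$ as an additive error.
\end{enumerate}
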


The proof reduces the compact endpoint defect on a Lyapunov core to
finitely many directions and compensates them by Cameron--Martin shifts.
After exact marginal repair, this yields a block multiplier with
negative mean logarithm and finite moments of its positive logarithm.
Lyapunov returns and uniform irreducibility produce a renewal coupling,
while a logarithmic transportation gauge converts this negative drift
into arbitrary-order polynomial mixing.

\subsection{Finite-factor compensation and the negative-log block}

The next lemma turns the compact endpoint defect into a finite-dimensional
Malliavin correction, uniformly on a compact Lyapunov core.  The correction
may moreover be chosen cylindrical, with uniform first order control and
arbitrarily small Gaussian excess outside the good driver set.
\begin{lemma}
\label{lem:finite-factor-compensation}
Assume \Cref{ass:poly-lyapunov,ass:stable-compact-regularity,ass:dense-malliavin}.  Fix $R\ge1$, a sufficiently large block $T$, and $\epsilon,\eta>0$.  Then $\mathbb V_R^2$ admits a finite Borel partition $D_j$ for which there are compact sets $G_j\subset E_T$ with $\boldsymbol{\gamma}_T(G_j)>1-\epsilon$, finite-dimensional spaces $F_j\subset\mathcal H_T^{0}$, and bounded smooth cylindrical fields $\mathscr R_j:E_T\longrightarrow\cL(H,F_j)$
such that
\begin{align}\label{eq:finite-factor-compensation}
 \norm{K_{T,x,y,\omega}-\cA_T(y,\omega)\mathscr R_j(\omega)}_{\cL(H)}\le\eta,
 \qquad (x,y)\in D_j,\quad\omega\in G_j
\end{align}
and on each $D_j\times G_j$, 
\begin{align}\label{eq:finite-factor-compensation-onejet}
 \lim_{s\downarrow0}\ \sup_{\substack{(x,y)\in D_j,\ \omega\in G_j\\
 h\in F_j,\ 0<\norm h_{\mathcal H_T}\le s}}
 \frac{\norm{\Phi_T(y,\omega+\iota_Th)-\Phi_T(y,\omega)-\cA_T(y,\omega)h}_H}{\norm h_{\mathcal H_T}}=0.
\end{align}
Moreover, there is $L=L(R,T,\epsilon,\eta)<\infty$ such that, 
\begin{align}\label{eq:finite-factor-cylindrical-bounds}
 \max_j\sup_{\omega\in E_T}\norm{\mathscr R_j(\omega)}_{\cL(H,F_j)}\le L
\end{align}
and  for every
$\delta>0$, the fields $\mathscr R_j$ may be chosen with the same
$D_j,G_j,F_j$,  to vanish outside cylindrical sets
$\mathsf G_j\supset G_j$  with 
\begin{align*}
\max_j\boldsymbol{\gamma}_T(\mathsf G_j\setminus G_j)<\delta, \text{ and } \max_j\sup_{\omega\in E_T}
\norm{\cD\mathscr R_j(\omega)|_{F_j}}_{\cL(F_j,\cL(H,F_j))}<\infty.
\end{align*}
\end{lemma}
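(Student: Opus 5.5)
The plan is a local-to-global compactness argument on $\mathbb V_R^2\times E_T$. First we construct a single compensating operator at a fixed base point, then propagate the approximation to a neighborhood by the continuity in \Cref{ass:stable-compact-regularity}(iii), and finally extract a finite cover.

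\emph{Pointwise approximation.} Fix $(x_0,y_0)\in\mathbb V_R^2$ and a Lusin-type compact set of good $\omega_0$.
\begin{itemize}
\item By \Cref{ass:dense-malliavin}, $\overline{\Ran\cA_T(y_0,\omega_0)}=H$ for $\boldsymbol\gamma_T$-a.e.\ $\omega_0$. Since $\mathcal H_T^0$ is dense in $\mathcal H_T$ and $\cA_T(y_0,\omega_0)$ is bounded, already $\cA_T(y_0,\omega_0)(\mathcal H^0_T)$ is dense.
\item The operator $K_0=K_{T,x_0,y_0,\omega_0}$ is compact. Choose a finite-rank approximation $\sum_{i\le N}\langle\cdot,e_i\rangle f_i$ within $\eta/4$, and pick $h_i\in\mathcal H_T^0$ with $\cA_T(y_0,\omega_0)h_i$ close to $f_i$.
\item Set $F=\spanop\{h_i\}$ and $R_0=\sum_i\langle\cdot,e_i\rangle h_i\in\cL(H,F)$. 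Then $\norm{K_0-\cA_T(y_0,\omega_0)R_0}\le\eta/2$.
\end{itemize}

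\emph{Local extension and uniform first-order control.} By \Cref{ass:stable-compact-regularity}(iii), on a product neighborhood of $(x_0,y_0,\omega_0)$ the maps $K_{T,x,y,\omega}$ and $\cA_T(y,\omega)|_F$ are continuous in operator norm. Since $F$ is finite-dimensional, $\cA_T(y,\omega)R_0$ is continuous as well, so the bound $\eta$ persists on a neighborhood $U\times N$. The same assumption gives $C^1$ dependence on $h\in F$ with derivative jointly continuous in operator norm. The mean value theorem then yields
\[
\norm{\Phi_T(y,\omega+\iota_Th)-\Phi_T(y,\omega)-\cA_T(y,\omega)h}\le\norm h\sup_{\theta\in[0,1]}\norm{\cA_T(y,\omega+\theta\iota_Th)|_F-\cA_T(y,\omega)|_F}.
\]
The supremum tends to $0$ uniformly on compact subsets of the neighborhood; this is \eqref{eq:finite-factor-compensation-onejet}.

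\emph{Globalizing.} This is the main obstacle, because the exceptional $\omega$-sets depend on the base point, so we cannot simply cover the product $\mathbb V_R^2\times E_T$ by neighborhoods. The plan is as follows.
\begin{itemize}
\item For each base pair $(x_0,y_0)$, use Fubini/measurability to choose finitely many good $\omega_0^{(k)}$ whose neighborhoods $N_k$ cover a compact $G\subset E_T$ of measure $>1-\epsilon$. Tightness of $\boldsymbol\gamma_T$ (Radon measure on a Polish space) gives compact sets of large measure, and we cover the good set by neighborhoods from the pointwise step.
\item Taking $U$ to be the intersection of the finitely many $(x,y)$-neighborhoods, the $H$-compactness of $\mathbb V_R^2$ from \Cref{ass:poly-lyapunov} extracts a finite subcover.
\item Refine this subcover to a Borel partition $D_j$. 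Since one $D_j$ must carry a single field $\mathscr R_j$ on $G_j$, glue the finitely many $R_0^{(k)}$ over the $\omega$-cover by a smooth partition of unity, with $F_j$ the sum of the corresponding spaces.
\end{itemize}

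\emph{Cylindrical form.} To make $\mathscr R_j$ cylindrical, replace the neighborhoods $N_k$ by cylindrical open sets $\{\omega:|\ell_i(\omega)-\ell_i(\omega_0)|<r\}$; these form a base of the topology of the separable Fréchet-type space $E_T$. The partition of unity is then built from smooth functions of finitely many $\ell_i$.
\begin{itemize}
\item Boundedness: each $R_0^{(k)}$ is fixed and the cover is finite, which gives \eqref{eq:finite-factor-cylindrical-bounds}.
\item Vanishing off $\mathsf G_j$: multiply by a smooth cylindrical cutoff equal to $1$ on $G_j$ and supported in a cylindrical set $\mathsf G_j$ with $\boldsymbol\gamma_T(\mathsf G_j\setminus G_j)<\delta$. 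Such a set exists because $G_j$ is compact and is a decreasing intersection of cylindrical neighborhoods, whose measures converge.
\item Derivative bound: the $F_j$-directional derivative of a smooth cylindrical function is bounded, since $\ell_i\circ\iota_T$ is bounded on $\mathcal H_T$.
\end{itemize}
Shrinking the cutoff does not alter $\mathscr R_j$ on $G_j$, so $D_j,G_j,F_j$ remain the same.
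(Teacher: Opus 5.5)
Your architecture is workable: a compensator at each good $\omega_0$, norm-continuity in $\omega$ to spread it over a neighbourhood $N(\omega_0)$, a finite subcover of a compact $G$ of large measure, and convex gluing $\sum_k\psi_k R_0^{(k)}$. The gluing keeps the $\eta$-bound because $\mathscr R\mapsto K-\cA_T\mathscr R$ is affine. This differs from the paper, which uses a measurable selector $n^*(\omega)$ over a countable family $Q_n$ and a finite-valued field on disjoint compacts. There are, however, two genuine gaps as written.

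\textbf{The cylindrical localization step fails.} The neighbourhoods in Assumption~(iii) are norm neighbourhoods of $E_T$ (for the SNLS, $C_0([0,T];\R^m)$ with the sup norm). In an infinite-dimensional Banach space the sets $\{\omega:|\ell_i(\omega)-\ell_i(\omega_0)|<r,\ i\le n\}$ form a base of the weak topology only. Every nonempty such set is invariant under translation by the infinite-dimensional subspace $\bigcap_i\ker\ell_i$, so it is norm-unbounded and never fits inside $N_k$. Hence no smooth cylindrical $\psi_k$ can have support inside $N_k$, and your convexity argument loses control exactly where the supports leave $N_k$.

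The missing idea is to use compactness of $G$. On $G$ the weak and norm topologies coincide, so Hahn--Banach separation of compact sets (exactly the paper's Step~2) gives finitely many $\ell_1,\dots,\ell_M$ and open sets $V_k\subset\R^M$ covering $\Pi(G)$ with $\Pi^{-1}(V_k)\cap G\subset N_k$. The glued field then satisfies \eqref{eq:finite-factor-compensation} only on $G$. That is all the lemma requires; off $G$ you only need the amplitude and derivative bounds. You already use this compactness correctly for the cutoff set $\mathsf G_j$, but not for the gluing.

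\textbf{The exceptional null sets in (iii) depend on $F$.} Assumption~(iii) says: for every finite-dimensional $F$, for $\boldsymbol{\gamma}_T$-a.e.\ $\omega_0$, with the null set depending on $F$ (and on $(x_0,y_0)$). You fix $\omega_0$ first and then build $F=\spanop\{h_i\}$ from it, so nothing guarantees that $\omega_0$ is good for this $F$. After gluing, \eqref{eq:finite-factor-compensation-onejet} must hold for $h\in F_j=\sum_k F_k$. That requires (iii) for $F_j$ at every point of $G$, yet $G$ was chosen before $F_j$ was known.

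The paper avoids this by fixing $F_{z_0}=\sum_{n\le N}\Ran Q_n$ deterministically before invoking (iii), and then taking $G_{z_0}\subset\Omega_{z_0}$. In your scheme, draw the $h_i$ from a fixed countable dense subset of $\mathcal H_T^{0}$; this is possible since $\cA_T(y_0,\omega_0)$ is bounded. Then only countably many spaces $F$ can occur, and you can intersect their full-measure sets and take $G$ inside the intersection. Finally, re-apply (iii) for $F_j$ at each point of $G$, uniformize by compactness, and shrink the state neighbourhoods so that $\overline{D_j}$ stays inside them. With both repairs your route goes through.
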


\begin{proof}
The proof is divided into three steps. 

\emph{Step 1: pointwise finite-dimensional compensation with high probability.}
Fix a center $z_0=(x_0,y_0)\in\mathbb V_R^2$. By compactness of $K_{T,z_0,\omega}=K_{T,x_0, y_0,\omega}$, dense range of $\cA_T(y_0,\omega)$ from \Cref{ass:dense-malliavin}, and density of $\mathcal H_T^{0}$ in $\mathcal H_T$, for almost every $\omega$ there is a finite-rank operator  $R_\omega:H\longrightarrow \mathcal H_T^{0}$ with 
\begin{align}\label{eq:PC-pointwise-compensation}
\norm{K_{T,z_0,\omega}
-\cA_T(y_0,\omega)R_\omega}_{\cL(H)}<\eta/8.
\end{align}
Indeed, one can approximate $K_{T,z_0,\omega}$ in operator norm by a finite-rank operator and then approximate each vector in its finite-dimensional range by an element of $\Ran\cA_T(y_0,\omega)$ with preimage in $\mathcal H_T^{0}$.

Moreover, since $H$ and $\mathcal H_T$ are separable, there is a countable operator-norm dense family
\begin{align*}
 \{Q_n:n\ge1\}\subset\mathcal K(H,\mathcal H_T)
\end{align*}
consisting of finite-rank operators with $\Ran Q_n\subset\mathcal H_T^{0}$.  By \eqref{eq:PC-pointwise-compensation} and the boundedness of $\cA_T(y_0,\omega)$, there is $Q_n$ (with $n$ may depend on $\omega$) satisfies
\begin{align*}
 \norm{K_{T,z_0,\omega}-\cA_T(y_0,\omega)Q_n}_{\cL(H)}<\eta/8.
\end{align*}
It follows from \Cref{ass:stable-compact-regularity}  that 
\begin{align*}
 n^*(\omega):=\min\left\{n\ge1:
 \norm{K_{T,z_0,\omega}-\cA_T(y_0,\omega)Q_n}_{\cL(H)}<\eta/8
 \right\},
\end{align*}
is measurable.  Therefore, $A_n:=\{\omega:n^*(\omega)=n\}$
are measurable and pairwise disjoint, and their union has full $\boldsymbol{\gamma}_T$-measure. Choose $N$ so large that
\begin{align*}
 \boldsymbol{\gamma}_T\left(\bigcup_{n=1}^N A_n\right)>1-\epsilon/8,
\end{align*}
and set
\begin{align*}
 F_{z_0}=\sum_{n=1}^N\Ran Q_n\subset\mathcal H_T^{0},
\end{align*}
which is  finite dimensional.  By \Cref{ass:stable-compact-regularity}\textup{(iii)}, applied at the fixed pair $z_0=(x_0,y_0)$ and to $F_{z_0}$, there is a full-measure set $\Omega_{z_0}\subset E_T$ on which the local continuity and Cameron--Martin $C^1$ conclusions of that assumption hold at $(z_0,\omega,0)$.  By inner regularity choose compact sets
\begin{align*}
 G_{z_0,n}\subset A_n\cap\Omega_{z_0},\qquad1\le n\le N,
\end{align*}
so that, with $G_{z_0}=\bigcup_{n=1}^NG_{z_0,n}$,
\begin{align*}
 \boldsymbol{\gamma}_T(G_{z_0})>1-\epsilon/4.
\end{align*}
Thus $G_{z_0}$ is compact and the selected compensator $Q_{n^*}$ takes only the fixed values $Q_1,\ldots,Q_N$ there, all with range contained in $F_{z_0}$.

\emph{Step 2: smooth cylindrical localization.}
We now replace the finite-valued selector $Q_{n^*}$ on $G_{z_0}$ by a smooth cylindrical field without changing its values there.  Fix $\delta>0$.  By inner regularity choose a compact set $G^*_{z_0}\subset E_T\setminus G_{z_0}$ such that
\begin{align*}
 \boldsymbol{\gamma}_T\bigl(E_T\setminus(G_{z_0}\cup G^*_{z_0})\bigr)<\delta.
\end{align*}
The compact sets $G_{z_0,1},\ldots,G_{z_0,N},G^*_{z_0}$ are pairwise disjoint.  Hahn--Banach separation and compactness therefore provide finitely many functionals $\ell_1,\ldots,\ell_M\in E_T^*$ such that the map
\begin{align*}
 \Pi(\omega)=\bigl(\ell_1(\omega),\ldots,\ell_M(\omega)\bigr)
\end{align*}
separates these compact sets: their images under $\Pi$ are pairwise disjoint compact subsets of $\R^M$.  Choose $\chi_n\in C_c^\infty(\R^M;[0,1])$ with $\chi_n=1$ on $\Pi(G_{z_0,n})$ and with support disjoint from the images of $G_{z_0,m}$, $m\ne n$, and of $G^*_{z_0}$.  Put
\begin{align*}
 \mathscr R_{z_0}(\omega)=\sum_{n=1}^N\chi_n(\Pi(\omega))Q_n,
 \qquad
 \mathsf G_{z_0}=\bigcup_{n=1}^N\Pi^{-1}(\supp\chi_n).
\end{align*}
Then $\mathscr R_{z_0}:E_T\to\cL(H,F_{z_0})$ is bounded and smooth cylindrical, equals $Q_n$ on $G_{z_0,n}$, vanishes outside $\mathsf G_{z_0}$, and
\begin{align}\label{eq:excess}
 \boldsymbol{\gamma}_T(\mathsf G_{z_0}\setminus G_{z_0})<\delta.
\end{align}
Since $0\le\chi_n\le1$,
\begin{align}\label{eq:local-compensator-amplitude}
 \sup_{\omega\in E_T}\norm{\mathscr R_{z_0}(\omega)}_{\cL(H,F_{z_0})}
 \le\sum_{n=1}^N\norm{Q_n}_{\cL(H,\mathcal H_T)}
 =:L_{z_0},
\end{align}
independent of $\delta$. Moreover by the chain rule for Malliavin derivative, for $k\in F_{z_0}$,
\begin{align*}
 \cD\mathscr R_{z_0}(\omega)k
 =\sum_{n=1}^N
 \nabla\chi_n(\Pi(\omega))\cdot\Pi(\iota_Tk)\,Q_n,
\end{align*}
and hence
\begin{align}\label{eq:local-compensator-derivative}
 \sup_{\omega\in E_T}
 \norm{\cD\mathscr R_{z_0}(\omega)|_{{F_{z_0}}}}
 _{\cL(F_{z_0},\cL(H,F_{z_0}))}
 \le\norm{\Pi\circ\iota_T|_{F_{z_0}}}
 \sum_{n=1}^N\norm{\nabla\chi_n}_{L^\infty}
 \norm{Q_n}_{\cL(H,\mathcal H_T)}<\infty,
\end{align}
which  may depend on
$\delta$ through the cylindrical localization.

\emph{Step 3: Uniformization and finite covering.}
For every $\omega\in G_{z_0}$, \Cref{ass:stable-compact-regularity}\textup{(iii)}, applied to the finite-dimensional space $F_{z_0}$, gives a product neighborhood of $(z_0,\omega,0)$ on which the map in \eqref{eq:finite-cm-map} and its $F_{z_0}$-derivative are jointly continuous. Since $G_{z_0}$ is compact, finitely many such neighborhoods cover it. Intersecting the corresponding relative state neighborhoods and taking the minimum Cameron--Martin radius, we obtain a relative neighborhood $U_{z_0}$ of $z_0$ and $r_{z_0}>0$ on which these conclusions hold uniformly over $G_{z_0}$.

On $G_{z_0,n}$ one has $\mathscr R_{z_0}(\omega)=Q_n$, and hence, by the definition of $A_n$,
\begin{align*}
\norm{K_{T,z_0,\omega}
-\cA_T(y_0,\omega)\mathscr R_{z_0}(\omega)}_{\cL(H)}
<\eta/8.
\end{align*}
Since $\mathscr R_{z_0}=Q_n$ on $G_{z_0,n}$ and the bound
\eqref{eq:local-compensator-amplitude} is independent of $\delta$,
the continuity in the state variables of $K_T$ and
$\cA_T|_{F_{z_0}}$ allows $U_{z_0}$ to be decreased independently
of $\delta$ so that
\begin{align}\label{eq:uniform-compensation}
\norm{K_{T,z,\omega}
-\cA_T(y,\omega)\mathscr R_{z_0}(\omega)}_{\cL(H)}
\le\eta,
\qquad z=(x,y)\in U_{z_0},\quad\omega\in G_{z_0}.
\end{align}
After decreasing $U_{z_0}$ once more, its closure in $\mathbb V_R^2$ is compact in the $H$-topology. Uniform continuity of the $F_{z_0}$-derivative on the finitely many resulting compact product patches and the fundamental theorem of calculus give a modulus $\varpi_{z_0}(s)\downarrow0$ such that
\begin{align}\label{eq:first-order}
\sup_{\substack{(x,y)\in U_{z_0},\ \omega\in G_{z_0}\\
 h\in F_{z_0},\ 0<\norm h_{\mathcal H_T}\le s}}
\frac{\norm{\Phi_T(y,\omega+\iota_Th)-\Phi_T(y,\omega)-\cA_T(y,\omega)h}_H}
{\norm h_{\mathcal H_T}}
\le\varpi_{z_0}(s),
\end{align}
for $0<s\le r_{z_0}$. 

Choose a smaller relative neighborhood $V_{z_0}$ with $z_0\in V_{z_0}$ and $\overline{V_{z_0}}\subset U_{z_0}$. Compactness of $\mathbb V_R^2$ yields finitely many $V_{z_j}$ covering the core. Assign each pair to its least index and let $D_j$ be the resulting Borel partition. With
\begin{align*}
G_j=G_{z_j},\qquad
F_j=F_{z_j},\qquad
\mathscr R_j=\mathscr R_{z_j},
\end{align*}
\eqref{eq:finite-factor-compensation} follows from
\eqref{eq:uniform-compensation}, while
\eqref{eq:first-order} gives
\eqref{eq:finite-factor-compensation-onejet}.  Since only finitely many
centers are used,
\begin{align*}
 L=\max_jL_{z_j}<\infty,
\end{align*}
and \eqref{eq:local-compensator-amplitude} gives
\eqref{eq:finite-factor-cylindrical-bounds}.  Notice that this $L$ is
independent of $\delta$.  For every fixed $\delta>0$,
\eqref{eq:excess} and \eqref{eq:local-compensator-derivative} give the desired last assertion. 
\end{proof}

The finite factor correction can now be converted into an exact-marginal
coupling with negative mean logarithmic contraction on the Lyapunov core.
The tradeoff for repairing the shifted companion to the true second marginal
is only of order of the initial distance.
\begin{proposition}
\label{prop:negative-log-core}
Assume \Cref{ass:poly-lyapunov,ass:stable-compact-regularity,ass:dense-malliavin}.  For every $R\ge1$ and $\mathsf A>0$, there are $T_c,r_c,q_*>0$ such that every $x,y\in\mathbb V_R$ with $0<r=\norm{x-y}_H\le r_c$ admits a Borel exact-marginal coupling $X,Y$, a companion process $Z$, and an event $\mathcal M$ such that
\begin{align}\label{eq:path-repair}
 Y_s=Z_s\quad(0\le s\le T_c)\quad\text{on }\mathcal M,\qquad \Prob(\mathcal M^c)\le C_{R,\mathsf A}r.
\end{align}
Moreover, with
\begin{align*}
 \mathfrak m_0=q_*\vee\frac{\norm{X_{T_c}-Z_{T_c}}_H}{r},
 \qquad
 \mathfrak m_0^{\sup}=1\vee\sup_{0\le s\le T_c}\frac{\norm{X_s-Z_s}_H}{r},
\end{align*}
one has
\begin{align}\label{eq:core-log-bounds}
 \E\log\mathfrak m_0\le-\mathsf A,\qquad
 \E(\log^+\mathfrak m_0^{\sup})^m\le C_{R,\mathsf A,m},\qquad m\in\N.
\end{align}
\end{proposition}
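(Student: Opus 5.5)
The plan is to combine the finite-factor compensation of \Cref{lem:finite-factor-compensation} with a Cameron--Martin shift that cancels the compact defect to first order. The shifted law is then repaired to the true second marginal by a Girsanov/total-variation argument whose cost is of order $r$. The block length $T_c$ will be chosen large enough that $\rho(T_c)$ beats $\e^{-2\mathsf A}$ on a good event. The moment bound on $\log^+\mathfrak m_0^{\sup}$ will come from \eqref{eq:weak-path-growth} and \eqref{eq:path-size-moments}.

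\emph{Choice of parameters.} First choose $\epsilon>0$ small and $T_c$ large so that $\rho(T_c)\le\e^{-4\mathsf A}$. Then take $\eta=\e^{-4\mathsf A}$, and apply \Cref{lem:finite-factor-compensation} on $\mathbb V_R^2$ to get $D_j,G_j,F_j,\mathscr R_j$ and the bound $L$. For $(x,y)\in D_j$ we let $X=\boldsymbol\Phi_{T_c}(x,\omega)$. The companion is
\begin{align*}
 Z=\boldsymbol\Phi_{T_c}\bigl(y,\omega+\iota_{T_c}h(\omega)\bigr),\qquad h(\omega)=-\mathscr R_j(\omega)(y-x)\in F_j ,
\end{align*}
so that $\norm h_{\mathcal H_{T_c}}\le Lr$. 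Decompose
\begin{align*}
 Z_{T_c}-X_{T_c}=[\Phi(y,\omega)-\Phi(x,\omega)]+[\Phi(y,\omega+\iota h)-\Phi(y,\omega)] .
\end{align*}
By \eqref{eq:abstract-stable-compact}, \eqref{eq:finite-factor-compensation} and \eqref{eq:finite-factor-compensation-onejet}, on $G_j$ this equals
\begin{align*}
 S(y-x)+(K-\cA_T\mathscr R_j)(y-x)+o(r),
\end{align*}
uniformly in the core. Hence for $r\le r_c$ small, $\norm{X_{T_c}-Z_{T_c}}_H\le 3\e^{-4\mathsf A}r$ on $G_j$. Off $G_j$ we use \eqref{eq:weak-path-growth} with $\mathbf r\le(1+L)r$, which gives
\begin{align*}
 \log\mathfrak m_0\le\log(1+L)+Q(\mathcal G(x,\omega)+\mathcal G(y,\omega+\iota h)).
\end{align*}
The second $\mathcal G$ is controlled by Cauchy--Schwarz with the Girsanov density of the bounded shift $h$, using \eqref{eq:path-size-moments} and $V\le C_R$ on the core. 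It follows that
\begin{align*}
 \E\log\mathfrak m_0\le -4\mathsf A+\log 3+\epsilon^{1/2}C_{R,T_c,L}\le-\mathsf A
\end{align*}
once $\epsilon$ is small. Here $q_*=\e^{-4\mathsf A}$, which handles the lower truncation. The same estimate gives the bounds on all moments of $\log^+\mathfrak m_0^{\sup}$.

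There is a circularity to resolve: $L$ depends on $\epsilon$, so the order of choices matters. We choose $\eta,T_c$ first. The off-$G_j$ contribution is then bounded by $\epsilon^{1/2}$ times a polynomial moment of $Q(\mathcal G)$ plus $\log(1+L)$. To make this small, we first fix $\epsilon$ so that $\epsilon^{1/2}\,\E[Q(\mathcal G)^2]^{1/2}$ is small. Then, since $L=L(\epsilon)$ is finite, we need $\epsilon^{1/2}\log(1+L)$ small, which may fail. If it fails, we instead cut the shift off: we set $h=0$ outside the cylindrical set $\mathsf G_j$ (which the lemma already provides). On $\mathsf G_j\setminus G_j$ we pay $\log(1+L)$ only on a set of measure $\delta$, and $\delta$ can be chosen after $L$. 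Outside $\mathsf G_j$ the shift vanishes, so the bound is $\log\mathbf r_T/r\le 0$ plus the $Q(\mathcal G)$ term. This resolves the circularity.

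\emph{Exact-marginal repair (main obstacle).} The law of $Z$ is the law of the path from $y$ under the measure $(\mathrm{Id}+\iota h)_\#\boldsymbol\gamma$. Because $h$ is a smooth cylindrical field valued in the finite-dimensional space $F_j$, with $\norm h\le Lr$ and $\norm{\cD h|_{F_j}}\le C_\delta r$, the map $\omega\mapsto\omega+\iota h(\omega)$ is a near-identity finite-dimensional perturbation. By the Ramer/Kusuoka change of variables (here elementary, in finitely many coordinates), its image measure has a density $1+O(r)$ in total variation with respect to $\boldsymbol\gamma$. The constant depends on $L$ and $\sup\norm{\cD\mathscr R_j}$, which are fixed once $\delta$ is fixed, so we get $\norm{\law(\omega+\iota h)-\boldsymbol\gamma}_{TV}\le C_{R,\mathsf A}r$. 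A maximal coupling of $\omega+\iota h(\omega)$ with a fresh $\widetilde\omega\sim\boldsymbol\gamma$ then lets us define $Y=\boldsymbol\Phi_{T_c}(y,\widetilde\omega)$. On the coupling event $\mathcal M$ we have $\widetilde\omega=\omega+\iota h$, so $Y=Z$ on $[0,T_c]$, and $\Prob(\mathcal M^c)\le C_{R,\mathsf A}r$, which is \eqref{eq:path-repair}. The delicate points are making the maximal coupling Borel in $(x,y)$, which we get from measurable selection over the finite partition $D_j$ and the explicit finite-dimensional densities, and checking that Girsanov-type moments of $\mathcal G$ under the shifted law stay uniform on the core.
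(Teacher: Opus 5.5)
Your architecture matches the paper's. You use the same shift $h=-\mathscr R_j(\omega)(y-x)$ and the same split of the driver space into $G_j$, $\mathsf G_j\setminus G_j$ and $\mathsf G_j^c$, with the synchronous companion on $\mathsf G_j^c$ costing at most $\epsilon^{1/2}$ times an $L$-independent moment. Your parameter order $T_c,\eta\to\epsilon\to L\to\delta\to r_c$ is also the paper's. The repair is likewise a finite-dimensional Gaussian change of variables followed by maximal coupling; the paper bounds the relative entropy by $Cr^2$ and applies Pinsker, which is equivalent to your $O(r)$ total-variation bound. One minor slip: with $\rho(T_c),\eta\le\e^{-4\mathsf A}$ you only get $\mathfrak m_0\le3\e^{-4\mathsf A}$ on $G_j$, so $(1-\epsilon)(\log 3-4\mathsf A)+\cdots\le-\mathsf A$ fails when $3\mathsf A\le\log 3$. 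To fix it, choose the contraction factor $q_1$ on $G_j$ with $(1-\epsilon)\log q_1+\frac12\le-\mathsf A$ (the paper takes $q_1=\e^{-4\mathsf A-5}$ and makes $\rho(T_c)+\eta$ plus the one-jet error at most $q_1$). Alternatively, prove the claim for $\max\{\mathsf A,1\}$.

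The genuine gap is in your resolution of the circularity. On $\mathsf G_j\setminus G_j$ you charge only $\log(1+L)$, but there one has
\[
 \log\mathfrak m_0\le\log(1+L)+Q_{T_c}\bigl(\mathcal G_{T_c}(x,\omega)+\mathcal G_{T_c}(y,\omega+\iota_{T_c}h(\omega))\bigr).
\]
The shifted growth functional can only be controlled through $\norm{\dd\Theta_\#\boldsymbol\gamma_{T_c}/\dd\boldsymbol\gamma_{T_c}}_{L^2}$, where $\Theta(\omega)=\omega+\iota_{T_c}h(\omega)$. The natural Ramer--Kusuoka bound, as you note in your repair step, depends on $\sup_\omega\norm{\cD\mathscr R_j(\omega)}$. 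That quantity exists only once $\delta$ is fixed, and it may blow up as $\delta\downarrow0$ because the cylindrical cutoffs steepen. The cost of $\mathsf G_j\setminus G_j$ is then $\delta^{1/2}C(L,D_*(\delta))$, so choosing $\delta$ after $L$ does not make it small.

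What closes the loop, and what the paper states explicitly, is the following: whenever $r\sup_\omega\norm{\cD\mathscr R_j(\omega)|_{F_j}}\le\frac12$, the $L^2$ norm of this density is bounded in terms of $L$ and $\dim F_j$ alone. In finite-dimensional coordinates $\Theta$ acts as $(a,b)\mapsto(a+h(a,b),b)$ with $a\in F_j$. This map is injective with Jacobian determinant at least $2^{-\dim F_j}$, and $\norm h\le Lr$ controls the Gaussian ratio. Hence the second moment $C_{\rm sh}$ of the shifted log-amplification is uniform over all cylindrical localizations. One then picks $\delta$ with $C_{\rm sh}^{1/2}\delta^{1/2}\le\frac14$, and only afterwards fixes $D_*$ and $r_c$ with $r_cD_*\le\frac12$. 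Equivalently, in your ordering you may shrink $r_c$ after $\delta$ until the density has $L^2$ norm at most $2$. This has to be built in explicitly, because the whole purpose of the cutoff is that the cost on $\mathsf G_j\setminus G_j$ not depend on $\delta$.
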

\begin{proof}
The proof is divided into three steps. 

\emph{Step 1: finite-factor compensation and logarithmic exceptional costs.}
Set $q_1=\e^{-4\mathsf A-5}$.  Choose $T_c$ so large that $\rho(T_c)\le q_1/4$ and put $\eta=q_1/4$.  For $x\ne y$ in $\mathbb V_R$, write $r=\norm{x-y}_H$ and
\begin{align*}
 A^0_{x,y}(\omega)=1\vee\frac{\mathbf d_{T_c}((x,\omega),(y,\omega))}{r}.
\end{align*}
By \eqref{eq:weak-path-growth}--\eqref{eq:path-size-moments}, for every $m<\infty$,
\begin{align*}
 \sup_{\substack{x,y\in\mathbb V_R,\ x\ne y}}\E(\log A^0_{x,y})^m<\infty.
\end{align*}
In particular, we may choose $\epsilon\in(0,1)$ small such that 
\begin{align}\label{eq:synchronous-exception-cost}
(1-\epsilon)\log q_1+\frac12\le-\mathsf A, \quad \text{ and } \sup_{\substack{x,y\in\mathbb V_R,\ x\ne y}}\int_E\log A^0_{x,y}\,\boldsymbol{\gamma}_{T_c}(\dd\omega)\le\frac14
\end{align}
for every Borel set $E\subset E_{T_c}$ with $\boldsymbol{\gamma}_{T_c}(E)\le\epsilon$.

Apply \Cref{lem:finite-factor-compensation} with this $\epsilon$ and
$\eta$, and let $D_j,G_j,F_j$ be the resulting finite-factor data and
$L$ the uniform bound in
\eqref{eq:finite-factor-cylindrical-bounds}.  For any admissible choice
of the cylindrical fields and $(x,y)\in D_j$, put $v=y-x$,
$r=\norm v_H$, and
\begin{align*}
 \widehat U_{x,y}(\omega)=-\mathscr R_j(\omega)\frac vr,\qquad
 \Theta_{x,y}(\omega)=\omega+r\iota_{T_c}\widehat U_{x,y}(\omega).
\end{align*}
Thus $\norm{\widehat U_{x,y}(\omega)}_{\mathcal H_{T_c}}\le L$.
Whenever
\begin{align}\label{eq:scaled-active-derivative}
 r\sup_{\omega\in E_{T_c}}
 \norm{\left.\mathcal D\mathscr R_j(\omega)\right|_{F_j}}
 _{\cL(F_j,\cL(H,F_j))}\le\frac12,
\end{align}
the finite-dimensional Gaussian transformation estimate, see for example \cite{LiuLuWave2026,Bogachev1998}, gives 
\begin{align*}
 \left\|\frac{\dd(\Theta_{x,y})_\#\boldsymbol{\gamma}_{T_c}}
 {\dd\boldsymbol{\gamma}_{T_c}}\right\|_{L^{2}(\boldsymbol{\gamma}_{T_c})}
 \le C_{R,\mathsf A},
\end{align*}
where the constant depends only on the finite-dimensional spaces and
the amplitude bound $L$, and not on the unscaled active derivative.
In particular, every exceptional set which is null for the original
Wiener driver remains null for the shifted driver.
By H\"older's inequality and \eqref{eq:path-size-moments}, every finite
moment of $\mathcal G_{T_c}(y,\Theta_{x,y}(\omega))$ is therefore
uniformly bounded.  Since \eqref{eq:weak-path-growth} gives
\begin{align*}
 \log^+\!\left(1\vee
 \frac{\mathbf d_{T_c}((x,\omega),(y,\Theta_{x,y}(\omega)))}r\right)
 \le\log(1+L)+Q_{T_c}\bigl(\mathcal G_{T_c}(x,\omega)
 +\mathcal G_{T_c}(y,\Theta_{x,y}(\omega))\bigr),
\end{align*}
we obtain, for every $m<\infty$,
\begin{align}\label{eq:shifted-log-bound}
 \sup_j\sup_{(x,y)\in D_j}
 \E\left[\log^+\!\left(1\vee
 \frac{\mathbf d_{T_c}((x,W),(y,\Theta_{x,y}(W)))}
 {\norm{x-y}_H}\right)\right]^m
 \le C_{R,\mathsf A,m},
\end{align}
whenever \eqref{eq:scaled-active-derivative} holds.  Importantly, this
bound is uniform over the cylindrical localizations allowed by
\Cref{lem:finite-factor-compensation}.

Write
\begin{align*}
 \Xi^j_{x,y}(\omega)=\log^+\!\left(1\vee
 \frac{\mathbf d_{T_c}((x,\omega),(y,\Theta_{x,y}(\omega)))}r\right).
\end{align*}
Then there is $C_{\rm sh}<\infty$, independent of the cylindrical
localization, such that $\E(\Xi^j_{x,y})^2\le C_{\rm sh}$
for every admissible localization and every $(x,y)\in D_j$ satisfying
\eqref{eq:scaled-active-derivative}. Choose $\delta>0$ with
$C_{\rm sh}^{1/2}\delta^{1/2}\le1/4$ and fix the corresponding fields
$\mathscr R_j$ and cylindrical support sets $\mathsf G_j$ from
\Cref{lem:finite-factor-compensation}, so that
\begin{align*}
 \supp\mathscr R_j\subset\mathsf G_j,\qquad
 G_j\subset\mathsf G_j,\qquad
 \max_j\boldsymbol{\gamma}_{T_c}(\mathsf G_j\setminus G_j)<\delta.
\end{align*}
Then
\begin{align}\label{eq:shifted-exception-cost}
 \sup_{(x,y)\in D_j}
 \int_{\mathsf G_j\setminus G_j}\Xi^j_{x,y}(\omega)\,
 \boldsymbol{\gamma}_{T_c}(\dd\omega)\le\frac14.
\end{align}
The fields are now fixed, and the derivative bound in
\Cref{lem:finite-factor-compensation} gives
\begin{align*}
 D_*:=\max_j\sup_{\omega\in E_{T_c}}
 \norm{\left.\mathcal D\mathscr R_j(\omega)\right|_{F_j}}
 _{\cL(F_j,\cL(H,F_j))}<\infty.
\end{align*}
Since there are only finitely many $j$,
\eqref{eq:finite-factor-compensation-onejet} yields a common modulus
$\varpi(s)\downarrow0$ such that
\begin{align}\label{eq:uniform-core-onejet}
 \sup_j\sup_{\substack{(x,y)\in D_j,\ \omega\in G_j\\
 h\in F_j,\ 0<\norm h_{\mathcal H_{T_c}}\le s}}
 \frac{\norm{\Phi_{T_c}(y,\omega+\iota_{T_c}h)-\Phi_{T_c}(y,\omega)
 -\cA_{T_c}(y,\omega)h}_H}{\norm h_{\mathcal H_{T_c}}}
 \le\varpi(s).
\end{align}
Choose $r_c\in(0,1]$ so small that
\begin{align}\label{eq:core-radius-choice}
 r_cD_*\le\frac12,\qquad
 L\varpi(Lr_c)\le q_1-\rho(T_c)-\eta.
\end{align}
Thus \eqref{eq:scaled-active-derivative} and all the preceding shifted
estimates hold whenever $0<r\le r_c$.

\emph{Step 2: compensated contraction and the logarithmic bounds.}
Fix $(x,y)\in D_j$ with $0<r=\norm{x-y}_H\le r_c$ and set
\begin{align*}
 h_{x,y}(\omega)=r\widehat U_{x,y}(\omega)
 =-\mathscr R_j(\omega)(y-x).
\end{align*}
Then
\begin{align*}
 \norm{h_{x,y}(\omega)}_{\mathcal H_{T_c}}\le Lr.
\end{align*}
Define
\begin{align*}
 X_s=\boldsymbol\Phi_{T_c}(x,\omega)(s),\qquad
 Z_s=\boldsymbol\Phi_{T_c}
 (y,\omega+\iota_{T_c}h_{x,y}(\omega))(s).
\end{align*}
On $G_j$,
\begin{align*}
 X_{T_c}-Z_{T_c}
 ={}&-\bigl\{\Phi_{T_c}(y,\omega)-\Phi_{T_c}(x,\omega)
 -\cA_{T_c}(y,\omega)\mathscr R_j(\omega)(y-x)\bigr\}\\
 &-\bigl\{\Phi_{T_c}(y,\omega+\iota_{T_c}h_{x,y}(\omega))
 -\Phi_{T_c}(y,\omega)-\cA_{T_c}(y,\omega)h_{x,y}(\omega)\bigr\}.
\end{align*}
By \eqref{eq:abstract-stable-compact},
\eqref{eq:finite-factor-compensation},
\eqref{eq:uniform-core-onejet}, and \eqref{eq:core-radius-choice},
\begin{align*}
 \norm{X_{T_c}-Z_{T_c}}_H
 \le\{\rho(T_c)+\eta+L\varpi(Lr_c)\}r
 \le q_1r.
\end{align*}

Set $q_*=q_1/2$.  Then $\mathfrak m_0\le q_1$ on $G_j$.  On
$\mathsf G_j^c$ one has $\mathscr R_j=0$, so $Z$ is the synchronous
companion and
\begin{align*}
 \log\mathfrak m_0\le\log A^0_{x,y}.
\end{align*}
Since
$\boldsymbol{\gamma}_{T_c}(\mathsf G_j^c)\le\boldsymbol{\gamma}_{T_c}(G_j^c)<\epsilon$,
\eqref{eq:synchronous-exception-cost} gives
\begin{align*}
 \int_{\mathsf G_j^c}\log\mathfrak m_0\,
 \boldsymbol{\gamma}_{T_c}(\dd\omega)\le\frac14.
\end{align*}
On $\mathsf G_j\setminus G_j$ one has
$\log\mathfrak m_0\le\Xi^j_{x,y}$, so
\eqref{eq:shifted-exception-cost} gives another contribution at most
$1/4$.  Consequently,
\begin{align*}
 \E\log\mathfrak m_0
 \le\boldsymbol{\gamma}_{T_c}(G_j)\log q_1+\frac12
 \le(1-\epsilon)\log q_1+\frac12
 \le-\mathsf A.
\end{align*}
Moreover,
\begin{align*}
 \log\mathfrak m_0^{\sup}
 =\log^+\!\left(1\vee
 \frac{\mathbf d_{T_c}((x,W),(y,\Theta_{x,y}(W)))}r\right),
\end{align*}
and \eqref{eq:shifted-log-bound} gives
\begin{align*}
 \E(\log^+\mathfrak m_0^{\sup})^m
 \le C_{R,\mathsf A,m},\qquad m\in\N.
\end{align*}

\emph{Step 3: exact marginal repair.}
Let
\begin{align*}
 \Theta(\omega)=\omega+\iota_{T_c}h_{x,y}(\omega),\qquad
 \nu=\Theta_\#\boldsymbol{\gamma}_{T_c}.
\end{align*}
For $k\in F_j$,
\begin{align*}
 \mathcal Dh_{x,y}(\omega)k
 =-\bigl(\mathcal D\mathscr R_j(\omega)k\bigr)(y-x),
\end{align*}
and hence, writing $d_*=\max_j\dim F_j$,
\begin{align*}
 \sup_\omega\norm{h_{x,y}(\omega)}_{\mathcal H_{T_c}}\le Lr,\qquad
 \sup_\omega\norm{\left.\mathcal Dh_{x,y}(\omega)\right|_{F_j}}_{\rm HS}
 \le\sqrt{d_*}D_*r,
\end{align*}
while
\begin{align*}
 \sup_\omega\norm{\left.\mathcal Dh_{x,y}(\omega)\right|_{F_j}}_{\rm op}
 \le D_*r\le\frac12.
\end{align*}
The finite-dimensional Gaussian transformation estimate therefore gives
\begin{align*}
 \operatorname{Ent}(\nu\mid\boldsymbol{\gamma}_{T_c})\le C_{R,\mathsf A}r^2.
\end{align*}

Let $\mathbf Q_y$ denote the genuine path law issued from $y$ on
$[0,T_c]$.  Since $\nu\ll\boldsymbol{\gamma}_{T_c}$, the Borel
representation agrees $\nu$-almost surely with the genuine solution
issued from $y$.  Therefore
\begin{align*}
 \law(Z)=\boldsymbol\Phi_{T_c}(y,\cdot)_\#\nu,\qquad
 \mathbf Q_y=\boldsymbol\Phi_{T_c}(y,\cdot)_\#\boldsymbol{\gamma}_{T_c}.
\end{align*}
The monotonicity of relative entropy under measurable pushforwards gives
\begin{align*}
 \operatorname{Ent}(\law(Z)\mid\mathbf Q_y)
 \le\operatorname{Ent}(\nu\mid\boldsymbol{\gamma}_{T_c})
 \le C_{R,\mathsf A}r^2.
\end{align*}
Hence Pinsker's inequality and maximal coupling as in the standard generalized-coupling construction
\cite{ButkovskyKulikScheutzow2020}, followed by the gluing
lemma with the joint law of $(X,Z)$, yield a triple $(X,Z,Y)$ such that
$X$ and $Y$ have the genuine path laws issued from $x$ and $y$,
respectively, and
\begin{align*}
 \Prob\{Y\ne Z\}\le C_{R,\mathsf A}r.
\end{align*}
Thus, with $\mathcal M=\{Y=Z\}$,
\begin{align*}
 Y_s=Z_s,\qquad 0\le s\le T_c,
\end{align*}
on $\mathcal M$, and
\begin{align*}
 \Prob(\mathcal M^c)\le C_{R,\mathsf A}r.
\end{align*}
This proves \eqref{eq:path-repair}.

All ingredients of the construction depend Borel measurably on
$(x,y)$.  Standard measurable maximal coupling and gluing on Polish
spaces therefore make the resulting exact-marginal coupling kernel
Borel in $(x,y)$.
\end{proof}

\subsection{Returns and resets}

The following lemma provides moment control for returning to the Lyapunov core  $\{\cV(x)+\cV(y)\le R\}$. 
\begin{lemma}
\label{lem:pair-return}
Assume \Cref{ass:poly-lyapunov,ass:stable-compact-regularity}.  Let $X$ and $Y$ be the synchronously driven trajectories issued from
$x$ and $y$ respectively. For every $m\in\N$, there are $\mathsf h_m\in\N$, $R_m,M_m<\infty$, and $\lambda_m,C_m>0$ such that with
\begin{align*}
 \tau=\inf\{n\ge0:\cV(X_{n\mathsf h_m})+\cV(Y_{n\mathsf h_m})\le R_m\},
\end{align*}
the path multiplier
\begin{align*}
 B_\tau=1\vee\sup_{0\le s\le \mathsf h_m\tau}
 \frac{\norm{X_s-Y_s}_H}{\norm{x-y}_H},\, \text{ with } B_\tau=1 \text{ when }  x=y,
\end{align*}
satisfies
\begin{align}\label{eq:pair-return}
 \E_{x,y}\e^{\lambda_m\tau}
 +\E_{x,y}(\log B_\tau)^m
 \le C_m\{V_{M_m}(x)+V_{M_m}(y)\}.
\end{align}
\end{lemma}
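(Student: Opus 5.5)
The plan is to treat the two parts of \eqref{eq:pair-return} separately: an exponential return-time bound for the pair chain, and then a moment bound for the accumulated logarithmic path amplification. Both rest only on \Cref{ass:poly-lyapunov} and on the per-block growth bound \eqref{eq:weak-path-growth} with $h=0$.

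\emph{Exponential return.} Fix $m$. Apply \eqref{eq:poly-lyapunov} with exponent $p=1$ to $X$ and $Y$ separately and sum. This gives, for the pair Lyapunov function $\mathcal U(x,y)=V_1(x)+V_1(y)$,
\begin{align*}
 \E_{x,y}\mathcal U(X_{\mathsf h},Y_{\mathsf h})\le 2C_1\e^{-c_1\mathsf h}\mathcal U(x,y)+2C_1 .
\end{align*}
Choose $\mathsf h=\mathsf h_m$ so large that $2C_1\e^{-c_1\mathsf h}\le1/4$, and then $R_m$ so large that $\mathcal U\ge 8C_1$ outside $\{\cV(x)+\cV(y)\le R_m\}$. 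On that complement the drift becomes $\E\mathcal U(\text{next})\le\tfrac12\mathcal U$. The discrete-time supermartingale argument for $2^{n\wedge\tau}\mathcal U(X_{(n\wedge\tau)\mathsf h},Y_{(n\wedge\tau)\mathsf h})$ then yields $\E_{x,y}\e^{\lambda\tau}\le C\,\mathcal U(x,y)$ with $\lambda=\log 2$. Replacing $V_1$ by $V_p$ for large $p$ gives the higher exponential moment $\E\e^{\lambda'\tau}\le C(V_p(x)+V_p(y))$ with $\lambda'$ as large as desired, which we will need below. The Markov property at block times is what makes the chain argument valid.

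\emph{Log multiplier.} Write $r_n=\norm{X_{n\mathsf h}-Y_{n\mathsf h}}_H$. Apply \eqref{eq:weak-path-growth} on each block $[n\mathsf h,(n+1)\mathsf h]$ with $h=0$ and with restarted states, using the concatenation compatibility. This gives
\begin{align*}
 \sup_{s\in[n\mathsf h,(n+1)\mathsf h]}\norm{X_s-Y_s}_H\le r_n\exp\{Q(\mathcal G^{(n)}_X+\mathcal G^{(n)}_Y)\},
\end{align*}
where $\mathcal G^{(n)}_X=\mathcal G_{\mathsf h}(X_{n\mathsf h},\omega_n)$, and similarly for $Y$. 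Iterating, and noting that the case $r_n=0$ is trivial by uniqueness,
\begin{align*}
 \log B_\tau\le\sum_{n<\tau}\xi_n,\qquad \xi_n:=Q\bigl(\mathcal G^{(n)}_X+\mathcal G^{(n)}_Y\bigr)\ge0 .
\end{align*}
By \eqref{eq:path-size-moments} and the Markov property, $\E[\xi_n^{k}\mid\mathcal F_{n\mathsf h}]\le C_k\bigl(V_{M_k}(X_{n\mathsf h})+V_{M_k}(Y_{n\mathsf h})\bigr)$ for some $M_k<\infty$, since $Q$ is a polynomial.

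\emph{Main obstacle.} The difficulty is controlling $\E(\sum_{n<\tau}\xi_n)^m$ with a random number of terms. I would first use
\begin{align*}
 \Bigl(\sum_{n<\tau}\xi_n\Bigr)^m\le\tau^{m-1}\sum_{n<\tau}\xi_n^m .
\end{align*}
Taking expectations and applying Cauchy--Schwarz termwise gives
\begin{align*}
 \E\Bigl(\sum_{n<\tau}\xi_n\Bigr)^m\le\sum_n\bigl(\E\,\tau^{2m-2}\one_{\tau>n}\bigr)^{1/2}\bigl(\E\,\xi_n^{2m}\bigr)^{1/2}.
\end{align*}
The tail factor satisfies $\E\,\tau^{2m-2}\one_{\tau>n}\le C\e^{-\lambda n/2}(V_p(x)+V_p(y))$ by the exponential moment of $\tau$. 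The moment factor satisfies $\E\,\xi_n^{2m}\le C\,\E\bigl(V_{M}(X_{n\mathsf h})+V_M(Y_{n\mathsf h})\bigr)\le C(V_M(x)+V_M(y))$ by \eqref{eq:poly-lyapunov}, uniformly in $n$. The series over $n$ therefore converges, and the resulting bound has the form $C(V_{M'}(x)+V_{M'}(y))$, using $\sqrt{ab}\le a+b$ and $V_a\le V_b+1$ for $a\le b$. Combining this with the return-time bound and enlarging $M_m$ and $C_m$ gives \eqref{eq:pair-return}, with $\lambda_m=\log 2$.
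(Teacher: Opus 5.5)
Your proposal is correct, but the logarithmic part takes a different route from the paper.

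\textbf{The paper's route.} The paper uses a single pair weight, $\mathbf V=V_{M_*}(x)+V_{M_*}(y)$. Here $M_*$ is exactly the exponent in the one-block bound \eqref{eq:one-block-log}. The stopped drift inequality \eqref{eq:stopped-return-drift} then does double duty: it gives the geometric tail of $\tau$, and it gives the geometric decay $\E_{x,y}[\one_{\{\tau>k\}}(\log A_{k,j})^m]\le C\alpha^k\mathbf V(x,y)$. A weighted H\"older inequality $(\sum_{k<\tau}a_k)^m\le C\sum_{k<\tau}\e^{\eta k}a_k^m$ then closes the estimate at moment order $m$. The paper also applies Assumption (ii) only on unit sub-blocks, whereas you apply it directly with $T=\mathsf h$; this is equally legitimate, since (ii) holds for every $T>0$.

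\textbf{Your route.} You run the return with $V_1$ alone. You then control the log-sum by the power mean inequality and Cauchy--Schwarz against $\tau^{m-1}\one_{\{\tau>n\}}$. This combines the exponential tail of $\tau$ with the uniform-in-$n$ bound $\E V_M(X_{n\mathsf h})\le C_MV_M(x)+C_M$ from \eqref{eq:poly-lyapunov}.

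\textbf{Trade-off.} Your route costs moments of order $2m$ of the growth functional, hence a larger $M_m$. In exchange it decouples the return clock from $m$: your $\mathsf h$, $R$ and $\lambda=\log 2$ are independent of $m$, whereas the paper's depend on $m$ through $M_*(m)$.

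\textbf{One aside to drop.} You claim that passing to $V_p$ gives $\E\e^{\lambda'\tau}$ with $\lambda'$ as large as desired. This is not justified with $\mathsf h$ and $R$ fixed, because the contraction factor $C_p\e^{-c_p\mathsf h}$ need not improve as $p$ grows. You do not need it. For any $\lambda>0$, on $\{\tau>n\}$ one has $\tau^{2m-2}\one_{\{\tau>n\}}\le C_{m,\lambda}\e^{\lambda\tau}\e^{-\lambda n/2}$. So $\lambda=\log 2$ already yields the summable tail used in your final series.
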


\begin{proof}

Fix $m\in\N$.  Apply \eqref{eq:weak-path-growth} on a unit block with
$h=0$.  Since $Q_1$ is polynomial, \eqref{eq:path-size-moments} gives
$M_*=M_*(m)$ and $C_*=C_*(m)$ such that
\begin{align}\label{eq:one-block-log}
 \E\left[\log\left(1\vee
 \frac{\mathbf d_1((x,W),(y,W))}{\norm{x-y}_H}\right)\right]^m
 \le C_*\{V_{M_*}(x)+V_{M_*}(y)\},
\end{align}
with the ratio interpreted as one when $x=y$. Set
\begin{align*}
 \mathbf V(x,y)=V_{M_*}(x)+V_{M_*}(y).
\end{align*}
Let $Q_{\mathsf h}$ be the transition kernel of the pair obtained by driving the
two solutions from $x$ and $y$ with the same Wiener path over a block
of length $\mathsf h$.  Since its two coordinate marginals are $P_{\mathsf h}(x,\cdot)$
and $P_{\mathsf h}(y,\cdot)$, \eqref{eq:poly-lyapunov} gives
\begin{align*}
 Q_{\mathsf h}\mathbf V(x,y)
 \le a_{\mathsf h}\mathbf V(x,y)+b,\qquad
 a_{\mathsf h}=C_{M_*}\e^{-c_{M_*}\mathsf h},\qquad b=2C_{M_*}.
\end{align*}
Choose an integer $\mathsf h=\mathsf h_m$ so large that $a_{\mathsf h}\le1/4$, and then choose
$R_m$ so large that
\begin{align*}
 b\le\frac14\mathbf V(x,y)
 \qquad\text{whenever}\qquad
 \cV(x)+\cV(y)>R_m.
\end{align*}
Thus, with $\alpha=1/2$,
\begin{align}\label{eq:pair-strict-drift}
 Q_{\mathsf h}\mathbf V(x,y)\le\alpha\mathbf V(x,y)
 \qquad\text{for }\cV(x)+\cV(y)>R_m.
\end{align}

Write
\begin{align*}
 \tau=\inf\{n\ge0:\cV(X_{n\mathsf h})+\cV(Y_{n\mathsf h})\le R_m\}.
\end{align*}
Since $\{\tau>n\}\in\mathcal F_{n\mathsf h}$, \eqref{eq:pair-strict-drift}
and the Markov property give
\begin{align*}
 \E_{x,y}\bigl[
 \mathbf V(X_{(n+1)\mathsf h},Y_{(n+1)\mathsf h})\one_{\{\tau>n+1\}}
 \mid\mathcal F_{n\mathsf h}\bigr]\le
 \one_{\{\tau>n\}}
 Q_{\mathsf h}\mathbf V(X_{n\mathsf h},Y_{n\mathsf h})
 \le\alpha\mathbf V(X_{n\mathsf h},Y_{n\mathsf h})\one_{\{\tau>n\}}.
\end{align*}
Iteration yields
\begin{align}\label{eq:stopped-return-drift}
 \E_{x,y}\bigl[
 \mathbf V(X_{n\mathsf h},Y_{n\mathsf h})\one_{\{\tau>n\}}\bigr]
 \le\alpha^n\mathbf V(x,y).
\end{align}
Since $\mathbf V\ge1$,
\begin{align*}
 \Prob_{x,y}\{\tau>n\}\le\alpha^n\mathbf V(x,y).
\end{align*}
Hence, for any $\lambda_m>0$ such that
$\e^{\lambda_m}\alpha<1$,
\begin{align}\label{eq:return-time-exponential}
 \E_{x,y}\e^{\lambda_m\tau}=1+(\e^{\lambda_m}-1)
   \sum_{n\ge0}\e^{\lambda_m n}\Prob_{x,y}\{\tau>n\}\le C\mathbf V(x,y).
\end{align}

We next control the amplification accumulated before the return.  For
$k<\tau$ and $0\le j<\mathsf h$, set
\begin{align*}
 A_{k,j}=1\vee\sup_{0\le s\le1}
 \frac{\norm{X_{k\mathsf h+j+s}-Y_{k\mathsf h+j+s}}_H}
      {\norm{X_{k\mathsf h+j}-Y_{k\mathsf h+j}}_H},
\end{align*}
with $A_{k,j}=1$ when the denominator vanishes.  By the Markov property
and \eqref{eq:one-block-log},
\begin{align*}
 \E\bigl[(\log A_{k,j})^m\mid\mathcal F_{k\mathsf h+j}\bigr]
 \le C_*\{V_{M_*}(X_{k\mathsf h+j})+V_{M_*}(Y_{k\mathsf h+j})\}.
\end{align*}
Moreover, for $0\le j<\mathsf h$, \eqref{eq:poly-lyapunov} gives
\begin{align*}
 \E\bigl[
 V_{M_*}(X_{k\mathsf h+j})+V_{M_*}(Y_{k\mathsf h+j})
 \mid\mathcal F_{k\mathsf h}\bigr]
 \le C_{\mathsf h}\mathbf V(X_{k\mathsf h},Y_{k\mathsf h}).
\end{align*}
Since $\{\tau>k\}\in\mathcal F_{k\mathsf h}\subset\mathcal F_{k\mathsf h+j}$, the two
conditional estimates imply
\begin{align*}
 \E_{x,y}\bigl[
 \one_{\{\tau>k\}}(\log A_{k,j})^m\bigr]
 &\le C_*
 \E_{x,y}\bigl[
 \one_{\{\tau>k\}}
 \{V_{M_*}(X_{k\mathsf h+j})+V_{M_*}(Y_{k\mathsf h+j})\}\bigr]\\
 &\le C_{\mathsf h}
 \E_{x,y}\bigl[
 \one_{\{\tau>k\}}\mathbf V(X_{k\mathsf h},Y_{k\mathsf h})\bigr]\\
 &\le C_{\mathsf h}\alpha^k\mathbf V(x,y),
\end{align*}
where the last inequality is \eqref{eq:stopped-return-drift}.
Choose $\eta>0$ so that $\e^\eta\alpha<1$.  Summing in $k$ and $j$
therefore gives
\begin{align}\label{eq:return-log-occupation}
 \E_{x,y}\sum_{k<\tau}\e^{\eta k}
 \sum_{j=0}^{\mathsf h-1}(\log A_{k,j})^m
 \le C\mathbf V(x,y).
\end{align}

Put $D_t=\norm{X_t-Y_t}_H$.  On every unit interval for which
$D_{k\mathsf h+j}>0$,
\begin{align*}
 \sup_{0\le s\le1}D_{k\mathsf h+j+s}\le A_{k,j}D_{k\mathsf h+j},
\end{align*}
while if $D_{k\mathsf h+j}=0$, pathwise uniqueness and the almost-sure
compatibility of the representation with concatenation imply that the
two paths remain identical under the common future Wiener blocks.
Iterating the preceding inequality
therefore yields
\begin{align*}
 B_\tau
 \le\prod_{k<\tau}\prod_{j=0}^{\mathsf h-1}A_{k,j},
 \qquad
 \log B_\tau
 \le\sum_{k<\tau}\sum_{j=0}^{\mathsf h-1}\log A_{k,j}.
\end{align*}
For $m>1$,  H\"older's inequality gives
\begin{align*}
 \left(\sum_{k<\tau}\sum_{j=0}^{\mathsf h-1}\log A_{k,j}\right)^m
 \le C_{\eta,m,\mathsf h}
 \sum_{k<\tau}\e^{\eta k}
 \sum_{j=0}^{\mathsf h-1}(\log A_{k,j})^m,
\end{align*}
and for $m=1$ the same estimate is immediate.  Together with
\eqref{eq:return-log-occupation}, this proves
\begin{align*}
 \E_{x,y}(\log B_\tau)^m
 \le C_m\mathbf V(x,y).
\end{align*}
Combining this estimate with \eqref{eq:return-time-exponential} proves
\eqref{eq:pair-return} with $M_m=M_*$.

\end{proof}

The next lemma gives a common bounded reset through uniform accessibility. 

\begin{lemma}
\label{lem:one-shot-reset}
Assume \Cref{ass:poly-lyapunov,ass:common-accessibility}.  There is $R_0<\infty$ such that, for every $R\ge1$ and $r>0$, there are $l<\infty$, $p>0$, and an exact-marginal coupling of the two path laws from every $x,y\in\mathbb V_R$ satisfying
\begin{align}\label{eq:one-shot-reset}
 \Prob\{X_l,Y_l\in\mathbb V_{R_0},\
 \norm{X_l-Y_l}_H\le r\}\ge p.
\end{align}
\end{lemma}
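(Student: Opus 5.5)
The plan is to use the independent coupling: run $X$ from $x$ and $Y$ from $y$ with independent Wiener paths. Exact marginals are automatic, and the kernel is Borel in $(x,y)$ because each marginal is. Take $R_0=R_*$ from \Cref{ass:common-accessibility}. Given $R\ge1$ and $r>0$, apply \Cref{ass:common-accessibility} with the pair $(R,r/2)$. This produces $T=T(R,r/2)$ and $p_1=p(R,r/2)>0$ such that
\begin{align*}
 \inf_{z\in\mathbb V_R}P_T\bigl(z,\mathbb V_{R_*}\cap B_H(x_*,r/2)\bigr)\ge p_1 .
\end{align*}
Set $l=T$; enlarging $T$ to an integer block is unnecessary, since $l$ need not be an integer.

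By independence, for $x,y\in\mathbb V_R$,
\begin{align*}
 \Prob\bigl\{X_l,Y_l\in\mathbb V_{R_*}\cap B_H(x_*,r/2)\bigr\}
 =P_l\bigl(x,\mathbb V_{R_*}\cap B_H(x_*,r/2)\bigr)\,P_l\bigl(y,\mathbb V_{R_*}\cap B_H(x_*,r/2)\bigr)
 \ge p_1^2 .
\end{align*}
On this event both points lie in $\mathbb V_{R_0}$. The triangle inequality in $H$ then gives $\norm{X_l-Y_l}_H\le\norm{X_l-x_*}_H+\norm{x_*-Y_l}_H<r$. Hence \eqref{eq:one-shot-reset} holds with $p=p_1^2$.

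The only point needing care is measurability. The ball $B_H(x_*,r/2)\cap X$ is Borel in $X$ because $X\hookrightarrow H$ is continuous. The set $\mathbb V_{R_*}$ is Borel because $\cV$ is measurable. The coupling also has to be an exact-marginal coupling in the sense used in this section, that is, one that can be concatenated at block stopping times with fresh Wiener increments. Independent driving realizes this directly: take two independent copies of the Wiener space on $[0,l]$ and use the Borel pathwise representation $\boldsymbol\Phi_l$. The resulting path laws are $\boldsymbol\Phi_l(x,\cdot)_\#\boldsymbol\gamma_l\otimes\boldsymbol\Phi_l(y,\cdot)_\#\boldsymbol\gamma_l$, which is Borel in the starting pair.

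I do not expect a genuine obstacle here. \Cref{ass:poly-lyapunov} is not actually needed for this argument. It only becomes relevant if one insists that the reset level $R_0$ be compatible with later return estimates, and $R_0=R_*$ is fixed independently of $R$ and $r$, as the statement requires.
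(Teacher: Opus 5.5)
Your proposal is correct and is essentially the paper's proof: an independent coupling of the two $l$-block path laws, \Cref{ass:common-accessibility} applied with radius $r/2$, the product bound $p_0^2$, and the triangle inequality, with $R_0=R_*$. You are also right that \Cref{ass:poly-lyapunov} plays no role here; the paper's proof does not use it either.
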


\begin{proof}
Fix $R\ge1$ and $r>0$, and apply \eqref{eq:common-accessibility} with radius $r/2$.  Thus, for some $l<\infty$ and $p_0>0$, with
\begin{align*}
 A=\mathbb V_{R_*}\cap B_H(x_*,r/2),
\end{align*}
we have
\begin{align*}
 \inf_{z\in\mathbb V_R}P_l(z,A)\ge p_0.
\end{align*}
Couple the two $l$-block path laws independently.  Then for every $x,y\in\mathbb V_R$,
\begin{align*}
 \Prob\{X_l\in A,\ Y_l\in A\}
 =P_l(x,A)P_l(y,A)\ge p_0^2.
\end{align*}
On this event,
\begin{align*}
 X_l,Y_l\in\mathbb V_{R_*},
 \qquad
 \norm{X_l-Y_l}_H\le r.
\end{align*}
Hence \eqref{eq:one-shot-reset} holds with $R_0=R_*$ and $p=p_0^2$.
\end{proof}

The following lemma upgrades the preceding reset to a stopping time
construction with an exponential moment.  Let $R_0$ be as in
\Cref{lem:one-shot-reset}.

\begin{lemma}
\label{lem:common-reset}
Assume \Cref{ass:poly-lyapunov,ass:stable-compact-regularity,ass:common-accessibility}.  For every
$r>0$ and $x,y\in X$, there is an exact-marginal coupling and a stopping
time $\sigma_r$ such that
\begin{align*}
 X_{\sigma_r},Y_{\sigma_r}\in\mathbb V_{R_0},\qquad
 \norm{X_{\sigma_r}-Y_{\sigma_r}}_H\le r.
\end{align*}
Moreover, there are $\lambda>0$ and $C<\infty$ such that
\begin{align}\label{eq:reset-clock}
 \E_{x,y}\e^{\lambda\sigma_r}
 \le C\{V_1(x)+V_1(y)\}.
\end{align}
\end{lemma}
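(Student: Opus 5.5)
The plan is to combine a Lyapunov return argument with the one-shot reset of \Cref{lem:one-shot-reset} in a standard geometric-trials scheme. We work with a fixed block structure. Using \eqref{eq:poly-lyapunov} with $p=1$, we pick an integer block length $\mathsf h$ and a level $R\ge1$ so that, for the pair function $\mathbf V_1(x,y)=V_1(x)+V_1(y)$ and any exact-marginal coupling (for instance the synchronous one), we get
\begin{align*}
 \E\bigl[\mathbf V_1(X_{\mathsf h},Y_{\mathsf h})\bigr]\le\tfrac12\mathbf V_1(x,y)
 \qquad\text{whenever }\cV(x)+\cV(y)>R.
\end{align*}
This holds because the bound only involves the marginals. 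Exactly as in the proof of \Cref{lem:pair-return}, the hitting index $\tau$ of $\{\cV(X_{n\mathsf h})+\cV(Y_{n\mathsf h})\le R\}$ then satisfies $\E\e^{\lambda_0\tau}\le C\mathbf V_1(x,y)$. On that set both points lie in $\mathbb V_R$. With this $R$ and the prescribed $r$, \Cref{lem:one-shot-reset} supplies $l$, $p>0$ and an exact-marginal coupling over $l$ time units that succeeds with probability at least $p$.

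The coupling is built recursively. First run the synchronous coupling until the return time $\mathsf h\tau_1$. Then run the one-shot reset coupling of \Cref{lem:one-shot-reset} for $l$ units. If the event in \eqref{eq:one-shot-reset} occurs, set $\sigma_r$ equal to the current time and stop. Otherwise restart the procedure from the current pair: synchronous return, then another reset attempt, and so on. Each piece is an exact-marginal Borel coupling kernel concatenated at block stopping times with fresh Wiener increments, so the strong Markov property gives exact marginals. The number of attempts $N$ is then dominated by a geometric variable with success probability $p$, independent of the past at each attempt.

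For the exponential moment, we control the starting state of each failed attempt. After a reset attempt started from $\mathbb V_R^2$, \eqref{eq:poly-lyapunov} gives $\E\mathbf V_1(X_l,Y_l)\le C_l(2R+1)=:K$, uniformly in the outcome. The conditional expectation on the failure event is therefore at most $K/(1-p)$ times the failure probability; more precisely, $\E[\mathbf V_1\one_{\rm fail}]\le K$. Write $\theta_k$ for the duration of the $k$-th cycle (return plus $l$). The strong Markov property then gives
\begin{align*}
 \E\Bigl[\e^{\lambda\theta_{k+1}}\one_{\{\text{attempt }k+1\text{ fails}\}}\,\Big|\,\mathcal F_{\text{end of }k}\Bigr]
 \le \e^{\lambda l}\,\E_{(X,Y)}\e^{\lambda\mathsf h\tau}\,(1-p)\ \text{(roughly)}.
\end{align*}

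The \textbf{main obstacle} is that the failure indicator and the next return time are coupled. I will handle this with Hölder's inequality. On one side, $\E\e^{\lambda\mathsf h\tau}\le \bigl(\E\e^{2\lambda\mathsf h\tau}\bigr)^{1/2}$. On the other, I use the bound $\E[\mathbf V_1\one_{\rm fail}]$ together with the observation that $\E_{z}\e^{\lambda\mathsf h\tau}\le 1+C\lambda'\mathbf V_1(z)$, obtained by interpolating the exponential moment for small $\lambda$, since $\e^{\lambda\tau}-1\le(\lambda/\lambda_0)\e^{\lambda_0\tau}$ for $\lambda\le\lambda_0$. This gives a per-cycle factor of $\e^{\lambda(l+\mathsf h)}\bigl[(1-p)+C(\lambda/\lambda_0)K\bigr]$. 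Taking $\lambda$ small makes this factor $\le 1-p/2<1$.

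Summing the resulting geometric series over $N$ yields $\E\e^{\lambda\sigma_r}\le C\,\E\e^{\lambda\mathsf h\tau_1}\le C\mathbf V_1(x,y)$, which is \eqref{eq:reset-clock} with $R_0$ from \Cref{lem:one-shot-reset}.
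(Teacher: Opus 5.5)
Your proposal is correct and follows the paper's proof: you choose $\mathsf h$ and $R$ in the same way to get a strict drift of $\mathbf V_1$ outside $\mathcal C_R$, you use the same geometric-trials scheme alternating one-shot resets from $\mathcal C_R$ with synchronous returns, and you finish the same way by prepending the initial return with $\E\e^{\lambda\mathsf h\tau}\le C\mathbf V_1(x,y)$. The only deviation is in bounding the factor $\E[\e^{\lambda(l+\mathsf h\tau)}\one_{\rm fail}]$ for a failed attempt followed by its return: you condition on the end state of the attempt and use $\e^{\lambda t}-1\le(\lambda/\lambda_0)\e^{\lambda_0t}$ together with $\E\mathbf V_1(X_l,Y_l)\le K$ to get $\e^{\lambda l}[(1-p)+C(\lambda/\lambda_0)K]$, whereas the paper applies H\"older with $\theta=\lambda/\lambda_0$ to get $K^\theta(1-p)^{1-\theta}$ (so your preliminary Cauchy--Schwarz remark is not needed), and both bounds are below one for small $\lambda$.
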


\begin{proof}

Set
\begin{align*}
 \mathbf V_1(x,y)=V_1(x)+V_1(y).
\end{align*}
For any coupling of the two $\mathsf h$-blocks, \eqref{eq:poly-lyapunov} gives
\begin{align*}
 \E\mathbf V_1(X_{\mathsf h},Y_{\mathsf h})
 \le a_{\mathsf h}\mathbf V_1(x,y)+b,\qquad
 a_{\mathsf h}=C_1\e^{-c_1\mathsf h},\qquad b=2C_1.
\end{align*}
Choose $\mathsf h\in\N$ so that $a_{\mathsf h}\le1/4$, and then $R<\infty$ so large that
$b\le\mathbf V_1(x,y)/4$ whenever $\cV(x)+\cV(y)>R$.  Thus
\begin{align*}
 \E\mathbf V_1(X_{\mathsf h},Y_{\mathsf h})
 \le\frac12\mathbf V_1(x,y),
 \qquad \cV(x)+\cV(y)>R.
\end{align*}
Using common $\mathsf h$-blocks, let
\begin{align*}
 \tau_R=\inf\{n\ge0:\cV(X_{n\mathsf h})+\cV(Y_{n\mathsf h})\le R\}.
\end{align*}
The stopped calculation in \Cref{lem:pair-return} yields, for some
$\lambda_0>0$,
\begin{align}\label{eq:reset-return-clock}
 \E_{x,y}\e^{\lambda_0\mathsf h\tau_R}
 \le C\mathbf V_1(x,y).
\end{align}
Put
\begin{align*}
 \mathcal C_R=\{(x,y):\cV(x)+\cV(y)\le R\}.
\end{align*}
Since $\mathcal C_R\subset\mathbb V_R^2$, \Cref{lem:one-shot-reset}
gives $l<\infty$ and $p>0$ such that, from every pair in
$\mathcal C_R$, an $l$-block coupling succeeds with probability at
least $p$, where success means
\begin{align*}
 X_l,Y_l\in\mathbb V_{R_0},\qquad
 \norm{X_l-Y_l}_H\le r.
\end{align*}

Suppose first that $(x,y)\in\mathcal C_R$.  Make the first $l$-block
attempt.  If it succeeds, let $S_1$ be this event and set
$\sigma_r=l$.  On its complement, use fresh common $\mathsf h$-blocks
until the pair returns to $\mathcal C_R$ and denote the return index by
\begin{align*}
 \tau_1=\inf\{n\ge0:
 \cV(X_{l+n\mathsf h})+\cV(Y_{l+n\mathsf h})\le R\}.
\end{align*}
At time $l+\mathsf h\tau_1$ make a second fresh $l$-block attempt.  Let $S_2$
be the event that the first attempt failed and the second succeeds;
then
\begin{align*}
 \sigma_r=2l+\mathsf h\tau_1
 \qquad\text{on }S_2.
\end{align*}
Iterating, let $S_n$ be the event that the first $n-1$ attempts fail
and the $n$-th succeeds.  If $\tau_j$ denotes the return index to
$\mathcal C_R$ after the $j$-th failed attempt, define
\begin{align}\label{eq:reset-time-decomposition}
 \sigma_r
 =nl+\mathsf h\sum_{j=1}^{n-1}\tau_j
 \qquad\text{on }S_n.
\end{align}

The events $S_n$ are pairwise disjoint.  Moreover, after every failed
attempt the pair is returned to $\mathcal C_R$, so the Markov property
and \Cref{lem:one-shot-reset} give
\begin{align*}
 \Prob\left(\bigcap_{j=1}^n S_j^c\right)\le(1-p)^n, \text{ implying } \Prob\left(\bigcup_{n\ge1}S_n\right)=1
\end{align*}
and therefore \eqref{eq:reset-time-decomposition} defines an almost surely finite
stopping time $\sigma_r$.  By construction,
\begin{align*}
 X_{\sigma_r},Y_{\sigma_r}\in\mathbb V_{R_0},\qquad
 \norm{X_{\sigma_r}-Y_{\sigma_r}}_H\le r.
\end{align*}

It remains to prove the exponential moment.  Exact marginality of each
$l$-block and \eqref{eq:poly-lyapunov} give
\begin{align*}
 \sup_{(x,y)\in\mathcal C_R}
 \E_{x,y}\mathbf V_1(X_l,Y_l)<\infty.
\end{align*}
Markov property and \eqref{eq:reset-return-clock}  therefore gives
\begin{align*}
 K:=\sup_{(x,y)\in\mathcal C_R}
 \E_{x,y}\e^{\lambda_0(l+\mathsf h\tau_1)}<\infty.
\end{align*}
For $0<\lambda<\lambda_0$, put $\theta=\lambda/\lambda_0$.  By
H\"older's inequality,
\begin{align*}
 \beta(\lambda)
 :=\sup_{(x,y)\in\mathcal C_R}
 \E_{x,y}\bigl[
 \e^{\lambda(l+\mathsf h\tau_1)}\one_{S_1^c}\bigr]\le K^\theta(1-p)^{1-\theta}<1
\end{align*}
for $\lambda$ sufficiently small.  Applying the Markov property after
each return to $\mathcal C_R$ then gives
\begin{align*}
 \E_{x,y}\bigl[
 \e^{\lambda\sigma_r}\one_{S_n}\bigr]
 \le\e^{\lambda l}\beta(\lambda)^{n-1},
 \qquad (x,y)\in\mathcal C_R.
\end{align*}
Summing in $n$,
\begin{align}\label{eq:reset-core-clock}
 \sup_{(x,y)\in\mathcal C_R}
 \E_{x,y}\e^{\lambda\sigma_r}
 \le\frac{\e^{\lambda l}}{1-\beta(\lambda)}<\infty.
\end{align}

For arbitrary $x,y\in X$, first use common $\mathsf h$-blocks until the
entrance time $T_0=\mathsf h\tau_R$ into $\mathcal C_R$, and then perform the
preceding construction with fresh blocks.  Denote its additional
duration by $\widehat\sigma_r$ and set
\begin{align*}
 \sigma_r=T_0+\widehat\sigma_r.
\end{align*}
By \eqref{eq:reset-core-clock}, \eqref{eq:reset-return-clock} and  the Markov property,
\begin{align*}
 \E_{x,y}\e^{\lambda\sigma_r}
 \le C\E_{x,y}\e^{\lambda \mathsf h\tau_R}
 \le C\mathbf V_1(x,y)
 =C\{V_1(x)+V_1(y)\},
\end{align*}
which proves \eqref{eq:reset-clock}.  Since all blocks have the correct
coordinate marginals and are concatenated at stopping times with fresh
randomness, the resulting coupling is exact-marginal.
\end{proof}

\subsection{The logarithmic gauge and coupling cycles}

Fix $a>1$ and set $L_a=2(a+1)$.  Define $\ell_a(0)=0$ and
\begin{align*}
 \ell_a(r)=
 \begin{cases}
  \{L_a+\log(1/r)\}^{-a},&0<r\le1,\\
  L_a^{-a}+aL_a^{-a-1}(1-\e^{-(r-1)}),&r\ge1.
 \end{cases}
\end{align*}
The function $\ell_a$ is increasing, concave, bounded by one, and subadditive.  Moreover,
\begin{align}\label{eq:power-below-log}
 1\wedge r^\delta\le C_{a,\delta}\ell_a(r),
 \qquad r\ge0,\quad0<\delta\le1.
\end{align}

The next lemma converts a negative mean logarithmic multiplier into a
strict drift for the logarithmic gauge.  It also shows that algebraically
small exceptional errors are negligible at sufficiently small distances.

\begin{lemma}
\label{lem:negative-log-drift}
Let $\mathfrak m\ge q_*>0$ almost surely and suppose that
\begin{align*}
 \E\log\mathfrak m\le-\kappa,\qquad
 \E(\log^+\mathfrak m)^m\le C_m
\end{align*}
for some $m>a+2$.  Then there are $r_0,c>0$ such that
\begin{align}\label{eq:log-gauge-drift}
 \E\ell_a(r\mathfrak m)
 \le\ell_a(r)-c\ell_a(r)^{1+1/a},
 \qquad0<r\le r_0.
\end{align}
Moreover, if $e(r)\ge0$ satisfies $e(r)\le C_\theta r^\theta$ for
some $\theta>0$, then after decreasing $r_0$ and $c$ if necessary,
\begin{align}\label{eq:log-gauge-drift-error}
 \E\ell_a(r\mathfrak m)+e(r)
 \le\ell_a(r)-c\ell_a(r)^{1+1/a},
 \qquad0<r\le r_0.
\end{align}
\end{lemma}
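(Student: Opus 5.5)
The plan is to pass to the logarithmic variable. For $0<r\le1$ put $s=L_a+\log(1/r)\ge L_a$, so that $\ell_a(r)=s^{-a}$. Writing $\xi=\log\mathfrak m$, we have $r\mathfrak m=\exp\{-(s-L_a)+\xi\}$. On the event $\{r\mathfrak m\le1\}$ this gives $\ell_a(r\mathfrak m)=(s-\xi)^{-a}$. On its complement we only use $\ell_a\le L_a^{-a}+aL_a^{-a-1}\le C$; this case requires $\xi\ge s-L_a$. The target is
\begin{align*}
\E\ell_a(r\mathfrak m)\le s^{-a}-c\,s^{-a-1},
\end{align*}
since $\ell_a(r)^{1+1/a}=s^{-a-1}$. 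The lower bound $\mathfrak m\ge q_*$ gives $\xi\ge\log q_*=:-k$, so $s-\xi\le s+k$ and no singularity arises from large negative $\xi$. We split the expectation into the three events $\{\xi\le s/2\}$, $\{s/2<\xi<s-L_a\}$ and $\{\xi\ge s-L_a\}$.

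\emph{Main event.} On $\{\xi\le s/2\}$, Taylor's formula for $f(t)=(s-t)^{-a}$ on $[-k,s/2]$ gives
\begin{align*}
f(\xi)\le s^{-a}+a s^{-a-1}\xi+C_a s^{-a-2}\xi^2 .
\end{align*}
The constant $C_a$ is uniform because $s-t\ge s/2$ there, so $f''(t)\le a(a+1)2^{a+2}s^{-a-2}$. Taking expectations on this event, we write
\begin{align*}
\E[\xi\one_{\xi\le s/2}]=\E\xi-\E[\xi\one_{\xi>s/2}]\le-\kappa+\E[\xi^+\one_{\xi>s/2}].
\end{align*}
By Markov's inequality with the moment bound, the last term is at most $C_m(s/2)^{1-m}$. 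The second-order term contributes $C s^{-a-2}\E\xi^2$, which is $O(s^{-a-2})$ because $\E\xi^2<\infty$: the negative part is bounded by $k$ and the positive part is controlled by the $m$-th moment. Hence the main event contributes at most $s^{-a}-a\kappa s^{-a-1}+O(s^{-a-2})+O(s^{-a-m})$.

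\emph{Remaining events.} On $\{s/2<\xi<s-L_a\}$ we have $f(\xi)\le L_a^{-a}\le1$, and the probability of this event is at most $C_m(s/2)^{-m}$. The same bound holds on $\{\xi\ge s-L_a\}$, where $\ell_a\le C$. Since $m>a+2$, these terms are $o(s^{-a-1})$. Choosing $s$ large, i.e.\ $r\le r_0$, yields $\E\ell_a(r\mathfrak m)\le s^{-a}-\tfrac{a\kappa}{2}s^{-a-1}$, which proves \eqref{eq:log-gauge-drift} with $c=a\kappa/2$.

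\emph{Main obstacle.} The delicate point is the bookkeeping showing that the tail contributions are $o(s^{-a-1})$. This is exactly where the hypothesis $m>a+2$ is used, and where we must check that the subtraction $\E\xi-\E[\xi\one_{\xi>s/2}]$ keeps the correct sign. One must also note that the positive part of $f(\xi)-s^{-a}$ is handled by the crude bound $\ell_a\le1$ rather than by Taylor's formula. For \eqref{eq:log-gauge-drift-error}, we observe that $e(r)\le C_\theta r^\theta=C_\theta e^{\theta L_a}e^{-\theta s}$ decays exponentially in $s$, so it is $o(s^{-a-1})$. Halving $c$ and shrinking $r_0$ then absorbs it.
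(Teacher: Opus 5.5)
Your proof is correct and follows the paper's argument. You pass to $L=L_a+\log(1/r)$, Taylor-expand $(L-Y)^{-a}$ on $\{Y\le L/2\}$, bound the complement by $\ell_a\le1$ plus Markov's inequality (so $m>a+1$ suffices for $o(L^{-a-1})$), and absorb $e(r)=O(\e^{-\theta L})$. The differences are cosmetic: you split the complement into two events, and your bound on $\E[\xi\one_{\{\xi\le s/2\}}]$ is needlessly lossy, since $\xi\one_{\{\xi>s/2\}}\ge0$ already gives $\E[\xi\one_{\{\xi\le s/2\}}]\le-\kappa$ directly.
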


\begin{proof}
Put $L=L_a+\log(1/r)$ and $ Y=\log\mathfrak m.$
Take $r_0$ so small that $L\ge2L_a$ for $0<r\le r_0$.  On
$\{Y\le L/2\}$ one then has $r\mathfrak m\le1$.  Therefore
\begin{align*}
 \ell_a(r\mathfrak m)=(L-Y)^{-a}\leq L^{-a}+aYL^{-a-1}+ CY^2L^{-a-2}.
\end{align*}
On the complementary event $\{Y> L/2\}$, using $\ell_a\le1$ and Markov's inequality gives
\begin{align*}
 \Prob\{Y>L/2\}\le C_mL^{-m}.
\end{align*}
Therefore 
\begin{align*}
 \E\ell_a(r\mathfrak m) \le L^{-a}-a\kappa L^{-a-1}+CL^{-a-2}+C_mL^{-m}.
\end{align*}
Since $m>a+2$, the last two terms are $o(L^{-a-1})$.  Hence, after
decreasing $r_0$,
\begin{align*}
 \E\ell_a(r\mathfrak m)
 \le L^{-a}-\frac{a\kappa}{2}L^{-a-1}.
\end{align*}
For $r\le1$,
\begin{align*}
 \ell_a(r)=L^{-a},\qquad
 L^{-a-1}=\ell_a(r)^{1+1/a},
\end{align*}
which proves \eqref{eq:log-gauge-drift}. Finally,
\begin{align*}
 r^\theta
 =\e^{\theta L_a}\e^{-\theta L}
 =o(L^{-a-1})
 =o\bigl(\ell_a(r)^{1+1/a}\bigr)
 \qquad(r\downarrow0).
\end{align*}
Thus any $e(r)\le C_\theta r^\theta$ is absorbed
after decreasing $r_0$ and $c$, proving
\eqref{eq:log-gauge-drift-error}.
\end{proof}

The same splitting without a sign assumption on $\E Y$ gives the envelope estimate
\begin{align}\label{eq:log-envelope}
 \E\sup_{0\le s\le\tau}\ell_a(r\e^{Y_s})
 \le C\ell_a(r)
\end{align}
whenever  $\sup_{s\le\tau}Y_s\le Y$ and $\E(Y_+)^m<\infty$ for some $m>a$.

We now concatenate the negative-log core block with Lyapunov returns and
resets to obtain a renewable coupling scheme.
\begin{proposition}
\label{prop:log-coupling-cycles}
Assume the hypotheses of \Cref{thm:stable-compact-polynomial-mixing} and fix $a>1$.  There are a core radius $R$, a distance $r_c>0$, stopping times $0\le\sigma_0<\sigma_1<\cdots$, and an exact-marginal coupling such that
\begin{align*}
 X_{\sigma_n},Y_{\sigma_n}\in\mathbb V_R,\qquad
 \norm{X_{\sigma_n}-Y_{\sigma_n}}_H\le r_c.
\end{align*}
Writing
\begin{align*}
 D_n=\ell_a(\norm{X_{\sigma_n}-Y_{\sigma_n}}_H),\qquad
 H_n=\sup_{\sigma_n\le t<\sigma_{n+1}}
 \ell_a(\norm{X_t-Y_t}_H),
\end{align*}
there are $M<\infty$ and $c,\lambda,C>0$ such that
\begin{align}
 \E[D_{n+1}\mid\mathcal F_{\sigma_n}]
 &\le D_n-cD_n^{1+1/a},\label{eq:cycle-drift}\\
 \E[H_n\mid\mathcal F_{\sigma_n}]
 &\le CD_n,\label{eq:cycle-envelope}\\
 \E[\e^{\lambda(\sigma_{n+1}-\sigma_n)}
 \mid\mathcal F_{\sigma_n}]
 &\le C.\label{eq:cycle-clock}
\end{align}
Every cycle has a deterministic positive minimum duration, and
\begin{align}\label{eq:initial-clock}
 \E_{x,y}\e^{\lambda\sigma_0}
 \le C\{V_M(x)+V_M(y)\}.
\end{align}
\end{proposition}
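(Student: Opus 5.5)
The cycles are built by concatenating three kinds of blocks: the negative-log core block of \Cref{prop:negative-log-core}, Lyapunov returns from \Cref{lem:pair-return}, and resets from \Cref{lem:common-reset}.

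\emph{Choice of parameters.} Fix $a>1$ and $m>a+2$. Take $R\ge R_0$, with $R_0$ as in \Cref{lem:one-shot-reset}. Enlarge $R$ further so that it dominates the return level $R_m$ of \Cref{lem:pair-return}; then a return to $\{\cV(X)+\cV(Y)\le R_m\}$ lands in $\mathbb V_R^2$. Apply \Cref{prop:negative-log-core} on $\mathbb V_R$ with some $\mathsf A>0$. This gives $T_c$, $q_*$, and a radius, which we decrease to $r_c$.

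\emph{The cycle starting at $\sigma_n$.} Suppose $(X_{\sigma_n},Y_{\sigma_n})\in\mathbb V_R^2$ and $r=\norm{X_{\sigma_n}-Y_{\sigma_n}}_H\le r_c$. If $r=0$, drive both with common noise; the two paths stay equal, so $D_{n+1}=H_n=0$. If $r>0$, run three stages.
\begin{enumerate}
\item Run the core coupling on $[\sigma_n,\sigma_n+T_c]$.
\item On $\mathcal M$ we have $Y=Z$. Continue with common $\mathsf h_m$-blocks until the pair first returns to the level set of \Cref{lem:pair-return}.
\item Perform a reset on $\mathcal M^c$, and also whenever the returned distance exceeds $r_c$. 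The reset is \Cref{lem:common-reset} with radius $r_c$, run until the state is again in $\mathbb V_{R_0}^2\subset\mathbb V_R^2$ with distance at most $r_c$.
\end{enumerate}
Then $\sigma_{n+1}$ is the end of this composite block, and its duration is at least $T_c$. The initial time $\sigma_0$ is obtained from \Cref{lem:common-reset} applied at $(x,y)$. Together with $V_1\le V_M$, this gives \eqref{eq:initial-clock}.

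\emph{The drift \eqref{eq:cycle-drift}.}
\begin{itemize}
\item On the good event, the terminal distance is at most $r\,\mathfrak m_0\,B$. Here $B$ is the return multiplier of \Cref{lem:pair-return}, started from $(X_{T_c},Y_{T_c})$.
\item The factor $\log B$ has moments of order $m$ bounded by $C\{V_{M_m}(X_{T_c})+V_{M_m}(Y_{T_c})\}$. Its conditional expectation is bounded uniformly on the core, by \eqref{eq:poly-lyapunov}.
\item Hence $\mathfrak m=\mathfrak m_0B$ satisfies $\mathfrak m\ge q_*$, $\E(\log^+\mathfrak m)^m\le C$, and $\E\log\mathfrak m\le-\mathsf A+\E\log B$.
\item The main obstacle is that $\E\log B$ must not eat the negative drift. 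The constant $\sup_{\rm core}\E\log B$ depends on $R$, but it does not depend on $\mathsf A$. So we choose $\mathsf A$ after $R$, equal to this constant plus one; this gives $\E\log\mathfrak m\le-1$.
\item The exceptional contributions all have probability at most $Cr+\Prob(r\mathfrak m>r_c)$: resets on $\mathcal M^c$, and returns landing at distance greater than $r_c$. By Markov's inequality on $\log^+\mathfrak m$, the second probability is at most $C(\log(r_c/r))^{-m}$. After a reset $D_{n+1}\le\ell_a(r_c)$, while $D_n=\ell_a(r)$; so these terms have to be absorbed into $\ell_a(r)^{1+1/a}$.
\item The $Cr$ term is absorbed by \eqref{eq:log-gauge-drift-error}, with $\theta=1$. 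The Markov term is of order $L^{-m}=o(L^{-a-1})$. This is the same splitting as in the proof of \Cref{lem:negative-log-drift}, so we decrease $r_c$ until it is absorbed.
\item For $r$ near $r_c$ no asymptotics are needed: we take $r_c\le r_0$, the radius of \Cref{lem:negative-log-drift}, which gives \eqref{eq:cycle-drift}.
\end{itemize}

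\emph{The envelope \eqref{eq:cycle-envelope} and the clock \eqref{eq:cycle-clock}.} On the good event, the supremum over the core block and the return block is bounded by $r\mathfrak m_0^{\sup}B$. Then \eqref{eq:log-envelope} bounds its expected gauge by $C\ell_a(r)$. On the exceptional events we use $\ell_a\le1$ and the probability bound $Cr+CL^{-m}\le C\ell_a(r)$, since $m>a$. For the clock, the core block has deterministic length $T_c$. The return has an exponential moment by \eqref{eq:pair-return}, and the reset by \eqref{eq:reset-clock}. The Lyapunov weights at the start of each stage are conditionally bounded because \eqref{eq:poly-lyapunov} holds from core states. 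Combining these by H\"older's inequality with a small $\lambda$ gives \eqref{eq:cycle-clock}. Exact marginality is preserved because each block is exact-marginal and fresh noise is concatenated at stopping times.
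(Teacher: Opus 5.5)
Your architecture is the paper's: core block, then common return, then reset, with $\mathsf A$ chosen after $R$ using the fact that the bound on $\E\log B$ is uniform in the core-block length. The clocks and exact marginality are also handled correctly. The gap is in the drift \eqref{eq:cycle-drift} for starting distances $r$ comparable to $r_c$. You count $\{r\mathfrak m>r_c\}$ as an exceptional event whose cost must be absorbed. You bound its probability by $C(\log(r_c/r))^{-m}$ and then treat this as $L^{-m}$ with $L=L_a+\log(1/r)$. That identification holds only for $r\ll r_c$. The bound depends on the ratio $r_c/r$ and degenerates as $r\uparrow r_c$, and decreasing $r_c$ does not change this. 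It is also not an artifact of Markov's inequality: at $r=r_c$ the event is $\{\mathfrak m>1\}$, and $\E\log\mathfrak m\le-1$ does not make its probability small. A cycle can start at distance close to $r_c$, for instance right after a reset, so the drift is needed there. Your remark that ``no asymptotics are needed'' once $r_c\le r_0$ does not supply it, because \Cref{lem:negative-log-drift} controls only $\E\ell_a(r\mathfrak m)$, not the extra reset cost.

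The paper introduces a second radius $r^\#<r_c$, below which $Cr+C_m\{\log(r_c/r)\}^{-m}\le\frac{c_0}{2}\ell_a(r)^{1+1/a}$, and resets to target distance $q_*r^\#$ instead of $r_c$.
\begin{itemize}
\item For $r\ge r^\#$, every reset endpoint is at distance at most $q_*r^\#\le q_*r\le r\mathfrak m$, since $\mathfrak m\ge q_*$. So the ideal drift applies with no exceptional cost.
\item For $r<r^\#$, the exceptional probability is genuinely absorbable.
\end{itemize}

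Your own reset radius $r_c$ can also be rescued. On $\mathcal M\cap\{r\mathfrak m>r_c\}$ the reset gives $D_{n+1}\le\ell_a(r_c)\le\ell_a(r\mathfrak m)$, so this event costs nothing. Only $\mathcal M^c$ remains, with probability at most $C_{R,\mathsf A}r$, and this constant does not depend on $r_c$. Then \eqref{eq:log-gauge-drift-error} with $\theta=1$ absorbs it for all $0<r\le r_0$, and you take $r_c\le r_0$.

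Your envelope bound $Cr+CL^{-m}\le C\ell_a(r)$ has the same imprecision, but there it is harmless: for $r$ bounded below you can simply use $H_n\le1\le C\ell_a(r)$.
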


\begin{proof}

Fix an integer $m>a+3$.  Apply \Cref{lem:pair-return} at this order and enlarge its return level, if necessary, so that the resulting return core is contained in $\mathbb V_R^2$ and $\mathbb V_{R_0}\subset\mathbb V_R$, 
where $R_0$ is the fixed reset radius from \Cref{lem:one-shot-reset}.

\emph{Step 1: one matched core--return block.}
We first derive a uniform consequence of \Cref{lem:pair-return} for random exact-marginal outputs.  Let a block of arbitrary deterministic length $T$ start from $x,y\in\mathbb V_R$, and let $(\xi,\zeta)$ be any exact-marginal pair of its endpoints.  Thus
\begin{align*}
 \law(\xi)=P_T(x,\cdot),\qquad
 \law(\zeta)=P_T(y,\cdot).
\end{align*}
Starting from $(\xi,\zeta)$, use fresh common Wiener blocks and let $\tau$ and $B$ be the return index and path multiplier supplied by \Cref{lem:pair-return}.  Conditionally on $(\xi,\zeta)$,
\begin{align*}
 \E\bigl[\e^{\lambda_m\tau}+(\log B)^m\mid\xi,\zeta\bigr]
 \le C_m\{V_{M_m}(\xi)+V_{M_m}(\zeta)\}.
\end{align*}
Exact marginality and \eqref{eq:poly-lyapunov} therefore give
\begin{align*}
 \E\{V_{M_m}(\xi)+V_{M_m}(\zeta)\}
 =P_TV_{M_m}(x)+P_TV_{M_m}(y)\le C_{R,m},
\end{align*}
uniformly in $T$.  Consequently there are $M_{\rm ret},C_m<\infty$, independent of the block length, such that
\begin{align}\label{eq:return-multiplier}
 \E\log B\le M_{\rm ret},\qquad
 \E(\log B)^m\le C_m,
\end{align}
and the actual return time $\mathsf h_m\tau$ has a uniform exponential moment.

Apply \Cref{prop:negative-log-core} on $\mathbb V_R$ with
\begin{align*}
 \mathsf A=M_{\rm ret}+3,
\end{align*}
and denote the resulting constants by $T_c,r_{\rm core},q_*$.  Starting from $(x,y)\in\mathbb V_R^2$ with
\begin{align*}
 0<r=\norm{x-y}_H\le r_{\rm core},
\end{align*}
perform the core block and then, from its two genuine outputs, use fresh common Wiener blocks until the return supplied by \Cref{lem:pair-return}.  On the repair event $\mathcal M$, the genuine second path agrees with the companion path throughout the core block, so the distance at the return time is at most $r\mathfrak m_0B$. 
Define the virtual multiplier
\begin{align*}
 \mathfrak m=
 \begin{cases}
  \mathfrak m_0B,&\text{on }\mathcal M,\\
  q_*,&\text{on }\mathcal M^c.
 \end{cases}
\end{align*}
Since $\mathfrak m_0\ge q_*$ and $B\ge1$,
\begin{align*}
 \log\mathfrak m\le\log\mathfrak m_0+\log B.
\end{align*}
Thus \eqref{eq:core-log-bounds} and \eqref{eq:return-multiplier} give
\begin{align}\label{eq:cycle-log-multiplier}
 \E\log\mathfrak m\le-3,\qquad
 \E(\log^+\mathfrak m)^m\le C_m.
\end{align}
Apply \Cref{lem:negative-log-drift} to this family and let $r_{\log},c_0>0$ be the resulting constants.  Choose
\begin{align*}
 0<r_c\le r_{\rm core}\wedge r_{\log}.
\end{align*}
Then, uniformly for $0<r\le r_c$,
\begin{align}\label{eq:ideal-cycle-drift}
 \E\ell_a(r\mathfrak m)
 \le\ell_a(r)-c_0\ell_a(r)^{1+1/a}.
\end{align}

\emph{Step 2: construction of the cycle times and the drift.}
For a matched core--return block starting at distance $0<r<r_c$, let
\begin{align*}
 \mathcal E_r=\mathcal M^c\cup\{r\mathfrak m>r_c\}.
\end{align*}
By \eqref{eq:path-repair}, \eqref{eq:cycle-log-multiplier}, and
Markov's inequality,
\begin{align*}
 \Prob(\mathcal E_r)
 \le Cr+C_m\{\log(r_c/r)\}^{-m}.
\end{align*}
Since $m>a+3$, choose $r^{\#}\in(0,r_c)$ so small that
\begin{align}\label{eq:small-reset-error}
 Cr+C_m\{\log(r_c/r)\}^{-m}
 \le\frac{c_0}{2}\ell_a(r)^{1+1/a},
 \qquad0<r\le r^{\#}.
\end{align}

We now construct the cycle times.  Apply \Cref{lem:common-reset} to
the original pair $(x,y)$ with target distance $q_*r^{\#}$ and denote
the resulting stopping time by $\sigma_0$.  Since
$\mathbb V_{R_0}\subset\mathbb V_R$ and $q_*<1$,
\begin{align*}
 X_{\sigma_0},Y_{\sigma_0}\in\mathbb V_R,\qquad
 \norm{X_{\sigma_0}-Y_{\sigma_0}}_H
 \le q_*r^{\#}<r_c.
\end{align*}

Suppose recursively that $\sigma_n$ has been defined and
\begin{align*}
 X_{\sigma_n},Y_{\sigma_n}\in\mathbb V_R,\qquad
 r:=\norm{X_{\sigma_n}-Y_{\sigma_n}}_H\le r_c.
\end{align*}
If $r=0$, use identical future Wiener blocks.  After the deterministic
time $T_c$, continue with common $\mathsf h_m$-blocks until the return
supplied by \Cref{lem:pair-return}, and define $\sigma_{n+1}$ to be
that return time.  The two coordinates remain identical throughout.

Suppose now that $r>0$.  Starting at $\sigma_n$, perform the
negative-log core block of Step~1.  From its two genuine endpoints,
use fresh common $\mathsf h_m$-blocks until the return supplied by
\Cref{lem:pair-return}.  Let $\tau_n$ and $B_n$ be the corresponding
return index and path multiplier, and set
\begin{align*}
 \widehat\sigma_{n+1}
 =\sigma_n+T_c+\mathsf h_m\tau_n.
\end{align*}
Let $\mathcal M_n$, $\mathfrak m_{0,n}$, and $\mathfrak m_n$ denote
the corresponding repair event, core multiplier, and virtual
multiplier.  If
\begin{align*}
 \mathcal M_n\quad\text{occurs and}\quad
 r\mathfrak m_n\le r_c,
\end{align*}
set
\begin{align*}
 \sigma_{n+1}=\widehat\sigma_{n+1}.
\end{align*}
Otherwise apply \Cref{lem:common-reset} at
$\widehat\sigma_{n+1}$ with target distance $q_*r^{\#}$.  If
$\mathcal R_n$ denotes its additional duration, set
\begin{align*}
 \sigma_{n+1}=\widehat\sigma_{n+1}+\mathcal R_n.
\end{align*}
In either case,
\begin{align*}
 X_{\sigma_{n+1}},Y_{\sigma_{n+1}}\in\mathbb V_R,\qquad
 \norm{X_{\sigma_{n+1}}-Y_{\sigma_{n+1}}}_H\le r_c.
\end{align*}
Thus the construction may be iterated indefinitely.  All blocks are
concatenated at stopping times with fresh Wiener increments, so every
$\sigma_n$ is a stopping time.  Moreover,
\begin{align*}
 \sigma_{n+1}-\sigma_n\ge T_c.
\end{align*}

We next prove the drift.  Condition on $\mathcal F_{\sigma_n}$ and,
for notational simplicity, suppress the cycle index:
\begin{align*}
 r=\norm{X_{\sigma_n}-Y_{\sigma_n}}_H,\qquad
 \mathcal M=\mathcal M_n,\qquad
 \mathfrak m=\mathfrak m_n.
\end{align*}
Suppose first that $r\ge r^{\#}$.  If repair fails, then
$\mathfrak m=q_*$ and the reset endpoint satisfies
\begin{align*}
 \norm{X_{\sigma_{n+1}}-Y_{\sigma_{n+1}}}_H
 \le q_*r^{\#}\le q_*r=r\mathfrak m.
\end{align*}
If repair succeeds but $r\mathfrak m>r_c$, the reset is used only to
restore the small distance condition for the next cycle, and again
\begin{align*}
 \norm{X_{\sigma_{n+1}}-Y_{\sigma_{n+1}}}_H
 \le q_*r^{\#}<r_c<r\mathfrak m.
\end{align*}
When no reset is used, the endpoint distance is already at most
$r\mathfrak m$.  Hence in all cases
\begin{align*}
 \norm{X_{\sigma_{n+1}}-Y_{\sigma_{n+1}}}_H
 \le r\mathfrak m,
\end{align*}
and \eqref{eq:ideal-cycle-drift} gives
\begin{align*}
 \E[D_{n+1}\mid\mathcal F_{\sigma_n}]
 \le D_n-c_0D_n^{1+1/a}.
\end{align*}

Suppose instead that $0<r<r^{\#}$.  On
\begin{align*}
 \mathcal E_r=\mathcal M^c\cup\{r\mathfrak m>r_c\},
\end{align*}
the reset endpoint need not be bounded by $r\mathfrak m$, so we use
only $\ell_a\le1$.  Therefore
\begin{align*}
 \E[D_{n+1}\mid\mathcal F_{\sigma_n}]
 \le\E\ell_a(r\mathfrak m)+\Prob(\mathcal E_r)\le\ell_a(r)-\frac{c_0}{2}\ell_a(r)^{1+1/a},
\end{align*}
by \eqref{eq:ideal-cycle-drift} and
\eqref{eq:small-reset-error} which proves
\eqref{eq:cycle-drift}
after decreasing the constant. 

\emph{Step 3: envelope and clocks.} Condition again on $\mathcal F_{\sigma_n}$ and write
\begin{align*}
 r=\norm{X_{\sigma_n}-Y_{\sigma_n}}_H.
\end{align*}
If $r=0$, then the two paths are identical throughout the cycle and
$H_n=0$.

Suppose first that $0<r<r^{\#}$.  On $\mathcal E_r^c$ the repair
succeeds and no reset is used.  By \Cref{prop:negative-log-core} and \Cref{lem:pair-return}, the core block and the following
common return then satisfy  
\begin{align*}
 \sup_{\sigma_n\le t<\sigma_{n+1}}
 \norm{X_t-Y_t}_H
 \le r\mathfrak m_0^{\sup}B.
\end{align*}
By \eqref{eq:core-log-bounds} and
\eqref{eq:return-multiplier},
\begin{align*}
 \E\bigl[\log^+(\mathfrak m_0^{\sup}B)\bigr]^m\le C_m.
\end{align*}
Since $m>a$, the envelope estimate \eqref{eq:log-envelope} gives
\begin{align*}
 \E\bigl[H_n\one_{\mathcal E_r^c}
 \mid\mathcal F_{\sigma_n}\bigr]
 \le C\ell_a(r).
\end{align*}
On $\mathcal E_r$ we use only $H_n\le1$.  By
\eqref{eq:small-reset-error},
\begin{align*}
 \E\bigl[H_n\one_{\mathcal E_r}\mid\mathcal F_{\sigma_n}\bigr]
 \le\Prob(\mathcal E_r)
 \le\frac{c_0}{2}\ell_a(r)^{1+1/a}
 \le C\ell_a(r).
\end{align*}
Thus
\begin{align*}
 \E[H_n\mid\mathcal F_{\sigma_n}]
 \le CD_n,\qquad0<r<r^{\#}.
\end{align*}
For $r\ge r^{\#}$, simply use $H_n\le1$ and
\begin{align*}
 D_n=\ell_a(r)\ge\ell_a(r^{\#})>0
\end{align*}
to obtain the same bound with a larger constant.  This proves
\eqref{eq:cycle-envelope}.

It remains to control the cycle duration.  The core block has the
deterministic length $T_c$, and the following common return has a
uniform conditional exponential moment by Step~1.  If a reset is
used, it starts from a pair in $\mathbb V_R^2$, so
\Cref{lem:common-reset} gives a uniform conditional exponential
moment for its additional duration.  Choosing $\lambda>0$ smaller
than the corresponding exponential rates and conditioning
successively at the end of the core block and at the common return
therefore gives
\begin{align*}
 \E\bigl[\e^{\lambda(\sigma_{n+1}-\sigma_n)}
 \mid\mathcal F_{\sigma_n}\bigr]\le C.
\end{align*}
This also covers the $r=0$ branch and proves
\eqref{eq:cycle-clock}.

Finally, $\sigma_0$ was constructed by
\Cref{lem:common-reset} with the fixed target distance
$q_*r^{\#}$.  After decreasing $\lambda$ to the common value above,
\eqref{eq:reset-clock} gives
\begin{align*}
 \E_{x,y}\e^{\lambda\sigma_0}
 \le C\{V_1(x)+V_1(y)\}
 \le C\{V_M(x)+V_M(y)\},
\end{align*}
which is \eqref{eq:initial-clock}.  Since every constituent block has
the correct coordinate marginals and the blocks are concatenated at
stopping times with fresh randomness, the resulting continuous time
coupling has the exact two Markov marginals throughout.
\end{proof}

\subsection{Deterministic time conversion and proof of the criterion}

\begin{proof}[Proof of \Cref{thm:stable-compact-polynomial-mixing}]
Fix $q>0$ and $0<\delta\le1$, and choose $a=q+1>1$. 
Apply \Cref{prop:log-coupling-cycles} with this $a$.  We first convert the cycle drift into decay in the cycle index.  Put
\begin{align*}
 u_n=\E D_n.
\end{align*}
Taking expectations in \eqref{eq:cycle-drift} and using Jensen's inequality gives
\begin{align*}
 u_{n+1}\le u_n-cu_n^{1+1/a}
\end{align*}
which implies 
\begin{align*}
 \E D_n\le C(1+n)^{-a}.
\end{align*}

We next pass to deterministic time.  Let $T_*>0$ be the deterministic lower bound for the cycle durations supplied by \Cref{prop:log-coupling-cycles}.  Iterating \eqref{eq:cycle-clock} and using \eqref{eq:initial-clock}, there is $C_{\#}\ge1$ such that
\begin{align*}
 \E_{x,y}\e^{\lambda\sigma_n}
 \le C\{V_M(x)+V_M(y)\}C_{\#}^n.
\end{align*}
Since $\ell_a\le1$, before the first cycle time,
\begin{align*}
 \E_{x,y}\bigl[
 \ell_a(\norm{X_t-Y_t}_H)\one_{\{t<\sigma_0\}}\bigr]
 \le\Prob_{x,y}\{\sigma_0>t\}
 \le C\{V_M(x)+V_M(y)\}\e^{-\lambda t}.
\end{align*}
On $\{t\ge\sigma_0\}$, let $N_t$ be the unique integer such that
\begin{align*}
 \sigma_{N_t}\le t<\sigma_{N_t+1}.
\end{align*}
Choose $c_*>0$ so small that
\begin{align*}
 C_{\#}^{c_*t}\e^{-\lambda t}\le\e^{-\lambda t/2},
 \qquad t\ge0.
\end{align*}
For $n<c_*t$,
\begin{align*}
 \Prob\{N_t=n\}
 \le\Prob\{\sigma_{n+1}>t\}
 \le C\{V_M(x)+V_M(y)\}\e^{-\lambda t/2}.
\end{align*}
Summing these early cycle contributions and using $\ell_a\le1$ gives
\begin{align*}
 \sum_{n<c_*t}
 \E\bigl[\ell_a(\norm{X_t-Y_t}_H)\one_{\{N_t=n\}}\bigr]
 \le C\{V_M(x)+V_M(y)\}\e^{-ct}.
\end{align*}
Moreover, $\sigma_n\ge nT_*$, so $N_t\le t/T_*$.  For the remaining indices,
\begin{align*}
 \E\bigl[\ell_a(\norm{X_t-Y_t}_H)\one_{\{N_t=n\}}\bigr]
 \le\E H_n\le C\E D_n\le C(1+n)^{-a}.
\end{align*}
Therefore
\begin{align*}
 \E_{x,y}\ell_a(\norm{X_t-Y_t}_H)
 &\le C\{V_M(x)+V_M(y)\}\e^{-ct}
 +C\sum_{c_*t\le n\le t/T_*}(1+n)^{-a}\notag\\
 &\le C\{V_M(x)+V_M(y)\}(1+t)^{-(a-1)}.
\end{align*}
Since $a-1=q$, \eqref{eq:power-below-log} now gives
\begin{align*}
 \E_{x,y}\bigl[1\wedge\norm{X_t-Y_t}_H^\delta\bigr]
 \le C_{a,\delta}\E_{x,y}\ell_a(\norm{X_t-Y_t}_H)
 \le C\{V_M(x)+V_M(y)\}(1+t)^{-q}.
\end{align*}
The coupling has the exact two Markov marginals at every time, so the coupling inequality yields \eqref{eq:abstract-mixing-rate}.
\end{proof}

\section{Polynomial mixing for the cubic stochastic NLS}

This section verifies
\Cref{ass:stable-compact-regularity,ass:dense-malliavin,ass:common-accessibility}
of \Cref{thm:stable-compact-polynomial-mixing} for
\eqref{eq:intro-spde} with $X=H^1$ and $H=L^2$.  The Lyapunov
hypothesis \Cref{ass:poly-lyapunov} is verified by
\Cref{prop:lyapunov}.  We will also provide
geometric characterizations of saturation and stationary regularity gain before proving
\Cref{thm:main}.  Throughout this section, set
\begin{align*}
 \cV(u)=1+\cE(u),\qquad
 \mathbb V_R=\{u\in H^1:\cV(u)\le R\},
\end{align*}
where $\cE$ is the NLS Hamiltonian defined in
\eqref{eq:hamiltonian-dissipation}.

For the SNLS application, the abstract Wiener data of
\Cref{sec:abstract-criterion} are specialized as follows.  For
$T>0$, let
\begin{align*}
 E_T&=C_0([0,T];\R^m)
 =\{\omega\in C([0,T];\R^m):\omega(0)=0\},\\
 H_T&=\{h\in H^1([0,T];\R^m):h(0)=0\},
 \qquad
 \norm h_{H_T}^2=\int_0^T|\dot h(t)|^2\,\dd t.
\end{align*}
Let $\boldsymbol{\gamma}_T$ be Wiener measure on $E_T$ and
$\iota_T:H_T\hookrightarrow E_T$ the natural Cameron--Martin
embedding.  Identifying $H_T^*$ with $H_T$ by the Riesz isomorphism,
set
\begin{align*}
 H_T^0=\iota_T^*(E_T^*)\subset H_T.
\end{align*}
We use
\begin{align*}
 \boldsymbol\Phi_T:H^1\times E_T\longrightarrow C([0,T];H^1),
 \qquad
 \Phi_T(x,\omega)=\boldsymbol\Phi_T(x,\omega)(T),
\end{align*}
for the Borel trajectory representation constructed in
\Cref{app:analytic}.

\subsection{The stable--compact structure}
This subsection verifies  the exact stable--compact difference decomposition. The remaining regular dependence requirements in
\Cref{ass:stable-compact-regularity} are verified in
\Cref{prop:snls-regular-dependence} in Appendix~\ref{app:analytic}.

The operator ideal argument is related to the analysis of Bogoliubov propagators in \cite[Theorem~3.6]{BruneauDerezinski2007}. Here a Sylvester identity yields compactness, and approximation of the time-dependent coefficients gives the form needed for exact SNLS differences.

Although $L^2(\T^3;\C)$ is regarded throughout as a real Hilbert
space for the nonlinear and Malliavin calculus, in this subsection
we temporarily retain its canonical complex Hilbert structure and
write $\mathbb H=L^2(\T^3;\C)$.
A real-linear equation involving both $\xi$ and $\bar\xi$ is realized
as the restriction of a complex-linear doubled system on
$\mathbb H\oplus\mathbb H$ to the closed real subspace $\mathbb H_{\rm phys}
=\{(\xi,\bar\xi):\xi\in\mathbb H\}.$

The following lemma shows that the off-diagonal conjugate coupling in
a Bogoliubov block system contributes only a compact defect to the
finite time propagator, which is useful for the stable--compact decomposition of SNLS. 
\begin{lemma}
\label{lem:opposite-sign-sylvester}
Consider $\mathbb H$ as a complex Hilbert space and let
$\mathsf H=\mathsf H^*\ge0$ have compact resolvent, and 
$C\in\cL(\mathbb H)$.  On $\mathbb H\oplus\mathbb H$ set
\begin{align*}
 D=\begin{pmatrix}-\ii\mathsf H&0\\0&\ii\mathsf H\end{pmatrix},\qquad
 R=\begin{pmatrix}0&-\ii C\\ \ii C^*&0\end{pmatrix},\qquad
 G=D+R.
\end{align*}
Then $\e^{tG}-\e^{tD}$ is compact for every $t\in\R$.
\end{lemma}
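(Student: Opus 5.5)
The plan is a norm-convergent Dyson expansion, with the oppositely rotating structure of the off-diagonal blocks isolated in one "resonance" integral. That integral solves a Sylvester (Lyapunov) equation whose solution is compact because $\mathsf H$ has compact resolvent.

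\emph{Step 1: Dyson expansion.} Since $R$ is bounded and $D$ generates a unitary group, $\e^{tG}$ is given by the series
\begin{align*}
 \e^{tG}=\sum_{n\ge0}\int_{0<s_n<\cdots<s_1<t}\e^{(t-s_1)D}R\,\e^{(s_1-s_2)D}R\cdots R\,\e^{s_nD}\,\dd s_n\cdots\dd s_1,
\end{align*}
with integrals in the strong sense. The $n$-th term has norm at most $(\abs t\norm R)^n/n!$, so the series converges in operator norm. The $n=0$ term is $\e^{tD}$. Since $\cK(\mathbb H\oplus\mathbb H)$ is norm closed, it suffices to show that every term with $n\ge1$ is compact. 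Negative $t$ is handled in the same way.

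\emph{Step 2: the resonance integral.} In the $n$-th term I first integrate out the innermost variable $s_n\in(0,s_{n-1})$, with $s_0=t$. Writing $\e^{(s_{n-1}-s_n)D}=\e^{s_{n-1}D}\e^{-s_nD}$, the inner integral becomes $\e^{s_{n-1}D}\,Y(s_{n-1})$, where
\begin{align*}
 Y(a)=\int_0^a\e^{-sD}R\,\e^{sD}\,\dd s .
\end{align*}
Because $R$ is off-diagonal and the two diagonal blocks of $D$ have opposite signs, the blocks of $Y(a)$ are
\begin{align*}
 Y_{12}(a)=-\ii\int_0^a\e^{\ii s\mathsf H}C\,\e^{\ii s\mathsf H}\,\dd s,\qquad
 Y_{21}(a)=\ii\int_0^a\e^{-\ii s\mathsf H}C^*\e^{-\ii s\mathsf H}\,\dd s .
\end{align*}
The phases therefore add instead of cancelling; this is the "opposite sign" feature. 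Set $X(s)=\e^{\ii s\mathsf H}C\e^{\ii s\mathsf H}$, so that $X'=\ii(\mathsf HX+X\mathsf H)$ in the form sense. With $A=\mathsf H+1\ge1$ and $Y=\int_0^aX\,\dd s$ one gets the Sylvester identity
\begin{align*}
 AY+YA=-\ii\bigl(X(a)-X(0)\bigr)+2Y=:Z ,
\end{align*}
where $Z$ is bounded. Since $A\ge1$, its unique bounded solution is
\begin{align*}
 Y=\int_0^\infty\e^{-uA}Z\,\e^{-uA}\,\dd u .
\end{align*}
Compact resolvent of $\mathsf H$ makes $\e^{-uA}$ compact for $u>0$, with $\norm{\e^{-uA}}\le\e^{-u}$. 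The integrand is norm continuous on $(0,\infty)$, compact-valued, and norm integrable, so $Y_{12}(a)$ is compact. The block $Y_{21}(a)$ is treated identically with $\mathsf H$ replaced by $-\mathsf H$, that is, with $\e^{-uA}$ on both sides after conjugation. In addition, $a\mapsto Y(a)$ is norm continuous, with Lipschitz constant $\norm R$.

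\emph{Step 3: reassembly.} The $n$-th term is now an iterated strong integral over $s_1,\ldots,s_{n-1}$ of products $B_1(s)\,Y(s_{n-1})\,B_2(s)$. Here $B_i$ are uniformly bounded and strongly continuous, and $Y(\cdot)$ is norm continuous and compact-valued. Approximate $Y(\cdot)$ uniformly on $[0,t]$ by finite-rank-valued norm-continuous functions, for instance by piecewise-linear interpolation in $a$ followed by finite-rank truncation of finitely many compact values. For finite rank the strong integral is compact, since each rank-one piece $B_1(\cdot)\,\phi\otimes\psi\,B_2(\cdot)$ integrates to a norm limit of finite-rank operators. The uniform error passes to the integral in operator norm. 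Hence each term with $n\ge1$ is compact, and so is $\e^{tG}-\e^{tD}$.

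I expect the main obstacle to be Step 2: justifying the Sylvester identity rigorously when $\mathsf H$ is unbounded, and showing that the Lyapunov integral really equals $Y$. I would handle this by testing against vectors in $\mathcal D(\mathsf H)$, or by working with the resolvent-regularized $X$. The measurability subtleties in Step 3, where strong but not norm continuity is available, are secondary.
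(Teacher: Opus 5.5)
Your proof is correct. It rests on the same key fact as the paper's proof: the map $Q\mapsto\int_0^\infty\e^{-u\mathsf A}Q\,\e^{-u\mathsf A}\,\dd u$, with $\mathsf A=I+\mathsf H$, sends bounded operators to compact ones. The two arguments are organized differently, however.

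The paper applies this map once, to $C$ and $C^*$. This produces a time-independent compact off-diagonal $X$ with $[D,X]=R+C_0$ and $C_0$ compact. A single integration by parts of $r\mapsto\e^{(t-r)G}X\e^{rD}$ in the Duhamel formula then gives $\e^{tG}-\e^{tD}=\e^{tG}X-X\e^{tD}-\int_0^t\e^{(t-r)G}(C_0+RX)\e^{rD}\,\dd r$. There is no series, but one needs $X\,\mathrm{Dom}(D)\subset\mathrm{Dom}(D)$ in order to differentiate.

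You instead prove compactness of the finite-time interaction integral $Y(a)=\int_0^a\e^{-sD}R\,\e^{sD}\,\dd s$. This is the paper's object in disguise, since $Y(a)=X-\e^{-aD}X\e^{aD}-\int_0^a\e^{-sD}C_0\e^{sD}\,\dd s$. Your route has two advantages. Your Sylvester step needs only form identities on $\mathrm{Dom}(\mathsf H)$: differentiating $u\mapsto\langle Y\e^{-u\mathsf A}x,\e^{-u\mathsf A}y\rangle$ gives the integral representation and uniqueness at once, which settles the obstacle you flagged. It also makes the resonance mechanism (adding phases) explicit. The cost is the Dyson series and the finite-rank approximation of Step 3, both of which go through as you describe.

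Two simplifications are available. First, because $R=R^*$ and $\e^{sD}$ is unitary, $Y(a)$ is self-adjoint, so $Y_{21}(a)=Y_{12}(a)^*$. Your sentence about replacing $\mathsf H$ by $-\mathsf H$ is muddled: the $(2,1)$ block satisfies the Sylvester equation with the same $\mathsf A$. Second, the series can be dropped. Integrating the Duhamel formula by parts against $s\mapsto\e^{sD}Y(s)$ yields $\e^{tG}-\e^{tD}=\e^{tD}Y(t)+\int_0^t\e^{(t-s)G}R\,\e^{sD}Y(s)\,\dd s$, and this integrand is norm continuous and compact-valued.
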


\begin{proof}It suffices to consider $t\ge0$. 
The operator $D$ is skew-adjoint and generates a unitary $C_0$-group.
Since $R$ is bounded, $G=D+R$ generates a $C_0$-group and
$\mathrm{Dom}(G)=\mathrm{Dom}(D)$.
Set $\mathsf A=I+\mathsf H\ge I$.  For $Q\in\cL(\mathbb H)$ define
\begin{align*}
 \mathcal S_{\mathsf A}(Q)
 =\int_0^\infty\e^{-r\mathsf A}Q\e^{-r\mathsf A}\,\dd r.
\end{align*}
This integral converges in operator norm and
$\mathcal S_{\mathsf A}(Q)$ is compact.  
For $x,y\in\mathrm{Dom}(\mathsf A)$, integration of
\begin{align*}
 -\frac{\dd}{\dd r}
 \la\e^{-r\mathsf A}Q\e^{-r\mathsf A}x,y\ra
\end{align*}
gives
\begin{align*}
 \la\mathcal S_{\mathsf A}(Q)x,\mathsf Ay\ra
 +\la\mathcal S_{\mathsf A}(Q)\mathsf Ax,y\ra
 =\la Qx,y\ra.
\end{align*}
Since $\mathsf A$ is self-adjoint, one obtains  $\mathcal S_{\mathsf A}(Q)x\in\mathrm{Dom}(\mathsf A)$ 
and the Lyapunov--Sylvester equation 
\begin{align}\label{eq:sylvester-identity}
 \mathsf A\mathcal S_{\mathsf A}(Q)
 +\mathcal S_{\mathsf A}(Q)\mathsf A=Q
 \qquad\text{on }\mathrm{Dom}(\mathsf H).
\end{align}
Now put
\begin{align*}
 X=\begin{pmatrix}
 0&\mathcal S_{\mathsf A}(C)\\
 \mathcal S_{\mathsf A}(C^*)&0
 \end{pmatrix}.
\end{align*}
Then $X$ is compact and $X\,\mathrm{Dom}(D)\subset\mathrm{Dom}(D)$.  Since $\mathsf H=\mathsf A-I$, \eqref{eq:sylvester-identity} yields
on $\mathrm{Dom}(D)$,
\begin{align*}
 [D,X]=R+C_0,\quad \text{ where }  C_0=2\ii
 \begin{pmatrix}
 0&\mathcal S_{\mathsf A}(C)\\
 -\mathcal S_{\mathsf A}(C^*)&0
 \end{pmatrix}.
\end{align*}

Let $\mathbf G(t)=\e^{tG},\, \, 
\mathbf D(t)=\e^{tD}.$
Duhamel's formula gives
\begin{align}\label{eq:G-D}
 \mathbf G(t)-\mathbf D(t)
 =\int_0^t\mathbf G(t-r)R\mathbf D(r)\,\dd r.
\end{align}
For vectors in $\mathrm{Dom}(D)$, differentiate
\begin{align*}
 F(r)=\mathbf G(t-r)X\mathbf D(r).
\end{align*}
Since $X$ preserves $\mathrm{Dom}(D)=\mathrm{Dom}(G)$, and 
\begin{align*}
 F'(r)
 =-\mathbf G(t-r)\bigl([D,X]+RX\bigr)\mathbf D(r),
\end{align*}
using $R=[D,X]-C_0$, \eqref{eq:G-D} and integrating in $r$ therefore gives
\begin{align}\label{eq:sylvester-duhamel}
 \mathbf G(t)-\mathbf D(t)
 =\mathbf G(t)X-X\mathbf D(t)
 -\int_0^t
 \mathbf G(t-r)(C_0+RX)\mathbf D(r)\,\dd r.
\end{align}
The boundary terms are compact and the integrand 
is operator-norm continuous with values in
$\cK(\mathbb H\oplus\mathbb H)$.  Hence the integral is compact. Density extends 
\eqref{eq:sylvester-duhamel} to the whole space and proves
the result.
\end{proof}

The next lemma gives a time dependent version of the previous decomposition. In what follows, the operator $\mathbf M_u$ represents the multiplication by $u$. 
\begin{lemma}
\label{lem:time-dependent-bog}
Let $\mathsf d\in L^1(0,T;L^\infty(\T^3;\R))$ with
$\mathsf d\ge0$, and let
$\mathsf c\in L^1(0,T;L^\infty(\T^3;\C))$.   Consider
\begin{align*}
 \partial_tz=-\ii(-\Delta+\mathbf M_{\mathsf d(t)})z
 -\ii \mathbf M_{\mathsf c(t)}\bar z .
\end{align*}
Let $\mathcal U(t,s)$ be its real $L^2$ propagator and let
$U_{\mathsf d}(t,s)$ be the complex unitary propagator of
\begin{align*}
 \partial_tz=-\ii(-\Delta+\mathbf M_{\mathsf d(t)})z.
\end{align*}
Then
\begin{align}\label{eq:time-dependent-decomposition}
 \mathcal U(t,s)=U_{\mathsf d}(t,s)+\mathcal K(t,s)
\end{align}
as real linear operators, where $\mathcal K(t,s)$ is compact on $L^2$.
\end{lemma}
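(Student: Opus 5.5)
We reduce the time-dependent statement to the autonomous Sylvester mechanism of \Cref{lem:opposite-sign-sylvester} through the doubled system and Duhamel's formula; compactness is preserved under operator-norm limits. We pass to the complexified doubled system on $\mathbb H\oplus\mathbb H$, restricted to $\mathbb H_{\rm phys}$. There $\mathcal U$ is the propagator of $\partial_t(z,w)=(D+\mathbf P(t)+R(t))(z,w)$ with $D=\operatorname{diag}(\ii\Delta,-\ii\Delta)$, diagonal potential $\mathbf P(t)=\operatorname{diag}(-\ii\mathbf M_{\mathsf d},\ii\mathbf M_{\mathsf d})$, and off-diagonal $R(t)=\begin{pmatrix}0&-\ii\mathbf M_{\mathsf c}\\ \ii\mathbf M_{\bar{\mathsf c}}&0\end{pmatrix}$. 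The propagator of $D+\mathbf P$ is $\operatorname{diag}(U_{\mathsf d},\overline{U_{\mathsf d}})$. Since compactness of a real-linear operator on $L^2$ is equivalent to compactness of its doubled complex-linear extension, it suffices to prove that the doubled propagator minus the diagonal one is compact.

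\emph{Step 1: piecewise constant coefficients.} Assume first that $\mathsf d,\mathsf c$ are piecewise constant in time with values in $L^\infty$. On each interval, \Cref{lem:opposite-sign-sylvester} with $\mathsf H=-\Delta+\mathbf M_{\mathsf d_k}\ge0$ (self-adjoint with compact resolvent, as a bounded perturbation of $-\Delta$) and $C=\mathbf M_{\mathsf c_k}$ gives $\e^{tG_k}=\e^{tD_k}+\text{compact}$. The full propagator is a finite product of such factors. Expanding the product, every term other than the product of the $\e^{tD_k}$ contains a compact factor, hence is compact. The product of the $\e^{tD_k}$ is exactly the diagonal propagator $U_{\mathsf d}$.

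\emph{Step 2: approximation.} For general $\mathsf d,\mathsf c\in L^1(0,T;L^\infty)$, we choose piecewise constant $\mathsf d_n\ge0$ and $\mathsf c_n$ converging in $L^1(0,T;L^\infty)$. A Gronwall/Duhamel estimate, using that all propagators are bounded by $\exp\int\norm{\mathsf c}_{L^\infty}$ (the $\mathsf d$ part is unitary), gives
\begin{align*}
\norm{\mathcal U-\mathcal U_n}_{\cL(L^2)}+\norm{U_{\mathsf d}-U_{\mathsf d_n}}_{\cL(L^2)}\le C\int_s^t\bigl(\norm{\mathsf d-\mathsf d_n}_{L^\infty}+\norm{\mathsf c-\mathsf c_n}_{L^\infty}\bigr)\,\dd r\to0 .
\end{align*}
Since the compact operators form a norm-closed ideal, $\mathcal K(t,s)=\lim_n(\mathcal U_n-U_{\mathsf d_n})$ is compact.

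\emph{Main obstacle.} The delicate point is Step 2: the Duhamel comparison for generators that are only $L^1$ in time. We would handle it through mild formulations with respect to the unitary group $\e^{\ii t\Delta}$, treating the bounded perturbations $\mathbf M_{\mathsf d}$ and $\mathbf M_{\mathsf c}$ in integral form. This yields existence of strongly continuous propagators and the stated norm-difference bound without any domain issues.
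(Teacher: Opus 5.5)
Your proposal is correct and follows essentially the same route as the paper: interval-step coefficients are handled by \Cref{lem:opposite-sign-sylvester} with $\mathsf H_j=-\Delta+\mathbf M_{\mathsf d_j}$ and the product expansion, and general $L^1_tL^\infty_x$ coefficients by operator-norm Duhamel stability together with norm-closedness of the compact operators. The only difference is cosmetic: you argue on the full doubled space through the doubled complex-linear extension, whereas the paper restricts each step to $\mathbb H_{\rm phys}$ and extracts $K_jz=\pi_1\widetilde K_j(z,\bar z)$ before expanding the product of real-linear factors.
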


\begin{proof}
We first prove the claim for interval step coefficients and then pass
to general $L^1_tL^\infty_x$ coefficients by operator-norm stability
of the propagators.

\emph{Step 1: interval step coefficients.}
Suppose first that $\mathsf d$ and $\mathsf c$ are interval step functions.  Fix $0\le s<t\le T$ and choose
\begin{align*}
 s=t_0<t_1<\cdots<t_N=t
\end{align*}
so that $\mathsf d(r)=\mathsf d_j$ and $\mathsf c(r)=\mathsf c_j$ on $(t_{j-1},t_j)$.  Set
\begin{align*}
 \mathsf H_j=-\Delta+\mathbf M_{\mathsf d_j},\qquad C_j=\mathbf M_{\mathsf c_j},\qquad \tau_j=t_j-t_{j-1}.
\end{align*}
Since $\mathsf d_j$ is real-valued, bounded, and nonnegative, $\mathsf H_j\geq 0$ is self-adjoint on $H^2(\T^3)$ and has compact resolvent, while $C_j\in\cL(L^2)$. On $\mathbb H\oplus\mathbb H$ put
\begin{align*}
 D_j=\begin{pmatrix}-\ii\mathsf H_j&0\\0&\ii\mathsf H_j\end{pmatrix},\qquad
 R_j=\begin{pmatrix}0&-\ii C_j\\ \ii C_j^*&0\end{pmatrix},\qquad
 G_j=D_j+R_j.
\end{align*}
By \Cref{lem:opposite-sign-sylvester},
\begin{align}\label{eq:step-doubled-compact}
 \e^{\tau_jG_j}=\e^{\tau_jD_j}+\widetilde K_j,\qquad
 \widetilde K_j\in\cK(\mathbb H\oplus\mathbb H).
\end{align}
Both groups preserve the physical real subspace
\begin{align*}
 \mathbb H_{\rm phys}=\{(z,\bar z):z\in\mathbb H\}.
\end{align*}
Indeed, with $U_j=\e^{-\ii\tau_j\mathsf H_j}$,
\begin{align*}
 \e^{\tau_jD_j}(z,\bar z)=\bigl(U_jz,\overline{U_jz}\bigr),
\end{align*}
while $\e^{\tau_jG_j}(z,\bar z)
=(\mathcal U_jz,\overline{\mathcal U_jz})$ by construction of the
doubled equation.  Hence $\widetilde K_j$ also preserves
$\mathbb H_{\rm phys}$.  Define
\begin{align*}
 K_jz=\pi_1\widetilde K_j(z,\bar z),
\end{align*}
where $\pi_1$ denotes projection onto the first component.  Taking the
first component in \eqref{eq:step-doubled-compact} gives
\begin{align}\label{eq:step-real-compact}
 \mathcal U_j=U_j+K_j.
\end{align}
Since $z\mapsto(z,\bar z)$ and $\pi_1$ are bounded real-linear maps
and $\widetilde K_j$ is compact, one has
$K_j\in\cK(L^2)$.

By concatenation,
\begin{align*}
 \mathcal U(t,s)=\mathcal U_N\cdots\mathcal U_1,\qquad
 U_{\mathsf d}(t,s)=U_N\cdots U_1.
\end{align*}
Using \eqref{eq:step-real-compact},
\begin{align*}
 \mathcal U_N\cdots\mathcal U_1=(U_N+K_N)\cdots(U_1+K_1).
\end{align*}
The unique term containing no compact factor is $U_N\cdots U_1$; every other term contains at least one $K_j$ and is therefore compact.  Hence
\begin{align*}
 \mathcal U(t,s)-U_{\mathsf d}(t,s)\in\cK(L^2),
\end{align*}
which proves \eqref{eq:time-dependent-decomposition} for interval step coefficients.

\emph{Step 2: approximation of general coefficients.}
By the standard density of simple functions in Bochner spaces and
the regularity of Lebesgue measure, we may choose  interval step functions $\mathsf d_n,\mathsf c_n$ such that
\begin{align*}
 \mathsf d_n\ge0,\qquad
 \mathsf d_n\longrightarrow\mathsf d,\qquad
 \mathsf c_n\longrightarrow\mathsf c
 \quad\text{in }L^1_tL^\infty_x.
\end{align*}
Set
\begin{align*}
 \eta_n(r)=\norm{\mathsf d_n(r)-\mathsf d(r)}_{L^\infty}
 +\norm{\mathsf c_n(r)-\mathsf c(r)}_{L^\infty},
\end{align*}
and
\begin{align*}
 M_n=\norm{\mathsf d_n}_{L^1_tL^\infty_x}
 +\norm{\mathsf c_n}_{L^1_tL^\infty_x},\qquad
 M=\norm{\mathsf d}_{L^1_tL^\infty_x}
 +\norm{\mathsf c}_{L^1_tL^\infty_x}.
\end{align*}
Then Duhamel's formula  gives
\begin{align*}
 \sup_{s\le t\le T}
 \norm{\mathcal U_n(t,s)-\mathcal U(t,s)}
 \le C\e^{C(M_n+M)}\int_0^T\eta_n(r)\,\dd r
 \longrightarrow0.
\end{align*}
Similarly,
\begin{align*}
 \sup_{s\le t\le T}
 \norm{U_{\mathsf d_n}(t,s)-U_{\mathsf d}(t,s)}
 \le\int_0^T
 \norm{\mathsf d_n(r)-\mathsf d(r)}_{L^\infty}\,\dd r
 \longrightarrow0.
\end{align*}
For every $n$, Step~1 gives
$\mathcal U_n(t,s)-U_{\mathsf d_n}(t,s)\in\cK(L^2)$.
The compact operators are closed in the operator norm, hence
$\mathcal U(t,s)-U_{\mathsf d}(t,s)$ is compact.
\end{proof}

The following proposition gives the exact stable--compact difference structure. 
\begin{proposition}
\label{prop:exact-stable-compact-difference}
For every $T>0$ there are strongly measurable real linear operator
fields $S_{T,x,y,\omega},K_{T,x,y,\omega}\in\cL(L^2)$ such that, for
every $x,y\in H^1$ and $\boldsymbol{\gamma}_T$-almost every
$\omega\in E_T$,
\begin{align}\label{eq:snls-exact-stable-compact}
 \Phi_T(y,\omega)-\Phi_T(x,\omega)
 =\bigl(S_{T,x,y,\omega}+K_{T,x,y,\omega}\bigr)(y-x),
\end{align}
where
\begin{align*}
 \norm{S_{T,x,y,\omega}}\le\e^{-\gamma T},\qquad
 K_{T,x,y,\omega}\in\cK(L^2).
\end{align*}
\end{proposition}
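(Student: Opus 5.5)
The plan is to write the exact difference $z_t=v_t-u_t$ of the two solutions $u_t=\boldsymbol\Phi_T(x,\omega)(t)$ and $v_t=\boldsymbol\Phi_T(y,\omega)(t)$, driven by the same Wiener path, as the solution of a linear Bogoliubov-type equation with time-dependent coefficients. The additive noise cancels, so $z$ satisfies the deterministic (pathwise) equation
\begin{align*}
 \partial_tz=-\gamma z+\ii\Delta z-\ii\bigl(\abs{v}^2v-\abs{u}^2u\bigr).
\end{align*}
The key algebraic step is the exact identity
\begin{align*}
 \abs{v}^2v-\abs{u}^2u=\mathsf d\,z+\mathsf c\,\bar z,\qquad
 \mathsf d=\abs u^2+\abs v^2+\Rea(u\bar v)\ \text{or similar},\quad \mathsf c=\tfrac12(u+v)^2.
\end{align*}
Expanding in $z$ shows that the difference is real-linear in $(z,\bar z)$. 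One convenient exact form is $\abs v^2v-\abs u^2u=(\abs u^2+\abs v^2)z+uv\,\bar z$, which one checks directly from $\abs v^2v-\abs u^2u=\abs v^2 z+u(\abs v^2-\abs u^2)$ and $\abs v^2-\abs u^2=\bar v z+u\bar z$. This gives $\mathsf d=\abs u^2+\abs v^2\ge0$ real, which is exactly what \Cref{lem:time-dependent-bog} requires, with $\mathsf c=uv$ (a small bookkeeping check is needed so that the $\bar v z u$ piece lands in a real diagonal term; if it does not, I would symmetrize so that the real part goes into $\mathsf d$ and the remainder into $\mathsf c$).

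Next I conjugate out the damping: $\tilde z_t=\e^{\gamma t}z_t$ solves
\begin{align*}
 \partial_t\tilde z=-\ii(-\Delta+\mathbf M_{\mathsf d(t)})\tilde z-\ii\mathbf M_{\mathsf c(t)}\bar{\tilde z},
\end{align*}
so $z_T=\e^{-\gamma T}\mathcal U(T,0)(y-x)$, with $\mathcal U$ the real propagator of \Cref{lem:time-dependent-bog} and coefficients depending on $(x,y,\omega)$. For the coefficient hypothesis $\mathsf d,\mathsf c\in L^1(0,T;L^\infty)$ I use the Strichartz regularity $u,v\in L^{7/2}(0,T;W^{7/8,7/2})$. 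In three dimensions $W^{7/8,7/2}\hookrightarrow L^\infty$ fails only marginally ($7/8>3/(7/2)=6/7$, so the embedding actually holds). Hence $\norm{u}_{L^\infty}^2\in L^{7/4}_t\subset L^1_t$ almost surely, and likewise for $v$ and $uv$.

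Applying \Cref{lem:time-dependent-bog} then gives $\mathcal U(T,0)=U_{\mathsf d}(T,0)+\mathcal K(T,0)$. I set $S_{T,x,y,\omega}=\e^{-\gamma T}U_{\mathsf d}(T,0)$, which has norm $\e^{-\gamma T}$ by unitarity, and $K_{T,x,y,\omega}=\e^{-\gamma T}\mathcal K(T,0)$, which is compact. This yields \eqref{eq:snls-exact-stable-compact} on the full-measure set where both trajectories have the stated regularity. One must also verify that the linear equation's $L^2$ solution coincides with $z$. I would argue that $z\in C([0,T];H^1)$ is a mild solution of the same linear equation, and that uniqueness of mild solutions with $L^1_tL^\infty_x$ coefficients follows from Gronwall applied to the Duhamel formula.

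The main obstacle is strong measurability of $(x,y,\omega)\mapsto S,K$. I would obtain it from the approximation in Step 2 of \Cref{lem:time-dependent-bog}: the propagators depend continuously in operator norm on the coefficients in $L^1_tL^\infty_x$, and the coefficients are Borel functions of $(x,y,\omega)$ through the Borel representation $\boldsymbol\Phi_T$. Composition with a continuous map then gives Borel, hence strongly measurable, operator fields. On the exceptional null set I define $S=K=0$.
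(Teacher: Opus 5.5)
Your proposal is correct and follows the paper's proof essentially step by step. Both arguments cancel the noise, write the cubic difference as a real nonnegative diagonal term plus a conjugate term with $L^1_tL^\infty_x$ coefficients (via $W^{7/8,7/2}\hookrightarrow L^\infty$), remove the damping, apply \Cref{lem:time-dependent-bog}, and obtain strong measurability from operator-norm continuity of the propagators in the coefficients.

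The differences are cosmetic:
\begin{itemize}
\item The paper uses the Taylor-integral coefficients $\mathsf d_{u,v}=2\int_0^1|(1-\theta)u+\theta v|^2\,\dd\theta$ and $\mathsf c_{u,v}=\int_0^1((1-\theta)u+\theta v)^2\,\dd\theta$. Your choice $\mathsf d=|u|^2+|v|^2$, $\mathsf c=uv$ is also an exact identity, as direct expansion shows. Your displayed derivation actually yields $(|v|^2+u\bar v)z+u^2\bar z$, but this agrees with your formula because $(u\bar v-|u|^2)z=u|z|^2=(uv-u^2)\bar z$.
\item Setting $S=0$ rather than $\e^{-\gamma T}\Id$ off the regular domain works equally well.
\end{itemize}
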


\begin{proof}
For $(x,y,\omega)$ such that both $(x,\omega)$ and $(y,\omega)$ belong
to the regular driver domain $\mathfrak D_T$ defined in
\eqref{eq:regular-driver-domain}, put
\begin{align*}
 u=\boldsymbol\Phi_T(x,\omega),\qquad
 v=\boldsymbol\Phi_T(y,\omega),\qquad e=v-u.
\end{align*}
The noise cancels from the equation for $e$.  The decomposition
\begin{align}\label{eq:nonlinear-decomposition}
 |v|^2v-|u|^2u=\mathsf d_{u,v}e+\mathsf c_{u,v}\bar e,
\end{align}
where
\begin{align*}
 \mathsf d_{u,v}
 =2\int_0^1|(1-\theta)u+\theta v|^2\,\dd\theta\ge0,\qquad
 \mathsf c_{u,v}
 =\int_0^1\bigl((1-\theta)u+\theta v\bigr)^2\,\dd\theta
\end{align*}
implies 
\begin{align*}
 \partial_te=-\gamma e-\ii(-\Delta+\mathbf M_{\mathsf d_{u,v}})e
 -\ii \mathbf M_{\mathsf c_{u,v}}\bar e,\qquad e(0)=y-x.
\end{align*}
Moreover,
\begin{align*}
 \norm{\mathsf d_{u,v}(t)}_{L^\infty}
 +\norm{\mathsf c_{u,v}(t)}_{L^\infty}
 \le C\bigl(\norm{u(t)}_{L^\infty}^2+\norm{v(t)}_{L^\infty}^2\bigr),
\end{align*}
Since $u,v\in L^{7/2}(0,T;W^{7/8,7/2})$ and
$W^{7/8,7/2}\hookrightarrow L^\infty$, it follows that
\begin{align*}
 \mathsf d_{u,v}\in L^1(0,T;L^\infty(\T^3;\R)),\qquad
 \mathsf c_{u,v}\in L^1(0,T;L^\infty(\T^3;\C)).
\end{align*}
Set $e_{\gamma}(t)=\e^{\gamma t}e(t)$.  Then
\begin{align*}
 \partial_te_{\gamma}
 =-\ii(-\Delta+\mathbf M_{\mathsf d_{u,v}})e_{\gamma}
 -\ii \mathbf M_{\mathsf c_{u,v}}\overline{e_{\gamma}}.
\end{align*}
Let $\mathcal U_{u,v}(t,s)$ denote its real $L^2$ propagator.  By
\Cref{lem:time-dependent-bog},
\begin{align*}
 \mathcal U_{u,v}(T,0)
 =U_{\mathsf d_{u,v}}(T,0)+\mathcal K_{u,v}(T,0),
 \qquad \mathcal K_{u,v}(T,0)\in\cK(L^2),
\end{align*}
where $U_{\mathsf d_{u,v}}(T,0)$ is unitary on complex $L^2$.
Consequently,
\begin{align*}
 e(T)=\e^{-\gamma T}
 \bigl(U_{\mathsf d_{u,v}}(T,0)+\mathcal K_{u,v}(T,0)\bigr)(y-x).
\end{align*}
Define
\begin{align*}
 S_{T,x,y,\omega}
 =\e^{-\gamma T}U_{\mathsf d_{u,v}}(T,0),\qquad
 K_{T,x,y,\omega}
 =\e^{-\gamma T}\mathcal K_{u,v}(T,0).
\end{align*}
Then $K_{T,x,y,\omega}$ is compact and
\begin{align*}
 \norm{S_{T,x,y,\omega}}=\e^{-\gamma T},
\end{align*}
which proves \eqref{eq:snls-exact-stable-compact}.

On this regular pair domain, the pathwise realization in Appendix~\ref{app:analytic} through \eqref{eq:pathwise-shift} is Borel, and
the maps
\begin{align*}
 (u,v)\longmapsto
 \bigl(\mathsf d_{u,v},\mathsf c_{u,v}\bigr)
\end{align*}
are continuous from bounded energy Strichartz sets into
$L^1(0,T;L^\infty)^2$.  The operator-norm stability estimates in
\Cref{lem:time-dependent-bog} show that both
$U_{\mathsf d}(T,0)$ and $\mathcal U(T,0)$ depend continuously in
operator norm on the corresponding $L^1_tL^\infty_x$ coefficients.
Since
\begin{align*}
 \mathcal K(T,0)=\mathcal U(T,0)-U_{\mathsf d}(T,0),
\end{align*}
the fields $(x,y,\omega)\mapsto S_{T,x,y,\omega}$ and
$(x,y,\omega)\mapsto K_{T,x,y,\omega}$ are strongly measurable.  On
the complement define
\begin{align*}
 S_{T,x,y,\omega}=\e^{-\gamma T}\Id,
 \qquad K_{T,x,y,\omega}=0.
\end{align*}
Since $\mathfrak D_T$ is Borel, the extended fields are strongly
measurable.  By \eqref{eq:full-measure-regular-drivers}, the complement
is $\boldsymbol\gamma_T$-null for every fixed $x,y$, so
\eqref{eq:snls-exact-stable-compact} holds almost surely as asserted.
\end{proof}

\subsection{Dense range of the Malliavin derivative}\label{subsec:reduced-malliavin}

This subsection verifies \Cref{ass:dense-malliavin} for the SNLS.
Since the tangent flow is invertible on $L^2$, we factor it out and
work with the reduced Malliavin range.

All operators in this section act on the realification of $L^2$.  Fix
$T>0$, an initial state $x\in H^1$, and an energy solution $u=\Phi_t(x,\omega)$ on
$[0,T]$.  We suppress $u$ from the propagator notation. For $h\in H_T$, the Malliavin variation
$r_t=\cD\Phi_t(x,\omega)h$ solves
\begin{align*}
 \partial_t r=L_u(t)r+B\dot h,\qquad r(0)=0,
\end{align*}
where
\begin{align*}
 L_u(t)\xi=-\gamma\xi+\ii\Delta\xi
 -\ii\bigl(2|u(t)|^2\xi+u(t)^2\bar\xi\bigr).
\end{align*}
Let $J_{s,t}$ be the propagator of the homogeneous linearized
equation.  Since
\begin{align*}
 L_u(t)=(-\gamma+\ii\Delta)+\mathbf M_u(t),\qquad
 \norm{\mathbf M_u(t)}_{\cL(L^2)}
 \le3\norm{u(t)}_{L^\infty}^2,
\end{align*}
and $u\in L^2(0,T;L^\infty)$, the multiplication perturbation belongs
to $L^1(0,T;\cL(L^2))$.  As $-\gamma+\ii\Delta$ generates a strongly
continuous group on $L^2$, the non-autonomous propagator is invertible.
Put
\begin{align*}
 J_t=J_{0,t},\qquad Q_t=J_t^{-1}.
\end{align*}
Then $J_{s,t}=J_tQ_s$ 
and variation of constants gives
\begin{align*}
 \cA_T h:=\cD\Phi_T(x,\omega)h
 =\int_0^T J_{s,T}B\dot h(s)\,\dd s
 =J_T\widehat{\cA}_Th,
\end{align*}
where
\begin{align}\label{eq:reduced-factor}
 \widehat{\cA}_Th
 =\sum_{j=1}^m\int_0^TQ_tb_j\dot h_j(t)\,\dd t.
\end{align}
Moreover, the forward and inverse Volterra equations imply that, for
every $f\in L^2$,
\begin{align*}
 t\longmapsto J_tf,\qquad t\longmapsto Q_tf
\end{align*}
are continuous in $L^2$, and the same representations give Borel
dependence on the underlying path.  Finally, backward Gronwall gives
\begin{align}\label{eq:l2-inverse-propagator-bound}
 \norm{Q_s^{-1}Q_t}_{\cL(L^2)}
 \le \exp\left\{\gamma(t-s)
 +3\int_s^t\norm{u(r)}_{L^\infty}^2\,\dd r\right\},
 \qquad0\le s\le t\le T.
\end{align}

\subsubsection{The reduced endpoint space}
The next lemma identifies the closure of the reduced Malliavin range.

\begin{lemma}
\label{lem:reduced-factorization}
The space 
\begin{align*}
 \cR(u)=\overline{\spanop_\R\{Q_tb_j:0\le t\le T,\ 1\le j\le m\}}^{\,L^2}
\end{align*}
is a measurable random closed subspace and 
\begin{align*}
 \overline{\Ran\widehat{\cA}_T}^{\,L^2}=\cR(u).
\end{align*}
Consequently $\cD\Phi_T(x,\omega)$ has dense range if and only if $\cR(u)=L^2$.
\end{lemma}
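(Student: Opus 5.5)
We argue by duality in the real Hilbert space $L^2$, using the representation \eqref{eq:reduced-factor} and the strong continuity of $t\mapsto Q_tf$ recorded above. Since $J_T$ is a bounded invertible operator on $L^2$, one has $\overline{\Ran\cA_T}=J_T\,\overline{\Ran\widehat{\cA}_T}$. Consequently $\cA_T$ has dense range if and only if $\widehat{\cA}_T$ does. This reduces the final assertion to the identification of the closure of $\Ran\widehat{\cA}_T$ with $\cR(u)$.

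\emph{Step 1: closure identification.} Let $f\in L^2$. Then $f\perp\Ran\widehat{\cA}_T$ means
\begin{align*}
 \sum_{j=1}^m\int_0^T(Q_tb_j,f)_\R\,\dot h_j(t)\,\dd t=0
 \qquad\text{for all }h\in H_T.
\end{align*}
Here $\dot h$ ranges over all of $L^2(0,T;\R^m)$, and each function $t\mapsto(Q_tb_j,f)_\R$ is continuous, hence lies in $L^2(0,T)$. The orthogonality condition therefore holds exactly when $(Q_tb_j,f)_\R=0$ for a.e.\ $t$ and every $j$. By continuity this extends to every $t\in[0,T]$, which says precisely that $f\perp\cR(u)$.

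Conversely, we check that $\Ran\widehat{\cA}_T\subset\cR(u)$. Each Bochner integral $\int_0^TQ_tb_j\dot h_j\,\dd t$ is a limit of Riemann-type sums of the vectors $Q_tb_j$, so it lies in the closed span $\cR(u)$. Combining the two inclusions gives $\overline{\Ran\widehat{\cA}_T}=\cR(u)^{\perp\perp}=\cR(u)$.

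\emph{Step 2: measurability.} Choose a countable dense set $\{t_k\}\subset[0,T]$. By strong continuity in $t$, $\cR(u)$ is the closed span of the countable family $\{Q_{t_k}b_j\}_{k,j}$. Each vector $Q_{t_k}b_j$ depends Borel measurably on $(x,\omega)$, by the Volterra representation already noted.

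A random closed subspace generated by countably many measurable vectors is measurable in the Effros sense. Equivalently, the orthogonal projection onto it is strongly measurable. To see this, apply Gram--Schmidt to the countable family: this produces measurable orthonormal vectors $e_n$, obtained via measurable selection of the nonzero residuals. The projection is then $P=\sum_n e_n\otimes e_n$, and $\operatorname{dist}(g,\cR(u))=\norm{(I-P)g}$ is measurable for every $g$.

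The main obstacle is mild: one must handle the Gram--Schmidt measurability carefully, because residual vectors may vanish. A cleaner alternative avoids this issue: characterize the subspace by the measurable functions $g\mapsto\inf_{N}\inf_{c\in\mathbb Q^{N}}\norm{g-\sum_{i\le N} c_iv_i}$, where $(v_i)$ is an enumeration of the countable family $\{Q_{t_k}b_j\}$. This gives Effros measurability directly.
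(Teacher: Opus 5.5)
Your proposal is correct. For measurability it matches the paper: restrict to rational times and take the infimum over rational coefficients to get a measurable distance function. Your final reduction through the invertibility of $J_T$ is also the paper's.

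The genuine difference is in the inclusion $\cR(u)\subset\overline{\Ran\widehat{\cA}_T}$.
\begin{itemize}
\item The paper proves it constructively. It takes controls $\dot h=|I_n|^{-1}\one_{I_n}e_j$ on intervals $I_n$ shrinking to $t$, so that $\widehat{\cA}_Th=|I_n|^{-1}\int_{I_n}Q_sb_j\,\dd s\to Q_tb_j$ by strong continuity.
\item You instead compute annihilators. Because $\dot h$ ranges over all of $L^2(0,T;\R^m)$, you obtain $(\Ran\widehat{\cA}_T)^\perp=\cR(u)^\perp$.
\end{itemize}

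Your route needs only continuity of the scalar functions $t\mapsto(Q_tb_j,f)_\R$. It also makes your separate check $\Ran\widehat{\cA}_T\subset\cR(u)$ redundant: since $\cR(u)$ is closed, equal annihilators already force equal closures.

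The paper's route instead exhibits explicit Cameron--Martin directions realizing each generator in the limit. This is the same device of averaging over shrinking rational intervals that the paper reuses in \Cref{prop:two-covariations} to pass from integrated to pointwise descendants.

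The Gram--Schmidt concern you flag is not a real obstruction. Normalizing the nonzero residuals and setting vanishing ones to zero is a measurable operation. In any case, your distance-function alternative is exactly what the paper uses.
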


\begin{proof}
Strong continuity of $t\mapsto Q_tb_j$ allows the span defining
$\cR(u)$ to be restricted to rational times.  Hence, for every fixed
$f\in L^2$,
\begin{align*}
 \operatorname{dist}(f,\cR(u))
 =\inf\left\{
 \norm{f-\sum_{\ell=1}^na_\ell Q_{t_\ell}b_{j_\ell}}_{L^2}:
 n\in\N,\ a_\ell\in\mathbb Q,\ 
 t_\ell\in\mathbb Q\cap[0,T]\right\}.
\end{align*}
The Borel dependence of $Q_tb_j$ therefore shows that $\cR(u)$ is a
measurable random closed subspace.

By \eqref{eq:reduced-factor}, every element of
$\Ran\widehat{\cA}_T$ is a Bochner integral of vectors in $\cR(u)$,
and hence
\begin{align*}
 \overline{\Ran\widehat{\cA}_T}^{\,L^2}\subset\cR(u).
\end{align*}
Conversely, fix $t\in[0,T]$ and $j$.  Using Cameron--Martin controls
whose derivatives are normalized indicators of intervals shrinking
to $t$, the strong continuity of $s\mapsto Q_sb_j$ gives
\begin{align*}
 Q_tb_j\in\overline{\Ran\widehat{\cA}_T}^{\,L^2}.
\end{align*}
Thus every generator of $\cR(u)$ belongs to the closed reduced range.
Since $\cA_T=J_T\widehat{\cA}_T$ and $J_T$ is invertible, the final
assertion follows.
\end{proof}

For a smooth constant direction $\phi$, define
\begin{align*}
 \mathscr F_\phi(v)=\gamma\phi-\ii\Delta\phi+\ii D\mathsf N(v)\phi .
\end{align*}
The map $\mathscr F_\phi:H^1\to L^2$ is a polynomial of degree two and
\begin{align*}
 D\mathscr F_\phi(v)a=\ii D^2\mathsf N(v)[a,\phi],
 \qquad
 D^2\mathscr F_\phi(v)[a,b]=\ii D^3\mathsf N[a,b,\phi].
\end{align*}

The next lemma provides absolutely continuous propagation mechanism. 
\begin{lemma}
\label{lem:ac-propagation}
For every smooth $\phi$,
\begin{align}\label{eq:ac-propagation}
 Q_t\phi-Q_s\phi
 =\int_s^tQ_r\mathscr F_\phi(u_r)\,\dd r
 \quad\text{in }L^2.
\end{align}
Hence, if $Q_t\phi\in\cR(u)$ for every $t$, then
\begin{align}\label{eq:fphi-in-range}
 Q_t\mathscr F_\phi(u_t)\in\cR(u),\qquad0\le t\le T.
\end{align}
\end{lemma}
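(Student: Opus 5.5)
The plan is to establish \eqref{eq:ac-propagation} first for regularized coefficients, where $Q_t$ can be differentiated directly on smooth vectors, and then pass to the limit using operator-norm stability. \eqref{eq:fphi-in-range} will follow by taking difference quotients.

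\emph{Formal computation.} Since $Q_tJ_t=\Id$ and $\partial_tJ_t=L_u(t)J_t$, we get formally
\begin{align*}
 \partial_tQ_t=-Q_tL_u(t).
\end{align*}
Moreover $D\mathsf N(v)\phi=2|v|^2\phi+v^2\bar\phi$, so
\begin{align*}
 -L_u(t)\phi=\gamma\phi-\ii\Delta\phi+\ii\bigl(2|u_t|^2\phi+u_t^2\bar\phi\bigr)=\mathscr F_\phi(u_t).
\end{align*}
Hence $\partial_tQ_t\phi=Q_t\mathscr F_\phi(u_t)$, which is \eqref{eq:ac-propagation} in differential form. The only issue is rigor: $\Delta$ is unbounded, and $u$ is controlled only in $C([0,T];H^1)\cap L^2(0,T;L^\infty)$.

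\emph{Regularization and limit.} I would approximate the multiplication perturbation $\mathbf M_u(t)$ by interval step (or smooth-in-time, bounded) coefficients $\mathbf M^n(t)$ converging in $L^1(0,T;\cL(L^2))$. I would also use coefficients $u^n\to u$ with $\mathscr F_\phi(u^n_\cdot)\to\mathscr F_\phi(u_\cdot)$ in $L^1(0,T;L^2)$. This is possible because, for smooth $\phi$,
\begin{align*}
 \norm{|v|^2\phi}_{L^2}+\norm{v^2\bar\phi}_{L^2}\le C\norm\phi_{L^\infty}\norm v_{L^4}^2,
\end{align*}
and $H^1\hookrightarrow L^4$. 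For the approximating problems, $J^n_t$ preserves $H^2$, and $t\mapsto J^n_tQ^n_t\phi$ can be differentiated on $\phi\in H^2$. This gives \eqref{eq:ac-propagation} for $Q^n$ and $u^n$. By the Duhamel/Gronwall argument of \Cref{lem:time-dependent-bog}, together with \eqref{eq:l2-inverse-propagator-bound}, we have $J^n_t\to J_t$ and $Q^n_t\to Q_t$ in operator norm, uniformly on $[0,T]$, with uniform bounds. Passing to the limit in both sides of the identity then yields \eqref{eq:ac-propagation}. An alternative avoiding regularization is to verify that
\begin{align*}
 J_t\Bigl(Q_s\phi+\int_s^tQ_r\mathscr F_\phi(u_r)\,\dd r\Bigr)=\phi
\end{align*}
through the mild (Volterra) formulation of $J$. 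The main obstacle is this justification step: checking that the mild propagator identities hold on smooth vectors despite the unbounded $\ii\Delta$ and the merely $L^1_t$ potential.

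\emph{Consequence \eqref{eq:fphi-in-range}.} The map $r\mapsto Q_r\mathscr F_\phi(u_r)$ is continuous in $L^2$. Indeed, $u\in C([0,T];H^1)$ makes $r\mapsto\mathscr F_\phi(u_r)$ continuous in $L^2$, while $Q_r$ is strongly continuous and uniformly bounded, so the two combine to give joint continuity. Hence, by \eqref{eq:ac-propagation}, for every $t\in[0,T]$,
\begin{align*}
 \frac{Q_{t+h}\phi-Q_t\phi}{h}\longrightarrow Q_t\mathscr F_\phi(u_t)\quad\text{in }L^2,
\end{align*}
using one-sided quotients at the endpoints. Each quotient lies in $\cR(u)$ by hypothesis, and $\cR(u)$ is closed, so the limit lies in $\cR(u)$. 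This gives \eqref{eq:fphi-in-range}.
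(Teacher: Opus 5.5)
Your proof is correct, and your last step is exactly the paper's: continuity of $r\mapsto Q_r\mathscr F_\phi(u_r)$, one-sided difference quotients, and closedness of $\cR(u)$. The difference lies in how you justify \eqref{eq:ac-propagation}.

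The paper does not regularize. It passes to the interaction picture $\widetilde J_t=S(-t)J_t$, with $S(t)=\e^{tA}$ and $A=-\gamma+\ii\Delta$. There the perturbation $\widetilde{\mathbf M}_u(t)=S(-t)\mathbf M_u(t)S(t)$ is a strongly measurable family bounded by $3\norm{u(t)}_{L^\infty}^2\in L^1(0,T)$. Hence the inverse $\widetilde Q_t$ satisfies $\widetilde Q_tf=f-\int_0^t\widetilde Q_r\widetilde{\mathbf M}_u(r)f\,\dd r$ for every $f\in L^2$, so it is strongly absolutely continuous with no domain restriction. Since $Q_t=\widetilde Q_tS(-t)$, the strong product rule on $\phi\in\mathrm{Dom}(A)$ gives $\frac{\dd}{\dd t}Q_t\phi=-Q_t(A+\mathbf M_u(t))\phi=Q_t\mathscr F_\phi(u_t)$ almost everywhere. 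This is a rigorous version of your ``alternative avoiding regularization'': the unbounded $\ii\Delta$ is confined to the single factor $S(-t)\phi$, so no approximation is needed.

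Your route instead proves the identity exactly for step coefficients and then transfers it to the limit.
\begin{itemize}
\item On each step interval, $Q^n_t=Q^n_{t_{j-1}}\e^{-(t-t_{j-1})G_j}$, so $\partial_tQ^n_t\phi=-Q^n_tG_j\phi$ for $\phi\in H^2$ by $C_0$-group calculus. Differentiating $J^n_tQ^n_t\phi$, which is constant, is not what you need.
\item The limit uses the operator-norm stability of \Cref{lem:time-dependent-bog}, the identity $Q^n_t-Q_t=Q^n_t(J_t-J^n_t)Q_t$, and uniform bounds of the type \eqref{eq:l2-inverse-propagator-bound}.
\item This reuses machinery already in the paper, at the price of an approximation step. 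That step needs Bochner measurability of $t\mapsto\mathbf M_u(t)$ in $\cL(L^2)$, which holds because $u$ is separably valued in $C(\T^3)$.
\item The regularized integrand should be written as $-A\phi-\mathbf M^n(t)\phi$. Its $L^1(0,T;L^2)$ convergence comes from $\mathbf M^n\to\mathbf M_u$ in $L^1(0,T;\cL(L^2))$, not from your separate $L^4$ condition on $u^n$.
\end{itemize}

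The paper's argument is shorter and self-contained; yours is equally valid and leans on stability estimates the paper already proves.
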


\begin{proof}
Recall that 
\begin{align*}
 L_u(t)=A+\mathbf M_u(t),\qquad
 A=-\gamma+\ii\Delta,
\end{align*}
where
\begin{align*}
 \mathbf M_u(t)\xi
 =-\ii\bigl(2|u(t)|^2\xi+u(t)^2\bar\xi\bigr),
 \qquad
 \norm{\mathbf M_u(t)}_{\cL(L^2)}
 \le3\norm{u(t)}_{L^\infty}^2.
\end{align*}
Since $u\in L^2(0,T;L^\infty)$,
$\mathbf M_u\in L^1(0,T;\cL(L^2))$.  Let
\begin{align*}
 S(t)=\e^{tA},\qquad
 \widetilde J_t=S(-t)J_t,\qquad
 \widetilde{\mathbf M}_u(t)
 =S(-t)\mathbf M_u(t)S(t).
\end{align*}
For every $f\in L^2$, the map
$t\mapsto\widetilde{\mathbf M}_u(t)f$ is strongly measurable and
\begin{align*}
 \norm{\widetilde{\mathbf M}_u(t)f}_{L^2}
 \le 3\norm{u(t)}_{L^\infty}^2\norm f_{L^2}.
\end{align*}
Hence the interaction evolution and its inverse $\widetilde Q_t=\widetilde J_t^{-1}$ satisfy, in the strong sense,
\begin{align*}
 \widetilde J_tf
 =f+\int_0^t
 \widetilde{\mathbf M}_u(r)\widetilde J_rf\,\dd r, \quad \widetilde Q_tf
 =f-\int_0^t
 \widetilde Q_r\widetilde{\mathbf M}_u(r)f\,\dd r.
\end{align*}
Consequently, for every fixed $f\in L^2$,
$t\mapsto\widetilde Q_tf$ is absolutely continuous and
\begin{align*}
 \frac{\dd}{\dd t}\widetilde Q_tf
 =-\widetilde Q_t\widetilde{\mathbf M}_u(t)f
\end{align*}
for almost every $t$.
Since
\begin{align*}
 Q_t=J_t^{-1}=\widetilde Q_tS(-t),
\end{align*}
for every smooth $\phi\in\operatorname{Dom}(A)$ the strong product
rule gives
\begin{align*}
 \frac{\dd}{\dd t}(Q_t\phi)
 =-\widetilde Q_t\widetilde{\mathbf M}_u(t)S(-t)\phi
 -\widetilde Q_tS(-t)A\phi
 =-Q_t\bigl(\mathbf M_u(t)+A\bigr)\phi
 =Q_t\mathscr F_\phi(u_t)
\end{align*}
for almost every $t$.
Moreover,
\begin{align*}
 t\longmapsto\mathscr F_\phi(u_t)
 =\gamma\phi-\ii\Delta\phi+\ii D\mathsf N(u_t)\phi
\end{align*}
is continuous in $L^2$.  Indeed, $u\in C([0,T];H^1)$,
$H^1\hookrightarrow L^4$, and $\phi$ is smooth.  Together with the
strong continuity and local boundedness of $Q_t$, this shows that
$t\mapsto Q_t\mathscr F_\phi(u_t)$ is continuous in $L^2$.
Consequently \eqref{eq:ac-propagation} in fact implies that
$t\mapsto Q_t\phi$ is continuously differentiable as an $L^2$-valued
curve, with derivative $Q_t\mathscr F_\phi(u_t)$.

Now assume that $Q_t\phi\in\cR(u)$ for every $t$.  For
$0<t<T$, the difference quotients
\begin{align*}
 \frac{Q_{t+h}\phi-Q_t\phi}{h}
\end{align*}
belong to the linear space $\cR(u)$ and converge in $L^2$ to
$Q_t\mathscr F_\phi(u_t)$ as $h\to0$.  Since $\cR(u)$ is closed,
\begin{align*}
 Q_t\mathscr F_\phi(u_t)\in\cR(u).
\end{align*}
The same conclusion at $t=0$ and $t=T$ follows from one-sided
difference quotients.  This proves \eqref{eq:fphi-in-range}.
\end{proof}

\subsubsection{Cubic descendants and dense endpoint range}
We now prove a reduced cubic propagation mechanism. For $R\ge1$ let 
\begin{align}\label{eq:adapted-localization-Malliavin}
 \tau_R=\inf\left\{t\le T:\sup_{r\le t}\norm{u_r}_{H^1}
 +\left(\int_0^t\norm{u_r}_{W^{7/8,7/2}}^{7/2}\,\dd r\right)^{2/7}\ge R\right\}\wedge T 
\end{align}
be the stopping times as in \eqref{eq:adapted-localization}.
As in \Cref{subsec:malliavin-freezing}, let $u^0_{s,t}$ and $\widetilde Q^0_{s,t}$ denote respectively the zero noise  flow and its inverse linearized evolution from $u_s$, while $\widetilde Q_{s,t}=J_{s,t}^{-1}$ and $d_{s,t}=u_t-u^0_{s,t}$ and use the notation $\E_s[\cdot]=\E[\cdot\mid\mathcal F_s]$. Note in particular that $u^0_{s,t}$ and $\widetilde Q^0_{s,t}$ are $\mathcal F_s$ measurable for every $t\geq s$. 

\begin{proposition}
\label{prop:two-covariations}
Let $\phi\in C^\infty$ be deterministic.  If
\begin{align*}
 Q_t\phi\in\cR(u)\quad\text{for every }t\in[0,T],
\end{align*}
then, for every forced pair $b_j,b_k$,
\begin{align}
 Q_t\ii D^2\mathsf N(u_t)[b_j,\phi]&\in\cR(u),
 \label{eq:first-descendant}\\
 Q_t\ii D^3\mathsf N[b_k,b_j,\phi]&\in\cR(u)
 \label{eq:second-descendant}
\end{align}
for every $t\in[0,T]$, almost surely.
\end{proposition}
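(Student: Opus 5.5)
By \Cref{lem:ac-propagation}, the hypothesis gives $Q_t\mathscr F_\phi(u_t)\in\cR(u)$ for every $t$. Both descendants will be extracted from this process by detecting its quadratic covariation with the noise. The proof will be carried out on the localized interval $[0,\tau_R]$ and then $R\to\infty$.

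\emph{Step 1: It\^o expansion.} Write $\Psi_t=Q_t\mathscr F_\phi(u_t)$, a process in $L^2$ with values in the deterministic-in-structure but random closed subspace $\cR(u)$. Formally,
\begin{align*}
 \dd\Psi_t=Q_t\mathscr F_{\mathscr F_\phi(u_t)}(u_t)\,\dd t+Q_tD\mathscr F_\phi(u_t)\bigl[\dd u_t\bigr]+\tfrac12Q_tD^2\mathscr F_\phi(u_t)[B\dd W_t,B\dd W_t].
\end{align*}
The martingale part is $\sum_jQ_t\ii D^2\mathsf N(u_t)[b_j,\phi]\,\dd W^j_t$. The obstacle here is that $u$ is only an $H^1$ energy solution, so a genuine It\^o formula in $L^2$ is not available. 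I would therefore use the frozen short-time expansion suggested by the notation fixed before the statement. On $[s,s+\varepsilon]$, write $u_t=u^0_{s,t}+d_{s,t}$ and $Q_t=\widetilde Q^0_{s,t}Q_s+(\text{error})$, where $d_{s,t}=\int_s^tJ_{r,t}B\,\dd W_r+O(\varepsilon)$. The error must be controlled in $L^2$ up to $\tau_R$ using \eqref{eq:l2-inverse-propagator-bound} and Strichartz bounds.

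\emph{Step 2: first descendant via covariation.} For fixed $s$ and $j$, consider the $\mathcal F_s$-measurable frozen vectors together with the increments $\varepsilon^{-1}\int_s^{s+\varepsilon}(\Psi_t-\Psi_s)\,\dd W^j_t$, or equivalently $\varepsilon^{-1}(\Psi_{s+\varepsilon}-\Psi_s)(W^j_{s+\varepsilon}-W^j_s)$ together with its conditional expectation $\E_s$. By the expansion,
\begin{align*}
 \varepsilon^{-1}\E_s\bigl[(\Psi_{s+\varepsilon}-\Psi_s)(W^j_{s+\varepsilon}-W^j_s)\bigr]\to Q_s\ii D^2\mathsf N(u_s)[b_j,\phi]
\end{align*}
in $L^2$. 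The drift contributes $O(\varepsilon^{1/2})$ and the cross terms vanish by independence of the increments.

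It remains to see that such limits stay in $\cR(u)$, which is the subtle point because $\cR(u)$ is random and depends on the whole path. To handle this, I would choose a countable dense family of $f\in L^2$ and test against $f$ restricted to $\cR(u)^\perp$. The claim is that if $P^\perp\Psi_t=0$ for all $t$, then $\Psi$ has zero quadratic covariation with $W^j$ in the direction $P^\perp$. Since $P^\perp$ is not adapted, the natural route is to argue instead via the pathwise covariation: along dyadic partitions, $\sum(\Psi_{t_{i+1}}-\Psi_{t_i})(W^j_{t_{i+1}}-W^j_{t_i})$ converges in probability to $\int_0^t Q_r\ii D^2\mathsf N(u_r)[b_j,\phi]\,\dd r$. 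Each partial sum lies in $\cR(u)$ pathwise, so along an a.s.\ subsequence the limit lies in the closed space $\cR(u)$. Differentiating in $t$, which is possible by continuity of the integrand, and using closedness then gives \eqref{eq:first-descendant} for all $t$ almost surely.

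\emph{Step 3: second descendant.} Apply the same argument to $\Psi^{(j)}_t=Q_t\ii D^2\mathsf N(u_t)[b_j,\phi]$, which Step 2 shows lies in $\cR(u)$ for all $t$. Its martingale part is $\sum_kQ_t\ii D^3\mathsf N[b_k,b_j,\phi]\,\dd W^k_t$, since $D^2\mathsf N(u)[b_j,\phi]$ is linear in $u$ with constant derivative. The pathwise covariation with $W^k$ then yields \eqref{eq:second-descendant}. This step is simpler than Step 2, because the integrand in the martingale part is now a constant vector multiplied by $Q_t$.

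I expect the main obstacle to be Step 1: justifying the covariation identity for $\Psi$ with only $L^2$-valued $Q$ and $H^1$ solutions. In particular, the drift term $Q_t\ii\Delta(\cdot)$ acting on $D\mathscr F_\phi(u)$ is not in $L^2$ pointwise. To avoid this, I would use the absolutely continuous representation from \Cref{lem:ac-propagation} for the $Q$-part, and place the rough $\dd u$ part only where it is tested against the smooth quadratic $\mathscr F_\phi$, exploiting the frozen flow $u^0_{s,t}$ to absorb the dispersive drift.
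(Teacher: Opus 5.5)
Your plan is essentially the paper's own proof. You localize at $\tau_R$, and you expand around the frozen flow $u^0_{s,t}$ so that the dispersive drift sits in an $\mathcal F_s$-measurable term; in the paper that term only needs to tend to zero uniformly, with no rate. You then extract the Brownian coefficient through pathwise covariation sums that stay in $\cR(u)$, pass to an a.s.\ subsequence, use closedness of $\cR(u)$, shrink intervals (over a countable family, on a common null set), and repeat the argument for the second descendant. The paper packages the covariation step as a general stopped Hilbert-valued covariation lemma whose remainder hypothesis is met by the frozen estimates at order $h^{13/14}=o(h^{1/2})$; note also that the correct factorization there is $Q_t=Q_s\widetilde Q_{s,t}$, not $\widetilde Q^0_{s,t}Q_s$.
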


\begin{proof}
The proof is divided into three steps.  

\emph{Step 1: first covariation.}
By \Cref{lem:ac-propagation},
\begin{align*}
 Z_t:=Q_t\mathscr F_\phi(u_t)\in\cR(u),\qquad 0\le t\le T.
\end{align*}
On a deterministic interval
$[s,t]$, $h=t-s$, and on $\{t\le\tau_R\}$, write
\begin{align*}
 Z_t-Z_s=A_{s,t}+\sum_{\ell=1}^mC^\ell_{s,t}\Delta_sW^\ell+\rho_{s,t},
\end{align*}
where
\begin{align*}
 A_{s,t}=Q_s\widetilde Q^0_{s,t}\mathscr F_\phi(u^0_{s,t})
-Q_s\mathscr F_\phi(u_s), \text{ and } C^\ell_{s,t}=Q_s\widetilde Q^0_{s,t}
 D\mathscr F_\phi(u^0_{s,t})b_\ell.
\end{align*}  
Indeed, since $Q_t=Q_s\widetilde Q_{s,t}$ and
$d_{s,t}=u_t-u^0_{s,t}$, the exact quadratic Taylor formula for
$\mathscr F_\phi$ gives
\begin{align*}
 \rho_{s,t}
 =Q_s(\widetilde Q_{s,t}-\widetilde Q^0_{s,t})
       \mathscr F_\phi(u_t)+Q_s\widetilde Q^0_{s,t}
 \left\{D\mathscr F_\phi(u^0_{s,t})(d_{s,t}-B\Delta_sW)
 +\frac12D^2\mathscr F_\phi[d_{s,t},d_{s,t}]\right\}.
\end{align*}
By \eqref{eq:adapted-localization-Malliavin},
\eqref{eq:frozen-zero-noise-bound}, \eqref{eq:l2-tangent-bound}, and
\eqref{eq:l2-inverse-propagator-bound}, all state and linearized
factors in the preceding decomposition are uniformly bounded on
$\{t\le\tau_R\}$ by constants depending only on $R$.  Since
\begin{align*}
 \norm{D\mathscr F_\phi(v)a}_{L^2}
 \le C_{\phi,R}\norm a_{H^1},\qquad
 \norm{D^2\mathscr F_\phi[a,b]}_{L^2}
 \le C_\phi\norm a_{H^1}\norm b_{H^1},
\end{align*}
\Cref{lem:frozen-estimates} gives
\begin{align}\label{eq:covariation-remainder-rate}
 \bigl(\E_s[\one_{\{t\le\tau_R\}}\norm{\rho_{s,t}}_{L^2}^2]\bigr)^{1/2}
 \le C_R(h^{13/14}+h)\one_{\{s<\tau_R\}}.
\end{align}
Here the $h$ term is the quadratic Taylor remainder and follows from
the $p=4$ case of \eqref{eq:frozen-s}.

We next verify the remaining hypotheses of
\Cref{lem:stopped-hilbert-covariation}.  We first give the short-time
continuity used for conditions \eqref{eq:stopped-a-bound} and \eqref{eq:stopped-compensator}.  Let
$G:H^1\to L^2$ be continuous on bounded $H^1$ sets and suppose that,
for some $3/4<\sigma<1$,
\begin{align*}
 \norm{G(v)-G(w)}_{L^2}\le C_R\norm{v-w}_{H^\sigma}
\end{align*}
on such sets.  The free multiplier estimate \begin{align*} \sup_{\norm v_{H^1}\le R} \norm{(\e^{-(\gamma-\ii\Delta)h}-\Id)v}_{H^\sigma} \le C_Rh^{(1-\sigma)/2} \end{align*} and nonlinear estimate \begin{align*} \norm{\int_s^{s+h}\e^{-(\gamma-\ii\Delta)(s+h-a)} \mathsf N(u^0_{s,a})\,\dd a}_{H^\sigma} \le C_R\int_s^{s+h}\norm{u^0_{s,a}}_{L^\infty}^2\,\dd a\le C_Rh^{3/7} \end{align*} obtained by \Cref{lem:periodic-toolkit} and \eqref{eq:frozen-zero-noise-bound}, imply that 
uniformly for $s<\tau_R$,
\begin{align*}
 \norm{u^0_{s,s+h}-u_s}_{H^\sigma}
 \le C_R\bigl(h^{(1-\sigma)/2}+h^{3/7}\bigr)\longrightarrow0.
\end{align*}
For a fixed sample path,
$\mathcal C_u=\{u_s:0\le s\le T\}$ is compact in $H^1$, hence
$G(\mathcal C_u)$ is compact in $L^2$.  By
\eqref{eq:frozen-m-bound} and \eqref{eq:Rvst},
\begin{align*}
 \norm{(\widetilde Q^0_{s,s+h}-\Id)f}_{L^2}
 \le C_Rh^{3/7}\norm f_{L^2}
 +\norm{(\e^{(\gamma-\ii\Delta)h}-\Id)f}_{L^2}.
\end{align*}
Strong continuity of the free group is uniform on the compact set
$G(\mathcal C_u)$.  Consequently,
\begin{align}\label{eq:frozen-composition-continuity}
 \sup_{s<\tau_R}
 \norm{\widetilde Q^0_{s,s+h}G(u^0_{s,s+h})-G(u_s)}_{L^2}
 \longrightarrow0.
\end{align}
The maps $G=\mathscr F_\phi$ and
$G=D\mathscr F_\phi(\,\cdot\,)b_\ell$ satisfy the preceding
assumptions because $H^\sigma\hookrightarrow L^4$ and
$\mathscr F_\phi$ is quadratic.  Thus
\eqref{eq:frozen-composition-continuity},
\eqref{eq:l2-inverse-propagator-bound} and 
\eqref{eq:adapted-localization-Malliavin}  imply
\begin{align*}
 \sup_{s<\tau_R}\norm{A_{s,s+h}}_{L^2}\longrightarrow0,
 \qquad
 \sup_{s<\tau_R,\ 0<h\le h_0}\norm{A_{s,s+h}}_{L^2}\le C_R.
\end{align*}
Returning to the uniform deterministic partitions
$\Pi_n=\{t_i\}$ of $[a,b]\subset [0,T]$ in \Cref{lem:stopped-hilbert-covariation}, with
mesh $h_n\downarrow0$ and $\tau=\tau_R\wedge b$  we obtain
\begin{align*}
 \max_i\one_{\{t_i<\tau\}}
 \norm{A_{t_i,t_{i+1}}}_{L^2}\longrightarrow0
 \qquad\text{a.s.}
\end{align*}
and dominated convergence gives
\begin{align*}
 \E\max_i\one_{\{t_i<\tau\}}
 \norm{A_{t_i,t_{i+1}}}_{L^2}^2\longrightarrow0,
\end{align*}
which is \eqref{eq:stopped-a-bound}. Applying \eqref{eq:frozen-composition-continuity} with
$G=D\mathscr F_\phi(\,\cdot\,)b_\ell$ yields
\begin{align*}
 C^\ell_{s,s+h}\longrightarrow Q_sD\mathscr F_\phi(u_s)b_\ell
\end{align*}
uniformly along the stopped path.  The uniform bound in
\eqref{eq:stopped-c-bound} follows from
\eqref{eq:l2-inverse-propagator-bound},
\eqref{eq:adapted-localization-Malliavin}, and
\eqref{eq:frozen-zero-noise-bound}; continuity of $t\mapsto Q_tD\mathscr F_\phi(u_t)b_\ell$ and dominated
convergence then give the left-Riemann-sum limit
\eqref{eq:stopped-compensator} with
\begin{align*}
 C_t^\ell=Q_tD\mathscr F_\phi(u_t)b_\ell.
\end{align*}
Finally, 
\eqref{eq:covariation-remainder-rate} gives
\begin{align*}
 h_n^{1/2}\sum_i
 \left(\E\one_{\{t_{i+1}\le\tau\}}
 \norm{\rho_{t_i,t_{i+1}}}_{L^2}^2\right)^{1/2}
 \le C_{R,T}h_n^{-1/2}(h_n^{13/14}+h_n)
 =C_{R,T}(h_n^{3/7}+h_n^{1/2})
 \longrightarrow0,
\end{align*}
which verifies \eqref{eq:stopped-remainder}.

Thus \Cref{lem:stopped-hilbert-covariation} applies on every rational
interval $I=[a,b]\subset[0,T]$, with stopping time
$\tau=\tau_R\wedge b$.
Since every increment of $Z_{\cdot\wedge\tau}$ belongs to the
linear space $\cR(u)$, every covariation sum does as well.  Passing to
an almost surely convergent subsequence and using the closedness of
$\cR(u)$ therefore gives
\begin{align}\label{eq:first-covariation-integral}
 \int_I\one_{\{t<\tau_R\}}
 Q_tD\mathscr F_\phi(u_t)b_j\,\dd t
 =\int_I\one_{\{t<\tau_R\}}
 Q_t\ii D^2\mathsf N(u_t)[b_j,\phi]\,\dd t
 \in\cR(u).
\end{align}
The integrand is continuous in $L^2$.  For any fixed $t<\tau_R$,
take rational intervals $I_n$ shrinking to $t$.  The averages of
\eqref{eq:first-covariation-integral} belong to $\cR(u)$ and converge
in $L^2$ to
\begin{align*}
 Q_t\ii D^2\mathsf N(u_t)[b_j,\phi].
\end{align*}
Closedness of $\cR(u)$ proves \eqref{eq:first-descendant} for
$t<\tau_R$, and the endpoint follows by one-sided continuity.

\emph{Step 2: second covariation.}
Set
\begin{align*}
 \mathcal N_{j,\phi}(v)=\ii D^2\mathsf N(v)[b_j,\phi].
\end{align*}
Since the nonlinearity is cubic, $\mathcal N_{j,\phi}$ is linear in $v$ and
\begin{align*}
 D\mathcal N_{j,\phi}(v)b_k=\ii D^3\mathsf N[b_k,b_j,\phi],
 \qquad D^2\mathcal N_{j,\phi}=0.
\end{align*}
By Step~1,
\begin{align*}
 \widetilde Z_t:=Q_t\mathcal N_{j,\phi}(u_t)\in\cR(u).
\end{align*}
Replacing $\mathscr F_\phi$ in the frozen decomposition of Step~1 by $\mathcal N_{j,\phi}$ gives, on $\{t\le\tau_R\}$,
\begin{align*}
 \widetilde Z_t-\widetilde Z_s
 =\widetilde A_{s,t}
 +\sum_{\ell=1}^m\widetilde C^\ell_{s,t}\Delta_sW^\ell
 +\widetilde\rho_{s,t},
\end{align*}
where
\begin{align*}
 \widetilde A_{s,t}
 &=Q_s\widetilde Q^0_{s,t}\mathcal N_{j,\phi}(u^0_{s,t})
   -Q_s\mathcal N_{j,\phi}(u_s),\\
 \widetilde C^\ell_{s,t}
 &=Q_s\widetilde Q^0_{s,t}
   \ii D^3\mathsf N[b_\ell,b_j,\phi],\\
 \widetilde\rho_{s,t}
 &=Q_s(\widetilde Q_{s,t}-\widetilde Q^0_{s,t})\mathcal N_{j,\phi}(u_t)
   +Q_s\widetilde Q^0_{s,t}
   \ii D^3\mathsf N[d_{s,t}-B\Delta_sW,b_j,\phi].
\end{align*}
The $p=2$ cases of \eqref{eq:frozen-endpoint} and
\eqref{eq:frozen-inverse} therefore give
\begin{align*}
 \bigl(\E_s[\one_{\{t\le\tau_R\}}
 \norm{\widetilde\rho_{s,t}}_{L^2}^2]\bigr)^{1/2}
 \le C_Rh^{13/14}\one_{\{s<\tau_R\}}
\end{align*}
implying 
\eqref{eq:stopped-remainder}, since on a uniform partition of $[a,b]\subset [0,T]$ with $\tau=b\wedge\tau_R$ and mesh
$h_n$,
\begin{align*}
 h_n^{1/2}\sum_i
 \left(\E\one_{\{t_{i+1}\le\tau\}}
 \norm{\widetilde\rho_{t_i,t_{i+1}}}_{L^2}^2\right)^{1/2}
 \le C_{R,T}h_n^{3/7}\longrightarrow0.
\end{align*}
The condition \eqref{eq:stopped-a-bound} follows from
\eqref{eq:frozen-composition-continuity} applied to
$G=\mathcal N_{j,\phi}$.  Applying the same estimate to the constant
map $G=\ii D^3\mathsf N[b_\ell,b_j,\phi]$, together with
\eqref{eq:l2-inverse-propagator-bound}, gives the uniform
$\widetilde C^\ell$-bound \eqref{eq:stopped-c-bound} and its compensator convergence \eqref{eq:stopped-compensator}.  Hence
\Cref{lem:stopped-hilbert-covariation},
followed by an almost surely convergent subsequence and the closedness
of $\cR(u)$, gives, for every rational interval $I$,
\begin{align*}
 \int_I\one_{\{t<\tau_R\}}
 Q_t\ii D^3\mathsf N[b_k,b_j,\phi]\,\dd t\in\cR(u).
\end{align*}
Shrinking rational intervals to a point and using continuity and
closedness gives
\begin{align*}
 Q_t\ii D^3\mathsf N[b_k,b_j,\phi]\in\cR(u),
 \qquad t<\tau_R,
\end{align*}
and hence \eqref{eq:second-descendant} up to the stopping time.

\emph{Step 3: removal of the localization and a common null set.}
Take $R\in\N$, rational intervals equipped with dyadic meshes, the
finitely many Brownian coordinates, and a fixed real basis of each
finite-dimensional cubic saturation level $\mathscr W_n$ in
\eqref{eq:cubic-recursion}.  These choices form a countable
family of covariation limits.  For each limit choose a deterministic
subsequence converging almost surely and intersect the corresponding
full-measure events.  We thereby obtain a single full-measure event
on which the conclusions of Steps~1--2 hold for every basis element
at every cubic saturation level.  Real multilinearity then extends
them to every vector in each $\mathscr W_n$.

Finally, $\tau_R\uparrow T$ almost surely.  Hence, for every $t<T$,
one has $t<\tau_R$ for all sufficiently large $R$, and
\eqref{eq:first-descendant}--\eqref{eq:second-descendant} hold at $t$.
The conclusion at $t=T$ follows from one-sided continuity and the
closedness of $\cR(u)$.
\end{proof}

Combining the above cubic propagation mechanism with cubic saturation, we obtain the density of the Malliavin range. 
\begin{theorem}\label{thm:dense-endpoint}
Suppose that the smooth forcing is cubic saturating as in \Cref{def:cubic-saturation}.  For every
$T>0$, every $x\in H^1$, and almost every Wiener path,
\begin{align}\label{eq:dense-endpoint}
 \overline{\Ran \cD\Phi_T(x,\omega)}^{\,L^2}=L^2 .
\end{align}
\end{theorem}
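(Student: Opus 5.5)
By \Cref{lem:reduced-factorization}, it suffices to prove that $\cR(u)=L^2$ almost surely. The plan is to show by induction on $n$ that, on a single full-measure event,
\begin{align*}
 Q_t\phi\in\cR(u)\qquad\text{for every }\phi\in\mathscr W_n\text{ and every }t\in[0,T].
\end{align*}
Once this holds, cubic saturation gives $Q_t\phi\in\cR(u)$ for all $\phi$ in a set dense in $H^1$. The case $t=0$, where $Q_0=\Id$, then yields $\mathscr W_\infty\subset\cR(u)$. Since $H^1$ is dense in $L^2$ and the $H^1$-closure of $\mathscr W_\infty$ is $H^1$, the $L^2$-closure of $\mathscr W_\infty$ is all of $L^2$. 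Because $\cR(u)$ is closed in $L^2$, we conclude $\cR(u)=L^2$.

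The base case $n=0$ holds by definition: $Q_tb_j\in\cR(u)$ for every $t$ and $j$, and $\cR(u)$ is a real linear space, so it contains $Q_t\phi$ for every $\phi\in\mathscr W_0$. For the inductive step, fix a basis element $\phi\in\mathscr W_n$. Since each $\mathscr W_n$ is spanned by finitely many products of smooth functions, $\phi$ is smooth and deterministic, so it is an admissible input for \Cref{prop:two-covariations}. The inductive hypothesis supplies the requirement $Q_t\phi\in\cR(u)$ for every $t$. The second descendant \eqref{eq:second-descendant} then gives
\begin{align*}
 Q_t\,\mathsf JD^3\mathsf N[b_k,b_j,\phi]\in\cR(u)
\end{align*}
for all $t$ and all $j,k$. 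By symmetry and real trilinearity of $D^3\mathsf N$, this covers every generator $\mathsf JD^3\mathsf N[\phi,h,k]$ with $h,k\in\mathscr W_0$. Real linearity of $t\mapsto Q_t$ and of $\cR(u)$ then extends the conclusion from generators to all of $\mathscr W_{n+1}$.

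The main obstacle is the bookkeeping of null sets, since \Cref{prop:two-covariations} holds only almost surely for each fixed $\phi$. Step~3 of its proof already produces one full-measure event that serves every basis element at every level simultaneously. I would make the induction run explicitly on that single event, so that the hypothesis ``$Q_t\phi\in\cR(u)$ for all $t$'' is available pathwise for the next level without any further null sets. Because the levels and bases form a countable family, intersecting the corresponding events keeps full measure. The result is valid for every $T>0$ and every $x\in H^1$, which is exactly \eqref{eq:dense-endpoint}.
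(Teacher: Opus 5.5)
Your proposal is correct and follows the paper's proof essentially step for step. It runs the induction through the cubic recursion using the second descendant in \Cref{prop:two-covariations}, on the single full-measure event built in Step~3 of that proposition, and then concludes by density of $\mathscr W_\infty$ and \Cref{lem:reduced-factorization}. The only cosmetic difference is at the last step: you evaluate at $t=0$ using $Q_0=\Id$, whereas the paper invokes the invertibility of $Q_t$. Both arguments work.
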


\begin{proof}
By definition of $\cR(u)$, $Q_tb_\ell\in\cR(u)$ for every forced
direction.  By real trilinearity,
\Cref{prop:two-covariations} extends from the pairs $b_j,b_k$ to every
pair $h,k\in\mathscr W_0=\Ran B$.  If $Q_t\phi\in\cR(u)$ for every $t$, then 
\begin{align*}
 Q_t\mathsf JD^3\mathsf N[\phi,h,k]\in\cR(u)
 \qquad(h,k\in\mathscr W_0).
\end{align*}
Induction in \eqref{eq:cubic-recursion} gives
$Q_t\mathscr W_\infty\subset\cR(u)$ on the single full-measure event
constructed in \Cref{prop:two-covariations}.  Cubic saturation
makes $\mathscr W_\infty$ dense in $H^1$ and hence in $L^2$.  Since $Q_t$ is
invertible and $\cR(u)$ is closed, $\cR(u)=L^2$.
\Cref{lem:reduced-factorization} proves \eqref{eq:dense-endpoint}.
\end{proof}

\subsection{Uniform irreducibility}

This subsection derives uniform irreducibility and full support from
cubic saturation through an energy level approximate controllability
argument.  For \eqref{eq:intro-spde}, deterministic attraction to the
origin would already suffice for uniform irreducibility.  We instead
use approximate controllability, since the mechanism is robust under
the addition of a smooth deterministic forcing. Throughout this subsection, write
\begin{align*}
 \mathsf S(t)=\e^{-(\gamma-\ii\Delta)t}
\end{align*}
for the damped free propagator.  For $g\in H_\tau$, let
$\Phi_t^g(x)$, $0\le t\le\tau$, denote the controlled solution of
\begin{align*}
 \partial_tu=-\gamma u+\ii\Delta u-\ii\mathsf N(u)+B\dot g(t),
 \qquad u(0)=x.
\end{align*}
All controls below may be chosen with piecewise constant derivative.

The following lemma is the energy level cubic loop underlying the
saturation induction.

\begin{lemma}
\label{lem:energy-ideal-cubic-loop}
Let $\zeta\in C^\infty$, $M<\infty$, and $a\in\R\setminus\{0\}$.  Put
\begin{align*}
 \zeta_a=\operatorname{sgn}(a)\zeta,\qquad
 t_R=\abs aR^{-3},\qquad
 \mathcal T_\eta u=u+\eta.
\end{align*}
Then, as $R\to\infty$,
\begin{align}\label{eq:energy-ideal-loop}
 \sup_{\norm u_{H^1}\le M}
 \norm{\mathcal T_{-R\zeta_a}\Phi_{t_R}^0
 \mathcal T_{R\zeta_a}u
 -\bigl(\mathsf S(t_R)u-a\ii\mathsf N(\zeta)\bigr)}_{H^1}
 \longrightarrow0.
\end{align}
\end{lemma}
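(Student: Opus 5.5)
Set $w(t)=\Phi^0_t(u+R\zeta_a)-R\zeta_a$ for $0\le t\le t_R$. Then $w(0)=u$ and $w$ solves
\begin{align*}
 \partial_tw=-\gamma w+\ii\Delta w-\gamma R\zeta_a+\ii R\Delta\zeta_a-\ii\mathsf N(w+R\zeta_a).
\end{align*}
Expanding the cubic in powers of $R$ gives
\begin{align*}
 \mathsf N(w+R\zeta_a)=R^3\mathsf N(\zeta_a)+R^2\,\mathcal Q(\zeta_a;w)+R\,\mathcal L(\zeta_a;w)+\mathsf N(w),
\end{align*}
where $\mathcal Q$ is linear in $w$ and $\mathcal L$ is quadratic in $w$. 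Since $\mathsf N(\zeta_a)=\operatorname{sgn}(a)\mathsf N(\zeta)$, we have $t_RR^3\mathsf N(\zeta_a)=a\mathsf N(\zeta)$. This is the source of the target term $-a\ii\mathsf N(\zeta)$. The plan is to write Duhamel's formula with the free propagator $\mathsf S$:
\begin{align*}
 w(t)=\mathsf S(t)u-\ii\int_0^t\mathsf S(t-s)\bigl[R^3\mathsf N(\zeta_a)+R^2\mathcal Q+R\mathcal L+\mathsf N(w)+R(\gamma-\ii\Delta)\zeta_a\bigr]\dd s,
\end{align*}
and to show that, at $t=t_R$, everything except the $R^3$ term is $o(1)$ in $H^1$, uniformly for $\norm u_{H^1}\le M$.

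\emph{Step 1: the leading term.} Because $\zeta$ is smooth, $s\mapsto\mathsf S(s)\mathsf N(\zeta)$ is continuous into $H^1$. Hence
\begin{align*}
 \int_0^{t_R}\mathsf S(t_R-s)R^3\mathsf N(\zeta_a)\,\dd s-a\mathsf N(\zeta)=\frac{a}{t_R}\int_0^{t_R}\bigl(\mathsf S(t_R-s)-\Id\bigr)\mathsf N(\zeta)\,\dd s\to0
\end{align*}
in $H^1$ as $t_R\to0$. The linear forcing $R(\gamma-\ii\Delta)\zeta_a$ contributes at most $Ct_RR=O(R^{-2})$ in $H^1$.

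\emph{Step 2: a priori bound via a short-time fixed point.} I will run a contraction argument for $w$ on $[0,t_R]$ in the ball $\{\sup_{t\le t_R}\norm{w(t)}_{H^1}\le 2M+C|a|\norm{\mathsf N(\zeta)}_{H^1}+1\}$. The difficulty is the $\mathsf N(w)$ term. In 3D, $H^1$ is not an algebra, so I cannot simply bound $\norm{\mathsf N(w)}_{H^1}$ by a power of $\norm w_{H^1}$. On a time interval of length $t_R\sim R^{-3}$, however, Strichartz estimates suffice: the periodic Strichartz toolkit (\Cref{lem:periodic-toolkit}, through the $L^{7/2}W^{7/8,7/2}$ theory) gives $\norm{\int_0^t\mathsf S(t-s)\mathsf N(w)\,\dd s}_{H^1}\le C\,t^{\theta}\,\norm w^3$ in the energy--Strichartz norm, for some $\theta>0$. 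Hence this term is $O(R^{-3\theta})$.

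The remaining terms are handled with the smoothness of $\zeta$. Multiplication by the smooth functions $\zeta$ and $\zeta^2$ is bounded on $H^1$, so $\norm{\mathcal Q(\zeta_a;w)}_{H^1}\le C_\zeta\norm w_{H^1}$. For the quadratic term, $\norm{\mathcal L(\zeta_a;w)}_{H^1}\lesssim\norm{\zeta w^2}_{H^1}$, which is controlled by $\norm w_{L^4}\norm w_{W^{1,4}}$ or by a Strichartz norm. Integrated over $[0,t_R]$, these give contributions $R^2t_R=O(R^{-1})$ and $R\,t_R^{\theta'}=o(1)$, respectively. These estimates also close the contraction for large $R$. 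By uniqueness of energy solutions, $w$ coincides with $\mathcal T_{-R\zeta_a}\Phi^0_{t}\mathcal T_{R\zeta_a}u$.

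\emph{Main obstacle.} The delicate point is Step 2: the $H^1$-level Strichartz estimate for the cubic term must carry a positive power of the time length. I would take this power from the $L^{7/2}_t$ integrability (the estimate $\int\norm{\cdot}_{L^\infty}^2\le Ch^{3/7}$ already used in \Cref{prop:two-covariations}). I also need to verify that the $R$-dependent quadratic term $R\,\mathcal L$ still gains enough, namely $R\,t_R^{\theta'}\to0$, which requires $\theta'>1/3$. If the $L^\infty$ Strichartz gain $3/7>1/3$ is available, this holds: $\norm{\zeta w^2}_{H^1}\lesssim\norm w_{L^\infty}\norm w_{H^1}$, and $\int_0^{t_R}\norm w_{L^\infty}\dd s\le t_R^{5/7}\norm w_{L^{7/2}L^\infty}$, which gives $R\cdot R^{-15/7}\to0$.
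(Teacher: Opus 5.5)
Your proposal is correct and follows the paper's proof essentially step for step. It uses the same shifted variable $\Phi^0_t(u+R\zeta_a)-R\zeta_a$, the same expansion of $\mathsf N(R\zeta_a+v)$ in powers of $R$, and the same energy--Strichartz bootstrap on $[0,t_R]$, giving $O(R^{-1})$, $O(R^{-8/7})$ and $O(R^{-9/7})$ for the terms linear, quadratic and cubic in $v$. The differences are cosmetic: you handle the leading term by strong continuity of $\mathsf S$ on $H^1$ rather than by the paper's explicit $O(R^{-3})$ bound through $\norm{\mathsf N(\zeta_a)}_{H^3}$. Two small slips should be fixed, neither affecting the argument. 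Your a priori ball should also control the $L^{7/2}(0,t_R;W^{7/8,7/2})$ norm, and the linear forcing $R(\gamma-\ii\Delta)\zeta_a$ should not carry the factor $-\ii$ in your Duhamel formula.
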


\begin{proof}
Write
\begin{align*}
 v_R(t;u)=\Phi_t^0(u+R\zeta_a)-R\zeta_a.
\end{align*}
The mild equation gives
\begin{align*}
 v_R(t;u)
 =\mathsf S(t)u+R(\mathsf S(t)-\Id)\zeta_a
 -\ii\int_0^t\mathsf S(t-r)
 \mathsf N(R\zeta_a+v_R(r;u))\,\dd r.
\end{align*}
Expand
\begin{align*}
 \mathsf N(R\zeta_a+v)
 =R^3\mathsf N(\zeta_a)
 +R^2P_1(\zeta_a,v)+RP_2(\zeta_a,v)+\mathsf N(v),
\end{align*}
where $P_1$ is linear and $P_2$ is quadratic in $v$, with smooth
coefficients depending on $\zeta_a$.

By \Cref{lem:periodic-toolkit}, the same local energy--Strichartz
bootstrap as in \eqref{eq:block-local-bootstrap} gives, for all
sufficiently large $R$,
\begin{align}\label{eq:energy-vr-bound}
 \sup_{\norm u_{H^1}\le M}\left\{
 \norm{v_R(\,\cdot\,;u)}_{C(0,t_R;H^1)}
 +\norm{v_R(\,\cdot\,;u)}_{L^{7/2}(0,t_R;W^{7/8,7/2})}
 \right\}\le C_{M,a,\zeta}.
\end{align}
Indeed, the $R^3\mathsf N(\zeta_a)$ term has
$L^1(0,t_R;H^1)$-norm $O(1)$, while on the bootstrap set
\begin{align*}
 R^2\norm{P_1(\zeta_a,v_R)}_{L^1H^1}
 &\le CR^2t_R\norm{v_R}_{CH^1}=O(R^{-1}),\\
 R\norm{P_2(\zeta_a,v_R)}_{L^1H^1}
 &\le CRt_R^{5/7}\norm{v_R}_{CH^1}
 \norm{v_R}_{L^{7/2}L^\infty}=O(R^{-8/7}),\\
 \norm{\mathsf N(v_R)}_{L^1H^1}
 &\le Ct_R^{3/7}\norm{v_R}_{CH^1}
 \norm{v_R}_{L^{7/2}L^\infty}^2=O(R^{-9/7}),
\end{align*}
where $t_R=O(R^{-3})$ and
$W^{7/8,7/2}\hookrightarrow L^\infty$. 

Since $R^3t_R\mathsf N(\zeta_a)=a\mathsf N(\zeta)$, we have 
\begin{align*}
 v_R(t_R;u)-\bigl(\mathsf S(t_R)u&-a\ii\mathsf N(\zeta)\bigr)
 =R(\mathsf S(t_R)-\Id)\zeta_a-\ii R^3\int_0^{t_R}
 (\mathsf S(t_R-r)-\Id)\mathsf N(\zeta_a)\,\dd r\\
 &-\ii R^2\int_0^{t_R}\mathsf S(t_R-r)
 P_1(\zeta_a,v_R(r;u))\,\dd r\\
 &-\ii R\int_0^{t_R}\mathsf S(t_R-r)
 P_2(\zeta_a,v_R(r;u))\,\dd r\\
 &-\ii\int_0^{t_R}\mathsf S(t_R-r)
 \mathsf N(v_R(r;u))\,\dd r. 
\end{align*}
As  $\zeta$ is smooth,
\begin{align*}
 R\norm{(\mathsf S(t_R)-\Id)\zeta_a}_{H^1}
 &\le CRt_R\norm{\zeta_a}_{H^3}=O(R^{-2}),\\
 R^3\int_0^{t_R}
 \norm{(\mathsf S(t_R-r)-\Id)\mathsf N(\zeta_a)}_{H^1}\,\dd r
 &\le CR^3t_R^2\norm{\mathsf N(\zeta_a)}_{H^3}
 =O(R^{-3}).
\end{align*}
The remaining three terms tend to zero by
\eqref{eq:energy-vr-bound} and the preceding estimates.  This proves
\eqref{eq:energy-ideal-loop}.
\end{proof}

The next lemma propagates these fast translations through the reduced
cubic saturation.

\begin{lemma}
\label{lem:reduced-fast-translation}
Assume cubic saturation as in
\Cref{def:cubic-saturation}.  For every
$\zeta\in\mathscr W_\infty$, $a\in\R$, $M<\infty$, and
$\delta,\epsilon>0$, there are $0<\theta<\delta$ and
$g\in H_\theta$ with piecewise constant derivative such that
\begin{align}\label{eq:energy-fast-translation}
 \sup_{\norm u_{H^1}\le M}
 \norm{\Phi_\theta^g(u)
 -\bigl(\mathsf S(\theta)u+a\zeta\bigr)}_{H^1}<\epsilon.
\end{align}
\end{lemma}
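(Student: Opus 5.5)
We argue by induction on the saturation level $n$ with $\zeta\in\mathscr W_n$, proving the statement for all $a\in\R$, $M$, $\delta$, $\epsilon$ simultaneously.

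\emph{Base case $n=0$.} Take $\zeta=B c$ with $c\in\R^m$. Choose a short time $\theta<\delta$ and the constant-derivative control $\dot g=(a/\theta)c$ on $[0,\theta]$. The mild formula gives
\begin{align*}
 \Phi_\theta^g(u)=\mathsf S(\theta)u+\frac a\theta\int_0^\theta\mathsf S(\theta-r)B c\,\dd r-\ii\int_0^\theta\mathsf S(\theta-r)\mathsf N(\Phi^g_r(u))\,\dd r.
\end{align*}
The middle term tends to $a\zeta$ in $H^1$ as $\theta\to0$, since $\zeta$ is smooth. Along the control, $\Phi^g_r(u)-\mathsf S(r)u$ stays bounded in $H^1$ by $|a|\,\norm\zeta_{H^1}+o(1)$. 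The local energy--Strichartz bootstrap (as in \eqref{eq:block-local-bootstrap}) then bounds the nonlinear integral by $C_M\theta^{3/7}$ uniformly for $\norm u_{H^1}\le M$. This gives \eqref{eq:energy-fast-translation} at level $0$.

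\emph{Inductive step.} The elements added at level $n+1$ are $\mathsf JD^3\mathsf N[\phi,h,k]$ with $\phi\in\mathscr W_n$ and $h,k\in\mathscr W_0$. Real polarization writes this trilinear form as a finite real combination of $\mathsf N(\zeta_i)=\frac16D^3\mathsf N[\zeta_i,\zeta_i,\zeta_i]$ and of elements of $\mathscr W_n$, with each $\zeta_i\in\mathscr W_n$. Since the target $\mathsf S(\theta)u+a\zeta$ is additive in $\zeta$, it suffices to do two things:
\begin{enumerate}
\item compose finitely many fast translations, using that a composition of maps each close to $u\mapsto\mathsf S(\theta_i)u+a_i\zeta_i$ uniformly on $H^1$-balls is close to $\mathsf S(\sum\theta_i)u+\sum a_i\zeta_i+o(1)$;
\item realize $-a\ii\mathsf N(\zeta_i)$ for $\zeta_i\in\mathscr W_n$.
\end{enumerate}
For step 1, note that the image balls are enlarged only by a controlled amount. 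Each map is uniformly continuous on bounded $H^1$ sets over short times by the bootstrap, so errors propagate by at most a constant factor. Moreover, $\mathsf S(\theta')(a_i\zeta_i)\to a_i\zeta_i$ as $\theta'\to0$.

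For step 2, apply \Cref{lem:energy-ideal-cubic-loop}. By the induction hypothesis, there is a short control approximating $u\mapsto\mathsf S(\theta_1)u+R\zeta_a$ on the ball of radius $M$. Next run the free flow for time $t_R$ on the ball of radius $M+R\norm\zeta_{H^1}$. Finally apply the control for $-R\zeta_a$, by the induction hypothesis again, on a larger ball. The result approximates $\mathsf S(\theta_1+t_R+\theta_2)u-a\ii\mathsf N(\zeta)$.

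\emph{Order of choices.} The main obstacle is getting the quantifiers right, because the inner approximation errors get amplified. We first fix $R$ large so that \Cref{lem:energy-ideal-cubic-loop} holds with error $\epsilon/3$; this fixes $t_R<\delta/3$. The loop map is continuous on bounded sets with a modulus depending on $R$. Only after that do we invoke the induction hypothesis with radii $M'\sim M+R\norm\zeta_{H^1}$, tolerances much smaller than this modulus, and durations $\theta_1,\theta_2$ so small that $\mathsf S(\theta_i)$ is $\epsilon/3$-close to the identity on the relevant compact translates. Here the translation by $R\zeta_a$ itself is smooth, and the free flow is strongly continuous uniformly on $\mathsf S$-images of balls in the difference.

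\emph{Remaining technical point.} We also need $\mathsf S(\theta_1)u\approx u$ uniformly on an $H^1$ ball in $H^1$, which is false in general. To avoid this, we keep the $\mathsf S$ factors and use that $\mathsf S$ commutes with itself. The loop estimate \eqref{eq:energy-ideal-loop} is uniform in $u$, so we apply it to $\mathsf S(\theta_1)u$. The only remaining discrepancy is $\mathsf S(t_R)R\zeta_a$ versus $R\zeta_a$, which is small because $\zeta$ is smooth, as in the proof of \Cref{lem:energy-ideal-cubic-loop}. The controls are concatenations of piecewise constant pieces, so $g$ has piecewise constant derivative, and the total time is below $\delta$.
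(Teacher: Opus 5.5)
Your proposal is correct and follows essentially the same route as the paper: the linear-ramp control at level $0$, then the pulse--zero-control-flow--pulse loop of \Cref{lem:energy-ideal-cubic-loop} applied at $w=\mathsf S(\theta_1)u$, the polarization identity $D^3\mathsf N[\phi,h,k]=\frac18\sum_{\varepsilon}\varepsilon_1\varepsilon_2\varepsilon_3\mathsf N(\xi_\varepsilon)$, and concatenation for real linear combinations, with $R$ fixed first and the pulse tolerances and durations chosen afterwards. One small correction: the residual discrepancy left by the composition is $R(\mathsf S(\theta_2)-\Id)\zeta_a$ (together with $a\ii(\mathsf S(\theta_2)-\Id)\mathsf N(\zeta)$), which comes from the second pulse. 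It is not $\mathsf S(t_R)R\zeta_a-R\zeta_a$, which is already absorbed inside the loop lemma, and your order-of-choices paragraph (taking $\theta_2$ small after $R$ is fixed) already takes care of the correct term.
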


\begin{proof}
We argue by induction in the reduced saturation recursion.

\emph{Step 1: the forced level.}
Let $\zeta\in\mathscr W_0=\Ran B$ and choose $e\in\R^m$ with
$Be=\zeta$.  Set
\begin{align*}
 g_\theta(t)=\frac{at}{\theta}e,\qquad0\le t\le\theta,
\end{align*}
so that $B\dot g_\theta=a\zeta/\theta$.  The mild equation gives
\begin{align*}
 \Phi_\theta^{g_\theta}(u)
 =\mathsf S(\theta)u
 +a\int_0^1\mathsf S(\theta(1-\rho))\zeta\,\dd\rho
 -\ii\theta\int_0^1\mathsf S(\theta(1-\rho))
 \mathsf N(\Phi_{\theta\rho}^{g_\theta}(u))\,\dd\rho.
\end{align*}
\Cref{lem:periodic-toolkit} and the local energy--Strichartz
bootstrap leading to \eqref{eq:block-local-bootstrap} give
\begin{align*}
 \sup_{\norm u_{H^1}\le M}
 \left\{
 \norm{\Phi_\cdot^{g_\theta}(u)}_{C(0,\theta;H^1)}
 +\norm{\Phi_\cdot^{g_\theta}(u)}_{L^{7/2}(0,\theta;W^{7/8,7/2})}
 \right\}
 \le C_{M,a,\zeta}
\end{align*}
for all sufficiently small $\theta$.  Hence
\begin{align*}
 \sup_{\norm u_{H^1}\le M}
 \norm{\mathsf N(\Phi_\cdot^{g_\theta}(u))}_{L^1(0,\theta;H^1)}
 \le C_{M,a,\zeta}\theta^{3/7},
\end{align*}
while smoothness of $\zeta$ gives
\begin{align*}
 \norm{a\int_0^1
 (\mathsf S(\theta(1-\rho))-\Id)\zeta\,\dd\rho}_{H^1}
 \le C_{a,\zeta}\theta.
\end{align*}
Thus
\begin{align*}
 \sup_{\norm u_{H^1}\le M}
 \norm{\Phi_\theta^{g_\theta}(u)
 -\bigl(\mathsf S(\theta)u+a\zeta\bigr)}_{H^1}
 \le C_{M,a,\zeta}(\theta+\theta^{3/7})
 \longrightarrow0.
\end{align*}
This proves \eqref{eq:energy-fast-translation} on $\mathscr W_0$.

\emph{Step 2: the reduced cubic induction.}
Assume \eqref{eq:energy-fast-translation} holds for every direction
in $\mathscr W_n$.  Since $\mathscr W_0\subset\mathscr W_n$, it
suffices by \eqref{eq:cubic-recursion} to treat
\begin{align*}
 \mathsf JD^3\mathsf N[\phi,h,k],
 \qquad \phi\in\mathscr W_n,\quad h,k\in\mathscr W_0.
\end{align*}
For $\varepsilon=(\varepsilon_1,\varepsilon_2,\varepsilon_3)
\in\{-1,1\}^3$ set
\begin{align*}
 \xi_\varepsilon
 =\varepsilon_1\phi+\varepsilon_2h+\varepsilon_3k\in\mathscr W_n.
\end{align*}
Fixing $\xi=\xi_\varepsilon$, $a\ne0$, and applying
\Cref{lem:energy-ideal-cubic-loop} with 
\begin{align*}
 \xi_a=\operatorname{sgn}(a)\xi,\qquad t_R=\abs aR^{-3},
\end{align*}
one has 
\begin{align}\label{eq:kappaR}
 \kappa_R
 =\sup_{\norm w_{H^1}\le M}
 \norm{\mathcal T_{-R\xi_a}\Phi_{t_R}^0
 \mathcal T_{R\xi_a}w
 -\bigl(\mathsf S(t_R)w-a\ii\mathsf N(\xi)\bigr)}_{H^1}\to0.
\end{align}

Let $R, \eta>0$.  By the induction hypothesis, there are
arbitrarily small $\theta_R^+>0$ and a control $g_R^+$ such that
\begin{align*}
 \Phi_{\theta_R^+}^{g_R^+}(u)
 =\mathsf S(\theta_R^+)u+R\xi_a+e_R^+(u),
 \qquad
 \sup_{\norm u_{H^1}\le M}\norm{e_R^+(u)}_{H^1}<\eta.
\end{align*}
The standard bounded-tube $H^1$ stability of the zero control flow gives
\begin{align*}
 \Phi_{t_R}^0\bigl(\Phi_{\theta_R^+}^{g_R^+}(u)\bigr)
 =\Phi_{t_R}^0\bigl(\mathsf S(\theta_R^+)u+R\xi_a\bigr)
 +e_R^0(u),
 \qquad
 \sup_{\norm u_{H^1}\le M}\norm{e_R^0(u)}_{H^1}
 \le C_R\eta.
\end{align*}
Since $\norm{\mathsf S(\theta_R^+)u}_{H^1}\le\norm u_{H^1}$, it follows from \eqref{eq:kappaR} with $w=\mathsf S(\theta_R^+)u$ that 
\begin{align*}
 \Phi_{t_R}^0\bigl(\mathsf S(\theta_R^+)u+R\xi_a\bigr)
 -R\xi_a=\mathsf S(t_R+\theta_R^+)u
 -a\ii\mathsf N(\xi)+e_R^{\rm id}(u),
\end{align*}
where
\begin{align*}
 \sup_{\norm u_{H^1}\le M}
 \norm{e_R^{\rm id}(u)}_{H^1}\le\kappa_R.
\end{align*}
Thus
\begin{align}\label{eq:bounded-middle-word}
 \Phi_{t_R}^0\circ\Phi_{\theta_R^+}^{g_R^+}(u)
 =R\xi_a+\mathsf S(t_R+\theta_R^+)u
 -a\ii\mathsf N(\xi)+\widetilde e_R(u),
\end{align}
with
\begin{align*}
 \sup_{\norm u_{H^1}\le M}
 \norm{\widetilde e_R(u)}_{H^1}
 \le\kappa_R+C_R\eta.
\end{align*}

For fixed $R$, the middle states in \eqref{eq:bounded-middle-word}
remain in an $H^1$-ball of some radius $M_R<\infty$.  Apply the
induction hypothesis on this ball to choose an arbitrarily small
$\theta_R^->0$ and a control $g_R^-$ such that
\begin{align*}
 \Phi_{\theta_R^-}^{g_R^-}(v)
 =\mathsf S(\theta_R^-)v-R\xi_a+e_R^-(v),
 \qquad
 \sup_{\norm v_{H^1}\le M_R}\norm{e_R^-(v)}_{H^1}<\eta.
\end{align*}
Substituting \eqref{eq:bounded-middle-word} gives, with
$\theta_R=\theta_R^++t_R+\theta_R^-$,
\begin{align*}
 &\Phi_{\theta_R^-}^{g_R^-}\circ\Phi_{t_R}^0
 \circ\Phi_{\theta_R^+}^{g_R^+}(u)\\
 &\quad=\mathsf S(\theta_R)u-a\ii\mathsf N(\xi)
 +R\bigl(\mathsf S(\theta_R^-)\xi_a-\xi_a\bigr)
 -a\ii\bigl(\mathsf S(\theta_R^-)\mathsf N(\xi)-\mathsf N(\xi)\bigr)
 +E_R(u),
\end{align*}
where
\begin{align*}
 \sup_{\norm u_{H^1}\le M}\norm{E_R(u)}_{H^1}
 \le\kappa_R+C_R\eta+\eta.
\end{align*}
Since $\xi$ and $\mathsf N(\xi)$ are smooth, for fixed $R$ the
duration $\theta_R^-$ may be chosen so small that
\begin{align*}
 R\norm{(\mathsf S(\theta_R^-)-\Id)\xi_a}_{H^1}
 +|a|\norm{(\mathsf S(\theta_R^-)-\Id)\mathsf N(\xi)}_{H^1}
\end{align*}
is arbitrarily small.  Choosing first $R$ large, so that
$t_R$ and $\kappa_R$ are small, and then the two pulse durations and
$\eta$ sufficiently small, proves \eqref{eq:energy-fast-translation} for $\zeta=-\ii\mathsf N(\xi_\varepsilon)$. The real polarization identity
\begin{align*}
 D^3\mathsf N[\phi,h,k]
 =\frac18\sum_{\varepsilon\in\{-1,1\}^3}
 \varepsilon_1\varepsilon_2\varepsilon_3
 \mathsf N(\xi_\varepsilon)
\end{align*}
therefore yields the same property for
$\mathsf JD^3\mathsf N[\phi,h,k]$.  Finite real linear combinations
are obtained by concatenating the corresponding short controls; the
only additional errors are again of the form
$\mathsf S(s)\zeta-\zeta$, with $\zeta$ a smooth saturation direction.
Hence \eqref{eq:energy-fast-translation} holds on $\mathscr W_{n+1}$.
Induction proves the lemma.
\end{proof}

The support property is now immediate.

\begin{corollary}
\label{cor:full-support}
Assume cubic saturation.  Every invariant probability measure
of $(P_t)_{t\ge0}$ has full $H^1$-support.
\end{corollary}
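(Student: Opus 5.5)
The plan is to combine the uniform fast-translation property of \Cref{lem:reduced-fast-translation} with the damping of the free propagator to steer any starting point approximately to any target in $H^1$. Then we pass from approximate controllability to positivity of transition probabilities by a continuity (support theorem) argument, and conclude via invariance of the measure.

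\emph{Step 1: approximate controllability.} Fix $x\in H^1$, a target $y\in H^1$ and $\epsilon>0$. By cubic saturation choose $\zeta\in\mathscr W_\infty$ with $\norm{y-\zeta}_{H^1}<\epsilon/3$. First I steer $x$ close to $\zeta$. Since $\mathsf S(\theta)$ is a contraction on $H^1$, applying \Cref{lem:reduced-fast-translation} with $a=1$ on the ball $\norm u_{H^1}\le M:=\norm x_{H^1}$ gives, for small $\theta$, a control $g$ with $\Phi^g_\theta(x)$ within $\epsilon/3$ of $\mathsf S(\theta)x+\zeta$. Moreover $\norm{\mathsf S(\theta)x-x}_{H^1}\to0$ as $\theta\to0$, which is not what we want. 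I therefore instead first translate by $-x$ approximately. Choose $\xi\in\mathscr W_\infty$ with $\norm{x-\xi}_{H^1}$ small, and take the translation $a\zeta$ with $\zeta=-\xi+\zeta_y$ (an element of $\mathscr W_\infty$, which is a linear space). Then $\Phi^g_\theta(x)\approx \mathsf S(\theta)x-\xi+\zeta_y\approx \zeta_y$ within $\epsilon$ for $\theta$ small. Thus for every $x,y,\epsilon$ there are $\theta>0$ and $g\in H_\theta$ with $\norm{\Phi^g_\theta(x)-y}_{H^1}<\epsilon$. (Alternatively, zero control plus damping brings energy down, but translation suffices.)

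\emph{Step 2: positivity of transition probabilities.} I use continuity of the Borel solution map $\omega\mapsto\Phi_\theta(x,\omega)$ at the Cameron--Martin point $\iota_\theta g$ in the $C_0([0,\theta];\R^m)$ topology. Since the noise is additive, writing $u=v+z$ with $z$ the stochastic convolution $\int_0^t\mathsf S(t-s)B\,\dd W_s$ (continuous in $\omega$ into $C([0,\theta];H^k)$ for every $k$, via integration by parts since $B$ has smooth finite rank) reduces the problem to a random-coefficient NLS for $v$. The local energy--Strichartz stability from \Cref{app:analytic} gives continuous dependence of $v$ on $z$ in $C H^1$. Hence a uniform neighborhood $\{\norm{\omega-g}_{C_0}<r\}$ maps into $B_{H^1}(y,2\epsilon)$. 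Since Wiener measure charges every sup-norm ball around a Cameron--Martin path, this yields $P_\theta(x,B_{H^1}(y,2\epsilon))>0$. Lower semicontinuity of $x\mapsto P_\theta(x,U)$ for open $U$ (Feller property) then makes this positive on a neighborhood of $x$.

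\emph{Step 3: support.} Let $\mu$ be invariant, $U=B_{H^1}(y,2\epsilon)$, and pick $x\in\supp\mu$ (which is nonempty). Then $\mu(U)=\int P_\theta(z,U)\mu(\dd z)\ge \mu(\mathcal O_x)\inf_{\mathcal O_x}P_\theta(\cdot,U)>0$ for a small neighborhood $\mathcal O_x$. One subtlety is that $\theta$ depends on $x$; this is harmless since invariance holds for every $\theta$. Since $y$ and $\epsilon$ are arbitrary, $\supp\mu=H^1$.

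I expect the main obstacle to be Step 2: continuity of the solution map in the Wiener path in the supremum topology at the $H^1$ level in 3D. This is where the Wong--Zakai-free additive structure and the Strichartz stability must be invoked carefully. Step 1 relies on \Cref{lem:reduced-fast-translation} and the linearity of $\mathscr W_\infty$.
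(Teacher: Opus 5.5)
Your proposal is correct and follows the paper's proof. You use \Cref{lem:reduced-fast-translation} with a translation in $\mathscr W_\infty$ approximating $y-x$ (your $-\xi+\zeta_y$; the paper simply chooses $p\in\mathscr W_\infty$ near $y-x_0$ directly by density), then continuity of the pathwise solution map around the regular controlled driver, full support of Wiener measure, and invariance. The only cosmetic difference is that you get a neighborhood of the starting point from Feller lower semicontinuity of $x\mapsto P_\theta(x,U)$, whereas the paper uses joint continuity in $(x,\omega)$ on the open domain $\mathfrak D_\theta$ to obtain a product neighborhood $U\times\mathcal O_g$; both work.
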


\begin{proof}
Let $\nu$ be invariant and $O\subset H^1$ be nonempty and open.
Choose $x_0\in\supp\nu$ and $y\in O$.  By density of
$\mathscr W_\infty$, choose $p\in\mathscr W_\infty$ such that
$x_0+p$ lies sufficiently close to $y$.  Since
$\mathsf S(\theta)x_0\to x_0$ in $H^1$, \Cref{lem:reduced-fast-translation}
gives, for some sufficiently small $\theta>0$ and $g\in H_\theta$,
\begin{align*}
 \Phi_\theta^g(x_0)\in O.
\end{align*}
Since $(x_0,g)\in\mathfrak D_\theta$, continuity with respect to the initial data and the driver 
give neighborhoods $U$ of $x_0$ in $H^1$ and $\mathcal O_g$ of $g$
in $E_\theta$  such that all the corresponding solutions exist up to time
$\theta$ and
$\Phi_\theta(x,\omega)\in O$ for $x\in U$ and
$\omega\in\mathcal O_g$.  Since
$\nu(U)>0$ and $\boldsymbol{\gamma}_\theta(\mathcal O_g)>0$,
invariance gives
\begin{align*}
 \nu(O)\ge
 \boldsymbol{\gamma}_\theta(\mathcal O_g)\nu(U)>0.
\end{align*}
\end{proof}

The next proposition on uniform irreducibility verifies \Cref{ass:common-accessibility} for the SNLS  used in the
coupling argument.

\begin{proposition}
\label{prop:reduced-common-accessibility}
Assume cubic saturation as in \Cref{def:cubic-saturation}.  There exist
$u_*\in H^1$ and $R_*<\infty$ such that, for every
$R<\infty$ and $r>0$, there are $T=T(R,r)>0$ and
$p=p(R,r)>0$ satisfying
\begin{align*}
 \inf_{x\in\mathbb V_R}
 P_T\bigl(x,\mathbb V_{R_*}
 \cap B_{L^2}(u_*,r)\bigr)\ge p.
\end{align*}
\end{proposition}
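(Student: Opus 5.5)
Take $u_*=0$ and use the damping: from any $x\in\mathbb V_R$ the zero-control flow $\Phi^0_t(x)$ is driven toward the origin. The paper remarks that deterministic attraction to the origin already suffices here, so the plan is a deterministic step followed by a stability step.

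\emph{Step 1: deterministic energy decay.} For the noiseless equation, the Hamiltonian and the mass satisfy
\begin{align*}
\frac{\dd}{\dd t}\cE(\Phi^0_t x)=-\gamma\mathsf D(\Phi^0_tx)\le-2\gamma\cE(\Phi^0_tx),\qquad \frac{\dd}{\dd t}\norm{\Phi^0_tx}_{L^2}^2=-2\gamma\norm{\Phi^0_tx}_{L^2}^2 .
\end{align*}
These are the standard energy and mass identities; the dispersive and cubic terms are conservative. Together with \eqref{eq:energy-coercivity} they give, uniformly on $\mathbb V_R$,
\begin{align*}
\norm{\Phi^0_Tx}_{L^2}\le \e^{-\gamma T}C_R,\qquad \cE(\Phi^0_Tx)\le \e^{-2\gamma T}R .
\end{align*}
Choose $T=T(R,r)$ with $\norm{\Phi^0_Tx}_{L^2}\le r/2$ and $\cV(\Phi^0_Tx)\le 3/2$. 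Then set $R_*=2$.

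\emph{Step 2: stability under small noise, uniformly over the sublevel.} The paper's Appendix gives bounded-tube $H^1$ stability of the flow with respect to the driver, of the kind already used in \Cref{lem:reduced-fast-translation} and \Cref{cor:full-support}. The plan is to show there is $\varepsilon>0$, depending only on $(R,T,r)$, such that
\begin{align*}
\sup_{t\le T}\abs{\omega(t)}\le\varepsilon
\end{align*}
implies $\norm{\Phi_T(x,\omega)-\Phi^0_T(x)}_{H^1}$ is small enough for all $x\in\mathbb V_R$. Small enough here means the $L^2$ distance is below $r/2$ and, by continuity of $\cE$ on $H^1$ balls, the energy increases by at most $1/2$.

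To get this, write $u=z+B\omega$-type corrections, or more precisely $u=w+\int_0^t\mathsf S(t-s)B\,\dd\omega_s$. Integrating by parts, the stochastic convolution is controlled in $C([0,T];H^k)$ by $\sup\abs{\omega}$ because the $b_j$ are smooth. Then $w$ solves a random NLS with a small smooth forcing term inside the nonlinearity.

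The energy--Strichartz bootstrap and Gronwall argument then compares $w$ with $\Phi^0(x)$. The controlling quantity is the $L^{7/2}W^{7/8,7/2}$ norm of $\Phi^0(x)$, which is bounded uniformly for $x\in\mathbb V_R$ by the global theory. This gives the required uniform continuity.

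\emph{Step 3: conclusion.} The ball $\{\sup_{t\le T}\abs{\omega}\le\varepsilon\}$ has positive Wiener measure $p$, independent of $x$. By Steps 1 and 2, on this event $\Phi_T(x,\omega)\in\mathbb V_{R_*}\cap B_{L^2}(0,r)$ for every $x\in\mathbb V_R$, which gives the claimed lower bound $p$.

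\emph{Main obstacle.} The hard point is the uniformity in Step 2. The $H^1$-critical-type 3D cubic stability must hold uniformly over the whole sublevel $\mathbb V_R$, which is bounded but not compact in $H^1$.

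To handle this, I would avoid compactness. Instead I would use a quantitative stability estimate: split $[0,T]$ into finitely many subintervals on which the Strichartz norm of $\Phi^0(x)$ is small, with the number of subintervals depending only on $R$ through the global energy--Strichartz bound. Then iterate the short-time perturbation lemma on each subinterval.

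Alternatively, it would suffice to argue in $L^2$ plus energy. One bounds $\cE(\Phi_T(x,\omega))$ by Itô's formula and a pathwise Gronwall argument, and bounds the $L^2$ difference via the exact difference equation with $\mathsf d,\mathsf c$ as in \Cref{prop:exact-stable-compact-difference}. That difference equation preserves the $L^2$ norm up to damping and the forcing difference.
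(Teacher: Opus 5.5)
Your proposal is correct, but it takes a different route from the paper.

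\textbf{The paper's route.} The paper fixes an arbitrary target $u_*$. It first uses the Lyapunov drift to move every $x\in\mathbb V_R$ into a fixed core $\mathbb V_{R_{\rm b}}$ with probability at least $1/2$, and covers this $L^2$-compact core by a finite $\varepsilon$-net. It then steers each net point close to $u_*$ with a saturating control $g_j$ from \Cref{lem:reduced-fast-translation}. Uniformity comes from finitely many local tubes (\Cref{lem:regular-core-local-tube}) together with full support of Wiener measure near each $g_j$.

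\textbf{Your route.} You use a single control, zero, for all initial data and take $u_*=0$. Uniformity comes from subcriticality, not compactness. The local existence time of the 3D cubic equation depends only on the $H^1$ norm (cf.\ \eqref{eq:block-local-bootstrap}), so the $\mathcal S(0,T)$ norm of the unforced flow is bounded uniformly on $H^1$ balls. A short-interval perturbation argument with a continuity bootstrap then gives dependence on $\sup_t|\omega(t)|$ uniformly over $\mathbb V_R$. This is essentially \eqref{eq:Lipschitz-stability}, plus the argument that the perturbed solution stays in the tube up to time $T$.

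\textbf{Comparison.} Your argument is shorter and needs no Lyapunov reduction, no net, and no saturation. It gives the explicit choice $u_*=0$, $R_*=2$, and realizes the paper's own remark that deterministic attraction to the origin would suffice. The paper's argument costs more but buys three things:
\begin{itemize}
\item it survives the addition of smooth deterministic forcing, where the zero-control flow need not contract to a point;
\item it allows an arbitrary target $u_*$;
\item it shares its controllability machinery with \Cref{cor:full-support}.
\end{itemize}

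\textbf{Two minor corrections.} First, the 3D cubic equation is $\dot H^{1/2}$-critical, hence $H^1$-subcritical, not ``$H^1$-critical-type.'' Subcriticality is exactly why uniformity over the noncompact sublevel comes cheaply here. Second, drop your closing alternative via It\^o's formula and a pathwise Gronwall argument. The stochastic integral is not controlled pathwise by $\sup_t|\omega(t)|$ on the small-ball event. What makes Step~2 work is the pathwise decomposition $u=w+Z^\omega$.
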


\begin{proof}
We first reduce the initial Lyapunov sublevels to one fixed source
core.  Let $c_1,C_1>0$ be the constants in
\Cref{prop:lyapunov} for $r=1$ there.  Since $\cV=1+\cE$,
\eqref{eq:polynomial-foster} gives
\begin{align}\label{eq:access-lyapunov-return}
 P_t\cV(x)=\E_x\cV(u_t)
 \le \e^{-c_1t}\cV(x)+C_1.
\end{align}
Set $R_{\rm b}=4C_1$ and  for $R<\infty$, choose $s_R$ so large that
\begin{align*}
 \e^{-c_1s_R}R\le C_1.
\end{align*}
Then for $x\in\mathbb V_R$, Markov's inequality and
\eqref{eq:access-lyapunov-return} give
\begin{align*}
 P_{s_R}(x,\mathbb V_{R_{\rm b}}^c)
 =\Prob\{\cV(u_{s_R})>R_{\rm b}\}
 \le\frac12 
\end{align*}
implying 
\begin{align}\label{eq:access-fixed-source-core}
 \inf_{x\in\mathbb V_R}
 P_{s_R}(x,\mathbb V_{R_{\rm b}})\ge\frac12.
\end{align}
By energy coercivity \eqref{eq:energy-coercivity}, we may choose $M_{\rm b}<\infty$ such that
\begin{align*}
 \mathbb V_{R_{\rm b}}
 \subset\{u\in H^1:\norm u_{H^1}\le M_{\rm b}\}.
\end{align*}
Fix any $u_*\in H^1$ and set
\begin{align*}
 \varepsilon=\min\{1,r/32\},\qquad
 L=2M_{\rm b}+\norm{u_*}_{H^1}+2.
\end{align*}
The standard deterministic energy bound gives $L_+<\infty$ such that
\begin{align}\label{eq:short-padding-bound}
 \sup_{\norm v_{H^1}\le L}
 \norm{\Phi_t^0(v)}_{H^1}\le L_+, \quad 0\le t\le1. 
\end{align}
Moreover, the mild equation and
$\norm{(\mathsf S(t)-\Id)v}_{L^2}\le Ct^{1/2}\norm v_{H^1}$ give
\begin{align}\label{eq:short-padding-modulus}
 \sup_{\substack{\norm v_{H^1}\le L\\0\le t\le\delta}}
 \norm{\Phi_t^0(v)-v}_{L^2}\longrightarrow0
 \qquad\text{as }\delta\downarrow0,
\end{align}
since
\begin{align*}
 \int_0^t\norm{\mathsf N(\Phi_s^0(v))}_{L^2}\,\dd s
 \le Ct\sup_{s\le t}\norm{\Phi_s^0(v)}_{H^1}^3.
\end{align*}
Choose $0<\delta_r<1$ such that
\begin{align*}
 \sup_{\substack{\norm v_{H^1}\le L\\0\le t\le\delta_r}}
 \norm{\Phi_t^0(v)-v}_{L^2}<r/8,
 \qquad
 \sup_{\substack{\norm v_{H^1}\le M_{\rm b}\\0\le t\le\delta_r}}
 \norm{(\mathsf S(t)-\Id)v}_{L^2}<\varepsilon.
\end{align*}

Since $\mathbb V_{R_{\rm b}}$ is compact in $L^2$, choose an
$\varepsilon$-net $x_1,\ldots,x_N$.  Cubic saturation gives
$p_j\in\mathscr W_\infty$ such that
\begin{align*}
 \norm{x_j+p_j-u_*}_{H^1}<\varepsilon.
\end{align*}
In particular,
\begin{align*}
 \norm{p_j}_{H^1}
 \le M_{\rm b}+\norm{u_*}_{H^1}+1.
\end{align*}
Apply \Cref{lem:reduced-fast-translation} with
$\zeta=p_j$, $a=1$, $M=M_{\rm b}$, duration below $\delta_r$, and
error below $\varepsilon$.  We obtain $\theta_j<\delta_r$ and
$g_j\in H_{\theta_j}$ such that, for every
$\norm z_{H^1}\le M_{\rm b}$,
\begin{align*}
 \norm{\Phi_{\theta_j}^{g_j}(z)
 -\bigl(\mathsf S(\theta_j)z+p_j\bigr)}_{H^1}<\varepsilon,
\end{align*}
which particularly shows $\norm{\Phi_{\theta_j}^{g_j}(z)}_{H^1}\le L$. Therefore, for $\norm{z-x_j}_{L^2}<\varepsilon$, 
\begin{align*}
 \norm{\Phi_{\theta_j}^{g_j}(z)-u_*}_{L^2}
 &\le\norm{\mathsf S(\theta_j)(z-x_j)}_{L^2}
 +\norm{(\mathsf S(\theta_j)-\Id)x_j}_{L^2}\\
 &\quad+\norm{x_j+p_j-u_*}_{L^2}+\varepsilon
 <4\varepsilon\le r/8.
\end{align*}
Put $\tau=\max_j\theta_j$ and extend each $g_j$ constantly on
$[\theta_j,\tau]$.  By
\eqref{eq:short-padding-bound}--\eqref{eq:short-padding-modulus},
\begin{align*}
 \norm{\Phi_\tau^{g_j}(z)-u_*}_{L^2}<r/4,
 \qquad
 \norm{\Phi_\tau^{g_j}(z)}_{H^1}\le L_+
\end{align*}
whenever $z\in\mathbb V_{R_{\rm b}}$ and
$\norm{z-x_j}_{L^2}<\varepsilon$.

All these piecewise smooth controlled trajectories belong to
$\mathfrak D_\tau$.  The controlled energy--Strichartz estimates used in
\Cref{lem:reduced-fast-translation}, together with the zero control
padding, give
\begin{align*}
 \max_{1\le j\le N}\sup_{z\in\mathbb V_{R_{\rm b}}}
 \left\{
 \norm{\Phi_\cdot^{g_j}(z)}_{C(0,\tau;H^1)}
 +\norm{\Phi_\cdot^{g_j}(z)}_{L^{7/2}(0,\tau;W^{7/8,7/2})}
 \right\}<\infty.
\end{align*}
For each fixed $j$, apply \Cref{lem:regular-core-local-tube} at
$(z,g_j)$ for $z\in\mathbb V_{R_{\rm b}}$.  A finite $L^2$ subcover
of the compact
set $\mathbb V_{R_{\rm b}}$, and the intersection of the corresponding
finitely many driver neighborhoods, give a common
energy--Strichartz tube.  The bounded-tube estimate
\eqref{eq:Lipschitz-stability} then gives $\eta_j>0$ such that
\begin{align*}
 \norm{\Phi_\tau(z,\omega)-\Phi_\tau^{g_j}(z)}_{H^1}
 <\min\{1,r/2\}
\end{align*}
whenever $z\in\mathbb V_{R_{\rm b}}$, and 
\begin{align*}
 \norm{z-x_j}_{L^2}<\varepsilon,\qquad
 \norm{\omega-g_j}_{E_\tau}<\eta_j.
\end{align*}
Set
\begin{align*}
 R_*
 =1+\frac12(L_++1)^2+\frac14C_4^4(L_++1)^4,
\end{align*}
where the constant $C_4$ is from $\norm v_{L^4}\le C_4\norm v_{H^1}$.  Then
\begin{align*}
 \Phi_\tau(z,\omega)\in
 \mathbb V_{R_*}\cap B_{L^2}(u_*,r)
\end{align*}
on the preceding driver neighborhood.  Notice that
$R_*$ is independent of both $R$ and $r$.

Since Wiener measure has full support on $E_\tau$,
\begin{align*}
 p_0=\min_{1\le j\le N}
 \boldsymbol{\gamma}_\tau
 \{\omega:\norm{\omega-g_j}_{E_\tau}<\eta_j\}>0.
\end{align*}
Therefore
\begin{align*}
 \inf_{z\in\mathbb V_{R_{\rm b}}}
 P_\tau\bigl(z,\mathbb V_{R_*}
 \cap B_{L^2}(u_*,r)\bigr)\ge p_0.
\end{align*}
Combining this with \eqref{eq:access-fixed-source-core} and the Markov
property yields
\begin{align*}
 \inf_{x\in\mathbb V_R}
 P_{s_R+\tau}\bigl(x,\mathbb V_{R_*}
 \cap B_{L^2}(u_*,r)\bigr)\ge\frac{p_0}{2}.
\end{align*}
\end{proof}

\subsection{Geometric characterizations of saturation}
\label{subsec:saturation-geometry}

In this section we give two complementary geometric
descriptions of cubic saturation in \Cref{def:cubic-saturation}.  For general complex valued
smooth forcing profiles, the recursion is described by a
finitely generated algebra of real linear endomorphism fields, while for complex phase complete Fourier forcing, saturation is
equivalent to generation of the full difference lattice.

\subsubsection{General complex smooth forcing}

We first give a geometric characterization for general complex valued
smooth forcing profiles.  Assume throughout this part that
\begin{align*}
 b_1,\ldots,b_m\in C^\infty(\T^3;\C),\qquad
 b=(b_1,\ldots,b_m):\T^3\to\C^m.
\end{align*}
For $1\le j,k\le m$, define the real linear endomorphism field
$\mathsf T_{jk}(x)\in\operatorname{End}_\R(\C)$ by
\begin{align}\label{eq:complex-pair-operator}
 \mathsf T_{jk}(x)z
 =\mathsf JD^3\mathsf N[z,b_j(x),b_k(x)]
 =2\ii\bigl[(b_j\bar b_k+b_k\bar b_j)z+b_jb_k\bar z\bigr].
\end{align}
Let
\begin{align}\label{eq:complex-matrix-algebra}
 \mathbb A_b
 =\operatorname{Alg}_\R\{\operatorname{Id},\mathsf T_{jk}:1\le j,k\le m\}
 \subset C^\infty\bigl(\T^3;\operatorname{End}_\R(\C)\bigr)
\end{align}
be the unital real algebra generated by these fields under pointwise
composition.  Put
\begin{align*}
 \Gamma_b(x)=b(x)b(x)^*\in\operatorname{Herm}_m^+,
\end{align*}
where $b(x)^*$ is the conjugate transpose and
$\operatorname{Herm}_m^+$ is the cone of positive semidefinite
Hermitian $m\times m$ matrices.  Thus $\Gamma_b(x)$ records the
complex line spanned by $b(x)$ and its norm, while forgetting only
the common phase.

The cubic induction is encoded by matrix words in the $\mathsf T_{jk}$.
The following theorem identifies the resulting smooth geometry exactly.

\begin{theorem}
\label{thm:complex-smooth-geometric-saturation}
The following statements hold.
\begin{enumerate}[label=\textup{(\roman*)}]
 \item The cubic saturation closure admits the exact algebraic representation
 \begin{align}\label{eq:complex-exact-algebra}
  \mathscr W_\infty
  =\left\{\sum_{j=1}^m\mathsf A_jb_j:\mathsf A_j\in\mathbb A_b\right\},
 \end{align}
 where $\mathsf{A}_j b_j$ denotes pointwise action of the real-linear endomorphism $\mathsf{A}_j(x)$ on $b_j(x)$.
 \item For every integer $k\ge1$,
 \begin{align}\label{eq:complex-ck-characterization}
  \overline{\mathscr W_\infty}^{\,C^k}
  =C^k(\T^3;\C)
  \quad\Longleftrightarrow\quad
  \Gamma_b:\T^3\to\operatorname{Herm}_m^+
  \text{ is a smooth embedding}.
 \end{align}
 In particular, if $\Gamma_b$ is a smooth embedding, then the forcing
 is cubic saturating.
\end{enumerate}
\end{theorem}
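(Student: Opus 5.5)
The plan is to prove (i) by induction on the saturation level $n$, and (ii) by a Stone--Weierstrass/Nachbin-type argument in which $\mathscr W_\infty$ is viewed as a module over an algebra of phase-invariant scalar functions.

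\emph{Part (i).} By real trilinearity of $D^3\mathsf N$, and since $\mathscr W_0=\spanop_\R\{b_j\}$, the generators added in \eqref{eq:cubic-recursion} are real combinations of $\mathsf JD^3\mathsf N[\phi,b_j,b_k]=\mathsf T_{jk}\phi$, with the action pointwise by \eqref{eq:complex-pair-operator}. Let $\mathbb A_b^{(n)}$ be the real span of all words of length $\le n$ in the $\mathsf T_{jk}$, including $\operatorname{Id}$. I will show by induction that
\begin{align*}
\mathscr W_n=\Bigl\{\sum_j\mathsf A_jb_j:\mathsf A_j\in\mathbb A_b^{(n)}\Bigr\}.
\end{align*}
The inductive step uses only the identity $\mathsf T_{jk}(\sum_l\mathsf A_lb_l)=\sum_l(\mathsf T_{jk}\mathsf A_l)b_l$. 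Taking the union over $n$ gives \eqref{eq:complex-exact-algebra}.

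\emph{Necessity in (ii).} The key observation is phase covariance. If $b(y)=\e^{\ii\alpha}b(x)$, then $\mathsf T_{jk}(y)=\mathsf T_{jk}(x)$ as real endomorphisms. This is because the coefficients $b_j\bar b_k+b_k\bar b_j$ are invariant, while $b_jb_k\bar z$ acquires $\e^{2\ii\alpha}$; this is exactly compensated when the map is applied to something that rotates by $\e^{\ii\alpha}$. Tracking this through words applied to $b_l$, every $w\in\mathscr W_\infty$ satisfies $w(y)=\e^{\ii\alpha}w(x)$. Now suppose $\Gamma_b(x)=\Gamma_b(y)$ with $x\ne y$. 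Then either $b(x)=b(y)=0$, in which case every $w$ vanishes at both points, or $b(y)=\e^{\ii\alpha}b(x)$, in which case the relation $w(y)=\e^{\ii\alpha}w(x)$ survives $C^0$ limits. In both cases $\overline{\mathscr W_\infty}^{C^k}\ne C^k$.

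The same argument handles $b(x)=0$ at a single point, so $\Gamma_b$ must be nonvanishing, and it handles failure of immersivity. If $d\Gamma_b(x)v=0$, then $\partial_vb(x)=\ii\beta\,b(x)$ for some real $\beta$. Differentiating the covariance relation gives a linear constraint of the form $\partial_vw(x)=\ii\beta\,w(x)$ plus terms determined by $w(x)$, for every $w\in\mathscr W_\infty$. This constraint is closed in $C^1$, hence closed in $C^k$ for $k\ge1$.

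\emph{Sufficiency in (ii).} Assume $\Gamma_b$ is an embedding. The step I expect to be the main obstacle is showing that $\mathbb A_b$ contains, or generates modulo its action on the $b_l$, the multiplication operators by all real polynomials in $\Rea\Gamma_{jk}$ and $\operatorname{Im}\Gamma_{jk}$. My first attempt is to compute products $\mathsf T_{jk}\mathsf T_{lm}$ in the $2\times2$ real matrix picture, writing each as $\ii\cdot(\text{real scalar})+(\text{antilinear part})$. Composing two antilinear parts gives complex-linear maps with coefficients $b_jb_k\bar b_l\bar b_m$, and symmetrizing should isolate the scalar and $\ii$-scalar parts. An alternative is to work directly with $\mathscr W_\infty$: show that if $w\in\mathscr W_\infty$, then $p(\Gamma_b)w\in\mathscr W_\infty$ for each generator $p$ in a family of phase-invariant functions generating the same algebra $\mathcal P$ as the entries of $\Gamma_b$.

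Granting this, $\mathscr W_\infty$ is a $\mathcal P$-module, and I will finish in two steps.
\begin{enumerate}[label=\textup{(\alph*)}]
\item At each $x$ the fiber $\{w(x):w\in\mathscr W_\infty\}$ is all of $\C$. This holds because $b(x)\ne0$ and some $\mathsf T_{jk}(x)b_l(x)$ is $\R$-independent of $b(x)$; the $\ii$ factor in \eqref{eq:complex-pair-operator} supplies this.
\item Since $\Gamma_b$ is an embedding, Nachbin's theorem makes $\mathcal P$ dense in $C^k(\T^3;\R)$: it separates points and tangent vectors and has no common zero.
\end{enumerate}
A $C^k$ partition of unity then patches the local fiber spanning sets into a $C^k$ approximation of any $f\in C^k(\T^3;\C)$. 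Saturation in the sense of \Cref{def:cubic-saturation} follows by taking $k\ge1$ and using $C^k\hookrightarrow H^1$.
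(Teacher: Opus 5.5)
\textbf{There is a genuine gap in the sufficiency direction of (ii).} The step that carries the content is that words in the $\mathsf T_{jk}$ supply scalar multipliers rich enough to separate points and tangent vectors. You simply grant this (``Granting this''), and in the form you state it, it is false in general.

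Here is why. Each $\mathsf T_{jk}$ is quadratic in $b$, and a real combination of the $\mathsf T_{jk}$ has purely imaginary complex-linear part. With $\kappa z=\bar z$ one has $\mathsf T_{jk}(\bar b)=-\kappa\,\mathsf T_{jk}(b)\,\kappa$. So multiplication by $|b_j|^2$, $\Rea\Gamma_{jk}$ or $\operatorname{Im}\Gamma_{jk}$ is not produced in general.

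Your first attempt does not fix this. Writing $\mathsf Tz=\ii(\alpha z+\beta\bar z)$ and $\mathsf T'z=\ii(\alpha' z+\beta'\bar z)$, symmetrized products give only the real scalars $-2(\alpha\alpha'-\Rea(\beta\bar\beta'))$. Examples are $\mathsf T_{jj}^2=-12|b_j|^4$ and $\mathsf T_{jj}\mathsf T_{j\ell}+\mathsf T_{j\ell}\mathsf T_{jj}=-24|b_j|^2\Rea(b_j\bar b_\ell)$. All of these are invariant under $b\mapsto\bar b$. They therefore cannot separate $x$ from $y=(-x_1,-x_2,x_3)$ for the profiles \eqref{eq:three-complex-profiles}, where $b(y)=\overline{b(x)}$ even though $\Gamma_b$ is an embedding. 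The $\ii$-scalar part, a multiple of $\ii\operatorname{Im}(\beta\bar\beta')$, appears only in commutators, together with a nonzero antilinear term. So it is never isolated as a multiplication operator.

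The multipliers that are actually available also carry weights $|b_j|^2,|b_j|^4$ and degenerate where $b_j=0$. Consequently the algebra you would feed to Nachbin is not the algebra generated by the entries of $\Gamma_b$. Its separation of points and tangent vectors has to be re-proved.

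\textbf{How the paper closes this.} It works locally on $U_j=\{b_j\neq0\}$ with the real frame $b_j$, $\mathsf T_{jj}b_j=6\ii|b_j|^2b_j$. It uses the words $\mathsf T_{jj}^2$, $\mathsf T_{jj}\mathsf T_{j\ell}+\mathsf T_{j\ell}\mathsf T_{jj}$ and $\mathsf T_{jj}[\mathsf T_{jj},\mathsf T_{j\ell}]$. The last is not a scalar operator, but it acts on $b_j$ as multiplication by $-48|b_j|^4\operatorname{Im}(b_j\bar b_\ell)$, which recovers the sign of the imaginary parts.

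Composing words that act as scalars on $b_j$ gives $P(\Psi_j)b_j\in\mathscr W_\infty$ for every real polynomial $P$. Then $P(\Psi_j)\mathsf T_{jj}b_j=\mathsf T_{jj}(P(\Psi_j)b_j)$ also lies in $\mathscr W_\infty$. Since $|b_j|>0$ on $U_j$, $\Psi_j$ and $\Gamma_b$ determine each other smoothly there, so $\Psi_j$ inherits the embedding property. One then writes $\chi_jf=a_jb_j+c_j\mathsf T_{jj}b_j$ with $a_j=\widetilde a_j\circ\Psi_j$, and $C^k$ polynomial approximation finishes the argument.

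Your module-plus-Nachbin outline can only be completed after such an explicit identification of multipliers, including ones odd under $b\mapsto\bar b$, and a check that they separate points and tangent vectors.

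\textbf{Smaller points.} Part (i) and the necessity half are essentially the paper's argument, with two slips:
\begin{itemize}
\item $\mathsf T_{jk}(y)\neq\mathsf T_{jk}(x)$ when $b(y)=\e^{\ii\alpha}b(x)$. What holds, and what you use, is the covariance $\mathsf T_{jk}(y)(\e^{\ii\alpha}z)=\e^{\ii\alpha}\mathsf T_{jk}(x)z$.
\item For the immersion step, differentiate the equivariance $F_w(\e^{\ii\theta}z)=\e^{\ii\theta}F_w(z)$ of the polynomial representative $w=F_w\circ b$ from (i). This yields exactly $\partial_vw(x)=\ii\beta w(x)$.
\end{itemize}
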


\begin{proof}
For $h=\sum_j\alpha_jb_j$ and $g=\sum_\ell\beta_\ell b_\ell$, real
trilinearity gives
\begin{align*}
 \mathsf JD^3\mathsf N[\phi,h,g]
 =\sum_{j,\ell}\alpha_j\beta_\ell\mathsf T_{j\ell}\phi.
\end{align*}
Hence \eqref{eq:cubic-recursion} is precisely iteration of the
operators $\mathsf T_{jk}$ from
$\mathscr W_0=\spanop_\R\{b_1,\ldots,b_m\}$.  Inductively,
$\mathscr W_n$ is the real span of
\begin{align*}
 \mathsf T_{j_rk_r}\cdots\mathsf T_{j_1k_1}b_\ell,
 \qquad 0\le r\le n,
\end{align*}
and taking the union over $n$ proves
\eqref{eq:complex-exact-algebra}. Next we divide the proof of \eqref{eq:complex-ck-characterization} to two steps. 

\emph{Step 1: the implication $\Longleftarrow$ in
\eqref{eq:complex-ck-characterization}.}
Assume that $\Gamma_b$ is a
smooth embedding.  Then $b(x)\ne0$ for every $x$, since
$\dd\Gamma_b(x)=0$ whenever $b(x)=0$.  Thus
$U_j=\{x:b_j(x)\ne0\}$, $1\le j\le m$, form an open cover of $\T^3$.
A direct calculation from \eqref{eq:complex-pair-operator} gives,
for every $j,\ell$,
\begin{align}\label{eq:matrix-identity}
\begin{split}
\mathsf T_{jj}^2
 &=-12|b_j|^4\operatorname{Id},\\
 \mathsf T_{jj}\mathsf T_{j\ell}
 +\mathsf T_{j\ell}\mathsf T_{jj}
 &=-24|b_j|^2\Rea(b_j\bar b_\ell)\operatorname{Id},\\
 \mathsf T_{jj}[\mathsf T_{jj},\mathsf T_{j\ell}]b_j
 &=-48|b_j|^4\operatorname{Im}(b_j\bar b_\ell)b_j,\\
 -\mathsf T_{jj}[\mathsf T_{jj},\mathsf T_{j\ell}]
 (\mathsf T_{jj}b_j)
 &=-48|b_j|^4\operatorname{Im}(b_j\bar b_\ell)\mathsf T_{jj}b_j.
\end{split}
\end{align}
Moreover,
\begin{align*}
 \mathsf T_{jj}b_j=6\ii|b_j|^2b_j,
\end{align*}
so $b_j$ and $\mathsf T_{jj}b_j$ form a pointwise real basis of
$\C$ on $U_j$. Define
\begin{align*}
 \Psi_j(x)=\Bigl(
 |b_j|^4,\,
 (|b_j|^2\Rea(b_j\bar b_\ell))_{\ell=1}^m,\,
 (|b_j|^4\operatorname{Im}(b_j\bar b_\ell))_{\ell=1}^m
 \Bigr).
\end{align*}
As $|b_j|>0$ on $U_j$,  the maps $\Psi_j$ and $\Gamma_b$ smoothly
determine each other there.  Hence, as $\Gamma_b$ is a smooth
embedding, so is $\Psi_j$ on $U_j$.
The matrix identities \eqref{eq:matrix-identity}, after multiplication
by suitable real constants, show that each coordinate of $\Psi_j$
can be realized by a matrix word acting as scalar multiplication on
$b_j$ and on $\mathsf T_{jj}b_j$.  Since $\mathbb A_b$ in \eqref{eq:complex-matrix-algebra} is closed
under composition and real linear combinations, the same holds for
every real polynomial $P$ in these coordinates.  Hence by \eqref{eq:complex-exact-algebra}, 
\begin{align*}
 P(\Psi_j)b_j,\qquad
 P(\Psi_j)\mathsf T_{jj}b_j\in\mathscr W_\infty.
\end{align*}

Choose a smooth partition of unity $\chi_j$ subordinate to
$\{U_j\}$.  For $f\in C^\infty(\T^3;\C)$, the local real basis above
gives real functions $a_j,c_j\in C^\infty(\T^3)$, supported compactly
in $U_j$, such that
\begin{align*}
 \chi_jf=a_jb_j+c_j\mathsf T_{jj}b_j.
\end{align*}
Since $\Psi_j:U_j\to\Psi_j(U_j)$ is a smooth embedding and
$\operatorname{supp}a_j,\operatorname{supp}c_j\Subset U_j$, there are
smooth compactly supported functions $\widetilde a_j,\widetilde c_j$
on $\Psi_j(U_j)$ such that
\begin{align*}
 a_j=\widetilde a_j\circ\Psi_j,\qquad
 c_j=\widetilde c_j\circ\Psi_j.
\end{align*}
Since $a_j$ and $c_j$ vanish near $\partial U_j$ and $\Psi_j=0$ on
$\T^3\setminus U_j$, these functions may be extended smoothly to a
neighborhood of $\Psi_j(\T^3)$. After smooth extension to the ambient
Euclidean space and polynomial $C^k$ approximation on a compact box,
there are real polynomials $P_n,Q_n$ such that
\begin{align*}
 P_n(\Psi_j)\longrightarrow a_j,\qquad
 Q_n(\Psi_j)\longrightarrow c_j
 \qquad\text{in }C^k(\T^3).
\end{align*}
Therefore
\begin{align*}
 P_n(\Psi_j)b_j+Q_n(\Psi_j)\mathsf T_{jj}b_j
 \longrightarrow\chi_jf
 \qquad\text{in }C^k,
\end{align*}
and every term on the left belongs to $\mathscr W_\infty$.  Summing
in $j$ shows that every smooth complex valued function lies in the
$C^k$ closure of $\mathscr W_\infty$.  This finishes the proof of the implication $\Longleftarrow$. 

\emph{Step 2: The implication $\Longrightarrow$ in
\eqref{eq:complex-ck-characterization}.} Suppose that
$\overline{\mathscr W_\infty}^{\,C^k}=C^k(\T^3;\C)$ for some
$k\ge1$.  If $b(x_0)=0$ at some $x_0$, then every $\mathsf T_{jk}(x_0)$ vanishes,
so \eqref{eq:complex-exact-algebra} implies
$\phi(x_0)=0$ for every $\phi\in\mathscr W_\infty$, a contradiction. For nonzero $z,w\in\C^m$,
\begin{align*}
 zz^*=ww^*
 \quad\Longleftrightarrow\quad
 w=\e^{\ii\theta}z
\end{align*}
for some $\theta\in\R$.  For $h,g,z\in\C$, let
$\mathsf T_{h,g}z=2\ii[(h\bar g+g\bar h)z+hg\bar z]$.  Then
\begin{align*}
 \mathsf T_{\e^{\ii\theta}h,\e^{\ii\theta}g}
 (\e^{\ii\theta}z)
 =\e^{\ii\theta}\mathsf T_{h,g}z.
\end{align*}
Thus, if $\Gamma_b(x)=\Gamma_b(y)$, then
$b(y)=\e^{\ii\theta}b(x)$ for some $\theta$, and induction in
\eqref{eq:cubic-recursion} gives
\begin{align*}
 \phi(y)=\e^{\ii\theta}\phi(x),
 \qquad \phi\in\mathscr W_\infty.
\end{align*}
If $x\ne y$, this contradicts $C^k$ density: for
$\e^{\ii\theta}\ne1$ it excludes approximation of the constant
function, while for $\e^{\ii\theta}=1$ it prevents separation of the
two points.  Hence $\Gamma_b$ is injective.

It remains to prove immersion.  Suppose that
$0\ne\xi\in T_x\T^3$ and $\dd\Gamma_b(x)\xi=0$, and put
$w=\dd b(x)\xi$.  Then
\begin{align*}
 wb(x)^*+b(x)w^*=0.
\end{align*}
Applying this identity to $b(x)$ gives
$|b(x)|^2w+b(x)(w^*b(x))=0$, so $w=\lambda b(x)$ for some
$\lambda\in\C$.  Substituting back yields
$\lambda+\bar\lambda=0$, hence
\begin{align}\label{eq:db-alphab}
 \dd b(x)\xi=\ii\alpha b(x)
\end{align}
for some $\alpha\in\R$. For every $\phi\in\mathscr W_\infty$, the cubic induction gives a real
polynomial map $F_\phi:\C^m\simeq\R^{2m}\to\C$ such that
$\phi=F_\phi\circ b$ and
\begin{align}\label{eq:phase-identity}
 F_\phi(\e^{\ii\theta}z)=\e^{\ii\theta}F_\phi(z).
\end{align}
Indeed, for $\phi=b_j$ one takes $F_\phi(z)=z_j$.  If
$\phi=F_\phi\circ b$ and $h=H\circ b$, $g=G\circ b$ with real linear
$H,G$, then the new descendant is represented by
\begin{align*}
 2\ii\bigl[(H\bar G+G\bar H)F_\phi+HG\bar F_\phi\bigr],
\end{align*}
which again has phase weight one.  Differentiating the phase identity \eqref{eq:phase-identity} 
at $\theta=0$  and using \eqref{eq:db-alphab} give 
\begin{align*}
 \dd\phi(x)\xi
 =DF_\phi(b(x))[\dd b(x)\xi]
 =\alpha DF_\phi(b(x))[\ii b(x)]
 =\ii\alpha\phi(x).
\end{align*}
This first jet constraint is closed under $C^1$ convergence.  If
$\alpha\ne0$, it is violated by the constant function $1$; if
$\alpha=0$, it is violated by a smooth function with
$\dd f(x)\xi\ne0$.  This contradicts the assumed $C^k$ density.
Hence $\dd\Gamma_b(x)$ is injective for every $x$.  Thus $\Gamma_b$
is an injective immersion, and compactness of $\T^3$ makes it a
smooth embedding.
\end{proof}

The characterization yields a simple optimal complex family.

\begin{proposition}
Fix $0<r<R$ and set
\begin{align}\label{eq:three-complex-profiles}
 b_1(x)&=1,\notag\\
 b_2(x)&=(R+r\cos x_3)\e^{\ii x_1},\\
 b_3(x)&=(R+r\sin x_3)\e^{\ii x_2}.\notag
\end{align}
Then, for every integer $k\ge1$,
\begin{align*}
 \overline{\mathscr W_\infty}^{\,C^k}=C^k(\T^3;\C),
\end{align*}
and in particular these three profiles are cubic saturating.
Moreover, three profiles are minimal among complex profile families
with $C^1$ dense saturation.
\end{proposition}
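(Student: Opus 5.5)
The plan is to apply \Cref{thm:complex-smooth-geometric-saturation}(ii): it suffices to show that $\Gamma_b=bb^*:\T^3\to\operatorname{Herm}_3^+$ is a smooth embedding. After that, minimality has to be argued separately.

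Write $\rho_2=R+r\cos x_3$ and $\rho_3=R+r\sin x_3$. Both are $\ge R-r>0$, so $b(x)\ne0$ everywhere. The entries of $\Gamma_b$ that we will use are:
\begin{align*}
 (\Gamma_b)_{11}=1,\quad (\Gamma_b)_{22}=\rho_2^2,\quad (\Gamma_b)_{33}=\rho_3^2,\quad
 (\Gamma_b)_{21}=\rho_2\e^{\ii x_1},\quad (\Gamma_b)_{31}=\rho_3\e^{\ii x_2}.
\end{align*}
From these we recover $\e^{\ii x_1}$ and $\e^{\ii x_2}$ smoothly, since $\rho_j>0$. The pair $(\rho_2,\rho_3)=(R+r\cos x_3,\,R+r\sin x_3)$ is a circle of radius $r$ centered at $(R,R)$, which determines $x_3\in\R/2\pi\Z$ injectively. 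So $\Gamma_b$ is injective.

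For immersivity, compute the differential of the map
\begin{align*}
 x\mapsto\bigl(\e^{\ii x_1},\,\e^{\ii x_2},\,\rho_2,\,\rho_3\bigr).
\end{align*}
Its differential is injective: the $x_1$- and $x_2$-derivatives appear only in the first two entries, each with a nonzero derivative, and $\partial_{x_3}(\rho_2,\rho_3)=(-r\sin x_3,\,r\cos x_3)\ne0$. Since this map is a smooth function of entries of $\Gamma_b$, $\dd\Gamma_b$ is injective as well. Compactness of $\T^3$ then upgrades the injective immersion to an embedding. Theorem (ii) gives $C^k$ density for every $k\ge1$, and since $C^k$ density implies $H^1$ density for $k\ge1$, the forcing is cubic saturating.

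For minimality, I plan to show that $m\le2$ profiles cannot produce $C^1$ density. By the converse direction of \Cref{thm:complex-smooth-geometric-saturation}(ii), $C^1$ density forces $\Gamma_b$ to be an embedding of $\T^3$. Because $\Gamma_b$ is invariant under the phase action, it factors through the complex projectivized data: $b(x)\ne0$, and $\Gamma_b(x)$ is determined by the line $[b(x)]\in\C P^{m-1}$ together with the norm $|b(x)|\in(0,\infty)$. For $m=1$ the target is $(0,\infty)$, which is one-dimensional, so there is no immersion of a 3-manifold. For $m=2$ the target is $\C P^1\times(0,\infty)\cong S^2\times\R$. This is a 3-manifold, so an embedding of the closed manifold $\T^3$ would be an open map with compact image, making it a diffeomorphism onto a union of components. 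That is impossible because $S^2\times\R$ is noncompact and connected.

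The main obstacle is making this last step rigorous. One has to identify the image of $\Gamma_b$ for $m=2$, the rank-one positive semidefinite Hermitian $2\times2$ matrices, as a smooth connected noncompact 3-manifold, and then invoke invariance of domain. For $m=3$ the computations above are routine.
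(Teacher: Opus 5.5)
Your proposal is correct and follows the paper's proof. You reduce to \Cref{thm:complex-smooth-geometric-saturation}(ii) by reading $b_2,b_3$ off the first column of $\Gamma_b$; your recovery of $\e^{\ii x_1},\e^{\ii x_2},\rho_2,\rho_3$ simply spells out why $x\mapsto(b_2,b_3)$ is an embedding. Minimality is the same argument as in the paper: a dimension count on the nonzero rank-one cone, which has real dimension $2m-1$, plus invariance of domain for $m=2$. The point you flag as the main obstacle is standard: this cone is the image of $\C^m\setminus\{0\}$ under $z\mapsto zz^*$, an embedded submanifold diffeomorphic to $(0,\infty)\times\mathbb CP^{m-1}$, and the paper also just asserts it.
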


\begin{proof}
Since $b_1=1$, the first column of $\Gamma_b$ contains $b_2$ and
$b_3$, and the map $x\mapsto(b_2(x),b_3(x))$ is  a smooth
embedding for $R>r$.  Hence the conclusion follows from
\Cref{thm:complex-smooth-geometric-saturation}.

For minimality, $C^1$ dense saturation forces $\Gamma_b$ to embed
$\T^3$ into the nonzero rank one positive semidefinite Hermitian cone, which has
real dimension $2m-1$.  Thus $m=1$ is impossible.  For $m=2$ this
cone is the connected noncompact three-manifold
$(0,\infty)\times\mathbb CP^1$.  By invariance of domain, an embedding
of the compact three-manifold $\T^3$ into this cone would have open
image, which is impossible.  Hence $m\ge3$, and
\eqref{eq:three-complex-profiles} attains the bound.
\end{proof}

\subsubsection{Phase-complete Fourier forcing}

We now specialize the general complex forcing to a natural
phase-complete Fourier class.  For each forced wave vector $k$, the
forcing contains the full complex line $\C e_k$.  In this class,
cubic saturation admits an exact affine lattice characterization.

Let $K=\{k_1,\ldots,k_N\}\subset\Z^3$, $N\ge1$, and write
$e_k(x)=\e^{\ii k\cdot x}$.  We call the forcing
\emph{complex phase-complete on $K$} if
\begin{align}\label{eq:complex-phase-complete-fourier}
 \Ran_\R B=\spanop_\R\{e_k,\ii e_k:k\in K\}
 =\spanop_\C\{e_k:k\in K\},
\end{align}
where the last space is regarded as a real vector space.   The phase-complete formulation is convenient for the exact affine
lattice characterization, but is not rank optimal in terms of real
forcing directions.  Indeed, the four complex Fourier profiles
\begin{align*}
 1,\quad \e^{\ii x_1},\quad
 \e^{\ii x_2},\quad \e^{\ii x_3}
\end{align*}
already form a cubic saturating family by
\Cref{thm:complex-smooth-geometric-saturation}, whereas their
phase-complete realization uses eight real Brownian directions.

Put $\Lambda_K=\spanop_\Z\{k-k':k,k'\in K\}$ and, for any $k^\circ\in K$, let $\mathcal C_K=k^\circ+\Lambda_K$. Since $k-k^\circ\in\Lambda_K$ for every $k\in K$, the coset $\mathcal C_K$ is independent of the choice of $k^\circ$.

\begin{theorem}
\label{thm:exact-fourier-saturation}
Assume that the forcing is complex phase-complete on $K$. Then
\begin{align}\label{eq:exact-saturation-union}
 \mathscr W_\infty=\spanop_\C\{e_\ell:\ell\in\mathcal C_K\}.
\end{align}
Consequently, for every finite $s$,
\begin{align}\label{eq:exact-saturation-closure}
 \overline{\mathscr W_\infty}^{\,H^s}=H^s_{\mathcal C_K}(\T^3;\C):=\{u\in H^s:\widehat u(\ell)=0\text{ for }\ell\notin\mathcal C_K\}.
\end{align}
In particular, the following are equivalent:
\begin{enumerate}[label=\textup{(\roman*)}]
 \item the forcing is cubic saturating;
 \item $\Lambda_K=\Z^3$;
 \item there are no nonzero $\vartheta\in\T^3$ and $c\in\mathbb S^1$ such that $\e^{\ii k\cdot\vartheta}=c$ for every $k\in K$.
\end{enumerate}
If $0\in K$, the common phase in \textup{(iii)} is necessarily $c=1$.
\end{theorem}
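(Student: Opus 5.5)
The identity \eqref{eq:exact-saturation-union} follows from one computation: how the cubic descendant map acts on Fourier monomials. The rest of the theorem then reduces to Fourier truncation and a duality argument for subgroups of $\Z^3$.

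\emph{The basic computation.} By \eqref{eq:cubic-third-derivative},
\begin{align*}
 \mathsf JD^3\mathsf N[\phi,h,g]=2\ii\bigl(hg\bar\phi+h\phi\bar g+g\phi\bar h\bigr).
\end{align*}
Take $h=\alpha e_k$ and $g=\beta e_{k'}$ with $k,k'\in K$ and $\alpha,\beta\in\C$, and take $\phi=\gamma e_\ell$ with $\gamma\in\C$. Then
\begin{align*}
 \mathsf JD^3\mathsf N[\gamma e_\ell,\alpha e_k,\beta e_{k'}]
 =2\ii\bigl(\alpha\beta\bar\gamma\, e_{k+k'-\ell}
 +\alpha\bar\beta\gamma\, e_{\ell+k-k'}
 +\bar\alpha\beta\gamma\, e_{\ell+k'-k}\bigr).
\end{align*}

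\emph{The inclusion $\subseteq$.} I argue by induction on $n$ that $\mathscr W_n\subset\spanop_\C\{e_\ell:\ell\in\mathcal C_K\}$. The base case holds because $K\subset\mathcal C_K$. For the inductive step, write $\ell=k^\circ+\lambda$ with $\lambda\in\Lambda_K$. All three output indices then lie in $\mathcal C_K$: for instance $k+k'-\ell=k^\circ+(k-k^\circ)+(k'-k^\circ)-\lambda$. Real trilinearity extends this from monomials to all of $\mathscr W_n$.

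\emph{The inclusion $\supseteq$.} This is the main step, because the three output modes must be separated using only the phase freedom in $\alpha,\beta$. Since the forcing is phase-complete, $\alpha,\beta$ may be chosen in $\{1,\ii\}$. Suppose $\C e_\ell\subset\mathscr W_n$, so $\gamma$ ranges over all of $\C$.
\begin{enumerate}[label=\textup{(\alph*)}]
\item Subtracting the outputs for $(\alpha,\beta)=(1,1)$ and $(\ii,\ii)$ isolates $4\ii\bar\gamma e_{k+k'-\ell}$. Letting $\gamma$ range over $\C$ gives $\C e_{k+k'-\ell}\subset\mathscr W_{n+1}$.
\item Adding the same two outputs gives $\gamma(e_{\ell+k-k'}+e_{\ell+k'-k})$.
\item The choice $(\alpha,\beta)=(\ii,1)$ gives $\ii\gamma(e_{\ell+k-k'}-e_{\ell+k'-k})$ plus a multiple of $e_{k+k'-\ell}$. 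That extra multiple can be subtracted, since it already lies in the real-linear space $\mathscr W_{n+1}$ by (a).
\item Combining (b) and (c) isolates $\C e_{\ell\pm(k-k')}\subset\mathscr W_{n+1}$. If $k=k'$ the statement is trivial.
\end{enumerate}
Thus the set $\{\ell:\C e_\ell\subset\mathscr W_\infty\}$ contains $K$ and is invariant under translation by every generator $\pm(k-k')$ of $\Lambda_K$. Iterating over words in these generators shows it contains $k^\circ+\Lambda_K=\mathcal C_K$. Coincidences among the three indices cause no difficulty, since each mode is shown individually to lie in the linear space $\mathscr W_\infty$.

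\emph{Closure, \eqref{eq:exact-saturation-closure}.} The space $H^s_{\mathcal C_K}$ is closed in $H^s$, because it is defined by vanishing of Fourier coefficients. Finite complex combinations of $e_\ell$, $\ell\in\mathcal C_K$, are dense in it, as Fourier truncation converges in every $H^s$.

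\emph{The equivalences.} By \eqref{eq:exact-saturation-closure} with $s=1$, statement (i) holds iff $\mathcal C_K=\Z^3$. A coset $k^\circ+\Lambda_K$ equals $\Z^3$ iff $\Lambda_K=\Z^3$, which is (ii).

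For (ii)$\Leftrightarrow$(iii), use Pontryagin duality between $\Z^3$ and $\T^3$. A point $\vartheta\in\T^3$ annihilates $\Lambda_K$ iff $\e^{\ii(k-k')\cdot\vartheta}=1$ for all $k,k'\in K$. This holds iff $k\mapsto\e^{\ii k\cdot\vartheta}$ is constant on $K$, say equal to $c\in\mathbb S^1$.
\begin{itemize}
\item If $\Lambda_K\ne\Z^3$, the quotient $\Z^3/\Lambda_K$ is a nontrivial finitely generated abelian group. It therefore has a nontrivial character, which lifts to a nonzero annihilating $\vartheta$. This covers both the finite-index case and the rank-deficient case.
\item If $\Lambda_K=\Z^3$, any annihilating $\vartheta$ satisfies $\e^{\ii k\cdot\vartheta}=1$ for all $k\in\Z^3$, forcing $\vartheta=0$.
\end{itemize}
Finally, if $0\in K$, evaluating at $k=0$ forces $c=\e^{0}=1$.
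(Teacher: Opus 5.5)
Your proof is correct and follows the paper's route. The $\subseteq$ inclusion comes from invariance of $\spanop_\C\{e_\ell:\ell\in\mathcal C_K\}$ under the recursion, and the $\supseteq$ inclusion from isolating single Fourier modes via phase rotations and then generating $\mathcal C_K$ by translations $\pm(k-k')$; the closure and equivalences follow from Fourier truncation and character duality. The only difference is in the mode separation: the paper also rotates the phase of $\phi$, deriving from two polarization identities that every $\mathscr W_n$ is complex linear and contains $\phi h\bar g$, whereas you use only the phases $\{1,\ii\}$ in the forcing slots and track $\C e_\ell\subset\mathscr W_n$ mode by mode.
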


\begin{proof}
For $\phi\in\mathscr W_n$ and $h,g\in\mathscr W_0$, \eqref{eq:cubic-third-derivative} gives
\begin{align*}
 \ii\mathsf JD^3\mathsf N[\phi,h,g]
 &=\mathsf JD^3\mathsf N[\ii\phi,h,g]
 +\mathsf JD^3\mathsf N[\phi,\ii h,g]
 +\mathsf JD^3\mathsf N[\phi,h,\ii g],\\
 \phi h\bar g
 &=-\frac14\bigl\{\mathsf JD^3\mathsf N[\ii\phi,h,g]
 +\mathsf JD^3\mathsf N[\phi,\ii h,g]\bigr\}.
\end{align*}
Since $\mathscr W_0$ is complex linear by \eqref{eq:complex-phase-complete-fourier}, induction shows that every $\mathscr W_n$ is complex linear and
$\phi h\bar g\in\mathscr W_{n+1}$ whenever $\phi\in\mathscr W_n$ and $h,g\in\mathscr W_0$. For pure modes, $e_pe_q\overline{e_r}=e_{p+q-r}$, so the recursion permits the frequency operation $p\mapsto p+q-r$ with $q,r\in K$.  Starting from $k^\circ\in K$, repeated application produces
\begin{align*}
 k^\circ+\sum_{\nu=1}^M(q_\nu-r_\nu),\qquad q_\nu,r_\nu\in K.
\end{align*}
Every element of $\Lambda_K$ is such a finite sum, after interchanging $q_\nu$ and $r_\nu$ when necessary.  Hence
\begin{align*}
 \spanop_\C\{e_\ell:\ell\in\mathcal C_K\}\subset\mathscr W_\infty.
\end{align*}

For the reverse inclusion, if $p\in\mathcal C_K$ and $q,r\in K$, then $p+q-r$, $p+r-q$, and $q+r-p$ all belong to $\mathcal C_K$.  Hence
\begin{align*}
 \mathsf JD^3\mathsf N[e_p,e_q,e_r]
 =2\ii(e_{p+q-r}+e_{p+r-q}+e_{q+r-p})
\end{align*}
has Fourier support in $\mathcal C_K$.  Real trilinearity therefore shows that $\spanop_\C\{e_\ell:\ell\in\mathcal C_K\}$ is invariant under the cubic recursion and contains $\mathscr W_0$.  This proves \eqref{eq:exact-saturation-union}.  Fourier truncation gives \eqref{eq:exact-saturation-closure}, and hence \textup{(i)}$\Leftrightarrow$\textup{(ii)}.

If $\Lambda_K\ne\Z^3$, character duality gives a nonzero $\vartheta\in\T^3$ such that $\e^{\ii\lambda\cdot\vartheta}=1$ for every $\lambda\in\Lambda_K$.  Thus all $\e^{\ii k\cdot\vartheta}$, $k\in K$, have the same value.  Conversely, the condition in \textup{(iii)} implies that every $k-k'$ lies in the kernel of the nontrivial character $\ell\mapsto\e^{\ii\ell\cdot\vartheta}$, so $\Lambda_K\ne\Z^3$.  This proves \textup{(ii)}$\Leftrightarrow$\textup{(iii)}.
\end{proof}

The following arithmetic form is immediate.  Fix $k_1\in K$ and let
$\mathbf A_K$ be the $3\times(N-1)$ integer matrix with columns
$k_j-k_1$, $2\le j\le N$.

\begin{corollary}
\label{cor:fourier-saturation-criteria}
For complex phase-complete forcing on $K$, the following are
equivalent:
\begin{enumerate}[label=\textup{(\roman*)}]
 \item the forcing is cubic saturating;
 \item $\Lambda_K=\Z^3$;
 \item $\mathbf A_K:\Z^{N-1}\to\Z^3$ is onto.
\end{enumerate}
Equivalently, $\mathbf A_K$ has rank three and the greatest common
divisor of its $3\times3$ minors is one.
\end{corollary}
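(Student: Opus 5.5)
The three equivalences follow from \Cref{thm:exact-fourier-saturation} once (ii) and (iii) of the corollary are matched. Under complex phase-completeness on $K$, that theorem already gives cubic saturation $\Leftrightarrow\Lambda_K=\Z^3$. So the work is to identify $\Lambda_K$ with the image $\mathbf A_K\Z^{N-1}$, and then to translate surjectivity into the minor condition.

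\emph{Step 1: $\Lambda_K=\mathbf A_K\Z^{N-1}$.} The columns $k_j-k_1$ lie in $\Lambda_K$, so $\mathbf A_K\Z^{N-1}\subset\Lambda_K$. For the reverse inclusion, every generator satisfies
\begin{align*}
 k_i-k_j=(k_i-k_1)-(k_j-k_1),
\end{align*}
so it is an integer combination of columns of $\mathbf A_K$. Hence $\Lambda_K=\mathbf A_K\Z^{N-1}$, and (ii)$\Leftrightarrow$(iii).

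\emph{Step 2: the minor criterion.} I will use Smith normal form. Write $\mathbf A_K=U\,\mathrm{diag}(d_1,d_2,d_3)\,V$ (padded with zero columns) with $U\in GL_3(\Z)$, $V\in GL_{N-1}(\Z)$ and $d_1\mid d_2\mid d_3$. Then $\mathbf A_K$ is onto if and only if $d_1=d_2=d_3=1$. The product $d_1d_2d_3$ is the third determinantal divisor, i.e.\ the gcd of all $3\times3$ minors; unimodular multiplication preserves these gcds. If the rank is below three, all $3\times 3$ minors vanish, so $d_3=0$ and the image has rank below three, hence is not onto. If the rank is three, $\Z^3/\mathbf A_K\Z^{N-1}\cong\bigoplus\Z/d_i$, which is trivial iff $d_1d_2d_3=1$. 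This gives the "equivalently" clause.

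This is a routine corollary and there is no real obstacle. The only point needing care is citing the determinantal-divisor characterization of the Smith invariants correctly, including the degenerate case $N-1<3$. In that case no $3\times3$ minors exist, the rank is below three, and the map cannot be onto, so both sides of the equivalence fail consistently.
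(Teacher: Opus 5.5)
Your proposal is correct and follows essentially the paper's route: identify $\Lambda_K$ with the image of $\mathbf A_K$, get (i)$\Leftrightarrow$(ii) from \Cref{thm:exact-fourier-saturation}, and use the determinantal-divisor characterization of the Smith normal form for the minor criterion. Your separate treatment of the rank-deficient case and of $N-1<3$ spells out details the paper leaves to the cited reference.
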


\begin{proof}
Since $\Lambda_K=\spanop_\Z\{k_j-k_1:2\le j\le N\}$,
$\operatorname{Range}\mathbf A_K=\Lambda_K$.  The conclusion follows
from \Cref{thm:exact-fourier-saturation} and the standard
determinantal divisor characterization of the Smith normal form;
see, for example, \cite[Chapter II]{Newman1972}.
\end{proof}

We now give the minimal rank in the complex phase-complete class.

\begin{corollary}\label{cor:fourier-minimal}
A cubic saturating complex phase-complete Fourier family requires at
least four wave vectors.  In particular,
$K=\{0,e_1,e_2,e_3\}$ is saturating.  If the constant mode is not
forced, one may take $K=\{e_1,e_2,e_3,e_1+e_2\}$.
\end{corollary}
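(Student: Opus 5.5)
The plan is to combine the lattice criterion of \Cref{cor:fourier-saturation-criteria} with a rank count. For the lower bound, suppose the forcing is complex phase-complete on $K$ with $N=|K|\le3$. Then $\Lambda_K=\spanop_\Z\{k_j-k_1:2\le j\le N\}$ is generated by at most $N-1\le2$ vectors. Equivalently, the integer matrix $\mathbf A_K$ has only $N-1\le2$ columns, so its rank is at most two. A subgroup of $\Z^3$ of rank at most two cannot be all of $\Z^3$, so $\Lambda_K\ne\Z^3$. By \Cref{thm:exact-fourier-saturation}, the forcing is then not cubic saturating, and therefore at least four wave vectors are needed.

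For the two examples, compute the difference lattices and check $\Lambda_K=\Z^3$ directly.
\begin{itemize}
\item For $K=\{0,e_1,e_2,e_3\}$, take $k_1=0$. The differences $e_1-0$, $e_2-0$, $e_3-0$ are the standard basis of $\Z^3$, so $\Lambda_K=\Z^3$ and saturation follows from \Cref{thm:exact-fourier-saturation}.
\item For $K=\{e_1,e_2,e_3,e_1+e_2\}$, take $k_1=e_1$. The differences are $e_2-e_1$, $e_3-e_1$, and $(e_1+e_2)-e_1=e_2$. These generate $e_2$, then $e_1=e_2-(e_2-e_1)$, and finally $e_3=(e_3-e_1)+e_1$, so again $\Lambda_K=\Z^3$.
\end{itemize}
As a cross-check via the Smith form criterion, the second matrix has columns $(-1,1,0)$, $(-1,0,1)$, $(0,1,0)$. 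Its determinant is $\pm1$, so the gcd of its $3\times3$ minors is one.

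There is no genuine obstacle here. The only point needing care is that ``at least four'' means the number of wave vectors $N$, not the number of real forcing directions. The rank bound $\operatorname{rank}\Lambda_K\le N-1$ is what makes the lower bound immediate.
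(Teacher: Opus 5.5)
Your proposal is correct and matches the paper's proof. The lower bound comes from $\Lambda_K$ being generated by at most $N-1$ differences, and the two families are checked against the lattice criterion of \Cref{cor:fourier-saturation-criteria}; you carry out that check explicitly, and your generator computation and determinant check for the second family are correct.
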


\begin{proof}
Since $\Lambda_K$ is generated by at most $N-1$ differences, the
condition $\Lambda_K=\Z^3$ requires $N\ge4$.  The two displayed
families satisfy the criterion in
\Cref{cor:fourier-saturation-criteria}.
\end{proof}

The exact saturation formula also identifies the invariant Fourier sector when the difference lattice is proper.

\begin{proposition}

For complex phase-complete forcing, $H^1_{\mathcal C_K}(\T^3;\C)$ as in \eqref{eq:exact-saturation-closure} is invariant under the stochastic equation \eqref{eq:intro-spde}. 
\end{proposition}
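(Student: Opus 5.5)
The plan is to show that the closed subspace $H^1_{\mathcal C_K}$ is invariant under each ingredient of \eqref{eq:intro-spde}, and then to pass to the stochastic flow through the pathwise (mild) formulation. Fix $k^\circ\in K$, so that $\mathcal C_K=k^\circ+\Lambda_K$. There are three ingredients to check.

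First, the linear part. Both $-\gamma+\ii\Delta$ and $\mathsf S(t)$ act diagonally on Fourier modes, so they preserve Fourier support. Hence $\mathsf S(t)$ maps $H^1_{\mathcal C_K}$ into itself.

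Second, the noise. Each $b_j$ lies in $\Ran_\R B=\spanop_\C\{e_k:k\in K\}$, and $K\subset\mathcal C_K$. So $B\,\dd W_t$ takes values in $H^1_{\mathcal C_K}$, and so does the stochastic convolution $\int_0^t\mathsf S(t-s)B\,\dd W_s$.

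Third, the nonlinearity. This is the key algebraic point. If $u,v,w$ have Fourier support in $\mathcal C_K$, then $u v\bar w$ has support in
\[
\{p+q-r:p,q,r\in\mathcal C_K\}.
\]
Since $p+q-r=k^\circ+(\lambda_1+\lambda_2-\lambda_3)\in\mathcal C_K$, the cubic term $\mathsf N(u)=|u|^2u$ maps $H^1_{\mathcal C_K}$ into the closed subspace of $L^2$ (indeed of $H^{1}$-regular distributions at the level allowed by the energy theory) consisting of functions with Fourier support in $\mathcal C_K$. To justify this for $H^1$ functions rather than trigonometric polynomials, I would argue by density: approximate $u$ by Fourier truncations in $H^1_{\mathcal C_K}$, use continuity of $\mathsf N:H^1\to L^2$ via $H^1\hookrightarrow L^6$, and note that the support condition is preserved under $L^2$ limits. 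Equivalently, one can use the characterization by the character group: $u$ has support in $\mathcal C_K$ if and only if $u(\cdot+\vartheta)=\e^{\ii k^\circ\cdot\vartheta}u$ for all $\vartheta$ in the annihilator of $\Lambda_K$. The cubic $|u|^2u$ inherits exactly this covariance, because the phase has modulus one.

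Then I would pass to solutions, which is the main obstacle. The symmetry argument gives the cleanest route. For each $\vartheta$ in the annihilator, set $c=\e^{\ii k^\circ\cdot\vartheta}$ and define
\[
\mathcal T_\vartheta u=\bar c\,u(\cdot+\vartheta).
\]
This is a linear isometry of $H^1$ commuting with $\Delta$ and with $\mathsf N$, since $|c|=1$. It also fixes each $b_j$, because $b_j$ has support in $K\subset\mathcal C_K$. Hence, if $u_t=\Phi_t(x,\omega)$ with $x\in H^1_{\mathcal C_K}$, the process $\mathcal T_\vartheta u_t$ solves \eqref{eq:intro-spde} with the same Wiener path and the same initial datum $\mathcal T_\vartheta x=x$. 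Pathwise uniqueness of the energy solutions, on the full-measure regular driver set, gives $\mathcal T_\vartheta u_t=u_t$ for all $t$, almost surely. Taking $\vartheta$ over a countable dense subset of the closed annihilator subgroup and using continuity gives the statement for all $\vartheta$ simultaneously. By character duality, these fixed points are exactly $H^1_{\mathcal C_K}$.

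The step I expect to need care is confirming that the Borel pathwise representation respects this symmetry, i.e.\ that uniqueness holds in the Strichartz class where the solution is constructed and that $\mathcal T_\vartheta$ preserves that class. This is clear, since translation and unimodular phase multiplication are isometries of $W^{7/8,7/2}$.
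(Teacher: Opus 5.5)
Your proof is correct, and its first half is the paper's entire proof. The paper checks three things: $\mathcal C_K-\mathcal C_K+\mathcal C_K=\mathcal C_K$ for the cubic term, that the Laplacian acts diagonally, and $K\subset\mathcal C_K$ for the forcing. It then asserts invariance of the flow without further comment. The step left implicit there is that the local energy--Strichartz construction can be run inside the closed Fourier-supported subspace and then identified with the $H^1$ solution by uniqueness.

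You differ in how you pass from these tangency properties to invariance of the flow. Your maps $\mathcal T_\vartheta u=\overline{c}\,u(\cdot+\vartheta)$, $\vartheta\in\Lambda_K^\perp$, have three properties. They commute with the linear part and with $\mathsf N$, since $|c|=1$. They fix each $b_j$, and hence $B\omega$. Their common fixed-point set is exactly $H^1_{\mathcal C_K}$. Invariance then follows from uniqueness of the pathwise shifted equation alone, with no need to redo the local theory in the subspace. This also makes your coset-closure check for $\mathsf N$ unnecessary, since commutation with the cubic term is automatic. It sidesteps the fact that $\mathsf N$ does not map $H^1(\T^3)$ into $H^1$; your phrasing in terms of Fourier support in $L^2$ is the accurate one. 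The paper's version is shorter but leaves this passage to the reader.

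Two small remarks. First, you only need one inclusion: the common fixed-point set lies in $H^1_{\mathcal C_K}$, that is, $(\Lambda_K^\perp)^\perp\subset\Lambda_K$. This is the same character-duality fact used in \Cref{thm:exact-fourier-saturation}. Second, the countable dense family of $\vartheta$'s is not needed. For a driver with $(x,\omega)\in\mathfrak D_T$, uniqueness of the shifted equation is deterministic, and the exceptional set depends only on $\omega$. So $\mathcal T_\vartheta z=z$ holds for every $\vartheta$ at once.
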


\begin{proof}
Since $\mathcal C_K-\mathcal C_K+\mathcal C_K=\mathcal C_K$, the cubic
nonlinearity preserves $H^1_{\mathcal C_K}$.  The Laplacian preserves
each Fourier mode, and $K\subset\mathcal C_K$, so the forcing also
lies in this sector.  Hence $H^1_{\mathcal C_K}(\T^3;\C)$ is invariant
under \eqref{eq:intro-spde}.
\end{proof}

\begin{remark}
The main result remains valid on each invariant sector
$H^1_{\mathcal C_K}$, with saturation understood relative to that
sector. When $\operatorname{rank}\Lambda_K=r<3$, after removing the affine
carrier frequency the restricted dynamics are naturally identified
with an $r$-dimensional damped cubic NLS with a constant
positive-definite quadratic dispersion.  The same argument, using the corresponding lower-dimensional periodic
Strichartz estimates, yields the polynomial mixing result in dimensions
one and two.
\end{remark}

\subsection{Stationary regularity gain}

In this subsection we prove the stationary regularity gain used in the
$L^2$ to $H^1$ bootstrap in the proof of \Cref{thm:main}.  The argument
uses stationary four-wave identities for the three-dimensional SNLS
and does not require the saturation assumption.

Recall that $\mathsf S(t)=\e^{-(\gamma-\ii\Delta)t}$. The following main theorem of this section will be proved at the end. 
\begin{theorem}
\label{thm:stationary-asymptotic-smoothing}
Let $\nu$ be any invariant probability measure of
\eqref{eq:intro-spde}.  Then, for every $0<\varepsilon<1$,
\begin{align}\label{eq:stationary-h1eps}
 \int_{H^1}\norm u_{H^{1+\varepsilon}}^2\,\nu(\dd u)<\infty.
\end{align}
Consequently, $\nu(H^{2-})=1$. 
\end{theorem}

The gain is a property of stationary statistics and is not a
finite-time smoothing statement for the Markov flow.  The proof has
two main ingredients.  First, the subcritical periodic Strichartz family,
combined with stationarity, gives one time $L^\infty$ moments of every
order below four.  Second, a stationary four-wave identity and an
adaptive resolvent estimate control the nonlinear flux in truncated
$H^s$ balances.  Iterating the resulting regularity improvement yields
every Sobolev exponent below two.

The following lemma gives the stationary energy--Strichartz moments. 
\begin{lemma}
\label{lem:stationary-linfty-family}
For every $10/3<q<4$, every invariant probability measure $\nu$
satisfies
\begin{align}\label{eq:stationary-linfty-family}
 \int_{H^1}\norm u_{L^\infty}^q\,\nu(\dd u)<\infty.
\end{align}
Moreover, if $0\le r<2$ and $u\in H^r\cap L^\infty$, then
\begin{align}\label{eq:cubic-tame-hr}
 \norm{\mathsf N(u)}_{H^r}
 \le C_r\norm u_{L^\infty}^2\norm u_{H^r}.
\end{align}
\end{lemma}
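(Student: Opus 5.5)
The plan is to treat the two assertions separately, beginning with the tame estimate \eqref{eq:cubic-tame-hr}, which is deterministic.

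\emph{The tame estimate.} I would use the fractional Leibniz (Kato--Ponce) inequality on $\T^3$. Since $\mathsf N(u)=u\,u\,\bar u$ is a trilinear product, distributing $\Lambda^r$ gives
\begin{align*}
 \norm{\mathsf N(u)}_{H^r}\le C_r\norm u_{L^\infty}^2\norm u_{H^r}.
\end{align*}
The intermediate step is the bilinear estimate $\norm{fg}_{H^r}\lesssim\norm f_{H^r}\norm g_{L^\infty}+\norm f_{L^\infty}\norm g_{H^r}$, applied twice. Here the $H^r$ norms of $u^2$ or $|u|^2$ are bounded by $\norm u_{L^\infty}\norm u_{H^r}$. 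For $0\le r\le1$ one can argue directly via the chain rule. I expect no real difficulty here, and the restriction $r<2$ is harmless.

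\emph{The stationary $L^\infty$ moments.} I would combine stationarity with Strichartz control and the Lyapunov moments. Let $u_0\sim\nu$ and let $u$ be the stationary solution. First I would show that $\nu$ has all polynomial energy moments. This follows by integrating \eqref{eq:poly-lyapunov} (Proposition~\ref{prop:lyapunov}) against $\nu$ after truncation and letting $t\to\infty$, which gives $\int\cE^p\,\dd\nu\le C_p$. By stationarity and Fubini,
\begin{align*}
 \int\norm u_{L^\infty}^q\,\dd\nu=\E\int_0^1\norm{u_t}_{L^\infty}^q\,\dd t,
\end{align*}
so it suffices to bound the one-time-unit space-time norm $\E\norm u_{L^q(0,1;L^\infty)}^q$. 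For $10/3<q<4$ I would use the periodic Strichartz family at $H^1$ regularity on $\T^3$, namely the estimates $L^q_tW^{s,p}_x$ with $W^{s,p}\hookrightarrow L^\infty$ that generalize the pair $L^{7/2}W^{7/8,7/2}$ quoted from \cite{CheungMosincat2019}. These are subcritical for $q<4$, where $q=4$ corresponds to the scaling endpoint. They come with a local bound of the form
\begin{align*}
 \norm u_{L^q(0,\delta;L^\infty)}\lesssim\norm{u_0}_{H^1}+\text{stochastic convolution}
\end{align*}
on an interval of length $\delta\sim(1+\norm{u_0}_{H^1})^{-\kappa}$. Covering $[0,1]$ by $\lesssim(1+\sup_t\norm{u_t}_{H^1})^\kappa$ such intervals gives
\begin{align*}
 \norm u_{L^q(0,1;L^\infty)}^q\lesssim(1+\sup_{t\le1}\norm{u_t}_{H^1})^{N}(1+\text{noise norms}).
\end{align*}
The right-hand side has finite expectation by the supremum energy moments, which come from Itô's formula and BDG together with the moments of $\nu$, and by Gaussian bounds on the stochastic convolution in $L^qW^{s,p}$.

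\emph{Main obstacle.} The hard step is getting the polynomial bound on the number of local Strichartz intervals. This needs a local well-posedness time that depends only polynomially on the $H^1$ norm for every $q$ in the range, which I would extract from the periodic Strichartz estimates with $\varepsilon$-loss. The lower threshold $q>10/3$ is presumably where the available $\T^3$ Strichartz exponents allow $W^{s,p}\hookrightarrow L^\infty$ at $H^1$ level. If the full range cannot be reached directly, my fallback is to interpolate between the $L^{7/2}W^{7/8,7/2}$ bound and higher-$q$ estimates with smaller admissible exponents.
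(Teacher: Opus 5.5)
Your proposal is correct and has the same architecture as the paper's proof. The tame bound comes from applying the bilinear Kato--Ponce inequality twice to $u\bar u u$. The moment bound comes from stationarity and Tonelli, the invariant energy moments, Gaussian moments of the stochastic convolution, and a diagonal periodic Strichartz estimate into $L^q_tW^{\sigma,q}_x$ with $W^{\sigma,q}\hookrightarrow L^\infty$.

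The genuine difference is how you handle the Duhamel term. The paper needs neither a bootstrap in the new norm nor a covering by short intervals. It fixes $3/q<\sigma<5/q-1/2$, a range that is nonempty precisely because $q<4$. Then the Killip--Visan estimate, summed over dyadic blocks, maps data in $H^{r_q}$ with $r_q=\sigma+3/2-5/q<1$ into $L^q(I;W^{\sigma,q})$ on unit intervals. The matching inhomogeneous bound maps $L^1(I;H^{r_q})$ into the same space. Because $r_q<1$, \eqref{eq:cubic-tame-hr} gives
\[
 \norm{\mathsf N(u)}_{L^1(I;H^{r_q})}\le C\norm{u}_{C(I;H^1)}\norm{u}_{L^2(I;L^\infty)}^2 .
\]
The right-hand side is already controlled polynomially by the energy supremum and by the $L^{7/2}W^{7/8,7/2}$ moments \eqref{eq:block-strichartz-moments}. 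Those moments were themselves obtained by exactly the short-interval bootstrap you describe. So a single Duhamel step on $[a,a+1]$ suffices, and what you flag as the main obstacle has already been paid for.

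Your route also works, since even $\varepsilon$-loss Strichartz estimates leave enough slack, and it does not depend on the earlier $L^{7/2}$ bounds. It does carry extra baggage:
\begin{itemize}
\item you need a priori finiteness of the new norm to run the continuity argument, and this is most easily obtained from the paper's one-step estimate;
\item you need uniform control of the restarted stochastic convolutions;
\item your subinterval count depends polynomially on both the energy and the noise.
\end{itemize}

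Finally, $q>10/3$ is not where the $L^\infty$ embedding becomes available. It is the range $p>10/3$ of the lossless scale-invariant Strichartz estimate on $\T^3$. The condition $q<4$ is what makes the embedding compatible with data below $H^1$.
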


\begin{proof}
Fix $10/3<q<4$.  Since $q<4$, the interval
\begin{align}\label{eq:stationary-strichartz-parameters}
 \frac3q<\sigma<\frac5q-\frac12
\end{align}
is nonempty.  Put $r_q=\sigma+3/2-5/q$.  Then $r_q<1$.
The scale-invariant periodic estimate
\cite[Theorem~1.1]{KillipVisan2016}, applied dyadically and summed by
the Littlewood--Paley square-function theorem, gives on every interval
$I=[a,b]$ of length at most one
\begin{align*}
 \norm{\mathsf S(t-a)f}_{L^q(I;W^{\sigma,q})}
 &\le C_q\norm f_{H^{r_q}},\\
 \norm{\int_a^t\mathsf S(t-r)F(r)\,\dd r}_{C(I;H^{r_q})\cap L^q(I;W^{\sigma,q})}
 &\le C_q\norm F_{L^1(I;H^{r_q})}.
\end{align*}
The damping changes only the constant.  The inhomogeneous estimate
follows from the homogeneous estimate and Minkowski's inequality.
By \eqref{eq:stationary-strichartz-parameters},
\begin{align*}
 H^1\hookrightarrow H^{r_q},\qquad
 W^{\sigma,q}\hookrightarrow L^\infty,\qquad
 H^2\hookrightarrow W^{\sigma,q}.
\end{align*}
The standard product inequality
\begin{align*}
 \norm{fg}_{H^r}
 \le C_r\bigl(
 \norm f_{L^\infty}\norm g_{H^r}
 +\norm g_{L^\infty}\norm f_{H^r}\bigr)
\end{align*}
then gives \eqref{eq:cubic-tame-hr} after two applications to $u\bar u u$. 

Let $I_0=[a,a+1]$ and set
\begin{align*}
 Z_a(t)=\int_a^t\mathsf S(t-r)B\,\dd W_r,\qquad
 H_a=\sup_{t\in I_0}(1+\cE(u_t)),\qquad
 S_a=\norm u_{L^{7/2}(I_0;W^{7/8,7/2})}.
\end{align*}
Since $r_q<1$, \eqref{eq:cubic-tame-hr} and
$W^{7/8,7/2}\hookrightarrow L^\infty$ imply
\begin{align*}
 \norm{\mathsf N(u)}_{L^1(I_0;H^{r_q})}
 \le C\norm u_{C(I_0;H^1)}
 \norm u_{L^2(I_0;L^\infty)}^2\le CH_a^{1/2}S_a^2.
\end{align*}
The Duhamel formula therefore yields
\begin{align*}
 \norm u_{L^q(I_0;W^{\sigma,q})}
 \le C_q\left(
 1+H_a^{1/2}+\norm{Z_a}_{C(I_0;H^2)}
 +H_a^{1/2}S_a^2\right).
\end{align*}
The conditional finite time energy estimate from
\Cref{prop:energy-strichartz} gives, for every $m>0$,
\begin{align*}
 \E\bigl[H_a^m\mid\mathcal F_a\bigr]
 \le C_m\bigl(1+\cE(u_a)\bigr)^{N_m}.
\end{align*}
Together with \eqref{eq:block-strichartz-moments}, the invariant
energy moments \eqref{eq:invariant-moments}, and the Gaussian moments
of $Z_a$, this shows that, under an invariant initial law,
\begin{align*}
 \E_\nu\norm u_{L^q(0,1;W^{\sigma,q})}^q<\infty.
\end{align*}

Taking a stationary solution $u_t$ issued from $\nu$, applying
Tonelli first to finite Littlewood--Paley truncations and then passing
to the full square function by monotone convergence gives
\begin{align*}
 \int_{H^1}\norm u_{W^{\sigma,q}}^q\,\nu(\dd u)
 =\E_\nu\int_0^1\norm{u_t}_{W^{\sigma,q}}^q\,\dd t<\infty.
\end{align*}
Since $W^{\sigma,q}\hookrightarrow L^\infty$, this proves
\eqref{eq:stationary-linfty-family}.
\end{proof}

We now deduce the deterministic four-wave estimate that will be used in the
stationary Sobolev balance. Write
\begin{align*}
 e_k(x)=\e^{\ii k\cdot x},\qquad
 \widehat f(k)=\int_{\T^3}f(x)\e^{-\ii k\cdot x}\,\dd x,\qquad
 \la k\ra=(1+|k|^2)^{1/2}.
\end{align*}
Fix $\rho\in C_c^\infty(\R^3;[0,1])$ with $\rho=1$ on
$\{|\xi|\le1\}$ and $\rho=0$ on $\{|\xi|\ge2\}$, and set
\begin{align}\label{eq:wsR}
 w_{s,R}(k)=\la k\ra^{2s}\rho(k/R),\qquad R\ge1.
\end{align}
On the momentum hyperplane
\begin{align}\label{eq:momentum-hyperplane}
 \Gamma=\{\boldsymbol k=(k_1,k_2,k_3,k_4)\in(\Z^3)^4:
 k_1-k_2+k_3-k_4=0\}
\end{align}
put
\begin{align*}
\begin{split}
 \Omega(\boldsymbol k)
 &=|k_1|^2-|k_2|^2+|k_3|^2-|k_4|^2,\\
 \delta_{s,R}(\boldsymbol k)
 &=w_{s,R}(k_2)+w_{s,R}(k_4)-w_{s,R}(k_1)-w_{s,R}(k_3).
\end{split}
\end{align*}

\begin{lemma}
\label{lem:four-wave-resolvent}
Let $0\le b<1$, $1<s<3/2+b$, and $R\ge1$.  For trigonometric
polynomials define
\begin{align*}
 \mathfrak B_{s,R}(f_1,f_2,f_3,f_4)
 =\sum_{\boldsymbol k\in\Gamma}
 \frac{\delta_{s,R}(\boldsymbol k)}
 {4\gamma+\ii\Omega(\boldsymbol k)}
 \widehat f_1(k_1)\overline{\widehat f_2(k_2)}
 \widehat f_3(k_3)\overline{\widehat f_4(k_4)}.
\end{align*}
Then $\mathfrak B_{s,R}$ extends uniquely to $(H^{1+b})^4$, and
\begin{align}\label{eq:four-wave-resolvent}
 \abs{\mathfrak B_{s,R}(f_1,f_2,f_3,f_4)}
 \le C_{s,b,\gamma}
 \sum_{1\le i<j\le4}
 \norm{f_i}_{H^{1+b}}\norm{f_j}_{H^{1+b}}
 \prod_{\ell\ne i,j}\norm{f_\ell}_{H^1},
\end{align}
where the constant is independent of $R$.
\end{lemma}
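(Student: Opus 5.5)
The plan is to exploit the cancellation of $\delta_{s,R}$ on the momentum hyperplane together with the damped resonance multiplier. By symmetry of $\Gamma$ under the swaps $k_1\leftrightarrow k_3$ and $k_2\leftrightarrow k_4$, I will order the frequencies so that the two largest among $|k_1|,\dots,|k_4|$ are identified. Write $N_1\ge N_2\ge N_3\ge N_4$ for the decreasing rearrangement of the $\la k_j\ra$. Momentum conservation forces $N_1\lesssim N_2$. I will then bound the multiplier $|\delta_{s,R}|/|4\gamma+\ii\Omega|$ and sum by a Cauchy--Schwarz/Schur argument over the hyperplane, with the weights $\la k\ra^{1+b}$ placed on the two highest frequencies and $\la k\ra$ on the other two. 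Uniformity in $R$ will come from the fact that $w_{s,R}(k)=\la k\ra^{2s}\rho(k/R)$ satisfies the symbol bounds $|\nabla^j w_{s,R}(k)|\lesssim\la k\ra^{2s-j}$ for $j\le2$, uniformly in $R\ge1$. This is because $\rho(\cdot/R)$ has derivatives of size $R^{-j}$, and these are only nonzero where $|k|\sim R$.

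\emph{Step 1: symbol cancellation.} I will split into two cases.

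In the first case, $k_1$ and $k_3$ are the two largest frequencies, or $k_2$ and $k_4$ are. Then $\delta_{s,R}$ has no cancellation, so $|\delta_{s,R}|\lesssim N_1^{2s}$. However, $|\Omega|\gtrsim N_1^2$ unless the configuration is nearly degenerate, since $\Omega=-2(k_1-k_2)\cdot(k_3-k_2)$ on $\Gamma$.

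In the second case, the top two frequencies are of opposite sign, say $k_1$ and $k_2$. I will write
\begin{align*}
\delta_{s,R}=\bigl(w(k_2)-w(k_1)\bigr)+\bigl(w(k_4)-w(k_3)\bigr).
\end{align*}
The mean value theorem then gives $|\delta_{s,R}|\lesssim N_1^{2s-1}N_3$. The second-order expansion uses the identity $k_1-k_2=k_4-k_3$, giving the refinement
\begin{align*}
\delta_{s,R}=\nabla w(k_2)\cdot(k_4-k_3)\cdot\text{(sign)}+\bigl(w(k_4)-w(k_3)\bigr)+O\bigl(N_1^{2s-2}N_3^2\bigr).
\end{align*}

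In both cases I will use the identity
\begin{align*}
\Omega=2(k_1-k_2)\cdot(k_2-k_3)
\end{align*}
on $\Gamma$, so the resonance factor is
\begin{align*}
\bigl(\gamma^2+|(k_1-k_2)\cdot(k_2-k_3)|^2\bigr)^{-1/2}.
\end{align*}

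\emph{Step 2: summation.} Set $n=k_1-k_2=k_4-k_3$. I will write the form as a sum over $n$ and over the pairs $(k_2,k_3)$. For fixed $n$ and fixed lower frequency $k_3$, the set of $k_2$ lies on a lattice and the weight $\bigl(\gamma+|n\cdot(k_2-k_3)|\bigr)^{-1}$ decays in the direction of $n$. Counting lattice points in slabs $\{|n\cdot k-c|\le\lambda\}$ inside a ball of radius $N$ gives roughly $N^2(1+\lambda/|n|)$ points. I will then apply Cauchy--Schwarz in the two high frequencies with the $H^{1+b}$ weights, together with Sobolev-type $\ell^2$ summation in the two low frequencies with $H^1$ weights, where $\sum_k\la k\ra^{-2}\cdot(\text{slab count})$ replaces the divergent $\sum\la k\ra^{-2}$ in 3D. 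The budget to check is as follows: the multiplier gains $N_1^{2s-1}N_3/(1+|n|\,|k_2-k_3|_{\parallel})$ against $N_1^{2+2b}N_3N_4$, and the requirement $s<3/2+b$ is exactly what should close the high--high pair. The non-cancelling case from Step 1 is easier because the resonance factor gives $N_1^{-2}$ there.

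\emph{Main obstacle.} The hard part will be the nearly resonant region $|\Omega|\lesssim1$ in the opposite-sign case, where only $\gamma$ controls the denominator. There I will dyadically decompose in $|n|$ and in $\lambda=|\Omega|$ and use the slab counting bound. I expect that the logarithmic losses from the $\lambda$-sum are absorbed by the strict inequality $s<3/2+b$. The extension to $(H^{1+b})^4$ then follows by density once \eqref{eq:four-wave-resolvent} holds for trigonometric polynomials with an $R$-independent constant.
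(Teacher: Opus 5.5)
Your Step~2 does not close, and it fails in exactly the configuration that makes the lemma sharp.

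\emph{The hard configuration.} Take three comparable high frequencies and one low one, $|k_1|,|k_2|,|k_3|\sim N$, $|k_4|\lesssim M\ll N$, near resonance (thickenings of the rectangle $(Ne_1,N(e_1+e_2),Ne_2,0)$). There $\Omega=-2k_1\cdot k_3+O(MN)$ can vanish. The top pair is $k_2$ and one of $k_1,k_3$, so this lies in your opposite-sign case with $N_3\sim N_1$, and nothing cancels: for $R\gg N$, $\delta_{s,R}\approx(|k_1|^2+|k_3|^2)^s-|k_1|^{2s}-|k_3|^{2s}\sim N^{2s}$.

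This also shows that your Case~1 claim $|\Omega|\gtrsim N_1^2$ holds only when the two remaining frequencies are much smaller than $N_1$. That is why the paper splits according to whether the third-largest scale is comparable to the maximum, not according to which slots carry the top two.

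\emph{What the estimate requires there.} Put $H^{1+b}$ on two high factors and $H^1$ on the others. The block is then $N^{2s-3-2b}M^{-1}\mathcal K(N,M)$, where $\mathcal K$ is the $\ell^2$ bound of the quadrilinear form with kernel $\mathbf 1_\Gamma/\abs{4\gamma+\ii\Omega}$ on these dyadic blocks. For $s$ near $3/2+b$ you need $\mathcal K\lesssim N^{\epsilon}M$. Your budget (``$N_1^{2s-1}N_3/(\cdots)$ against $N_1^{2+2b}N_3N_4$'') implicitly takes $\mathcal K=O(1)$ and never counts the interacting quadruples.

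\emph{What your slab count gives.} Fix $n=k_1-k_2$, so $|n|\sim N$ and $k_3$ is confined to an $M$-ball. For generic $n$, a level set $\{n\cdot k=j\}$ meets the $k_2$-region in up to $\sim N$ lattice points and the $k_3$-region in up to $\sim M$. Cauchy--Schwarz along level sets plus the discrete Hilbert kernel in $j$ then gives only $\mathcal K\lesssim(NM)^{1/2}\log N$. For fixed $n$ this constant cannot be improved if $F_n(k)=\widehat f_1(k+n)\overline{\widehat f_2(k)}$ and $G_n$ are treated as arbitrary $\ell^2$ functions. The block becomes $N^{2s-5/2-2b}M^{-1/2}$, which is summable only for $s<5/4+b$.

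The separated opposite-sign case is worse under the same scheme. For $|n|=O(1)$ (e.g.\ $n=e_1$) the level sets carry $\sim N^2$ and $\sim K^2$ points, giving $N^{2s-2-2b}M^{-1}$. For $b=0$ this already fails for every $s>1$ unless further structure is exploited. So the near-resonant loss is a power of $N$, not a logarithm, and $s<3/2+b$ cannot absorb it.

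\emph{The missing ingredient} is a genuinely dispersive estimate. Since $\Omega\in\mathbb Z$, the paper writes $(4\gamma+\ii\Omega)^{-1}=(1-\e^{-8\pi\gamma})^{-1}\int_0^{2\pi}\e^{-4\gamma t}\e^{-\ii t\Omega}\,\dd t$. After factorizing the normalized symbol (whose rescaled derivative bounds it checks case by case), each dyadic block becomes a space--time integral over one period of four free Schr\"odinger waves. It then applies H\"older with exponents $(p,p,p,p_*)$, where $p\downarrow 10/3$ and $1/p_*=1-3/p$, together with the scale-invariant periodic Strichartz estimate of Killip--Visan (which rests on Bourgain--Demeter decoupling). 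Each high factor then costs only $N^{\kappa}$ with $\kappa=3/2-5/p\to0$, and the low factor costs $M^{1-3\kappa}$. This yields:
\begin{itemize}
\item $\mathcal K\lesssim N^{3\kappa}M^{1-3\kappa}$ in the three-high case;
\item $N^{2\kappa}M^{\kappa}K^{1-3\kappa}$ in the separated opposite-sign case, combined there with $\abs{\delta_{s,R}}\lesssim MN^{2s-1}$.
\end{itemize}
Only the separated equal-sign case is nonresonant, with $\abs{\Omega}\gtrsim N^2$, and the paper closes it with Bernstein. Without an input of this strength, your counting argument does not reach the stated range $1<s<3/2+b$.
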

Note that the exact resonant rectangle
\begin{align*}
 (k_1,k_2,k_3,k_4)
 =\bigl(Ne_1,N(e_1+e_2),Ne_2,0\bigr)
\end{align*}
shows that the range $s<3/2+b$ is sharp up to the endpoint.

\begin{proof}
We divide the proof according to the possible configurations of the
largest interacting frequencies. The normalized dyadic symbols appearing below have uniformly bounded
rescaled derivatives.  By the standard Fourier series decomposition
of smooth dyadic multipliers, they may therefore be reduced to
factorized multipliers with absolutely summable coefficients and
uniform $L^r$ bounds.  We use this reduction implicitly below.

\emph{Step 1: dyadic setup and the resolvent representation.}
The cutoff weights satisfy, uniformly in $R\ge1$,
\begin{align}\label{eq:weight-symbol-bounds}
 \abs{\partial^\alpha w_{s,R}(\xi)}
 \le C_{\alpha,s}\la\xi\ra^{2s-|\alpha|}.
\end{align}
Indeed, when a derivative falls on $\rho(\xi/R)$ one has
$|\xi|\simeq R$, and the resulting factor $R^{-1}$ supplies the lost
power of $\la\xi\ra$. Let $P_N$ be a smooth dyadic decomposition, with $P_1$ containing the
origin, and put
\begin{align*}
 a_{j,N}^{(\beta)}
 =N^{1+\beta}\norm{P_Nf_j}_{L^2},\qquad \beta\in\{0,b\}.
\end{align*}
Then $\norm{a_j^{(\beta)}}_{\ell^2}
\simeq\norm{f_j}_{H^{1+\beta}}$. By multilinearity and the Littlewood--Paley decomposition,
\begin{align*}
 \mathfrak B_{s,R}(f_1,f_2,f_3,f_4)
 =\sum_{N_1,N_2,N_3,N_4}
 \mathfrak B_{s,R}(P_{N_1}f_1,P_{N_2}f_2,P_{N_3}f_3,P_{N_4}f_4).
\end{align*}
For $\boldsymbol N=(N_1,N_2,N_3,N_4)$, denote the corresponding
summand by $\mathfrak B_{s,R,\boldsymbol N}$.  Thus
\begin{align*}
 \mathfrak B_{s,R,\boldsymbol N}
 =\sum_{\boldsymbol k\in\Gamma}
 \frac{\delta_{s,R}(\boldsymbol k)}
 {4\gamma+\ii\Omega(\boldsymbol k)}
 \prod_{j=1}^4\varphi_{N_j}(k_j)\,
 \widehat f_1(k_1)\overline{\widehat f_2(k_2)}
 \widehat f_3(k_3)\overline{\widehat f_4(k_4)},
\end{align*}
where $\varphi_{N_j}$ is the Fourier multiplier of $P_{N_j}$.
Since $\Omega(\boldsymbol k)\in\Z$,
\begin{align}\label{eq:finite-laplace-resolvent}
 \frac1{4\gamma+\ii\Omega}
 =\frac1{1-\e^{-8\pi\gamma}}
 \int_0^{2\pi}\e^{-4\gamma t}\e^{-\ii t\Omega}\,\dd t.
\end{align}
Choose $10/3<p<4$ sufficiently close to $10/3$ that, with
\begin{align*}
 \kappa=\frac32-\frac5p>0,\qquad
 \frac1{p_*}=1-\frac3p,
\end{align*}
one has $p_*>10/3$ and
\begin{align}\label{eq:resolvent-exponent-choice}
 2s-3-2b+3\kappa<0.
\end{align}
The periodic scale-invariant estimate gives
\begin{align}\label{eq:periodic-p-resolvent}
 \norm{\e^{\ii t\Delta}P_Nf}_{L^p([0,2\pi]\times\T^3)}
 &\le C_pN^\kappa\norm{P_Nf}_{L^2},\notag\\
 \norm{\e^{\ii t\Delta}P_Nf}_{L^{p_*}([0,2\pi]\times\T^3)}
 &\le C_pN^{1-3\kappa}\norm{P_Nf}_{L^2}.
\end{align}

By the momentum constraint \eqref{eq:momentum-hyperplane}, the two
largest dyadic scales are comparable.  After fixing a sufficiently
large separation constant to distinguish whether the third largest
scale is comparable to the first two, we are reduced, up to finitely
many permutations of the frequency slots, to the following three
cases.

\emph{Step 2: at least three frequencies are comparable to the maximum.}
After a permutation of the slots, suppose
\begin{align*}
 N_1\simeq N_2\simeq N_3\simeq N,\qquad N_4=M\le N.
\end{align*}
The case in which all four scales are comparable is included by taking
$M\simeq N$.  By \eqref{eq:weight-symbol-bounds},
\begin{align*}
 |\delta_{s,R}(\boldsymbol k)|\le C_sN^{2s},
\end{align*}
and $N^{-2s}\delta_{s,R}$ has uniformly bounded rescaled derivatives
on the corresponding dyadic boxes.  Hence the multiplier
factorization, \eqref{eq:finite-laplace-resolvent},
H\"older's inequality with exponents $(p,p,p,p_*)$  and
\eqref{eq:periodic-p-resolvent} give
\begin{align*}
 |\mathfrak B_{s,R,\boldsymbol N}|
 &\le C_{s,\gamma}N^{2s}(N^\kappa)^3M^{1-3\kappa}
 \prod_{j=1}^4\norm{P_{N_j}f_j}_{L^2}\\
 &=C_{s,\gamma}N^{2s+3\kappa}M^{1-3\kappa}
 \prod_{j=1}^4\norm{P_{N_j}f_j}_{L^2}.
\end{align*}
Assign the two $H^{1+b}$ weights to the first two $N$-scale factors.
By the definition of $a_{j,N}^{(\beta)}$,
\begin{align*}
 \prod_{j=1}^4\norm{P_{N_j}f_j}_{L^2}
 =N^{-3-2b}M^{-1}
 a_{1,N}^{(b)}a_{2,N}^{(b)}a_{3,N}^{(0)}a_{4,M}^{(0)}.
\end{align*}
Consequently,
\begin{align}\label{eq:three-high-block}
 |\mathfrak B_{s,R,\boldsymbol N}|
 \le C_{s,\gamma}
 N^{2s-3-2b+3\kappa}M^{-3\kappa}
 a_{1,N}^{(b)}a_{2,N}^{(b)}a_{3,N}^{(0)}a_{4,M}^{(0)}.
\end{align}

\emph{Step 3: exactly two large frequencies in opposite-sign slots.}
After a permutation preserving the sign pattern in the momentum
relation, suppose
\begin{align*}
 N_1\simeq N_2\simeq N,\qquad N_3=M,\qquad N_4=K,\qquad N\gg M\ge K.
\end{align*}
Thus the two large frequencies occupy the opposite-sign slots
$k_1$ and $k_2$.  By the momentum constraint,
\begin{align*}
 k_1-k_2=k_4-k_3,
\end{align*}
and hence $|k_1-k_2|\lesssim M$.  Moreover,
\begin{align*}
 \delta_{s,R}(\boldsymbol k)
 =\{w_{s,R}(k_2)-w_{s,R}(k_1)\}
 +\{w_{s,R}(k_4)-w_{s,R}(k_3)\}.
\end{align*}
The mean value formula and \eqref{eq:weight-symbol-bounds} give
\begin{align*}
 |w_{s,R}(k_2)-w_{s,R}(k_1)|
 \le C_s|k_2-k_1|N^{2s-1}
 \le C_sMN^{2s-1}.
\end{align*}
For the low-frequency difference,
\begin{align*}
 |w_{s,R}(k_4)-w_{s,R}(k_3)|
 \le C_sM^{2s}
 \le C_sMN^{2s-1},
\end{align*}
where we used $M\le N$ and $s>1$.  Consequently,
\begin{align*}
 |\delta_{s,R}(\boldsymbol k)|
 \le C_sMN^{2s-1}.
\end{align*}

We next verify the rescaled derivative bounds needed for the dyadic
multiplier factorization.  Regard $k_2,k_3,k_4$ as the independent
variables on the momentum hyperplane and set
$k_1=k_2-k_3+k_4$.  On the corresponding dyadic boxes, the normalized symbol satisfies
\begin{align}\label{eq:opposite-sign-symbol}
 \left|
 \partial_{k_2}^\alpha\partial_{k_3}^\beta\partial_{k_4}^\chi
 \frac{\delta_{s,R}(k_2-k_3+k_4,k_2,k_3,k_4)}
 {MN^{2s-1}}
 \right|
 \le C_{\alpha,\beta,\chi,s}
 N^{-|\alpha|}M^{-|\beta|}K^{-|\chi|}.
\end{align}
Indeed, if no derivative falls on the low variables $k_3,k_4$, the
difference
\begin{align*}
 w_{s,R}(k_2)-w_{s,R}(k_2-k_3+k_4)
\end{align*}
retains the factor $|k_3-k_4|\lesssim M$ by the mean value formula,
and \eqref{eq:weight-symbol-bounds} gives
\begin{align*}
 \left|
 \partial_{k_2}^\alpha
 \{w_{s,R}(k_2)-w_{s,R}(k_2-k_3+k_4)\}
 \right|
 \le C_{\alpha,s}MN^{2s-1-|\alpha|}.
\end{align*}
If $|\beta|+|\chi|>0$, a derivative in $k_3$ or $k_4$ may remove
the factor $|k_3-k_4|\lesssim M$ coming from the mean value formula.
We therefore use the direct symbol bound
\begin{align*}
 \left|
 \partial_{k_2}^\alpha\partial_{k_3}^\beta\partial_{k_4}^\chi
 \{w_{s,R}(k_2)-w_{s,R}(k_2-k_3+k_4)\}
 \right|
 \le C_{\alpha,\beta,\chi,s}
 N^{2s-|\alpha|-|\beta|-|\chi|}.
\end{align*}
After division by $MN^{2s-1}$, this is bounded by
$C_{\alpha,\beta,\chi,s}N^{-|\alpha|}M^{-|\beta|}K^{-|\chi|}$,
because
\begin{align*}
 \frac{N^{1-|\beta|-|\chi|}}M
 \le M^{-|\beta|}K^{-|\chi|},
 \qquad N\ge M\ge K.
\end{align*}
For the remaining difference
$w_{s,R}(k_4)-w_{s,R}(k_3)$, mixed derivatives in $k_3,k_4$ vanish.
Pure $k_3$-derivatives are bounded by
$C_{\beta,s}M^{2s-|\beta|}$ and pure $k_4$-derivatives by
$C_{\chi,s}K^{2s-|\chi|}$.  Since
\begin{align*}
 M^{2s}\le MN^{2s-1},\qquad
 K^{2s}\le MN^{2s-1},
\end{align*}
these terms also satisfy \eqref{eq:opposite-sign-symbol}. Therefore, the standard multiplier
factorization applies.  Using
\eqref{eq:finite-laplace-resolvent}, H\"older's inequality with the
two $N$-scale factors and the $M$-scale factor in $L^p_{t,x}$ and the
$K$-scale factor in $L^{p_*}_{t,x}$, and
\eqref{eq:periodic-p-resolvent}, we obtain
\begin{align*}
 |\mathfrak B_{s,R,\boldsymbol N}|
 &\le C_{s,\gamma}
 (MN^{2s-1})(N^\kappa)^2M^\kappa K^{1-3\kappa}
 \prod_{j=1}^4\norm{P_{N_j}f_j}_{L^2}.
\end{align*}
Assigning the two $H^{1+b}$ weights to the two $N$-scale factors,
\begin{align*}
 \prod_{j=1}^4\norm{P_{N_j}f_j}_{L^2}
 =N^{-2-2b}M^{-1}K^{-1}
 a_{1,N}^{(b)}a_{2,N}^{(b)}
 a_{3,M}^{(0)}a_{4,K}^{(0)}.
\end{align*}
Therefore
\begin{align}\label{eq:opposite-sign-block}
 |\mathfrak B_{s,R,\boldsymbol N}|
 \le C_{s,\gamma}
 N^{2s-3-2b+3\kappa}
 \left(\frac MN\right)^\kappa K^{-3\kappa}
 a_{1,N}^{(b)}a_{2,N}^{(b)}
 a_{3,M}^{(0)}a_{4,K}^{(0)}.
\end{align}

\emph{Step 4: exactly two large frequencies in equal-sign slots.}
After a permutation preserving the sign pattern in the momentum
relation, suppose
\begin{align*}
 N_1\simeq N_3\simeq N,\qquad N_2=M,\qquad N_4=K,\qquad N\gg M\ge K.
\end{align*}
Thus the two large frequencies occupy the equal-sign slots
$k_1$ and $k_3$.  By the momentum constraint,
\begin{align*}
 k_3=-k_1+k_2+k_4.
\end{align*}
Substituting this identity into the resonance function gives
\begin{align*}
 \Omega(\boldsymbol k)
 =2|k_1|^2-2k_1\cdot(k_2+k_4)+2k_2\cdot k_4.
\end{align*}
Since $|k_1|\simeq N$ and $|k_2|+|k_4|\lesssim M\ll N$, choosing the
separation constant sufficiently large yields
\begin{align}\label{eq:equal-sign-nonresonance}
 |\Omega(\boldsymbol k)|\ge cN^2.
\end{align}
Thus this dyadic region is uniformly nonresonant.

We next verify the rescaled derivative bounds needed for the dyadic
multiplier factorization.  Regard $k_1,k_2,k_4$ as independent and set
$k_3=-k_1+k_2+k_4$.  Since
\begin{align*}
 \Omega=2|k_1|^2-2k_1\cdot(k_2+k_4)+2k_2\cdot k_4,
\end{align*}
\eqref{eq:equal-sign-nonresonance} and repeated differentiation give
\begin{align*}
 \left|\partial_{k_1}^\alpha\partial_{k_2}^\beta\partial_{k_4}^\chi
 (4\gamma+\ii\Omega)^{-1}\right|
 \le C_{\alpha,\beta,\chi,\gamma}N^{-2-|\alpha|-|\beta|-|\chi|}.
\end{align*}
Moreover, \eqref{eq:weight-symbol-bounds} gives, with
$m=|\alpha|+|\beta|+|\chi|$,
\begin{align*}
 \left|\partial_{k_1}^\alpha\partial_{k_2}^\beta\partial_{k_4}^\chi
 \delta_{s,R}\right|
 \le C_{\alpha,\beta,\chi,s}
 \left(N^{2s-m}
 +\one_{\{\alpha=\chi=0\}}M^{2s-|\beta|}
 +\one_{\{\alpha=\beta=0\}}K^{2s-|\chi|}\right).
\end{align*}
Leibniz's rule, together with $N\ge M\ge K$ and
$(M/N)^{2s},(K/N)^{2s}\le1$, therefore yields
\begin{align*}
 \left|
 \partial_{k_1}^\alpha\partial_{k_2}^\beta\partial_{k_4}^\chi
 \left\{
 N^{2-2s}
 \frac{\delta_{s,R}(k_1,k_2,-k_1+k_2+k_4,k_4)}
 {4\gamma+\ii\Omega(k_1,k_2,-k_1+k_2+k_4,k_4)}
 \right\}\right|
 \le C_{\alpha,\beta,\chi,s,\gamma}
 N^{-|\alpha|}M^{-|\beta|}K^{-|\chi|}.
\end{align*}

Hence the standard multiplier factorization applies.  Using $L^2$ for the two $N$-scale factors and
$L^\infty$ for the two low-frequency factors, Bernstein's inequality
gives
\begin{align*}
 |\mathfrak B_{s,R,\boldsymbol N}|
 &\le C_{s,\gamma}N^{2s-2}
 \norm{P_Nf_1}_{L^2}\norm{P_Nf_3}_{L^2}
 \norm{P_Mf_2}_{L^\infty}\norm{P_Kf_4}_{L^\infty}\\
 &\le C_{s,\gamma}N^{2s-2}M^{3/2}K^{3/2}
 \norm{P_Nf_1}_{L^2}\norm{P_Mf_2}_{L^2}
 \norm{P_Nf_3}_{L^2}\norm{P_Kf_4}_{L^2}.
\end{align*}
Assigning the two $H^{1+b}$ weights to the two $N$-scale factors,
\begin{align*}
 \norm{P_Nf_1}_{L^2}\norm{P_Mf_2}_{L^2}
 \norm{P_Nf_3}_{L^2}\norm{P_Kf_4}_{L^2}
 =N^{-2-2b}M^{-1}K^{-1}
 a_{1,N}^{(b)}a_{2,M}^{(0)}a_{3,N}^{(b)}a_{4,K}^{(0)}.
\end{align*}
Therefore
\begin{align}\label{eq:equal-sign-block}
 |\mathfrak B_{s,R,\boldsymbol N}|
 \le C_{s,\gamma}
 N^{2s-3-2b}
 \left(\frac MN\right)^{1/2}
 \left(\frac KN\right)^{1/2}
 a_{1,N}^{(b)}a_{3,N}^{(b)}
 a_{2,M}^{(0)}a_{4,K}^{(0)}.
\end{align}

\emph{Step 5: summation over the dyadic scales.}
Set
\begin{align*}
 \eta=3+2b-2s-3\kappa>0,\qquad
 \eta_0=3+2b-2s>0.
\end{align*}
The positivity of $\eta$ follows from
\eqref{eq:resolvent-exponent-choice}, while $\eta_0>0$ follows from
$s<3/2+b$.

For the three-high configuration, \eqref{eq:three-high-block} gives
the kernel $N^{-\eta}M^{-3\kappa}$.  Since the scales are dyadic,
\begin{align*}
 \sup_N\sum_{M\le N}M^{-3\kappa}|a_{4,M}^{(0)}|
 \le
 \left(\sum_M M^{-6\kappa}\right)^{1/2}
 \norm{a_4^{(0)}}_{\ell^2}
 \le C_\kappa\norm{a_4^{(0)}}_{\ell^2}.
\end{align*}
Moreover,
\begin{align*}
 \sum_N
 |a_{1,N}^{(b)}a_{2,N}^{(b)}a_{3,N}^{(0)}|
 \le
 \norm{a_1^{(b)}}_{\ell^2}
 \norm{a_2^{(b)}}_{\ell^2}
 \norm{a_3^{(0)}}_{\ell^2},
\end{align*}
where we used Cauchy--Schwarz on the first two factors and
$\ell^2\hookrightarrow\ell^\infty$ on the third.  Since
$N^{-\eta}\le1$, the full sum of the three-high blocks is therefore
bounded by
\begin{align*}
 C
 \norm{a_1^{(b)}}_{\ell^2}
 \norm{a_2^{(b)}}_{\ell^2}
 \norm{a_3^{(0)}}_{\ell^2}
 \norm{a_4^{(0)}}_{\ell^2}.
\end{align*}

For the opposite-sign configuration, \eqref{eq:opposite-sign-block}
gives the kernel
\begin{align*}
 N^{-\eta}\left(\frac MN\right)^\kappa K^{-3\kappa},
 \qquad K\le M\le N.
\end{align*}
For every fixed $N$,
\begin{align*}
 \sum_{M\le N}\sum_{K\le M}
 \left(\frac MN\right)^\kappa K^{-3\kappa}
 |a_{3,M}^{(0)}a_{4,K}^{(0)}|
 &\le
 \left(\sum_{M\le N}\left(\frac MN\right)^{2\kappa}\right)^{1/2}
 \norm{a_3^{(0)}}_{\ell^2}\times
 \left(\sum_KK^{-6\kappa}\right)^{1/2}
 \norm{a_4^{(0)}}_{\ell^2}\\
 &\le C_\kappa
 \norm{a_3^{(0)}}_{\ell^2}
 \norm{a_4^{(0)}}_{\ell^2}.
\end{align*}
The remaining sum satisfies
\begin{align*}
 \sum_NN^{-\eta}
 |a_{1,N}^{(b)}a_{2,N}^{(b)}|
 \le
 \norm{a_1^{(b)}}_{\ell^2}
 \norm{a_2^{(b)}}_{\ell^2}.
\end{align*}
Hence the opposite-sign blocks satisfy the same four-factor bound.

Finally, for the equal-sign configuration,
\eqref{eq:equal-sign-block} gives the kernel
\begin{align*}
 N^{-\eta_0}
 \left(\frac MN\right)^{1/2}
 \left(\frac KN\right)^{1/2}.
\end{align*}
For every $N$,
\begin{align*}
 \sum_{M\le N}\left(\frac MN\right)^{1/2}|a_{2,M}^{(0)}|
 \le C\norm{a_2^{(0)}}_{\ell^2}, \quad 
 \sum_{K\le N}\left(\frac KN\right)^{1/2}|a_{4,K}^{(0)}|
 \le C\norm{a_4^{(0)}}_{\ell^2},
\end{align*}
by Cauchy--Schwarz and the summability of the dyadic geometric
kernels.  Enlarging the original region $K\le M\le N$ to
$K,M\le N$ therefore gives
\begin{align*}
 \sum_{\boldsymbol N\text{ in the equal-sign case}}
 |\mathfrak B_{s,R,\boldsymbol N}|
 \le C
 \norm{a_1^{(b)}}_{\ell^2}
 \norm{a_3^{(b)}}_{\ell^2}
 \norm{a_2^{(0)}}_{\ell^2}
 \norm{a_4^{(0)}}_{\ell^2}.
\end{align*}

The same estimates hold for the finitely many permutations of the
frequency slots.  Since
$\norm{a_j^{(\beta)}}_{\ell^2}\simeq
\norm{f_j}_{H^{1+\beta}}$, summing over the six possible choices of
the two slots carrying the $H^{1+b}$ norms yields
\eqref{eq:four-wave-resolvent}.  Density of trigonometric polynomials
then gives the extension of $\mathfrak B_{s,R}$ to $(H^{1+b})^4$.
\end{proof}

The next lemma provides one stationary bootstrap step.
\begin{lemma}
\label{lem:one-stationary-bootstrap}
Let $\nu$ be an invariant probability measure.  Fix $10/3<q<4$ and
$0\le b<1$.  Suppose either $b=0$, or there exists $A\in(0,1)$ such
that
\begin{align*}
 b<A\left(1-\frac2q\right),
 \qquad
 \int_{H^1}\norm u_{H^{1+A}}^2\,\nu(\dd u)<\infty.
\end{align*}
Then
\begin{align}\label{eq:bootstrap-output}
 \int_{H^1}\norm u_{H^s}^2\,\nu(\dd u)<\infty
 \qquad\text{for every }1<s<\frac32+b.
\end{align}
\end{lemma}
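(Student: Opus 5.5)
The plan is a stationary energy balance for the truncated $H^s$ norm. The quartic Sobolev flux in that balance is removed by a stationary four-point It\^o identity. What remains is controlled by \Cref{lem:four-wave-resolvent} together with the moments from \Cref{lem:stationary-linfty-family}.

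\emph{Step 1: the truncated balance.} Fix $1<s<3/2+b$ and $R\ge1$, and set $\Psi_R(u)=\sum_k w_{s,R}(k)|\widehat u(k)|^2$, a bounded quadratic form on $L^2$ since $w_{s,R}$ has finite support. Apply It\^o's formula to $\Psi_R(u_t)$ along a stationary solution with law $\nu$, and take expectations using stationarity. This gives
\begin{align*}
 2\gamma\int\Psi_R\,\dd\nu=\sum_j\Psi_R(b_j)+\int\mathcal F_R\,\dd\nu .
\end{align*}
The dispersive term drops out because $w_{s,R}$ is a Fourier multiplier. The flux $\mathcal F_R$ is, up to a constant, $\operatorname{Im}$ of the quartic sum over $\Gamma$ with symbol $\delta_{s,R}(\boldsymbol k)$ and factors $\widehat u(k_1)\overline{\widehat u(k_2)}\widehat u(k_3)\overline{\widehat u(k_4)}$. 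All terms are integrable: $\mathcal F_R$ is bounded by $C_R\norm u_{L^2}^4$, and energy moments of every order hold by \eqref{eq:invariant-moments}.

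\emph{Step 2: the four-point identity.} Let $\mathcal Q_R(u)=\mathfrak B_{s,R}(u,u,u,u)$. Apply It\^o's formula to a frequency-truncated version of $\mathcal Q_R$, where every $k_j$ is restricted to $|k_j|\le K$, so that the functional is a polynomial on a finite-dimensional space. The linear part of the drift acts on each monomial as multiplication by $-(4\gamma+\ii\Omega(\boldsymbol k))$, which cancels the resolvent and reproduces exactly the quartic flux sum with symbol $\delta_{s,R}$. Stationarity then expresses $\E_\nu\mathcal F_R$ as the sum of two kinds of terms. The first kind are sextic terms of the form $\mathfrak B_{s,R}(\ii\mathsf N(u),u,u,u)$, together with their permutations and conjugates. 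The second kind are It\^o terms $\mathfrak B_{s,R}(b_j,b_j,u,u)$ and its permutations, plus the first-order terms in $u$ from the noise drift. I will then let $K\to\infty$. This limit is where I expect the main obstacle. It needs uniform-in-$K$ bounds, obtained from \eqref{eq:four-wave-resolvent} since the truncated symbols obey the same dyadic bounds, and dominated convergence justified by the moment bounds of Step 3. In the case $b>0$ the a priori $H^{1+A}$ integrability is used essentially here.

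\emph{Step 3: bounding the remaining terms uniformly in $R$.} The It\^o terms and the noise-drift terms involve smooth $b_j$. By \eqref{eq:four-wave-resolvent} with $b=0$ in the relevant slots, they are bounded by polynomials in $\norm u_{H^1}$, which are integrable. For the sextic terms, apply \eqref{eq:four-wave-resolvent} and then \eqref{eq:cubic-tame-hr} with $r=1+b$, which gives $\norm{\mathsf N(u)}_{H^{1+b}}\le C\norm u_{L^\infty}^2\norm u_{H^{1+b}}$. The worst term is then bounded by $\norm u_{L^\infty}^2\norm u_{H^{1+b}}^2\norm u_{H^1}^2$. When $b=0$ this is $\norm u_{L^\infty}^2\norm u_{H^1}^4$. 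It is integrable by H\"older's inequality, combining \eqref{eq:stationary-linfty-family} (since $2<q$) with the energy moments.

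When $b>0$, interpolate: $\norm u_{H^{1+b}}\le\norm u_{H^1}^{1-b/A}\norm u_{H^{1+A}}^{b/A}$. Then apply H\"older's inequality with $\norm u_{L^\infty}^2$ in $L^{q/2}(\nu)$. The factor $\norm u_{H^{1+A}}^{2b/A}$ must lie in $L^{q/(q-2)-}(\nu)$, so that energy factors can absorb the slack. This holds because $\frac{2b}{A}\cdot\frac{q}{q-2}<2$, which is exactly the hypothesis $b<A(1-2/q)$, combined with the assumed second moment of $\norm u_{H^{1+A}}$.

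Hence $\int\Psi_R\,\dd\nu\le C$ with $C$ independent of $R$. Letting $R\to\infty$ by monotone convergence, since $\rho(k/R)\uparrow1$, yields \eqref{eq:bootstrap-output}.
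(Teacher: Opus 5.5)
Your route is the paper's route: truncated $H^s$ balance, the stationary four-wave identity for frequency-truncated quartic monomials, cancellation of $4\gamma+\ii\Omega$ by the resolvent symbol, \Cref{lem:four-wave-resolvent} with \eqref{eq:cubic-tame-hr}, the H\"older count $2/q+b/A<1$, and removal of $R$. Your treatment of the sextic terms is correct.

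\textbf{The gap: the It\^o terms.} The step that fails as written is your bound on these terms.
\begin{itemize}
\item You cannot invoke \eqref{eq:four-wave-resolvent} ``with $b=0$''. The lemma needs $s<3/2+b$, so $b=0$ only covers $s<3/2$. When $b>0$, the new range $3/2\le s<3/2+b$ is the whole point of the lemma.
\item You also cannot choose which slots carry the strong norms. The right side of \eqref{eq:four-wave-resolvent} is a sum over all pairs of slots. In its proof the two $H^{1+b}$ weights sit on the two highest frequencies. In $\mathfrak B_{s,R}(b_j,b_j,u,u)$ and its permutations these are the $u$ slots, because $b_j$ is smooth.
\item With only $H^1$ on those slots, the opposite-sign dyadic bound in Step~3 of that proof carries the factor $N^{2s-3+3\kappa}$. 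This is not summable once $s\ge 3/2$.
\end{itemize}
What the lemma actually gives is $C\norm{b_j}_{H^{1+b}}^2\norm u_{H^{1+b}}^2$, not a polynomial in $\norm u_{H^1}$. So you need $\int\norm u_{H^{1+b}}^2\,\nu(\dd u)<\infty$.

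\textbf{The repair.} This second moment is available from the hypotheses.
\begin{itemize}
\item For $b=0$ it is the energy moment.
\item For $b>0$ it follows from $\norm u_{H^{1+b}}^2\le\norm u_{H^{1+A}}^{2b/A}\norm u_{H^1}^{2(1-b/A)}$ and H\"older.
\end{itemize}
This is exactly how the paper bounds its term $\mathscr G$. So the $H^{1+A}$ hypothesis is needed for the It\^o terms as well, not only for the sextic terms.

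\textbf{Smaller inaccuracies.}
\begin{itemize}
\item The flux is not bounded by $C_R\norm u_{L^2}^4$. The symbol $\delta_{s,R}(\boldsymbol k)$ is nonzero as soon as one $|k_a|<2R$, so the quartet sum is not finite. The bound $C_R\norm u_{L^2}\norm{\mathsf N(u)}_{L^2}\le C_R\norm u_{H^1}^4$ does hold and suffices.
\item Additive noise produces no first-order terms in $u$. The only extra drift is the It\^o correction, with two $b_j$ slots.
\item With a sharp cutoff $|k_j|\le K$, the symbol has no smooth dyadic bounds. However, the truncated form equals $\mathfrak B_{s,R}$ applied to sharply truncated inputs, and truncation contracts every $H^\sigma$. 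This gives the uniform-in-$K$ bound (the paper uses the smooth $S_K$ the same way).
\item The $K\to\infty$ step is not a real obstacle. A uniform-in-$K$ bound on the right side, plus dominated convergence for the truncated flux, is enough.
\item Since $\rho(\cdot/R)$ need not increase in $R$, use Fatou rather than monotone convergence at the end.
\end{itemize}
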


\begin{proof}
Fix $1<s<3/2+b$.  The proof is divided into three steps.

\emph{Step 1: the truncated stationary $H^s$ balance.}
When $b=0$, put $\theta=0$.  When $b>0$, put $\theta=b/A$.  By
assumption, $\frac2q+\theta<1$. 
Sobolev interpolation gives
\begin{align*}
 \norm u_{H^{1+b}}^2
 \le
 \norm u_{H^{1+A}}^{2\theta}
 \norm u_{H^1}^{2(1-\theta)}
\end{align*}
when $b>0$, while for $b=0$ the same estimate is trivial with
$\theta=0$.  Hence H\"older's inequality,
\Cref{lem:stationary-linfty-family}, and the invariant energy moments
give
\begin{align}\label{eq:stationary-cubic-integrability}
 \int_{H^1}
 \norm u_{L^\infty}^2
 \norm u_{H^{1+b}}^2
 \norm u_{H^1}^2\,\nu(\dd u)<\infty.
\end{align}
In particular, $u\in H^{1+b}\cap L^\infty$ for $\nu$-almost every
$u$, and \eqref{eq:cubic-tame-hr} implies
\begin{align*}
 \int\norm{\mathsf N(u)}_{H^{1+b}}
 \norm u_{H^{1+b}}\norm u_{H^1}^2\,\nu(\dd u)<\infty.
\end{align*}
Define
\begin{align*}
 Q_{s,R}(u)=\sum_{k\in\Z^3}w_{s,R}(k)|u_k|^2,\qquad
 q_{s,R}=\sum_{j=1}^m\sum_{k\in\Z^3}
 w_{s,R}(k)|b_{j,k}|^2,
\end{align*}
where $w_{s,R}$ is given as in \eqref{eq:wsR} and $b_{j,k}=\widehat b_j(k)$ is the $k$-th Fourier coefficient of $b_j$.  
The sums are finite because $w_{s,R}$ has compact frequency support,
and
\begin{align}\label{eq:stationary-noise-trace}
 q_{s,R}\le\sum_{j=1}^m\norm{b_j}_{H^s}^2.
\end{align}
Choose a stationary solution, so that $u_0\sim\nu$ and $u_t\sim\nu$
for every $t\ge0$.  For each Fourier mode,
\begin{align*}
 \dd u_k
 =-(\gamma+\ii|k|^2)u_k\,\dd t
 -\ii\mathsf N_k(u)\,\dd t
 +\sum_{j=1}^m b_{j,k}\,\dd W_t^j,
\end{align*}
where $\mathsf N_k(u)=\widehat{\mathsf N(u)}(k)$.  Applying real
It\^o calculus to the finite sum $Q_{s,R}(u_t)$ gives
\begin{align*}
 \dd Q_{s,R}(u_t)
 =-2\gamma Q_{s,R}(u_t)\,\dd t
 +2\sum_k w_{s,R}(k)
 \Im\bigl(\bar u_k\mathsf N_k(u)\bigr)\,\dd t+q_{s,R}\,\dd t+\dd M_t^{s,R}.
\end{align*}
For fixed $R$,
\begin{align*}
 \left|
 \sum_k w_{s,R}(k)
 \bar u_k\mathsf N_k(u)
 \right|
 \le C_R\norm u_{L^2}\norm{\mathsf N(u)}_{L^2}
 \le C_R\norm u_{H^1}^4,
\end{align*}
and the quadratic variation density of $M^{s,R}$ is bounded by a
polynomial in $\norm u_{H^1}$.  Thus one may first stop at the exit of
an $H^1$ energy ball, apply It\^o's formula, and then remove the stop
using \eqref{eq:invariant-moments} and BDG.  Taking expectations and using stationarity therefore gives
\begin{align}\label{eq:stationary-weighted-prebalance}
 2\gamma\int Q_{s,R}\,\dd\nu
 =q_{s,R}
 +2\int\sum_k w_{s,R}(k)
 \Im\bigl(\bar u_k\mathsf N_k(u)\bigr)\,\nu(\dd u).
\end{align}

We now justify the symmetrization rigorously.  Fix a real even
$\chi\in C_c^\infty(\R^3;[0,1])$, equal to one on the unit ball and
zero outside the ball of radius two, and let $S_K$ be the Fourier
multiplier defined by $\widehat{S_Ku}(k)=\chi(k/K)\widehat u(k)$. 
Define the finite sum
\begin{align*}
 \mathscr F_{s,R}^{(K)}(u)
 =\sum_{\boldsymbol k\in\Gamma}
 \delta_{s,R}(\boldsymbol k)
 \prod_{a=1}^4\chi(k_a/K)
 \Im G_{\boldsymbol k}(u),
\end{align*}
where
\begin{align}\label{eq:Gku}
 G_{\boldsymbol k}(u)
 =u_{k_1}\bar u_{k_2}u_{k_3}\bar u_{k_4}.
\end{align}
For this finite sum the slot symmetries are exact.  By changes of
variables preserving $\Gamma$ and the symmetric cutoff,
\begin{align*}
 \sum_{\Gamma}w_{s,R}(k_2)\prod_{a=1}^4\chi(k_a/K)\Im G_{\boldsymbol k}
 &=\sum_{\Gamma}w_{s,R}(k_4)\prod_{a=1}^4\chi(k_a/K)\Im G_{\boldsymbol k},\\
 \sum_{\Gamma}w_{s,R}(k_1)\prod_{a=1}^4\chi(k_a/K)\Im G_{\boldsymbol k}
 &=-\sum_{\Gamma}w_{s,R}(k_4)\prod_{a=1}^4\chi(k_a/K)\Im G_{\boldsymbol k},\\
 \sum_{\Gamma}w_{s,R}(k_3)\prod_{a=1}^4\chi(k_a/K)\Im G_{\boldsymbol k}
 &=-\sum_{\Gamma}w_{s,R}(k_4)\prod_{a=1}^4\chi(k_a/K)\Im G_{\boldsymbol k}.
\end{align*}
Hence
\begin{align*}
 \mathscr F_{s,R}^{(K)}(u)
 =4\sum_k w_{s,R}(k)
 \Im\left(
 \overline{(S_Ku)_k}\,
 \widehat{\mathsf N(S_Ku)}(k)
 \right).
\end{align*}
Since $S_Ku\to u$ in $H^1$ and
$\mathsf N(S_Ku)\to\mathsf N(u)$ in $L^2$, for fixed $R$,
\begin{align*}
 \mathscr F_{s,R}^{(K)}(u)
 \longrightarrow
 4\sum_k w_{s,R}(k)
 \Im\bigl(\bar u_k\mathsf N_k(u)\bigr).
\end{align*}
The limit is bounded by $C_R\norm u_{H^1}^4$.  We therefore define
$\mathscr F_{s,R}$ by this limit, so that
\eqref{eq:stationary-weighted-prebalance} takes the
symmetrized form
\begin{align}\label{eq:stationary-weighted-balance}
 2\gamma\int Q_{s,R}\,\dd\nu
 =q_{s,R}+\frac12\int\mathscr F_{s,R}(u)\,\nu(\dd u).
\end{align}

Applying It\^o formula to 
$G_{\boldsymbol k}$ in \eqref{eq:Gku} gives, after localization and removal by \eqref{eq:invariant-moments},
\begin{align*}
 \begin{split}
 G_{\boldsymbol k}(u_t)-G_{\boldsymbol k}(u_0)
 ={}&\int_0^t
 \Bigl[
 -(4\gamma+\ii\Omega(\boldsymbol k))G_{\boldsymbol k}(u_r)
 +\mathscr N_{\boldsymbol k}(u_r)\\
 &\qquad
 +\frac12\sum_{j=1}^m
 D^2G_{\boldsymbol k}(u_r)[b_j,b_j]
 \Bigr]\,\dd r
 +M_t^{\boldsymbol k},
 \end{split}
\end{align*}
where
\begin{align*}
 \mathscr N_{\boldsymbol k}(u)
 ={}&-\ii\mathsf N_{k_1}(u)\bar u_{k_2}u_{k_3}\bar u_{k_4}
 +\ii u_{k_1}\overline{\mathsf N_{k_2}(u)}u_{k_3}\bar u_{k_4}\\
 &-\ii u_{k_1}\bar u_{k_2}\mathsf N_{k_3}(u)\bar u_{k_4}
 +\ii u_{k_1}\bar u_{k_2}u_{k_3}\overline{\mathsf N_{k_4}(u)}.
\end{align*}
Taking expectations,
dividing by $t>0$, and using stationarity gives the stationary
four-wave identity
\begin{align}\label{eq:stationary-quartet-hierarchy}
 (4\gamma+\ii\Omega(\boldsymbol k))
 \int G_{\boldsymbol k}(u)\,\nu(\dd u)
 =\int\mathscr N_{\boldsymbol k}(u)\,\nu(\dd u)
 +\frac12\sum_{j=1}^m
 \int D^2G_{\boldsymbol k}(u)[b_j,b_j]\,\nu(\dd u).
\end{align}

\emph{Step 2: finite Fourier summation and the uniform flux bound.}
For fixed $K$, multiply \eqref{eq:stationary-quartet-hierarchy} by
\begin{align*}
 \frac{\delta_{s,R}(\boldsymbol k)}
 {4\gamma+\ii\Omega(\boldsymbol k)}
 \prod_{a=1}^4\chi(k_a/K),
\end{align*}
take the imaginary part, and sum over $\boldsymbol k\in\Gamma$.
Since the sum is finite, summation and expectation may be
interchanged directly.  By the definition of
$\mathscr F_{s,R}^{(K)}$ and the triangle inequality,
\begin{align*}
 \abs{\int\mathscr F_{s,R}^{(K)}(u)\,\nu(\dd u)}
 \le \mathscr R+\mathscr G,
\end{align*}
where
\begin{align*}
 \mathscr R
 =\int\Bigl\{
 &\abs{\mathfrak B_{s,R}(S_K\mathsf N(u),S_Ku,S_Ku,S_Ku)}
 +\abs{\mathfrak B_{s,R}(S_Ku,S_K\mathsf N(u),S_Ku,S_Ku)}\\
 &+\abs{\mathfrak B_{s,R}(S_Ku,S_Ku,S_K\mathsf N(u),S_Ku)}
 +\abs{\mathfrak B_{s,R}(S_Ku,S_Ku,S_Ku,S_K\mathsf N(u))}
 \Bigr\}\,\nu(\dd u)
\end{align*}
and
\begin{align*}
 \mathscr G
 =\frac12\sum_{j=1}^m
 \int\abs{
 \sum_{\boldsymbol k\in\Gamma}
 \frac{\delta_{s,R}(\boldsymbol k)}
 {4\gamma+\ii\Omega(\boldsymbol k)}
 \prod_{a=1}^4\chi(k_a/K)
 D^2G_{\boldsymbol k}(u)[b_j,b_j]
 }\nu(\dd u).
\end{align*}
Here the factors involving the nonlinearity are
$S_K\mathsf N(u)$ rather than $\mathsf N(S_Ku)$, since they arise
directly from \eqref{eq:stationary-quartet-hierarchy}. Since $|\chi|\le1$, $S_K$ is a contraction on every Sobolev space
under consideration.  By \Cref{lem:four-wave-resolvent} and
\eqref{eq:cubic-tame-hr},
\begin{align*}
 \mathscr R
 \le C_{s,b,\gamma}
 \int
 \norm u_{L^\infty}^2
 \norm u_{H^{1+b}}^2
 \norm u_{H^1}^2\,\nu(\dd u).
\end{align*}
Indeed, if $\mathsf N(u)$ occupies a strong slot, we use
\begin{align*}
 \norm{\mathsf N(u)}_{H^{1+b}}
 \le C\norm u_{L^\infty}^2\norm u_{H^{1+b}},
\end{align*}
while if it occupies a weak slot, we use the corresponding $H^1$
estimate and assign the two strong norms to two of the remaining
$u$ factors.  The right-hand side is finite by
\eqref{eq:stationary-cubic-integrability}. Moreover,
\begin{align*}
 S_Ku\to u\quad\text{in }H^{1+b},\qquad
 S_K\mathsf N(u)\to\mathsf N(u)\quad\text{in }H^{1+b}
\end{align*}
for $\nu$-almost every $u$.  Hence dominated convergence permits
$K\to\infty$ in the nonlinear terms.

For $\mathscr G$, the real Hessian
$D^2G_{\boldsymbol k}[b_j,b_j]$ is the sum of the six terms obtained
by placing the two noise directions in two distinct slots of the
quartic monomial.  Each term therefore becomes a form
$\mathfrak B_{s,R}$ with two inputs $S_Kb_j$ and two inputs $S_Ku$.
Another application of \Cref{lem:four-wave-resolvent} gives
\begin{align*}
 \mathscr G
 \le C_{s,b,\gamma}
 \sum_{j=1}^m\norm{b_j}_{H^{1+b}}^2
 \int\norm u_{H^{1+b}}^2\,\nu(\dd u),
\end{align*}
uniformly in $K$ and $R$.  The right-hand side is finite by Step~1.
Since $S_Kb_j\to b_j$ in every Sobolev space, dominated convergence
also permits $K\to\infty$ in these terms. Combining the two estimates and using
$\mathscr F_{s,R}^{(K)}(u)\to\mathscr F_{s,R}(u)$ gives
\begin{align}\label{eq:stationary-flux-uniform}
 \sup_{R\ge1}
 \abs{\int\mathscr F_{s,R}(u)\,\nu(\dd u)}<\infty.
\end{align}

\emph{Step 3: removal of the Sobolev cutoff.}
Insert \eqref{eq:stationary-flux-uniform} and
\eqref{eq:stationary-noise-trace} into
\eqref{eq:stationary-weighted-balance}.  Since $\gamma>0$,
\begin{align*}
 \sup_{R\ge1}
 \abs{\int Q_{s,R}(u)\,\nu(\dd u)}<\infty.
\end{align*}
For every $u$,
\begin{align*}
 Q_{s,R}(u)
 =\sum_k\la k\ra^{2s}\rho(k/R)|u_k|^2
 \longrightarrow\norm u_{H^s}^2
\end{align*}
as $R\to\infty$.  Since $Q_{s,R}\ge0$, Fatou's lemma gives
\begin{align*}
 \int\norm u_{H^s}^2\,\nu(\dd u)<\infty.
\end{align*}
This proves \eqref{eq:bootstrap-output}.
\end{proof}

\begin{proof}[Proof of \Cref{thm:stationary-asymptotic-smoothing}]
Fix $0<\varepsilon<1$.  Choose $q\in(10/3,4)$ so close to $4$ that
$\varepsilon<q/4$, and put $c=1-2/q$.  At $\eta=0$ the affine
recursion below has fixed point
\begin{align*}
 \frac{1/2}{1-c}=\frac q4>\varepsilon.
\end{align*}
Hence one may choose $\eta>0$ sufficiently small that
$c-\eta>0$ and
\begin{align*}
 \frac{1/2-\eta}{1-c+\eta}>\varepsilon.
\end{align*}
Starting from $g_0=0$, define
\begin{align*}
 g_{n+1}=\frac12-\eta+(c-\eta)g_n.
\end{align*}
Since $0<c-\eta<1$, the sequence is increasing and converges to
\begin{align*}
 \frac{1/2-\eta}{1-c+\eta}<\frac q4<1.
\end{align*}
By the choice of $\eta$, there is a finite $n$ for which
$g_n>\varepsilon$.

We prove inductively that
\begin{align}\label{eq:bootstrap-induction}
 \int\norm u_{H^{1+g_n}}^2\,\nu(\dd u)<\infty.
\end{align}
For $n=0$ this is the invariant energy moment.  Apply
\Cref{lem:one-stationary-bootstrap} with $b=0$.  It gives every
Sobolev order below $3/2$, and in particular
\eqref{eq:bootstrap-induction} for $g_1=\frac12-\eta$. 
Suppose now that \eqref{eq:bootstrap-induction} holds for some
$n\ge1$.  Put $b_n=(c-\eta)g_n$. 
Since $c=1-2/q$,
\begin{align*}
 0<b_n<g_n\left(1-\frac2q\right),
\end{align*}
and $g_n<1$.  Thus
\Cref{lem:one-stationary-bootstrap} applies with $A=g_n$ and
$b=b_n$.  It gives every Sobolev exponent strictly below
$3/2+b_n$.  Since
\begin{align*}
 1+g_{n+1}
 =\frac32-\eta+b_n
 <\frac32+b_n,
\end{align*}
we obtain \eqref{eq:bootstrap-induction} at level $n+1$.

Choose an index with $g_n>\varepsilon$.  The embedding
$H^{1+g_n}\hookrightarrow H^{1+\varepsilon}$ then yields
\eqref{eq:stationary-h1eps}.  Applying the result with
$\varepsilon=1-1/n$, $n\ge2$, and intersecting the resulting
full-measure sets gives
\begin{align*}
 \nu\left(\bigcap_{n\ge2}H^{2-1/n}\right)=1,
\end{align*}
which is the asserted concentration on $H^{2-}$.
\end{proof}

\subsection{Proof of the main theorem}

The passage from the coupling topology to the energy topology below is a quantitative form of the energy equation method; see \cite{Ball1997,EkrenKukavicaZiane2017}. We combine convergence of norm expectations with stationary Sobolev tails to retain a polynomial rate in the untruncated $H^1$ Wasserstein distance.

\begin{proof}[Proof of \Cref{thm:main}]
Recall that $\cV=1+\cE$.  By \eqref{eq:energy-coercivity}, the sublevels of
$\cV$ are bounded in $H^1$ and compact in $L^2$.  Hence
\Cref{prop:lyapunov} verifies \Cref{ass:poly-lyapunov}.  The exact
difference formula in \Cref{prop:exact-stable-compact-difference},
together with \Cref{prop:snls-regular-dependence}, verifies
\Cref{ass:stable-compact-regularity}, with $H=L^2$, $\mathcal H_T=H_T$,
$\rho(T)=\e^{-\gamma T}$, and the path functional
\eqref{eq:snls-growth-functional}.  The dense endpoint theorem
\Cref{thm:dense-endpoint} verifies \Cref{ass:dense-malliavin}.  Finally,
\Cref{prop:reduced-common-accessibility} verifies
\Cref{ass:common-accessibility}.
For $0<\delta\le1$, put
\begin{align*}
 d_\delta(u,v)=1\wedge\norm{u-v}_{L^2}^\delta.
\end{align*}
The preceding results verify
\Cref{ass:poly-lyapunov,ass:stable-compact-regularity,ass:dense-malliavin,ass:common-accessibility}.  Hence
\Cref{thm:stable-compact-polynomial-mixing}, applied with $X=H^1$,
$H=L^2$, and $\cV=1+\cE$, gives, for every $q>0$ and
$0<\delta\le1$,
\begin{align}\label{eq:snls-pairwise-polynomial-mixing}
 \mathcal W_{d_\delta}
 \bigl(P_t(x,\cdot),P_t(y,\cdot)\bigr)
 \le C\{1+\cE(x)^M+\cE(y)^M\}(1+t)^{-q},
\end{align}
where $M=M(q)$. By \Cref{prop:existence}, there is an invariant measure
$\mu$, and every invariant measure has all polynomial energy moments.
By \Cref{cor:full-support}, $\supp\mu=H^1$.
Moreover, \Cref{thm:stationary-asymptotic-smoothing} gives
$\mu(H^{2-})=1$.
Invariance, the coupling inequality, and \eqref{eq:snls-pairwise-polynomial-mixing} yield
\begin{align}\label{eq:snls-invariant-l2-polynomial-mixing}
\begin{split}
 \mathcal W_{d_\delta}\bigl(P_t(x,\cdot),\mu\bigr)
 &\le\int_{H^1}\mathcal W_{d_\delta}
 \bigl(P_t(x,\cdot),P_t(y,\cdot)\bigr)\,\mu(\dd y)\le C\left(1+\cE(x)^M
 \right)(1+t)^{-q},
\end{split}
\end{align}
and the uniqueness follows. 

We next upgrade the convergence from $L^2$ to $H^1$.  We first deduce
a consequence of \eqref{eq:snls-invariant-l2-polynomial-mixing}.
Suppose that $F:H^1\to\R$ satisfies
\begin{align*}
 |F(u)-F(v)|
 \le C_F\{1+\cE(u)^a+\cE(v)^a\}d_\theta(u,v)
\end{align*}
for some $a<\infty$ and $0<\theta\le1$.  Then, for every $L>0$,
\begin{align}\label{eq:weighted-observable-transfer}
 |P_tF(x)-\mu(F)|
 \le C_{F,L}\{1+\cE(x)^{M_L}\}(1+t)^{-L}.
\end{align}
Indeed, for conjugate exponents $r,r'>1$ and a nearly optimal
$d_\theta$-coupling $\pi_t$ of $P_t(x,\cdot)$ and $\mu$,
H\"older's inequality and $d_\theta^{r'}\le d_\theta$ give
\begin{align*}
 |P_tF(x)-\mu(F)|
 \le C_F
 \left(\int\{1+\cE(u)^a+\cE(v)^a\}^r\,\dd\pi_t\right)^{1/r}
 \left(\int d_\theta(u,v)\,\dd\pi_t\right)^{1/r'}.
\end{align*}
The first factor is bounded polynomially in $1+\cE(x)$ by
\Cref{prop:lyapunov} and \eqref{eq:invariant-moments}, while the
second is controlled by
\eqref{eq:snls-invariant-l2-polynomial-mixing} with order $Lr'$.

Set
\begin{align*}
 \mathsf G(u)
 =\frac12\sum_{j=1}^m\left[
 \norm{\nabla b_j}_{L^2}^2+
 \int_{\T^3}\left(|u|^2|b_j|^2+
 2(\Rea(u\bar b_j))^2\right)\dd x\right]
 -\frac{\gamma}{2}\norm u_{L^4}^4.
\end{align*}
The smoothness of the noise profiles, energy coercivity, and
\begin{align*}
 \norm{u-v}_{L^4}
 \le C\norm{u-v}_{L^2}^{1/4}
 \bigl(\norm u_{H^1}+\norm v_{H^1}\bigr)^{3/4}
\end{align*}
give
\begin{align*}
 |\mathsf G(u)-\mathsf G(v)|
 &\le C\{1+\cE(u)^2+\cE(v)^2\}d_{1/4}(u,v),\\
 \left|\norm u_{L^4}^4-\norm v_{L^4}^4\right|
 &\le C\{1+\cE(u)^2+\cE(v)^2\}d_{1/4}(u,v).
\end{align*}
Hence \eqref{eq:weighted-observable-transfer} applies to both
observables.

Since
\begin{align*}
 \mathsf D(u)=2\cE(u)+\frac12\norm u_{L^4}^4,
\end{align*}
the energy identity \eqref{eq:ito-identity} gives
\begin{align*}
 \frac{\dd}{\dd t}P_t\cE(x)+2\gamma P_t\cE(x)=P_t\mathsf G(x),
 \qquad
 \mu(\mathsf G)=2\gamma\mu(\cE).
\end{align*}
Variation of constants and \eqref{eq:weighted-observable-transfer}
therefore imply, for every $L>0$,
\begin{align*}
 |P_t\cE(x)-\mu(\cE)|
 \le C_L\{1+\cE(x)^{M_L}\}(1+t)^{-L}.
\end{align*}
Together with the exact mass balance
\begin{align*}
 P_t\bigl(\norm{\cdot}_{L^2}^2\bigr)(x)
 -\mu\bigl(\norm{\cdot}_{L^2}^2\bigr)
 =\e^{-2\gamma t}\left\{
 \norm x_{L^2}^2-\mu\bigl(\norm{\cdot}_{L^2}^2\bigr)\right\}
\end{align*}
and
\begin{align*}
 \norm u_{H^1}^2
 =\norm u_{L^2}^2+2\cE(u)-\frac12\norm u_{L^4}^4,
\end{align*}
this yields
\begin{align}\label{eq:h1-second-moment-convergence}
 \left|
 P_t\bigl(\norm{\cdot}_{H^1}^2\bigr)(x)
 -\mu\bigl(\norm{\cdot}_{H^1}^2\bigr)
 \right|
 \le C_L\{1+\cE(x)^{M_L}\}(1+t)^{-L}.
\end{align}

Let $\Pi_N$ be the Fourier projection onto
$\{k\in\Z^3:|k|\le N\}$ and $Q_N=\Id-\Pi_N$.  Since
\begin{align*}
 \left|
 \norm{\Pi_Nu}_{H^1}^2-\norm{\Pi_Nv}_{H^1}^2
 \right|
 \le CN^2\{1+\cE(u)+\cE(v)\}d_1(u,v),
\end{align*}
\eqref{eq:weighted-observable-transfer} and
\eqref{eq:h1-second-moment-convergence} imply
\begin{align*}
 &P_t\bigl(\norm{Q_N\cdot}_{H^1}^2\bigr)(x)
 +\int\norm{Q_Nu}_{H^1}^2\,\mu(\dd u)\\
 &\qquad\le
 2\int\norm{Q_Nu}_{H^1}^2\,\mu(\dd u)
 +C_LN^2\{1+\cE(x)^{M_L}\}(1+t)^{-L}.
\end{align*}
Taking $\varepsilon=1/2$ in
\eqref{eq:stationary-h1eps} gives
\begin{align}\label{eq:high-frequency-tail-transfer}
 P_t\bigl(\norm{Q_N\cdot}_{H^1}^2\bigr)(x)
 +\int\norm{Q_Nu}_{H^1}^2\,\mu(\dd u)
 \le CN^{-1}
 +C_LN^2\{1+\cE(x)^{M_L}\}(1+t)^{-L}.
\end{align}

Let now $\pi_t$ be an optimal $d_1$-coupling of
$P_t(x,\cdot)$ and $\mu$, and put $z=\norm{u-v}_{L^2}$.
Splitting into $\{z\le1\}$ and $\{z>1\}$ and using
Cauchy--Schwarz gives
\begin{align*}
 \int z^2\,\dd\pi_t
 \le \int d_1(u,v)\,\dd\pi_t
 +\left(\int(\norm u_{L^2}+\norm v_{L^2})^4\,\dd\pi_t\right)^{1/2}
 \left(\int d_1(u,v)\,\dd\pi_t\right)^{1/2}.
\end{align*}
The fourth moments are controlled by
\Cref{prop:lyapunov} and \eqref{eq:invariant-moments}.
Since the $d_1$-mixing estimate is available at arbitrary polynomial
order, for every $L>0$,
\begin{align}\label{eq:l2-second-moment-coupling}
 \int\norm{u-v}_{L^2}^2\,\pi_t(\dd u,\dd v)
 \le C_L\{1+\cE(x)^{M_L}\}(1+t)^{-L}.
\end{align}

Finally,
\begin{align*}
 \norm{u-v}_{H^1}^2
 \le CN^2\norm{u-v}_{L^2}^2
 +2\norm{Q_Nu}_{H^1}^2+2\norm{Q_Nv}_{H^1}^2.
\end{align*}
Integrating against $\pi_t$ and using
\eqref{eq:high-frequency-tail-transfer} and
\eqref{eq:l2-second-moment-coupling} gives
\begin{align*}
 \mathcal W_{2,H^1}\bigl(P_t(x,\cdot),\mu\bigr)^2
 \le CN^{-1}
 +C_LN^2\{1+\cE(x)^{M_L}\}(1+t)^{-L}.
\end{align*}
Given $q>0$, choose $a>2q$, set
$N=\lceil(1+t)^a\rceil$, and then choose $L>2a+2q$.  Hence
\begin{align}\label{eq:h1-w2-mixing}
 \mathcal W_{2,H^1}\bigl(P_t(x,\cdot),\mu\bigr)
 \le C_q\{1+\cE(x)^{M_q}\}(1+t)^{-q}.
\end{align}

It remains only to pass to arbitrary finite Wasserstein order.  If
$p>2$ and $\widehat\pi_t$ is an optimal
$\mathcal W_{2,H^1}$-coupling, interpolation between
$L^2(\widehat\pi_t)$ and $L^{2p}(\widehat\pi_t)$ gives
\begin{align*}
 \mathcal W_{p,H^1}\bigl(P_t(x,\cdot),\mu\bigr)
 &\le
 \mathcal W_{2,H^1}\bigl(P_t(x,\cdot),\mu\bigr)^{1/(p-1)}\\
 &\quad\times
 \left\{
 \left(P_t\norm{\cdot}_{H^1}^{2p}(x)\right)^{1/(2p)}
 +\left(\int\norm u_{H^1}^{2p}\,\mu(\dd u)\right)^{1/(2p)}
 \right\}^{(p-2)/(p-1)}.
\end{align*}
By \eqref{eq:energy-coercivity}, \Cref{prop:lyapunov}, and
\eqref{eq:invariant-moments}, the term in braces is bounded by
$C_p\{1+\cE(x)\}^{1/2}$ uniformly in $t$.  Applying
\eqref{eq:h1-w2-mixing} with polynomial order $(p-1)q$ therefore
proves
\begin{align*}
 \mathcal W_{p,H^1}\bigl(P_t(x,\cdot),\mu\bigr)
 \le C_{p,q}\{1+\cE(x)^{M_{p,q}}\}(1+t)^{-q}.
\end{align*}
For $1\le p\le2$, the same conclusion follows from
$\mathcal W_{p,H^1}\le\mathcal W_{2,H^1}$.  This proves
\eqref{eq:main-mixing}.
\end{proof}

\appendix
\section{Analytic estimates for the cubic stochastic NLS}\label{app:analytic}

Global well-posedness in the energy space is standard.  By \cite[Theorem~1.5(ii)]{CheungMosincat2019}, equation \eqref{eq:intro-spde} is globally well posed in $H^1$ with trajectories in $C([0,T];H^1)\cap L^{7/2}(0,T;W^{7/8,7/2})$ almost surely; the additional term $-\gamma u$ is a bounded dissipative perturbation and the smooth finite-rank operator $B$ satisfies the noise regularity hypotheses.  We therefore take the global $H^1$ Markov flow for granted and provide only the estimates needed in the mixing argument.

For an interval $I$, put
\begin{align}\label{eq:energy-strichartz-space}
 \norm v_{\mathcal S(I)}
 =\norm v_{C(I;H^1)}
 +\norm v_{L^{7/2}(I;W^{7/8,7/2})}.
\end{align}

For the pathwise formulation, let $\omega\in E_T$ and set $z=u-B\omega$.  Then $z$ solves
\begin{align}\label{eq:pathwise-shift}
 \partial_tz=-\gamma(z+B\omega)+\ii\Delta(z+B\omega)
 -\ii\abs{z+B\omega}^2(z+B\omega),\qquad z(0)=u_0.
\end{align}
Since $B\omega\in C([0,T];H^k)$ for every $k$, the standard
deterministic energy-subcritical theory gives a unique maximal $H^1$
solution of \eqref{eq:pathwise-shift}.  Put
\begin{align}\label{eq:regular-driver-domain}
 \mathfrak D_T
 =\{(x,\omega)\in H^1\times E_T:
 \text{\eqref{eq:pathwise-shift} has an $H^1$ solution on $[0,T]$}\}.
\end{align}
The local theory and the blow-up alternative show that
$\mathfrak D_T$ is open, hence Borel.  The almost sure global theory mentioned above gives
\begin{align}\label{eq:full-measure-regular-drivers}
 \boldsymbol\gamma_T\{\omega:(x,\omega)\in\mathfrak D_T\}=1,
 \qquad x\in H^1.
\end{align}
On $\mathfrak D_T$, set
$\boldsymbol\Phi_T(x,\omega)=z+B\omega$, which is the genuine solution
path and belongs to the energy--Strichartz space.  Extend this map
Borel measurably to $H^1\times E_T$, for instance by setting
\begin{align*}
 \boldsymbol\Phi_T(x,\omega)(s)
 =\e^{-(\gamma-\ii\Delta)s}x
\end{align*}
on $\mathfrak D_T^c$.  We thereby obtain a Borel representation
\begin{align*}
 \boldsymbol\Phi_T:H^1\times E_T\longrightarrow C([0,T];H^1)
\end{align*}
which, for each fixed initial state, agrees with the stochastic flow
for almost every Wiener driver.  Restriction and concatenation hold
almost surely by pathwise uniqueness.  Values of the extension on
$\mathfrak D_T^c$ will never be used as solutions.  For piecewise
constant Cameron--Martin controls, the standard persistence of
regularity in $H^s$, $s>3/2$, will also be used; see
\cite[Section~4]{Sarychev2012}.
The tangent and Malliavin operator fields used below are defined by
their variational equations on $\mathfrak D_T$ and are given arbitrary
Borel extensions on its complement.

\subsection{Lyapunov structure and existence of invariant measures}
We first provide necessary periodic estimates. Recall that $\mathsf N (u)=|u|^2u$ is the nonlinear term. 

\begin{lemma}\label{lem:periodic-toolkit}
If $I=[a,b]$ has length at most one, then 
\begin{align} 
 \norm{\e^{-(\gamma-\ii\Delta)(t-a)}f}_
 {L^{7/2}(I;W^{7/8,7/2})}
 &\le C\norm f_{H^{53/56}},\label{eq:periodic-homogeneous}\\
 \norm{\int_a^t\e^{-(\gamma-\ii\Delta)(t-s)}F(s)\,\dd s}_
 {C(I;H^{53/56})\cap L^{7/2}(I;W^{7/8,7/2})}
 &\le C\norm F_{L^1(I;H^{53/56})}. 
 \label{eq:periodic-inhomogeneous}
\end{align}
For $0\le s\le1$ and $u,v\in H^s\cap L^\infty$,
\begin{align}
 \norm{\mathsf N(u)}_{H^s}
 &\le C\norm u_{L^\infty}^2\norm u_{H^s},\notag\\
 \norm{\mathsf N(v)-\mathsf N(u)}_{H^s}
 &\le C(\norm u_{L^\infty}+\norm v_{L^\infty})^2
       \norm{v-u}_{H^s}\notag\\
 &\quad+C(\norm u_{L^\infty}+\norm v_{L^\infty})
       (\norm u_{H^s}+\norm v_{H^s})
       \norm{v-u}_{L^\infty} .
 \label{eq:cubic-difference-hs}
\end{align}
\end{lemma}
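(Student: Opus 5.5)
The plan is to derive the three groups of estimates in \Cref{lem:periodic-toolkit} separately: first the two Strichartz bounds \eqref{eq:periodic-homogeneous}--\eqref{eq:periodic-inhomogeneous}, then the tame cubic bound, and finally the difference bound \eqref{eq:cubic-difference-hs}, which follows from the cubic bound by algebra. Since $I$ has length at most one and $\e^{-\gamma(t-a)}\in[\e^{-\gamma},1]$ there, the damping is a harmless scalar factor. It therefore suffices to prove the bounds for the unitary group $\e^{\ii(t-a)\Delta}$.

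For \eqref{eq:periodic-homogeneous} I would use the scale-invariant periodic Strichartz estimate of Killip--Vi\c{s}an, \cite[Theorem~1.1]{KillipVisan2016}, with $p=7/2>10/3$. It gives, on unit intervals,
\begin{align*}
 \norm{\e^{\ii t\Delta}P_Nf}_{L^{7/2}_{t,x}}\le CN^{3/2-5/p}\norm{P_Nf}_{L^2},\qquad \frac32-\frac{10}{7}=\frac1{14}.
\end{align*}
Applying this dyadically to $\langle\nabla\rangle^{7/8}P_Nf$ produces the factor $N^{7/8+1/14}=N^{53/56}$. The pieces are then summed through the Littlewood--Paley square function in $L^{7/2}_x$, using Minkowski's inequality to exchange $\ell^2_N$ and $L^{7/2}_t$, which is allowed since $7/2\ge2$. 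This gives exactly the $H^{53/56}$ norm.

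The inhomogeneous estimate \eqref{eq:periodic-inhomogeneous} then follows by writing $\int_a^t=\int_a^b\one_{s<t}$. For the $C(I;H^{53/56})$ part, unitarity together with Minkowski's inequality suffices. For the Strichartz part, Minkowski's inequality gives
\begin{align*}
 \Bigl\|\int_a^b\one_{s<t}\,\e^{\ii(t-s)\Delta}F(s)\,\dd s\Bigr\|\le\int_a^b\norm{\e^{\ii(t-s)\Delta}F(s)}\,\dd s,
\end{align*}
after which the homogeneous bound is applied to each fixed $s$. This is legitimate because the cutoff $\one_{s<t}$ only shrinks the time domain, and the estimate is applied on a time interval of length at most two. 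No Christ--Kiselev argument is needed because the input norm is $L^1_t$.

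For the nonlinear bounds with $0\le s\le1$, I would use the fractional Leibniz (Kato--Ponce) inequality
\begin{align*}
 \norm{fg}_{H^s}\lesssim\norm f_{L^\infty}\norm g_{H^s}+\norm g_{L^\infty}\norm f_{H^s},
\end{align*}
which also holds on $\T^3$ via paraproducts. Applying it twice to $u\bar u u$ gives the tame bound. For the difference I would write
\begin{align*}
 \mathsf N(v)-\mathsf N(u)=(v-u)(|v|^2+\ldots)+\ldots,
\end{align*}
that is, as a sum of trilinear products each containing one factor $v-u$ or $\overline{v-u}$, with the other two factors taken from $u,v$ and their conjugates. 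Applying the Leibniz bound to each product, the $H^s$ norm falls either on $v-u$, giving the first term of \eqref{eq:cubic-difference-hs}, or on $u$ or $v$, giving the second term with $\norm{v-u}_{L^\infty}$. The only step needing some care is the exponent bookkeeping in the square-function summation; I expect it to be routine.
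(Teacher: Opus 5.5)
Your proposal is correct and follows the paper's proof essentially step by step. It uses the Killip--Vi\c{s}an estimate at $p=7/2$ on dyadic blocks to get the factor $N^{7/8+1/14}=N^{53/56}$, sums with Littlewood--Paley and Minkowski (valid since $7/2\ge2$), proves the inhomogeneous bound by Minkowski in $s$ with the homogeneous estimate applied on $[s,b]$, and applies the fractional Leibniz rule to $u\bar u u$ and to the three-term splitting of $\mathsf N(v)-\mathsf N(u)$. One harmless slip: the interval $[s,b]$ in the inhomogeneous step has length at most one, not two.
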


\begin{proof}
These estimates are standard; we include the short argument for completeness.
Let $P_N$ be a smooth dyadic projector, with $N\ge1$.  The scale-invariant estimate of \cite[Theorem~1.1]{KillipVisan2016} in dimension three gives, for $p=7/2$,
\begin{align*}
 \norm{\e^{\ii(t-a)\Delta}P_Nf}_{L^{7/2}(I\times\T^3)}
 \le C N^{\frac32-\frac5{7/2}}\norm{P_Nf}_{L^2}
 =C N^{1/14}\norm{P_Nf}_{L^2}.
\end{align*}
Applying $\Lambda^{7/8}$ on a frequency block contributes $N^{7/8}$, and hence
\begin{align*}
 \norm{\e^{-(\gamma-\ii\Delta)(t-a)}P_Nf}_{L^{7/2}(I;W^{7/8,7/2})}
 \le C N^{7/8+1/14}\norm{P_Nf}_{L^2}
 =C N^{53/56}\norm{P_Nf}_{L^2}.
\end{align*}
Then the spatial Littlewood--Paley square function theorem followed by Minkowski in time yields
\begin{align*}
 \norm{\e^{-(\gamma-\ii\Delta)(t-a)}f}_{L^{7/2}(I;W^{7/8,7/2})}\le C\left(\sum_N N^{2\cdot53/56}\norm{P_Nf}_{L^2}^2\right)^{1/2}\le C\norm f_{H^{53/56}},
\end{align*}
which is \eqref{eq:periodic-homogeneous}.

For \eqref{eq:periodic-inhomogeneous}, Minkowski's inequality and the preceding bound on every interval $[s,b]\subset I$ give
\begin{align*}
 &\norm{\int_a^t\e^{-(\gamma-\ii\Delta)(t-s)}F(s)\,\dd s}_{L^{7/2}(I;W^{7/8,7/2})}\\
 &\qquad\le\int_a^b
 \norm{\one_{\{t\ge s\}}\e^{-(\gamma-\ii\Delta)(t-s)}F(s)}_{L^{7/2}_t(I;W^{7/8,7/2})}\,\dd s\le C\int_a^b\norm{F(s)}_{H^{53/56}}\,\dd s.
\end{align*}
The $C(I;H^{53/56})$ bound follows from the semigroup bound and
Minkowski's inequality.

For $0\le s\le1$, the fractional Leibniz rule gives
\begin{align*}
 \norm{fgh}_{H^s}
 \le C\bigl(\norm f_{H^s}\norm g_{L^\infty}\norm h_{L^\infty}
 +\norm g_{H^s}\norm f_{L^\infty}\norm h_{L^\infty}
 +\norm h_{H^s}\norm f_{L^\infty}\norm g_{L^\infty}\bigr).
\end{align*}
Taking $(f,g,h)=(u,\bar u,u)$ gives the first estimate.  For the difference, write
\begin{align*}
 \mathsf N(v)-\mathsf N(u)=(v-u)|v|^2+u(\bar v-\bar u)v+|u|^2(v-u)
\end{align*}
and apply the same product rule term by term.  Grouping the terms in which the $H^s$ derivative falls on $v-u$ and those in which it falls on $u$ or $v$ gives exactly \eqref{eq:cubic-difference-hs}.
\end{proof}
Recall that the Hamiltonian and its damping dissipation are
\begin{align*}
 \cE(u)&=\frac12\norm{\nabla u}_{L^2}^2+\frac14\norm u_{L^4}^4,\\
 \mathsf D(u)&=\norm{\nabla u}_{L^2}^2+\norm u_{L^4}^4.
\end{align*}

\begin{proposition}\label{prop:energy-strichartz}
For the global $H^1$ solution $u$, one has
\begin{align}
 \cE(u_t)+\frac{\gamma}{2}\int_0^t\mathsf D(u_s)\,\dd s
 \leq \cE(u_0)+C_{\gamma,B}t+M_t,\label{eq:energy-ito}
\end{align}
where 
\begin{align}
 d\langle M\rangle_t\le C_B\bigl(1+\cE(u_t)^{3/2}\bigr)dt.\label{eq:energy-qv}
\end{align}
Moreover, for every $T,R>0$ and $0<r<\infty$,
\begin{align}\label{eq:energy-supremum-moments}
 \sup_{\cE(u_0)\le R}\E\sup_{t\le T}(1+\cE(u_t))^r<\infty,
\end{align}
and there are $C,N<\infty$ such that
\begin{align}\label{eq:block-strichartz-moments}
 \E\norm u_{L^{7/2}(0,T;W^{7/8,7/2})}^r
 \le C\bigl(1+\cE(u_0)\bigr)^N.
\end{align}
If $u_0$ has an invariant law, then \eqref{eq:block-strichartz-moments} holds on every block $[a,a+T]$, uniformly in $a$.
\end{proposition}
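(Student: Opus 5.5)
The plan is to apply the real Itô formula to the Hamiltonian $\cE(u_t)$, to derive the pathwise balance \eqref{eq:energy-ito}, and then to obtain moments from the martingale structure. Formally,
\begin{align*}
 D\cE(u)\xi=(-\Delta u+|u|^2u,\xi)_\R .
\end{align*}
Along the drift $-\gamma u+\ii\Delta u-\ii|u|^2u$, the Hamiltonian part $\ii(\Delta u-|u|^2u)$ contributes zero, because $(w,\ii w)_\R=0$ for $w=-\Delta u+|u|^2u$. The damping contributes
\begin{align*}
 -\gamma\bigl(\norm{\nabla u}_{L^2}^2+\norm u_{L^4}^4\bigr)=-\gamma\mathsf D(u).
\end{align*}
The Itô correction is $\frac12\sum_j D^2\cE(u)[b_j,b_j]$, which is exactly the first bracket in $\mathsf G$ as used later. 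It is bounded by
\begin{align*}
 C_B\bigl(1+\norm u_{L^2}^2\bigr)\le C_B+\tfrac{\gamma}{2}\mathsf D(u)+C_{\gamma,B},
\end{align*}
using $\norm u_{L^2}^2\le C\norm u_{L^4}^2\le\epsilon\norm u_{L^4}^4+C_\epsilon$ on the finite-volume torus. Absorbing half the dissipation gives \eqref{eq:energy-ito} with
\begin{align*}
 M_t=\int_0^t\bigl(-\Delta u+|u|^2u,B\,\dd W\bigr)_\R .
\end{align*}
Its quadratic variation density is $\sum_j(-\Delta u+|u|^2u,b_j)_\R^2$. Integrating by parts onto the smooth $b_j$ bounds this by
\begin{align*}
 C_B\bigl(\norm u_{L^2}^2+\norm u_{L^3}^6\bigr)\le C_B\bigl(1+\norm u_{L^4}^6\bigr)\le C_B\bigl(1+\cE^{3/2}\bigr),
\end{align*}
which is \eqref{eq:energy-qv}.

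The rigorous justification is the first technical step. $H^1$ solutions do not allow direct Itô calculus on $\cE$, since $\Delta u\notin L^2$. I would regularize, either with Galerkin/frequency truncation $S_K$ of the equation or by applying Itô to $\cE(S_Ku)$ using the mild formulation. I would stop at the exit time from an energy–Strichartz ball, as in \eqref{eq:adapted-localization-Malliavin}, and pass $K\to\infty$. The commutator terms between $S_K$ and the nonlinearity vanish in the limit because $u\in L^2_tL^\infty_x$ on the stopped interval, via the Strichartz control in $L^{7/2}W^{7/8,7/2}$. Alternatively, one can invoke the energy identity in \cite{CheungMosincat2019}. Removing the stop uses $\tau_R\uparrow T$.

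For \eqref{eq:energy-supremum-moments}, I would apply Itô to $(1+\cE)^r$, or combine \eqref{eq:energy-ito} with BDG. This gives
\begin{align*}
 \E\sup_{t\le T}|M_t|^r\le C\,\E\Bigl(\int_0^T\bigl(1+\cE^{3/2}\bigr)\Bigr)^{r/2}\le C\,\E\sup_t(1+\cE)^{3r/4}T^{r/2}.
\end{align*}
Since $3/4<1$, Young's inequality absorbs this into the left side after localization, and a Gronwall-type induction on $r$ closes the bound.

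The main obstacle is \eqref{eq:block-strichartz-moments}, the polynomial Strichartz moments. I would use Duhamel on short subintervals $I$ of length $\delta$, with the noise term handled as $Z=\int\mathsf S(t-s)B\,\dd W$ (Gaussian with all moments in $C H^2$). By Lemma \ref{lem:periodic-toolkit},
\begin{align*}
 \norm u_{L^{7/2}(I;W^{7/8,7/2})}\le C\norm{u(a)}_{H^1}+C\norm{Z}_{CH^2}+C\delta^{3/7}\norm u_{C(I;H^1)}\norm u_{L^{7/2}(I;L^\infty)}^2 .
\end{align*}
This cubic bootstrap closes on $I$ only if $\delta$ is chosen depending polynomially on $A=\sup_t\norm{u_t}_{H^1}+\norm Z_{CH^2}$, say $\delta\sim(1+A)^{-c}$. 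Summing over the $T/\delta\lesssim(1+A)^c$ subintervals then yields
\begin{align*}
 \norm u_{L^{7/2}(0,T;W^{7/8,7/2})}\le C(1+A)^{N'},
\end{align*}
because the problem is energy subcritical and the $H^1$ norm is controlled a priori. Taking moments and using \eqref{eq:energy-supremum-moments} together with Gaussian moments of $Z$ gives \eqref{eq:block-strichartz-moments}.

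The stationary statement follows by applying the Markov property at time $a$ and integrating against the invariant law. This requires that the invariant law has all polynomial energy moments, which comes from the Lyapunov bound derived from \eqref{eq:energy-ito} via $\mathsf D\ge2\cE$.
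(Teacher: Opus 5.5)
Your proposal is correct and follows the paper's proof essentially step by step: It\^o's formula for $\cE$ with the It\^o correction absorbed into half of $\gamma\mathsf D$, a BDG/Young absorption for the supremum moments, a short-subinterval Strichartz bootstrap whose interval length is polynomially small in the a priori bound (the paper takes $\ell\le cR_a^{-14/3}$), and stationarity for the block statement. The differences are minor. You absorb $\E\sup_t|M_t|^r$ directly from the pathwise inequality using the exponent $3/4<1$, so no Gronwall step is actually needed, whereas the paper applies It\^o to $(1+\cE)^m$ and then uses Gronwall. You also sketch a regularization that justifies It\^o's formula at $H^1$ regularity, which the paper takes for granted.
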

\begin{proof}
It follows from It\^o formula that 
\begin{align}\label{eq:ito-identity}
\begin{split}
\cE(u_t)+\gamma\int_0^t\mathsf D(u_s)\,\dd s
 &=\cE(u_0)
 +M_t\\
 &+\frac12\sum_{j=1}^m\int_0^t\left[\norm{\nabla b_j}_{L^2}^2
 +\int_{\T^3}\left(|u_s|^2|b_j|^2+2(\Rea(u_s\bar b_j))^2\right)\dd x\right]\dd s,
\end{split} 
\end{align}
where
\begin{align*}
 M_t=\sum_{j=1}^m\int_0^t(\nabla\cE(u_s),b_j)_\R\,\dd W_s^j.
\end{align*}
Indeed,
\begin{align*}
 \nabla\cE(u)=-\Delta u+\mathsf N(u),\qquad
 (\nabla\cE(u),u)_\R=\norm{\nabla u}_{L^2}^2+\norm u_{L^4}^4=\mathsf D(u),
\end{align*}
and the Hamiltonian vector field is $-\mathsf J\nabla\cE(u)$, so
\begin{align*}
 (\nabla\cE(u),-\mathsf J\nabla\cE(u))_\R=0.
\end{align*}
Moreover,
\begin{align*}
 D\mathsf N(u)h=2|u|^2h+u^2\bar h,\qquad
 (D\mathsf N(u)h,h)_\R
 =\int_{\T^3}\left(|u|^2|h|^2+2(\Rea(u\bar h))^2\right)\dd x,
\end{align*}
which gives the displayed It\^o correction. Note that 
\begin{align}\label{eq:ito-correction}
 \frac12\sum_{j=1}^m\left[\norm{\nabla b_j}_{L^2}^2
 +\int_{\T^3}\left(|u|^2|b_j|^2+2(\Rea(u\bar b_j))^2\right)\dd x\right]
 \le C_B(1+\norm u_{L^2}^2)
 \le\frac{\gamma}{2}\mathsf D(u)+C_{\gamma,B},
\end{align}
which implies \eqref{eq:energy-ito}.  Furthermore,
\begin{align*}
 |(\nabla\cE(u),b_j)_\R|
 \le\norm{\nabla u}_{L^2}\norm{\nabla b_j}_{L^2}
 +\norm{b_j}_{L^\infty}\norm u_{L^3}^3
 \le C_{b_j}\bigl(1+\cE(u)^{3/4}\bigr),
\end{align*}
and therefore
\begin{align*}
 \dd\langle M\rangle_t
 =\sum_{j=1}^m|(\nabla\cE(u_t),b_j)_\R|^2\,\dd t
 \le C_B\bigl(1+\cE(u_t)^{3/2}\bigr)\,\dd t,
\end{align*}
which proves \eqref{eq:energy-qv}.

For an integer $m\ge1$, It\^o's formula applied to $(1+\cE)^m$, together with the preceding drift bound and \eqref{eq:energy-qv}, gives
\begin{align*}
 (1+\cE(u_t))^m
 \le(1+\cE(u_0))^m+C_m\int_0^t(1+\cE(u_s))^m\,\dd s
 +m\int_0^t(1+\cE(u_s))^{m-1}\,\dd M_s.
\end{align*}
By the Burkholder--Davis--Gundy and Young inequalities,
\begin{align*}
 \E\sup_{s\le t}\left|\int_0^s(1+\cE(u_\tau))^{m-1}\,\dd M_\tau\right|
 \le\frac12\E\sup_{s\le t}(1+\cE(u_s))^m
 +C_m\int_0^t\E\sup_{\tau\le s}(1+\cE(u_\tau))^m\,\dd s.
\end{align*}
Gronwall's inequality therefore yields
\begin{align}\label{eq:finite-time-energy-bound}
 \E\sup_{t\le T}(1+\cE(u_t))^m\le C_{m,T}(1+\cE(u_0))^m.
\end{align}
Increasing an arbitrary $r>0$ to an integer power and using Jensen's inequality proves \eqref{eq:energy-supremum-moments}.

It remains to establish the finite block dispersive estimate.  We first work on $I_0=[a,a+1]$ and set
\begin{align*}
 Z_a(t)=\int_a^t\e^{-(\gamma-\ii\Delta)(t-s)}B\,\dd W_s,\quad
 H_a=\sup_{t\in I_0}(1+\cE(u_t)),\quad
 R_a=1+H_a^{1/2}+\norm{Z_a}_{C(I_0;H^2)}.
\end{align*}
Conditionally on $\mathcal F_a$, the fresh convolution $Z_a$ has Gaussian moments of every finite order with deterministic bounds.  Let $I=[s,s+\ell]\subset I_0$.  Restarting the mild equation at $s$, using \Cref{lem:periodic-toolkit}, and noting that
\begin{align*}
 \int_s^t\e^{-(\gamma-\ii\Delta)(t-r)}B\,\dd W_r
 =Z_a(t)-\e^{-(\gamma-\ii\Delta)(t-s)}Z_a(s),
\end{align*}
we obtain
\begin{align*}
 \norm u_{C(I;H^1)}+\norm u_{L^{7/2}(I;W^{7/8,7/2})}
 \le CR_a+C\norm{\mathsf N(u)}_{L^1(I;H^1)}.
\end{align*}
Since $W^{7/8,7/2}\hookrightarrow L^\infty$, the cubic estimate in \Cref{lem:periodic-toolkit} and H\"older's inequality give
\begin{align*}
 \norm{\mathsf N(u)}_{L^1(I;H^1)}
 \le C\ell^{3/7}\norm u_{C(I;H^1)}
 \norm u_{L^{7/2}(I;L^\infty)}^2.
\end{align*}
Hence
\begin{align}\label{eq:block-local-bootstrap}
 \norm u_{C(I;H^1)}+\norm u_{L^{7/2}(I;W^{7/8,7/2})}
 \le CR_a+C\ell^{3/7}\norm u_{C(I;H^1)}
 \norm u_{L^{7/2}(I;W^{7/8,7/2})}^2.
\end{align}
Choosing $\ell\le cR_a^{-14/3}$ and applying the continuity bootstrap bounds the left-hand side by $2CR_a$.  The interval $I_0$ can be covered by at most $C(1+R_a^{14/3})$ such subintervals. 
The conditional version of \eqref{eq:finite-time-energy-bound}, applied from time $a$, together with the Gaussian moments of $Z_a$, shows that every finite moment of the right-hand side is bounded by a polynomial in $1+\cE(u_a)$.  Iterating over the finitely many unit intervals intersecting $[0,T]$ and using \eqref{eq:finite-time-energy-bound} proves \eqref{eq:block-strichartz-moments}.  If the initial law is invariant, the same estimate on $[a,a+T]$ is uniform in $a$ by stationarity.
\end{proof}

The next proposition verifies \Cref{ass:poly-lyapunov}  for SNLS \eqref{eq:intro-spde} with $\cV=1+\cE$ and $V_r=1+\cV^r$. 
\begin{proposition}\label{prop:lyapunov}
For every $r\ge1$ there are $c_r,C_r>0$ such that
\begin{align}
 \E(1+\cE(u_t))^r
 &\le \e^{-c_rt}(1+\cE(u_0))^r+C_r,\label{eq:polynomial-foster}\\
 \frac1T\int_0^T\E(1+\cE(u_s))^r\,\dd s
 &\le C_r+\frac{(1+\cE(u_0))^r}{c_rT}.\label{eq:polynomial-time-average}
\end{align}
\end{proposition}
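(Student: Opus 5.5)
The plan is to apply It\^o's formula to $F_r(u)=(1+\cE(u))^r$ and use the energy identity \eqref{eq:ito-identity} together with the dissipation bound $\mathsf D\ge2\cE$. This gives a differential inequality of the form $\dd\E F_r\le(-c_rF_r+C_r)\,\dd t$, which we then integrate.

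For the drift, the It\^o identity gives
\begin{align*}
 \dd F_r(u_t)=rF_{r-1}(u_t)\,\dd\cE(u_t)+\tfrac{r(r-1)}2F_{r-2}(u_t)\,\dd\langle M\rangle_t .
\end{align*}
By \eqref{eq:ito-identity} and \eqref{eq:ito-correction}, the drift of $\cE$ is at most $-\frac\gamma2\mathsf D(u)+C_{\gamma,B}\le-\gamma\cE(u)+C_{\gamma,B}$. Hence the first-order contribution is bounded by $rF_{r-1}\bigl(-\gamma(1+\cE)+\gamma+C_{\gamma,B}\bigr)=-r\gamma F_r+C F_{r-1}$.

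For the second-order term, \eqref{eq:energy-qv} gives $F_{r-2}\,\dd\langle M\rangle\le C_BF_{r-2}(1+\cE^{3/2})\le CF_{r-1/2}$. This is the only place where some care is needed: the quadratic variation grows like $\cE^{3/2}$, so it is superlinear in $\cE$. It is nevertheless dominated by $F_r$, because $r-\tfrac12<r$. Young's inequality then gives $CF_{r-1}+CF_{r-1/2}\le\frac{r\gamma}2F_r+C_r$. The drift of $F_r$ is therefore at most $-\frac{r\gamma}2F_r+C_r$. For $1\le r<2$, the exponent $r-2$ is negative, but $F_{r-2}\le1$ and $1+\cE^{3/2}\le2F_{3/2}$, so the bound $F_{r-2}(1+\cE^{3/2})\le CF_{r-1/2}$ still holds.

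Since $\E F_r(u_t)$ may not a priori be finite or differentiable, we localize at the stopping times $\tau_n=\inf\{t:\cE(u_t)\ge n\}$. At these times the stochastic integral $\int rF_{r-1}\,\dd M$ is a true martingale. We apply It\^o's formula to $\e^{ct}F_r(u_t)$ with $c=r\gamma/2$ and take expectations. This gives
\begin{align*}
 \E\,\e^{c(t\wedge\tau_n)}F_r(u_{t\wedge\tau_n})\le F_r(u_0)+\frac{C_r}{c}\bigl(\e^{ct}-1\bigr).
\end{align*}
We let $n\to\infty$ using Fatou's lemma; global well-posedness gives $\tau_n\to\infty$ almost surely. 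This yields \eqref{eq:polynomial-foster} with $c_r=r\gamma/2$, after enlarging $C_r$. Rigor at the level of $H^1$ solutions (It\^o's formula for $\cE$) is taken from \Cref{prop:energy-strichartz}. If needed, one can apply It\^o to $\Pi_N$-Galerkin approximations or rely on the identity \eqref{eq:ito-identity} already established there.

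For \eqref{eq:polynomial-time-average}, we integrate the same differential inequality without the exponential weight. The localized expectation gives
\begin{align*}
 \E F_r(u_{T})+c\int_0^T\E F_r(u_s)\,\dd s\le F_r(u_0)+C_rT .
\end{align*}
We drop the nonnegative first term and divide by $cT$. This gives the time-average bound with constants relabeled, since $\E F_r\ge(1+\cE)^r$ by definition. No step is a genuine obstacle beyond controlling the $\cE^{3/2}$ quadratic variation, which we handle by the Young absorption described above.
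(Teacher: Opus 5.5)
Your proposal is correct and follows essentially the same route as the paper: It\^o's formula for $(1+\cE)^r$, the bound $\mathsf D\ge2\cE$, the quadratic-variation estimate \eqref{eq:energy-qv}, Young absorption of the $F_{r-1}$ and $F_{r-1/2}$ terms, then Gronwall and time integration, with your localization at $\tau_n$ plus Fatou making explicit the expectation step the paper takes directly. One small fix: for $1\le r<2$, the bound $F_{r-2}(1+\cE^{3/2})\le 2F_{r-1/2}$ follows from the identity $F_{r-2}F_{3/2}=F_{r-1/2}$, valid for any real exponent, not from $F_{r-2}\le1$, which would only give $2F_{3/2}$ and would break the absorption when $1<r<3/2$.
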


\begin{proof}
Put $Y=1+\cE$.  By \eqref{eq:ito-identity} and \eqref{eq:ito-correction}, for every $\varepsilon>0$,
\begin{align*}
 \dd Y_t\le\bigl\{-(\gamma-\varepsilon)\mathsf D(u_t)+C_{\varepsilon,B}\bigr\}\,\dd t+\dd M_t.
\end{align*}
Applying It\^o's formula to $Y^r$ and using \eqref{eq:energy-qv} gives
\begin{align*}
 \dd Y_t^r\le-r(\gamma-\varepsilon)Y_t^{r-1}\mathsf D(u_t)\,\dd t
 +C_r\bigl(Y_t^{r-1}+Y_t^{r-1/2}\bigr)\,\dd t+rY_t^{r-1}\,\dd M_t.
\end{align*}
Choose $\varepsilon<\gamma/2$.  Since $\mathsf D\ge2\cE=2(Y-1)$, Young's inequality yields $c_r,C_r>0$ such that
\begin{align*}
 -r(\gamma-\varepsilon)Y^{r-1}\mathsf D+C_r\bigl(Y^{r-1}+Y^{r-1/2}\bigr)\le-c_rY^r+C_r.
\end{align*}
Hence
\begin{align*}
 \dd Y_t^r\le(-c_rY_t^r+C_r)\,\dd t+rY_t^{r-1}\,\dd M_t.
\end{align*}
Taking expectations and applying Gronwall's inequality prove \eqref{eq:polynomial-foster}.  Integrating the same differential inequality gives
\begin{align*}
 c_r\int_0^T\E Y_s^r\,\dd s\le Y_0^r+C_rT,
\end{align*}
which proves \eqref{eq:polynomial-time-average}.

\end{proof}

Existence of invariant measures for damped stochastic NLS is standard in closely related settings; see \cite{EkrenKukavicaZiane2017} for additive noise equations on $\R^d$ and \cite{BrzezniakFerrarioZanella2023} for subsequent ergodic results in dimensions $d\le3$.  On compact domains, the weak topology Krylov--Bogoliubov method is developed for damped 2D SNLS in \cite{BrzezniakFerrarioZanella2024}, following the sequential weak-Feller principle of \cite{MaslowskiSeidler1999}. Since the available compact domain result is two-dimensional and does not
directly cover the present three-dimensional energy space setting, we include
a short equation specific verification.

\begin{proposition}
\label{prop:existence}
The semigroup $P_t$ is Feller in the norm topology and sequentially weakly Feller on $H^1$.  It admits at least one invariant probability measure.  Every invariant probability $\nu$ satisfies
\begin{align}\label{eq:invariant-moments}
 \int_{H^1}(1+\cE(u))^r\,\nu(\dd u)<\infty,
 \qquad 0<r<\infty.
\end{align}
\end{proposition}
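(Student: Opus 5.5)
The proof has three parts: Feller continuity, existence via a weak-topology Krylov--Bogoliubov argument, and moment bounds for every invariant measure.

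\emph{Norm Feller property.} For $x_n\to x$ in $H^1$ and almost every driver $\omega$, we have $(x,\omega)\in\mathfrak D_T$. Since $\mathfrak D_T$ is open and the pathwise equation \eqref{eq:pathwise-shift} depends continuously on its initial data in the energy--Strichartz norm \eqref{eq:energy-strichartz-space}, this gives $\Phi_T(x_n,\omega)\to\Phi_T(x,\omega)$ in $H^1$ almost surely. For bounded continuous $f$, dominated convergence then yields $P_Tf(x_n)\to P_Tf(x)$.

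\emph{Sequential weak Feller property.} Let $x_n\rightharpoonup x$ weakly in $H^1$. Then $\sup_n\norm{x_n}_{H^1}<\infty$ and, by compact embedding, $x_n\to x$ in $L^2$. The plan is to show $\Phi_t(x_n,\omega)\rightharpoonup\Phi_t(x,\omega)$ weakly in $H^1$ almost surely. First, \eqref{eq:energy-supremum-moments} and \eqref{eq:block-strichartz-moments} give uniform bounds in $C([0,T];H^1)\cap L^{7/2}W^{7/8,7/2}$, along a subsequence and pathwise on a set of full measure, via Fatou and Borel--Cantelli applied to localized events. Second, the difference $e_n=\Phi(x_n)-\Phi(x)$ satisfies the noise-free equation from the proof of \Cref{prop:exact-stable-compact-difference}, and the real part of the $L^2$ pairing with $e_n$ gives
\begin{align*}
 \frac{\dd}{\dd t}\norm{e_n}_{L^2}^2\le C\bigl(\norm{u}_{L^\infty}^2+\norm{v_n}_{L^\infty}^2\bigr)\norm{e_n}_{L^2}^2 .
\end{align*}
Gronwall with the $L^1_tL^\infty$ bounds yields $L^2$ convergence on the bounded tube. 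Combined with the uniform $H^1$ bound, this gives weak $H^1$ convergence at time $t$. A subsequence argument upgrades this to convergence of $P_tf(x_n)$ for $f$ bounded and sequentially weakly continuous. The main obstacle is making this pathwise localization rigorous, since the $H^1$ and Strichartz bounds hold only in expectation; the plan is to truncate with the stopping times $\tau_R$, on which the bounds are deterministic, and let $R\to\infty$ using the moment bounds.

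\emph{Existence.} Start from $u_0=0$ and form the time averages $\mu_T=\frac1T\int_0^TP_s(0,\cdot)\,\dd s$. By \eqref{eq:polynomial-time-average}, $\int(1+\cE)\,\dd\mu_T\le C$. Energy sublevels are bounded in $H^1$, hence weakly compact and metrizable in the weak topology, so $\{\mu_T\}$ is tight in the weak topology. Take a limit point $\mu$. The sequential weak Feller property and the Maslowski--Seidler version of Krylov--Bogoliubov \cite{MaslowskiSeidler1999} show that $\mu$ is invariant. Since the weak and norm Borel $\sigma$-algebras on the separable space $H^1$ coincide, $\mu$ is a Borel probability measure on $H^1$.

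\emph{Moments.} Let $\nu$ be any invariant measure. Apply \eqref{eq:polynomial-foster} to $\min\{(1+\cE)^r,K\}$, integrate against $\nu$, and use invariance:
\begin{align*}
 \int\min\{(1+\cE)^r,K\}\,\dd\nu\le\int\min\bigl\{\e^{-c_rt}(1+\cE)^r+C_r,K\bigr\}\,\dd\nu .
\end{align*}
Let $t\to\infty$ first, using dominated convergence with $\nu$ a probability measure, to get the bound $\int\min\{(1+\cE)^r,K\}\,\dd\nu\le C_r$. Then let $K\to\infty$ by monotone convergence to obtain \eqref{eq:invariant-moments}.
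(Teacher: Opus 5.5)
Your proposal is correct and follows the same route as the paper. The ingredients match: norm Feller continuity from continuous dependence; sequential weak Feller continuity via Rellich compactness and the noise-free $L^2$ Gronwall estimate for the difference, on localized events whose complements are small uniformly in $n$; the weak-topology Krylov--Bogoliubov argument of Maslowski--Seidler; and the standard truncation argument for the moments, which you write out where the paper only cites it.

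Drop the detour through almost sure uniform-in-$n$ bounds via Fatou and Borel--Cantelli, since such bounds do not follow from uniform moment bounds alone. The localization you settle on instead is exactly the paper's argument and suffices. It splits $\abs{P_tf(x_n)-P_tf(x)}$ into the contribution on $\{\norm{v_n}_{\mathcal S(0,t)}+\norm{u}_{\mathcal S(0,t)}\le R\}$ and a remainder bounded by $2\norm{f}_{L^\infty}$ times a probability that is small uniformly in $n$.
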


\begin{proof}
Norm Feller continuity follows from the $H^1$ well-posedness and
continuous dependence.  To prove the sequential weak Feller property,
let $u_0^n\rightharpoonup u_0$ in $H^1$ and denote the corresponding
solutions by $u^n,u$. Rellich compactness gives 
\begin{align*}
 \norm{u_0^n-u_0}_{L^2}\longrightarrow0.
\end{align*}
For $R\ge1$, set
\begin{align*}
 \Omega_R^n=\left\{
 \norm{u^n}_{\mathcal S(0,t)}+\norm u_{\mathcal S(0,t)}\le R
 \right\},
\end{align*}
where $\mathcal S$ is the energy--Strichartz norm defined in
\eqref{eq:energy-strichartz-space}.
The estimates \eqref{eq:energy-supremum-moments} and
\eqref{eq:block-strichartz-moments} give
\begin{align*}
 \lim_{R\to\infty}\sup_n\Prob((\Omega_R^n)^c)=0,
\end{align*}
while an $L^2$ difference estimate 
gives on $\Omega_R^n$
\begin{align*}
 \norm{u_t^n-u_t}_{L^2}
 \le C_{t,R}\norm{u_0^n-u_0}_{L^2}\longrightarrow0.
\end{align*}
On every bounded $H^1$-ball, the strong $L^2$ topology coincides with
the weak $H^1$ topology.  Hence, if $f$ is bounded and sequentially
weakly continuous,
\begin{align*}
 \abs{P_tf(u_0^n)-P_tf(u_0)}
 \le \omega_{f,R}\!\left(
 C_{t,R}\norm{u_0^n-u_0}_{L^2}\right)
 +2\norm f_{L^\infty}\Prob((\Omega_R^n)^c),
\end{align*}
where $\omega_{f,R}(s)\to0$ as $s\downarrow0$.  Letting first
$n\to\infty$ and then $R\to\infty$ proves
$P_tf(u_0^n)\to P_tf(u_0)$.

For fixed $u_0\in H^1$, set
\begin{align*}
 \nu_T=\frac1T\int_0^T\delta_{u_0}P_t\,\dd t.
\end{align*}
By \eqref{eq:polynomial-time-average} with $r=1$ and \eqref{eq:energy-coercivity}, the family
$(\nu_T)_{T\ge1}$ is tight in the weak $H^1$ topology.  The weak topology Krylov--Bogoliubov theorem of \cite{MaslowskiSeidler1999}, in the form used for damped SNLS in \cite{BrzezniakFerrarioZanella2024}, therefore yields an invariant subsequential limit.

Finally, \eqref{eq:invariant-moments} follows directly
from \eqref{eq:polynomial-foster} and invariance by standard 
truncation argument, see \cite[Proposition~A.3]{LiuLuWave2026} for example.
\end{proof}

\subsection{Regular estimates of variations}
Next we derive  the weak tangent bounds and the regular dependence requirements of \Cref{ass:stable-compact-regularity}.  

Along a fixed energy path $u$, define the real-linear operator
\begin{align}\label{eq:linearized-operator}
 L_u(t)\xi=-\gamma\xi+\ii\Delta\xi
 -\ii\bigl(2|u(t)|^2\xi+u(t)^2\bar\xi\bigr).
\end{align}
Because $u\in L^2(0,T;L^\infty)$, the non-autonomous equation
\begin{align}\label{eq:J-homogeneous}
\partial_t\xi=L_u(t)\xi
\end{align}
has a unique propagator
$J^u_{s,t}$ on real $L^2$.

\begin{lemma}
For $0\le s\le t\le T$, one has 
\begin{align}\label{eq:l2-tangent-bound}
 \norm{J^u_{s,t}}_{\cL(L^2)}
 \le\exp\left\{-\gamma(t-s)
 +\int_s^t\norm{u(r)}_{L^\infty}^2\,\dd r\right\}.
\end{align}
If $f\in L^1(s,T;L^2)$ and $\xi$ solves
$\partial_t\xi=L_u(t)\xi+f$, then
\begin{align}\label{eq:l2-inhomogeneous}
 \sup_{t\in[s,T]}\norm{\xi(t)}_{L^2}
 \le C_u\left(\norm{\xi(s)}_{L^2}+
 \norm f_{L^1(s,T;L^2)}\right),
\end{align}
where
\begin{align*}
 \log C_u\le\norm u_{L^2(s,T;L^\infty)}^2.
\end{align*}
\end{lemma}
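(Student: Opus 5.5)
The plan is a direct $L^2$ energy estimate, using the fact that the unitary part of $L_u$ is skew-adjoint for the real inner product $(\cdot,\cdot)_\R$. For smooth data (or after a Galerkin/mollification step) take $\xi$ solving \eqref{eq:J-homogeneous} and compute
\begin{align*}
 \frac12\frac{\dd}{\dd t}\norm{\xi}_{L^2}^2=(L_u(t)\xi,\xi)_\R .
\end{align*}
The dispersive term vanishes: $(\ii\Delta\xi,\xi)_\R=\Rea\int\ii\Delta\xi\,\bar\xi=-\Rea\,\ii\norm{\nabla\xi}_{L^2}^2=0$. The diagonal potential term also vanishes: $(-2\ii|u|^2\xi,\xi)_\R=\Rea(-2\ii\int|u|^2|\xi|^2)=0$. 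Only the conjugate coupling survives, and pointwise $|\Rea(-\ii u^2\bar\xi\,\bar\xi)|\le|u|^2|\xi|^2$, so
\begin{align*}
 \frac12\frac{\dd}{\dd t}\norm{\xi}_{L^2}^2\le(-\gamma+\norm{u(t)}_{L^\infty}^2)\norm\xi_{L^2}^2 .
\end{align*}
Gronwall then gives \eqref{eq:l2-tangent-bound} from time $s$, with the constant exactly $1$ in front and the factor $1$ (not $3$) on $\int\norm u_{L^\infty}^2$. This is sharper than the crude bound $\norm{\mathbf M_u}\le3\norm u_{L^\infty}^2$, because the self-adjoint diagonal part drops out.

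To make the energy identity rigorous at energy regularity, I would work with the mild/interaction formulation already used in \Cref{lem:ac-propagation}. Since $u\in L^2(0,T;L^\infty)$, the perturbation $\mathbf M_u$ lies in $L^1(0,T;\cL(L^2))$, and the propagator is the strong limit of the Dyson series. I would approximate $u$ by smooth $u_n$ with $u_n\to u$ in $L^2_tL^\infty_x$ and take $\xi_0\in H^2$. For $u_n$ the solution is regular enough that the computation above is literal. The propagators then converge in operator norm by the Duhamel stability estimate, as in Step~2 of \Cref{lem:time-dependent-bog}, and the bound passes to the limit because $\int\norm{u_n}_{L^\infty}^2\to\int\norm u_{L^\infty}^2$. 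Density of $H^2$ in $L^2$ finishes the homogeneous case. This justification step is the only technical point, and I expect it to be routine.

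For \eqref{eq:l2-inhomogeneous}, use Duhamel with the propagator:
\begin{align*}
 \xi(t)=J^u_{s,t}\xi(s)+\int_s^tJ^u_{r,t}f(r)\,\dd r .
\end{align*}
By \eqref{eq:l2-tangent-bound}, and dropping $-\gamma(t-r)\le0$, each propagator has norm at most $\exp\norm u_{L^2(s,T;L^\infty)}^2=:C_u$. This gives $\norm{\xi(t)}_{L^2}\le C_u(\norm{\xi(s)}_{L^2}+\norm f_{L^1(s,T;L^2)})$ uniformly in $t$, with $\log C_u\le\norm u_{L^2(s,T;L^\infty)}^2$. Alternatively, the same energy inequality with the source term $(f,\xi)_\R\le\norm f\norm\xi$, followed by a Gronwall–Bihari step on $\norm\xi_{L^2}$ (not its square), gives the same conclusion directly.
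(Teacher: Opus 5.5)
Your proposal is correct and follows essentially the same route as the paper. Both use the real $L^2$ energy identity, in which the dispersive and diagonal potential terms drop out, bound the conjugate term by $\abs{(-\ii u^2\bar\xi,\xi)_\R}\le\norm u_{L^\infty}^2\norm\xi_{L^2}^2$, apply Gronwall, and then use variation of constants with the damping factor discarded to get \eqref{eq:l2-inhomogeneous}. Your approximation step (smooth $u_n$, $H^2$ initial data, operator-norm stability of the propagators) is a valid justification of the energy identity at low regularity, which the paper leaves implicit.
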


\begin{proof}
Taking the real $L^2$ inner product of the homogeneous equation \eqref{eq:J-homogeneous} with $\xi$, the terms $\ii\Delta\xi$ and $-2\ii|u|^2\xi$ vanish, while
\begin{align*}
 \abs{(-\ii u^2\bar\xi,\xi)_\R}
 \le\norm u_{L^\infty}^2\norm\xi_{L^2}^2.
\end{align*}
Hence
\begin{align*}
 \frac12\frac{\dd}{\dd t}\norm\xi_{L^2}^2
 \le\bigl(-\gamma+\norm{u(t)}_{L^\infty}^2\bigr)\norm\xi_{L^2}^2.
\end{align*}
Gronwall's inequality gives
\begin{align*}
 \norm{J^u_{s,t}}_{\cL(L^2)}
 \le\exp\left\{-\gamma(t-s)+\int_s^t\norm{u(r)}_{L^\infty}^2\,\dd r\right\},
\end{align*}
which is \eqref{eq:l2-tangent-bound}.

For the inhomogeneous equation, variation of constants gives
\begin{align*}
 \xi(t)=J^u_{s,t}\xi(s)+\int_s^tJ^u_{r,t}f(r)\,\dd r.
\end{align*}
Using \eqref{eq:l2-tangent-bound} and discarding the damping factor,
\begin{align*}
 \sup_{t\in[s,T]}\norm{\xi(t)}_{L^2}
 \le \exp\left\{\int_s^T\norm{u(r)}_{L^\infty}^2\,\dd r\right\}
 \left(\norm{\xi(s)}_{L^2}+\norm f_{L^1(s,T;L^2)}\right).
\end{align*}
Thus \eqref{eq:l2-inhomogeneous} holds with
\begin{align*}
 C_u=\exp\left\{\int_s^T\norm{u(r)}_{L^\infty}^2\,\dd r\right\},
 \qquad
 \log C_u\le\norm u_{L^2(s,T;L^\infty)}^2.
\end{align*}
\end{proof}

For $(\xi,h)\in H^1\times H_T$, the joint first variation is
\begin{align}\label{eq:joint-first-variation}
 \partial_t r=L_u(t)r+B\dot h,\qquad r(0)=\xi.
\end{align}
Its endpoint is
\begin{align*}
 D_x\Phi_T(x,\omega)\xi+\cD\Phi_T(x,\omega)h.
\end{align*}
The state derivative, initially defined on $H^1$ directions, extends
to a bounded operator on $L^2$, although the nonlinear endpoint map
itself is defined only on the phase space $H^1$.

Fix $T>0$ and put $I=[0,T]$.
By the pathwise formulation \eqref{eq:pathwise-shift} and the standard deterministic stability argument, if $(x,\omega),(y,\widetilde\omega)\in\mathfrak D_T$,
\begin{align*}
 u=\boldsymbol\Phi_T(x,\omega),\quad
 v=\boldsymbol\Phi_T(y,\widetilde\omega),\quad \text{ and }
 \norm u_{\mathcal S(I)}+\norm v_{\mathcal S(I)}\le R,
\end{align*}
then one has 
\begin{align}\label{eq:Lipschitz-stability}
 \norm{u-v}_{\mathcal S(I)}
 \le C_{T,R,B}\left(
 \norm{x-y}_{H^1}
 +\norm{B(\omega-\widetilde\omega)}_{C(I;H^2)}
 \right)
\end{align}
implying that the solution map is continuous from $\mathfrak D_T$ into $\mathcal S(0,T)$.
The corresponding tangent maps are Borel and continuous on such tubes by their mild equations.

The next lemma provides a standard local stability consequence of the energy-subcritical theory.  On a bounded Lyapunov core, $L^2$ closeness of the initial data upgrades by interpolation to the subcritical Sobolev topology, yielding a common energy--Strichartz tube around every regular driver.
\begin{lemma}
\label{lem:regular-core-local-tube}
Fix $R<\infty$, $(x_0,\omega_0)\in\mathfrak D_T$ with $x_0\in\mathbb V_R$, and put
\begin{align*}
 \sigma=\frac{53}{56},\qquad \vartheta=1-\sigma=\frac3{56}.
\end{align*}
There are a relative $L^2$-neighborhood $U$ of $x_0$ in $\mathbb V_R$, a neighborhood $O$ of $\omega_0$ in $E_T$, and $R_1<\infty$ such that
\begin{align}\label{eq:regular-core-common-tube}
 U\times O\subset\mathfrak D_T,
 \qquad \sup_{(x,\omega)\in U\times O}
 \norm{\boldsymbol\Phi_T(x,\omega)}_{\mathcal S(0,T)}\le R_1.
\end{align}
After shrinking $U$ and $O$ if necessary, for $(x,\omega),(a',\eta')\in U\times O$ one has
\begin{align}\label{eq:local-weak-path-stability}
 &\norm{\boldsymbol\Phi_T(x,\omega)-\boldsymbol\Phi_T(a',\eta')}
 _{C(0,T;H^\sigma)\cap L^{7/2}(0,T;W^{7/8,7/2})}\notag\\
 &\qquad\le C_{T,R_1,B}
 \bigl(\norm{x-a'}_{H^\sigma}+\norm{\omega-\eta'}_{E_T}\bigr).
\end{align}
\end{lemma}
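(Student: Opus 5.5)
The plan is to pass to the pathwise formulation \eqref{eq:pathwise-shift}, which solves for $z=u-B\omega$. In that form the driver enters only through the smooth path $B\omega\in C([0,T];H^k)$, so the problem becomes a deterministic perturbation problem around the regular solution $u_0=\boldsymbol\Phi_T(x_0,\omega_0)$, which has finite $\mathcal S(0,T)$ norm $R_0$.

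\emph{Step 1: interpolation.} For $x\in\mathbb V_R$, energy coercivity \eqref{eq:energy-coercivity} gives $\norm x_{H^1}\le C_R$. Interpolation then yields
\begin{align*}
 \norm{x-x_0}_{H^\sigma}\le\norm{x-x_0}_{L^2}^{\vartheta}\norm{x-x_0}_{H^1}^{\sigma}\le C_R\norm{x-x_0}_{L^2}^{\vartheta}.
\end{align*}
So an $L^2$ neighborhood in $\mathbb V_R$ is small in $H^\sigma$. On the driver side, $\norm{B(\omega-\omega_0)}_{C(I;H^2)}\le C_B\norm{\omega-\omega_0}_{E_T}$.

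\emph{Step 2: stability in the subcritical space $C(H^\sigma)\cap L^{7/2}W^{7/8,7/2}$.} I partition $[0,T]$ into finitely many intervals $I_k$ on which $\norm{u_0}_{L^{7/2}(I_k;W^{7/8,7/2})}\le\epsilon_0$. On each interval I write the Duhamel difference $w=u-u_0$ and estimate it with \Cref{lem:periodic-toolkit}. The point is that the Strichartz estimate \eqref{eq:periodic-homogeneous}--\eqref{eq:periodic-inhomogeneous} requires only $H^{53/56}=H^\sigma$ data. For the nonlinear term I use \eqref{eq:cubic-difference-hs} with $s=\sigma$, together with $W^{7/8,7/2}\hookrightarrow L^\infty$ and H\"older in time. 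This bounds $\norm{\mathsf N(u)-\mathsf N(u_0)}_{L^1H^\sigma}$ by a small factor times $\norm w$, plus higher powers of $\norm w$. A continuity argument then shows that $w$ stays small on each $I_k$, and iterating over the $I_k$ gives
\begin{align*}
 \norm w_{C H^\sigma\cap L^{7/2}W^{7/8,7/2}}\le C\bigl(\norm{x-x_0}_{H^\sigma}+\norm{\omega-\omega_0}_{E_T}\bigr).
\end{align*}
The same argument applied between two arbitrary points $(x,\omega)$ and $(a',\eta')$ of the neighborhood gives \eqref{eq:local-weak-path-stability}, once the common bound $R_1$ from Step 3 is available.

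\emph{Step 3: common $H^1$ bound and membership in $\mathfrak D_T$.} Knowing that $\norm u_{L^{7/2}(I;L^\infty)}$ is controlled by Step 2, the $H^1$ norm obeys a linear Gronwall inequality. Indeed, $\norm{\mathsf N(u)}_{H^1}\le C\norm u_{L^\infty}^2\norm u_{H^1}$ by \Cref{lem:periodic-toolkit} with $s=1$, so Duhamel plus Gronwall gives $\norm u_{C(I;H^1)}\le C(\norm x_{H^1}+C_B)\exp\{C\norm u_{L^2L^\infty}^2\}$. Since the $L^\infty$ integrability from Step 2 is uniform over the neighborhood, this bound is uniform too. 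By the blow-up alternative, the $H^1$ solution then exists on all of $[0,T]$, which gives $U\times O\subset\mathfrak D_T$ and the uniform bound $R_1$ in \eqref{eq:regular-core-common-tube}.

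The main obstacle is the Step 2 bootstrap at regularity $H^\sigma$ rather than $H^1$. In \eqref{eq:cubic-difference-hs}, the term $(\norm u_{H^\sigma}+\norm{u_0}_{H^\sigma})\norm w_{L^\infty}$ must be absorbed using the $L^{7/2}L^\infty$ component of the $w$-norm. This works because the $H^\sigma$ norms are bounded by the $H^1$ norms and the $L^\infty$ norm is controlled by the Strichartz norm, both available locally. The step needs care, however, to keep the argument from being circular with Step 3. I would close it with a single combined continuity argument on the pair of quantities $\norm u_{\mathcal S}$ and $\norm w_{C H^\sigma\cap L^{7/2}W^{7/8,7/2}}$ on each short interval.
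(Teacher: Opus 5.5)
Your proposal is correct and follows essentially the paper's own argument: $L^2\to H^\sigma$ interpolation on $\mathbb V_R$, then a subdivision-and-absorption stability estimate in $C(H^\sigma)\cap L^{7/2}W^{7/8,7/2}$ around the reference trajectory via \Cref{lem:periodic-toolkit}, then $H^1$ persistence by Gronwall with the uniform $L^2L^\infty$ bound plus the blow-up alternative, and finally the same absorption on the common tube for \eqref{eq:local-weak-path-stability}. The circularity you worry about does not arise: on the bootstrap set $\norm{u-u_0}_{C H^\sigma\cap L^{7/2}W^{7/8,7/2}}\le1$ one has $\norm{u}_{CH^\sigma}\le\norm{u_0}_{CH^\sigma}+1$, so the $H^\sigma$ bootstrap closes on its own without any $H^1$ input, which is exactly how the paper proceeds (working with the maximal $H^\sigma$ solution and partitioning by a time length $\ell_0$ rather than by smallness of the reference Strichartz norm).
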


\begin{proof}
It follows from energy coercivity \eqref{eq:energy-coercivity}
and interpolation that 
\begin{align}\label{eq:regular-core-interpolation}
 \norm{x-x_0}_{H^\sigma}
 \le C_R\norm{x-x_0}_{L^2}^{\vartheta},
 \qquad x\in\mathbb V_R.
\end{align}
By
\Cref{lem:periodic-toolkit}, the shifted equation
\eqref{eq:pathwise-shift} is locally wellposed in $H^\sigma$. Denote for $I\subset [0,T]$
\begin{align*}
\|v\|_{\mathcal S^{\sigma}(I)} = \|v\|_{C(I;H^\sigma)}+ \norm{v}_{L^{7/2}(I;W^{7/8,7/2})}. 
\end{align*}
Choose $\ell_0\in(0,1]$ so small that
\begin{align*}
 C\ell_0^{3/7}
 \left(1+
 \norm{\boldsymbol\Phi_T(x_0,\omega_0)}_{\mathcal S^{\sigma}(0,T)}^2 \right)
 \le\frac14,
\end{align*}
and partition $[0,T]$ into finitely many intervals of length at most
$\ell_0$.
Let $\boldsymbol\Phi(x,\omega)$ denote the maximal $H^\sigma$ solution issued from $(x,\omega)$. For a subdivision interval $I=[s,s+\ell_0]$,  on  the bootstrap set $\norm{\boldsymbol\Phi(x,\omega)-\boldsymbol\Phi_T(x_0,\omega_0)}_{\mathcal S^{\sigma}(I)} \le1$, the cubic difference estimate \eqref{eq:cubic-difference-hs},
H\"older's inequality, and the choice of $\ell_0$ give
\begin{align*}
\begin{split}
\norm{\boldsymbol\Phi(x,\omega)-\boldsymbol\Phi_T(x_0,\omega_0)}_{\mathcal S^{\sigma}(I)}
 &\le C\Bigl(
 \norm{\boldsymbol\Phi(x,\omega)(s)
 -\boldsymbol\Phi_T(x_0,\omega_0)(s)}_{H^\sigma}
 +\norm{\omega-\omega_0}_{E_T}\Bigr)\\
 &+\frac12 \norm{\boldsymbol\Phi(x,\omega)-\boldsymbol\Phi_T(x_0,\omega_0)}_{\mathcal S^{\sigma}(I)}.
\end{split}
\end{align*}
After absorption and finite iteration over the subdivision,
\begin{align}\label{eq:reference-global-hsigma-stability}
\norm{\boldsymbol\Phi(x,\omega)-\boldsymbol\Phi_T(x_0,\omega_0)}_{\mathcal S^{\sigma}(0,T)}\le C_{T,x_0,\omega_0,B}
 \bigl(\norm{x-x_0}_{H^\sigma}
 +\norm{\omega-\omega_0}_{E_T}\bigr)
\end{align}
whenever the right-hand side is sufficiently small.  By
\eqref{eq:regular-core-interpolation}, after shrinking a relative
$L^2$-neighborhood $U$ of $x_0$ in $\mathbb V_R$ and a neighborhood
$O$ of $\omega_0$ in $E_T$, all the corresponding maximal
$H^\sigma$ solutions reach time $T$ and remain uniformly bounded in
\begin{align}\label{eq:common-hsigma-tube}
 C(0,T;H^\sigma)\cap
 L^{7/2}(0,T;W^{7/8,7/2}).
\end{align}
In particular, by denoting the solutions as $\boldsymbol\Phi_T(x,\omega)$, one has 
\begin{align}\label{eq:common-l2linfty}
 \sup_{(x,\omega)\in U\times O}
 \norm{\boldsymbol\Phi_T(x,\omega)}_{L^2(0,T;L^\infty)}
 <\infty.
\end{align}
For $(x,\omega)\in U\times O$, persistence of
regularity for \eqref{eq:pathwise-shift} gives, on the maximal
$H^1$ lifespan,
\begin{align*}
 \norm{\boldsymbol\Phi_T(x,\omega)}_{C(0,t;H^1)}
 \le C_{R,O,B,T}+
 C_{B,T}\int_0^t
 \norm{\boldsymbol\Phi_T(x,\omega)(s)}_{L^\infty}^2
 \norm{\boldsymbol\Phi_T(x,\omega)(s)}_{H^1}\,\dd s.
\end{align*}
By \eqref{eq:common-l2linfty} and Gronwall's inequality,
\begin{align*}
 \sup_{(x,\omega)\in U\times O}
 \norm{\boldsymbol\Phi_T(x,\omega)}_{C(0,T;H^1)}<\infty.
\end{align*}
The $H^1$ blow-up alternative therefore shows that
$U\times O\subset\mathfrak D_T$.  Combining this estimate with
\eqref{eq:common-hsigma-tube} gives
\eqref{eq:regular-core-common-tube} for some $R_1<\infty$. Finally, \eqref{eq:local-weak-path-stability} follows by repeating the
same subdivision-and-absorption stability argument on the common tube
\eqref{eq:regular-core-common-tube}, now for the pair
$(x,\omega)$ and $(a',\eta')$.
\end{proof}

Set
\begin{align}\label{eq:snls-growth-functional}
 \mathcal G_T(x,\omega)=
 \begin{cases}
  1+\norm{\boldsymbol\Phi_T(x,\omega)}_{\mathcal S(0,T)},
  &(x,\omega)\in\mathfrak D_T,\\
  +\infty,&(x,\omega)\notin\mathfrak D_T.
 \end{cases}
\end{align}

\begin{proposition}
\label{prop:snls-regular-dependence}
For every $p>0$, there are
$C,N<\infty$ such that
\begin{align}\label{eq:snls-growth-moments}
 \E\mathcal G_T(x,W)^p\le C\bigl(1+\cE(x)\bigr)^N.
\end{align}
In addition, with $\mathbf r_T=\norm{x-y}_{L^2}+\norm h_{H_T}$, 
one has
\begin{align}\label{eq:snls-path-growth}
 \mathbf d_T\bigl((x,\omega),(y,\omega+\iota_Th)\bigr)
 \le \mathbf r_T
 \exp\!\left\{C_{B,T}
 \bigl(1+\mathcal G_T(x,\omega)
 +\mathcal G_T(y,\omega+\iota_Th)\bigr)^2\right\}
\end{align}
whenever the two growth functionals are finite.

Furthermore, for every $R<\infty$, $T>0$, finite-dimensional
$F\subset H_T^0$, $(x_0,y_0)\in\mathbb V_R^2$, and
$\boldsymbol{\gamma}_T$-almost every $\omega_0\in E_T$, the map
\begin{align*}
 (x,y,\omega,h)\longmapsto
 \bigl(K_{T,x,y,\omega},\Phi_T(y,\omega+\iota_Th)\bigr)
\end{align*}
is defined and continuous on a product neighborhood $\mathcal O$ of
$(x_0,y_0,\omega_0,0)$ in $\mathbb V_R^2\times E_T\times F$, is
continuously Fr\'echet differentiable in $h$, with derivative  
\begin{align*}
 \bigl(0,\cD\Phi_T(y,\omega+\iota_Th)|_F\bigr),
\end{align*}
jointly continuous on $\mathcal O$. 
\end{proposition}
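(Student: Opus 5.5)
The three assertions of \Cref{prop:snls-regular-dependence} are proved separately: the moment bound \eqref{eq:snls-growth-moments}, the weak path growth \eqref{eq:snls-path-growth}, and the local regular dependence.

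\emph{Moment bound.} For fixed $x$, \eqref{eq:full-measure-regular-drivers} shows that $\mathcal G_T(x,W)<\infty$ almost surely. We then have $\mathcal G_T\le 1+\sup_{t\le T}\norm{u_t}_{H^1}+\norm u_{L^{7/2}W^{7/8,7/2}}$. The first term is controlled through \eqref{eq:energy-coercivity} by $C(1+\sup_t\cE(u_t))$, whose moments are bounded by \eqref{eq:finite-time-energy-bound}. The second term is handled by \eqref{eq:block-strichartz-moments}. Together these give \eqref{eq:snls-growth-moments}.

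\emph{Weak path growth.} Put $u=\boldsymbol\Phi_T(x,\omega)$, $v=\boldsymbol\Phi_T(y,\omega+\iota_Th)$ and $e=v-u$. Then
\begin{align*}
\partial_te=-\gamma e+\ii\Delta e-\ii(\mathsf d_{u,v}e+\mathsf c_{u,v}\bar e)+B\dot h,
\end{align*}
with the same coefficients as in \eqref{eq:nonlinear-decomposition}. The real $L^2$ energy estimate kills the unitary part, so
\begin{align*}
\tfrac12\tfrac{\dd}{\dd t}\norm e_{L^2}^2\le C(\norm u_{L^\infty}^2+\norm v_{L^\infty}^2)\norm e_{L^2}^2+\norm B\,|\dot h|\,\norm e_{L^2}.
\end{align*}
Gronwall gives
\begin{align*}
\sup_t\norm{e_t}_{L^2}\le(\norm{x-y}_{L^2}+C\sqrt T\norm h_{H_T})\exp\{C\norm u_{L^2L^\infty}^2+C\norm v_{L^2L^\infty}^2\},
\end{align*}
and Hölder in time with $W^{7/8,7/2}\hookrightarrow L^\infty$ bounds the exponent by $C_T(\mathcal G_T(x,\omega)^2+\mathcal G_T(y,\omega+\iota_Th)^2)$. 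This is \eqref{eq:snls-path-growth}. One must check that the $L^2$ energy identity is legitimate for $H^1$ differences; this is routine by Galerkin or mollification.

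\emph{Regular dependence.} Take $\omega_0$ in the full-measure set where $(x_0,\omega_0),(y_0,\omega_0)\in\mathfrak D_T$. Since $h\in F\subset H_T^0$ ranges over a finite-dimensional space and $\iota_T$ is continuous, \Cref{lem:regular-core-local-tube} applied at $(x_0,\omega_0)$ and $(y_0,\omega_0)$ gives relative $L^2$ neighborhoods $U_x,U_y\subset\mathbb V_R$ and a driver neighborhood $O$ with $\omega+\iota_Th\in O$ for small $h$. On this set all solutions lie in a common tube, and by \eqref{eq:local-weak-path-stability} the map $(x,\omega,h)\mapsto\boldsymbol\Phi_T$ is continuous into $L^{7/2}(0,T;W^{7/8,7/2})\cap C(0,T;H^\sigma)$. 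Consequently the coefficients $(\mathsf d_{u,v},\mathsf c_{u,v})$ depend continuously in $L^1_tL^\infty_x$, and the operator-norm stability in Step~2 of \Cref{lem:time-dependent-bog} makes $K_{T,x,y,\omega}=\e^{-\gamma T}(\mathcal U_{u,v}(T,0)-U_{\mathsf d_{u,v}}(T,0))$ continuous in operator norm. $K$ does not depend on $h$, which explains the zero component of the derivative. Continuity of $\Phi_T(y,\omega+\iota_Th)$ in $L^2$ follows from the same stability estimate.

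The main obstacle is Fréchet differentiability in $h$ with derivative $\cD\Phi_T(y,\omega+\iota_Th)|_F$, jointly continuous in operator norm. To handle it, write the remainder $\rho=v_{h+k}-v_h-r_k$, where $r_k$ solves \eqref{eq:joint-first-variation} along $v_h$ with $\xi=0$. Then $\rho$ solves the linearized equation along $v_h$ with forcing $-\ii[\mathsf N(v_{h+k})-\mathsf N(v_h)-D\mathsf N(v_h)(v_{h+k}-v_h)]$. The bracket is quadratic in $w=v_{h+k}-v_h$, bounded in $L^1L^2$ by $C\norm w_{L^2L^\infty}\norm w_{C L^2}(\norm{v}_{L^2L^\infty}+\norm w_{L^2L^\infty})$. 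By \eqref{eq:local-weak-path-stability} and finite dimensionality, $\norm w_{L^{7/2}W^{7/8,7/2}}\lesssim\norm k_{H_T}$, so \eqref{eq:l2-inhomogeneous} yields $\norm{\rho(T)}_{L^2}=O(\norm k^2)$. Joint continuity of $h\mapsto\cD\Phi_T(y,\omega+\iota_Th)|_F$ follows from continuity of $J$ with respect to the $L^2_tL^\infty_x$ potential, by a Duhamel comparison of two linearized propagators whose potentials differ by $O(\norm{u-v}_{L^2L^\infty})$, together with the finite rank of $F$.
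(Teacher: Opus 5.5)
Your proposal is correct and follows the paper's proof essentially step by step. The moment bound comes from the energy and block Strichartz estimates. The path growth comes from the real $L^2$ energy estimate with Gronwall. Continuity of $K$ comes from the local tube lemma together with the operator-norm stability in \Cref{lem:time-dependent-bog}, and continuity of the derivative comes from a Duhamel comparison of tangent propagators.

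The only deviation is in the Fr\'echet step. You bound the quadratic remainder directly via the Lipschitz driver stability \eqref{eq:local-weak-path-stability} at a fixed initial state, which gives an $O(\norm k_{H_T}^2)$ remainder. The paper instead routes through its more general weak one-jet estimate: an $L^2$ difference bound, interpolated to $H^{53/56}$ and upgraded by a Strichartz restart. That yields only $O(\norm k_{H_T}^{1+\vartheta})$ but also covers $L^2$ perturbations of the initial state. Either suffices for the differentiability in $h$ claimed here.
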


\begin{proof}
The moment bound \eqref{eq:snls-growth-moments} follows from
\eqref{eq:energy-coercivity},
\eqref{eq:energy-supremum-moments}, and
\eqref{eq:block-strichartz-moments}.  We prove the remaining assertions
in three steps.

\emph{Step 1: weak path growth.}
Let
\begin{align*}
 u=\boldsymbol\Phi_T(x,\omega),\qquad
 v=\boldsymbol\Phi_T(y,\omega+\iota_Th),\qquad e=v-u.
\end{align*}
By \eqref{eq:nonlinear-decomposition},
\begin{align*}
 \partial_te=-\gamma e-\ii(-\Delta+\mathbf M_{\mathsf d_{u,v}})e
 -\ii \mathbf M_{\mathsf c_{u,v}}\bar e+B\dot h.
\end{align*}
Since $\mathsf d_{u,v}$ is real valued and
\begin{align*}
 \norm{\mathsf c_{u,v}(t)}_{L^\infty}
 \le C\bigl(\norm{u(t)}_{L^\infty}^2+\norm{v(t)}_{L^\infty}^2\bigr),
\end{align*}
the real $L^2$ energy estimate and Gronwall's inequality give
\begin{align*}
 \norm e_{C(0,T;L^2)}
 \le C_{B,T}\mathbf r_T
 \exp\left\{C\int_0^T
 \bigl(\norm{u(t)}_{L^\infty}^2+\norm{v(t)}_{L^\infty}^2\bigr)\,\dd t\right\}.
\end{align*}
Since $W^{7/8,7/2}\hookrightarrow L^\infty$,
\begin{align*}
 \int_0^T\norm{u(t)}_{L^\infty}^2\,\dd t
 \le T^{3/7}\norm u_{L^{7/2}(0,T;L^\infty)}^2
 \le C_T\mathcal G_T(x,\omega)^2,
\end{align*}
and similarly for $v$.  Absorbing the fixed prefactor into the
exponential proves \eqref{eq:snls-path-growth}.

\emph{Step 2: a weak one-jet estimate.}
Suppose $(x,\omega),(y,\omega+\iota_Th)\in\mathfrak D_T$ and, for some $R_1<\infty$, that
\begin{align*}
 \norm x_{H^1}+\norm y_{H^1}+\norm h_{H_T}
 +\norm u_{\mathcal S(0,T)}+\norm v_{\mathcal S(0,T)}\le R_1,
\end{align*}
and put
\begin{align*}
 \delta=\norm{y-x}_{L^2}+\norm h_{H_T}\le1.
\end{align*}
Step~1 gives
\begin{align}\label{eq:snls-weak-difference}
 \norm e_{C(0,T;L^2)}\le C_{T,R_1}\delta.
\end{align}
Since $\norm e_{C(0,T;H^1)}\le CR_1$, interpolation with
\begin{align*}
 \sigma=\frac{53}{56},\qquad \vartheta=1-\sigma=\frac3{56},
\end{align*}
gives
\begin{align*}
 \norm e_{C(0,T;H^\sigma)}
 \le C_{T,R_1}\delta^\vartheta.
\end{align*}

Let $I=[s,s+\ell]\subset[0,T]$.  Restarting the difference equation
at $s$ and using \Cref{lem:periodic-toolkit} gives
\begin{align*}
 \norm e_{L^{7/2}(I;W^{7/8,7/2})}
 \le C\norm{e(s)}_{H^\sigma}
 +C\norm{B\dot h}_{L^1(I;H^\sigma)}
 +C\norm{\mathsf N(v)-\mathsf N(u)}_{L^1(I;H^\sigma)}.
\end{align*}
By \eqref{eq:cubic-difference-hs}, the preceding $H^\sigma$ bound,
and H\"older's inequality,
\begin{align*}
 \norm{\mathsf N(v)-\mathsf N(u)}_{L^1(I;H^\sigma)}
 \le C_{T,R_1}\delta^\vartheta
 +C_{R_1}\ell^{3/7}\norm e_{L^{7/2}(I;L^\infty)}.
\end{align*}
Moreover,
\begin{align*}
 \norm{B\dot h}_{L^1(I;H^\sigma)}
 \le C_{B,T}\delta\le C_{B,T}\delta^\vartheta.
\end{align*}
Using $W^{7/8,7/2}\hookrightarrow L^\infty$ and subdividing
$[0,T]$ into finitely many intervals for which
$C_{R_1}\ell^{3/7}\le1/2$, we obtain
\begin{align}\label{eq:snls-weak-strichartz}
 \norm e_{L^{7/2}(0,T;L^\infty)}
 \le C_{T,R_1}\delta^\vartheta.
\end{align}

Let $r$ solve \eqref{eq:joint-first-variation} with $\xi=y-x$ and
set $\rho=e-r$.  Then
\begin{align*}
 \partial_t\rho=L_u(t)\rho-\ii\mathscr N(u,e),\qquad \rho(0)=0,
\end{align*}
where
\begin{align*}
 \mathscr N(u,e)
 =\mathsf N(u+e)-\mathsf N(u)-D\mathsf N(u)e,\qquad
 |\mathscr N(u,e)|
 \le C\bigl(|u||e|^2+|e|^3\bigr).
\end{align*}
Thus \eqref{eq:snls-weak-difference},
\eqref{eq:snls-weak-strichartz}, and H\"older's inequality give
\begin{align*}
 \norm{\mathscr N(u,e)}_{L^1(0,T;L^2)}
 &\le CT^{3/7}\norm e_{C L^2}
 \left(\norm u_{L^{7/2}L^\infty}\norm e_{L^{7/2}L^\infty}
 +\norm e_{L^{7/2}L^\infty}^2\right)\\
 &\le C_{T,R_1}\delta^{1+\vartheta}.
\end{align*}
Hence \eqref{eq:l2-inhomogeneous} yields
\begin{align}\label{eq:snls-onejet-remainder}
 &\norm{\Phi_T(y,\omega+\iota_Th)-\Phi_T(x,\omega)
 -D_x\Phi_T(x,\omega)(y-x)-\cD\Phi_T(x,\omega)h}_{L^2}
 \le C_{T,R_1}\delta^{1+\vartheta}.
\end{align}

\emph{Step 3: local Cameron--Martin regularity.}
Fix $R<\infty$, a finite-dimensional $F\subset H_T^0$,
$(x_0,y_0)\in\mathbb V_R^2$, and a driver $\omega_0$ such that
$(x_0,\omega_0),(y_0,\omega_0)\in\mathfrak D_T$.  By
\Cref{lem:regular-core-local-tube}, applied at both reference
trajectories, there are a relative $L^2\times L^2$ neighborhood $D$
of $(x_0,y_0)$, a neighborhood $O$ of $\omega_0$ in $E_T$,
$r_*>0$, and $R_1<\infty$ such that
\begin{align*}
 \boldsymbol\Phi_T(x,\omega),\qquad
 \boldsymbol\Phi_T(y,\omega),\qquad
 \boldsymbol\Phi_T(y,\omega+\iota_Th)
\end{align*}
are genuine solutions in a common energy--Strichartz tube whenever
$(x,y)\in D$, $\omega\in O$, $h\in F$, and
$\norm h_{H_T}<r_*$.  Here we used
\begin{align*}
 \norm{\omega+\iota_Th-\omega_0}_{E_T}
 \le\norm{\omega-\omega_0}_{E_T}+C_T\norm h_{H_T}.
\end{align*}
The same lemma shows that these three trajectory maps depend
continuously on $(x,y,\omega,h)$ in
\begin{align*}
 C([0,T];H^{53/56})\cap L^{7/2}(0,T;L^\infty).
\end{align*}

Consequently
$(u,v)\mapsto(\mathsf d_{u,v},\mathsf c_{u,v})$ is continuous into
$L^1(0,T;L^\infty)^2$.  The operator-norm stability in
\Cref{lem:time-dependent-bog}, together with
\Cref{prop:exact-stable-compact-difference}, therefore gives
operator-norm continuity of $K_{T,x,y,\omega}$.

We next prove differentiability in the Cameron--Martin variable.
For $k\in F$ sufficiently small, apply
\eqref{eq:snls-onejet-remainder} with identical initial states,
base driver $\omega+\iota_Th$, and Cameron--Martin increment $k$.
The common energy--Strichartz bound gives
\begin{align*}
 &\norm{\Phi_T(y,\omega+\iota_T(h+k))
 -\Phi_T(y,\omega+\iota_Th)
 -\cD\Phi_T(y,\omega+\iota_Th)k}_{L^2}
 \le C\norm k_{H_T}^{1+\vartheta}.
\end{align*}
Hence
\begin{align*}
 D_h\Phi_T(y,\omega+\iota_Th)
 =\cD\Phi_T(y,\omega+\iota_Th)|_F,
\end{align*}
so $h\mapsto\Phi_T(y,\omega+\iota_Th)$ is Fr\'echet differentiable
on $F$.

It remains to prove continuity of this derivative.  If
$w=\boldsymbol\Phi_T(y,\eta)$, then
\begin{align*}
 \cD\Phi_T(y,\eta)k
 =\int_0^T J^w_{s,T}B\dot k(s)\,\dd s.
\end{align*}
For two paths $w,w'$ in the common tube,
\begin{align*}
 \int_0^T\norm{L_w(t)-L_{w'}(t)}_{\cL(L^2)}\,\dd t
 \le C_{T,R_1}\norm{w-w'}_{L^{7/2}(0,T;L^\infty)}.
\end{align*}
Duhamel's formula and \eqref{eq:l2-tangent-bound} therefore imply
\begin{align*}
 \sup_{0\le s\le t\le T}
 \norm{J^w_{s,t}-J^{w'}_{s,t}}_{\cL(L^2)}
 \longrightarrow0
\end{align*}
whenever $w'\to w$ in $L^{7/2}(0,T;L^\infty)$.  Hence
\begin{align*}
 (y,\omega,h)\longmapsto
 \cD\Phi_T(y,\omega+\iota_Th)|_F
\end{align*}
is jointly continuous in operator norm.  This proves the required
continuous Fr\'echet differentiability in $h$ and completes the proof.
\end{proof}

\section{A covariation lemma and short time expansion}

This appendix collects the probabilistic and short time analytic estimates used in the reduced Malliavin propagation.  

\subsection{A covariation lemma}

The following lemma extracts the predictable Brownian coefficient from a Hilbert valued process whose short increments admit an adapted expansion. We first give some notations. Let $I=[a,b]$, $\tau\le b$ be a stopping time, and $Z$ be an adapted continuous $H_0$-valued process, where $H_0$ is a separable Hilbert space.  Let
\begin{align*}
 \Pi_n=\{a=t_0<t_1<\cdots<t_{N_n}=b\}
\end{align*}
be uniform deterministic partitions of $I$ with mesh
$h_n=(b-a)/N_n\downarrow0$.  For each $n$, we suppress the partition
index $n$ for convenience and put
\begin{align*}
 \chi_i=\one_{\{t_i<\tau\}},\qquad
 \Delta_iW^\ell=W^\ell_{t_{i+1}}-W^\ell_{t_i}
\end{align*}
where $W^{\ell}, 1\leq \ell\leq m$ are independent Brownian motions. 

\begin{lemma}
\label{lem:stopped-hilbert-covariation}
Suppose that there are $\mathcal F_{t_i}$-measurable
$A_i,C_i^\ell\in H_0$, $1\le\ell\le m$, and remainders
$\rho_i\in H_0$ such that, on $\{t_{i+1}\le\tau\}$,
\begin{align*}
 Z_{t_{i+1}}-Z_{t_i}
 =A_i+\sum_{\ell=1}^mC_i^\ell\Delta_iW^\ell+\rho_i.
\end{align*}
Assume that there are measurable $H_0$-valued processes
$C^\ell=(C_t^\ell)_{t\in I}$ such that
\begin{align}
 &\sup_{n,i,\ell}\norm{\chi_iC_i^\ell}_{L^\infty(\Omega;H_0)}<\infty,
 \label{eq:stopped-c-bound}\\
 &\E\max_i\chi_i\norm{A_i}_{H_0}^2\longrightarrow0,
 \label{eq:stopped-a-bound}\\
 &\sum_i\chi_iC_i^\ell h_n
 \longrightarrow\int_I\one_{\{t<\tau\}}C_t^\ell\,\dd t
 \quad\text{in }L^2(\Omega;H_0),
 \label{eq:stopped-compensator}\\
 &h_n^{1/2}\sum_i
 \left(\E\one_{\{t_{i+1}\le\tau\}}\norm{\rho_i}_{H_0}^2\right)^{1/2}
 \longrightarrow0.
 \label{eq:stopped-remainder}
\end{align}
Then, for every Brownian coordinate $k$,
\begin{align}\label{eq:stopped-covariation-limit}
 \sum_i\bigl(Z_{t_{i+1}\wedge\tau}-Z_{t_i\wedge\tau}\bigr)\Delta_iW^k
 \longrightarrow\int_I\one_{\{t<\tau\}}C_t^k\,\dd t
\end{align}
in probability in $H_0$.
\end{lemma}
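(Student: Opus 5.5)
The plan is to expand the stopped covariation sum using the adapted increment decomposition and to treat the four resulting pieces separately. On $\{t_{i+1}\le\tau\}$ the stopped increment is the full increment. On $\{t_i<\tau<t_{i+1}\}$ it is $Z_\tau-Z_{t_i}$. On $\{\tau\le t_i\}$ it vanishes. Hence
\begin{align*}
 \sum_i(Z_{t_{i+1}\wedge\tau}-Z_{t_i\wedge\tau})\Delta_iW^k
 =\sum_i\chi_iA_i\Delta_iW^k+\sum_{i,\ell}\chi_iC_i^\ell\Delta_iW^\ell\Delta_iW^k+\sum_i\one_{\{t_{i+1}\le\tau\}}\rho_i\Delta_iW^k+\mathcal B_n,
\end{align*}
where $\mathcal B_n$ is the straddling correction. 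It consists of the single term with $t_i<\tau<t_{i+1}$, namely $(Z_\tau-Z_{t_i})\Delta_iW^k$, minus $\chi_i(A_i+\sum_\ell C_i^\ell\Delta_iW^\ell)\Delta_iW^k$ for that same index. I would arrange the bookkeeping so that on the straddling index the first three pieces use only the $\mathcal F_{t_i}$-measurable factors weighted by $\chi_i$ rather than $\one_{\{t_{i+1}\le\tau\}}$, keeping everything adapted.

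\emph{Drift piece.} Each summand $\chi_iA_i\Delta_iW^k$ has an $\mathcal F_{t_i}$-measurable coefficient and a fresh increment, so the summands are martingale differences. Orthogonality in $L^2(\Omega;H_0)$ gives
\begin{align*}
 \E\norm{\textstyle\sum_i\chi_iA_i\Delta_iW^k}_{H_0}^2=h_n\sum_i\E\chi_i\norm{A_i}^2\le(b-a)\,\E\max_i\chi_i\norm{A_i}^2\to0
\end{align*}
by \eqref{eq:stopped-a-bound}.

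\emph{Brownian piece.} I write $\Delta_iW^\ell\Delta_iW^k=\delta_{\ell k}h_n+(\Delta_iW^\ell\Delta_iW^k-\delta_{\ell k}h_n)$. The compensator part is $\sum_i\chi_iC_i^kh_n$, which converges by \eqref{eq:stopped-compensator}. The centred part is again a martingale-difference sum. Its second moment is bounded by $C\,h_n^2\,N_n\sup\norm{\chi_iC_i^\ell}^2_{L^\infty}=O(h_n)$, using \eqref{eq:stopped-c-bound}.

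\emph{Remainder piece.} Cauchy--Schwarz summand by summand, with $\E|\Delta_iW^k|^2=h_n$ (no independence from $\rho_i$ is needed), gives the bound $h_n^{1/2}\sum_i(\E\one_{\{t_{i+1}\le\tau\}}\norm{\rho_i}^2)^{1/2}\to0$ by \eqref{eq:stopped-remainder}. This gives $L^1$ convergence.

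\emph{Straddling term.} This is the main obstacle, since no expansion is assumed on $(t_i,\tau)$. It involves only one index $i$, so I would bound it by $2\sup_{|s-t|\le h_n,\,s,t\in I}\norm{Z_s-Z_t}_{H_0}\cdot\max_i|\Delta_iW^k|$. This tends to zero almost surely by uniform continuity of the continuous path $Z$ on the compact interval $I$ and by $\max_i|\Delta_iW^k|\to0$. The subtracted expansion terms on that index are $\chi_i\norm{A_i}|\Delta_iW^k|$ and $\chi_i\norm{C_i^\ell}|\Delta_iW^\ell\Delta_iW^k|$. They are likewise dominated by $\max_i\chi_i\norm{A_i}\cdot\max_i|\Delta_iW^k|$ and by a constant times $\max_i|\Delta_iW|^2$, both of which go to zero in probability.

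Finally, convergence in $L^2$ or $L^1$ of the first three pieces, together with almost sure convergence of $\mathcal B_n$, yields \eqref{eq:stopped-covariation-limit} in probability.
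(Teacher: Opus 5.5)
Your proposal is correct and follows essentially the same route as the paper: the paper likewise forms the $\chi_i$-weighted sum of the $A$- and $C$-terms, shows it converges in $L^2$ by martingale orthogonality and the compensator hypothesis, controls the $\rho_i\Delta_iW^k$ terms in $L^1$ by Cauchy--Schwarz, and handles the single crossing cell using uniform continuity of $Z_{\cdot\wedge\tau}$ and $W$ together with \eqref{eq:stopped-a-bound} and \eqref{eq:stopped-c-bound}. One small slip: because of the $A_i$-term, your correction $\mathcal B_n$ tends to zero only in probability, not almost surely as your final sentence says (you stated this correctly earlier), and convergence in probability is still enough for the conclusion.
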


\begin{proof}
Set
\begin{align*}
 S_n=\sum_i\chi_i\left(A_i+\sum_{\ell=1}^mC_i^\ell\Delta_iW^\ell\right)\Delta_iW^k.
\end{align*}
Since $\chi_iA_i$ and $\chi_iC_i^\ell$ are
$\mathcal F_{t_i}$-measurable, conditional centering and
Hilbert-space martingale orthogonality give
\begin{align*}
 \E\norm{\sum_i\chi_iA_i\Delta_iW^k}_{H_0}^2
 &=h_n\sum_i\E[\chi_i\norm{A_i}_{H_0}^2]
 \le |I|\,\E\max_i\chi_i\norm{A_i}_{H_0}^2\longrightarrow0,
\end{align*}
and
\begin{align*}
 \E\norm{\sum_{i,\ell}\chi_iC_i^\ell
 (\Delta_iW^\ell\Delta_iW^k-\delta_{\ell k}h_n)}_{H_0}^2
 \le C_m\sup_{n,i,\ell}\norm{\chi_iC_i^\ell}_{L^\infty(\Omega;H_0)}^2N_nh_n^2
 \le C_{m,I}h_n\longrightarrow0.
\end{align*}
Together with \eqref{eq:stopped-compensator}, this yields
\begin{align}\label{eq:predictable-covariation-limit}
 S_n\longrightarrow\int_I\one_{\{t<\tau\}}C_t^k\,\dd t
\end{align}
in $L^2(\Omega;H_0)$.

Denote the stopped covariation sum by
\begin{align*}
 T_n=\sum_i\bigl(Z_{t_{i+1}\wedge\tau}-Z_{t_i\wedge\tau}\bigr)\Delta_iW^k.
\end{align*}
We show that $T_n-S_n\to0$ in probability.  On cells with $t_{i+1}\le\tau$, one has $\chi_i=1$ and 
\begin{align*}
 Z_{t_{i+1}\wedge\tau}-Z_{t_i\wedge\tau}
 =A_i+\sum_{\ell=1}^mC_i^\ell\Delta_iW^\ell+\rho_i,
\end{align*}
so their contribution to $T_n-S_n$ is
$\rho_i\Delta_iW^k$, and by \eqref{eq:stopped-remainder}
\begin{align*}
 \E\sum_i\one_{\{t_{i+1}\le\tau\}}
 \norm{\rho_i}_{H_0}\abs{\Delta_iW^k}
 \le h_n^{1/2}\sum_i
 \left(\E\one_{\{t_{i+1}\le\tau\}}\norm{\rho_i}_{H_0}^2\right)^{1/2}
 \longrightarrow0.
\end{align*}

On cells with $t_i\ge\tau$, both terms vanish: the stopped increment
is zero and $\chi_i=0$.  Thus only the unique possible crossing cell
$t_{i_*}<\tau<t_{i_*+1}$ remains.  Its contribution to $T_n$ is bounded by
\begin{align*}
 \max_i\norm{Z_{t_{i+1}\wedge\tau}-Z_{t_i\wedge\tau}}_{H_0}
 \max_i\abs{\Delta_iW^k}\longrightarrow0
\end{align*}
almost surely by uniform continuity of $Z_{\cdot\wedge\tau}$ and $W$.
The corresponding contribution to $S_n$ consists of the $A$-term,
which tends to zero in probability by \eqref{eq:stopped-a-bound}, and
the $C$-term, which is bounded by
\begin{align*}
 C\max_i\abs{\Delta_iW}^2\longrightarrow0
\end{align*}
almost surely by \eqref{eq:stopped-c-bound}.  Hence $T_n-S_n\to0$ in
probability, proving \eqref{eq:stopped-covariation-limit} together with \eqref{eq:predictable-covariation-limit}.
\end{proof}

\subsection{Short time expansion for Malliavin propagation}\label{subsec:malliavin-freezing}

We next justify the frozen noise expansions used in \Cref{subsec:reduced-malliavin}.  The following stochastic convolution estimate is standard. Throughout this subsection, write $\E_s[\cdot]=\E[\cdot\mid\mathcal F_s]$.

\begin{lemma}
Define
\begin{align*}
 Z^s_r=\sum_{\ell=1}^m\int_s^r
 \e^{-(\gamma-\ii\Delta)(r-a)}b_\ell\,\dd W^\ell_a,
 \qquad s\le r\le s+h.
\end{align*}
For every finite $p\ge2$ and $0<h\le1$,
\begin{align}
 \bigl(\E_s\norm{Z^s}_{\cS(s,s+h)}^p\bigr)^{1/p}
 &\le C_ph^{1/2},\label{eq:short-convolution-s}\\
 \bigl(\E_s\norm{Z^s_{s+h}-B\Delta_sW}_{H^1}^p\bigr)^{1/p}
 &\le C_ph^{3/2}.\label{eq:short-convolution-endpoint}
\end{align}
The constants are deterministic and independent of $s$.
\end{lemma}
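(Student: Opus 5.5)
We need to prove the two estimates on the stochastic convolution $Z^s$: the energy--Strichartz bound \eqref{eq:short-convolution-s} and the endpoint bound \eqref{eq:short-convolution-endpoint}.

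Since the increments of $W$ after $s$ are independent of $\mathcal F_s$ and $Z^s$ depends only on them, every conditional expectation $\E_s$ reduces to an unconditional one. By time stationarity of Brownian increments, we may therefore take $s=0$; this also explains why the constants are deterministic and independent of $s$. The process $Z^0$ is a finite sum of $m$ terms, so it suffices to treat a single smooth profile $b=b_\ell$ and a single Brownian motion $W=W^\ell$.

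\emph{The bound \eqref{eq:short-convolution-s}.} We have $Z_r=\int_0^r\mathsf S(r-a)b\,\dd W_a$ with $\mathsf S(t)=\e^{-(\gamma-\ii\Delta)t}$.

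For the $C(0,h;H^1)$ part, integrate by parts in time to avoid a factorization argument:
\begin{align*}
 Z_r=\mathsf S(0)bW_r-\int_0^r\partial_a\bigl[\mathsf S(r-a)b\bigr]W_a\,\dd a
 =bW_r-\int_0^r\mathsf S(r-a)(\gamma-\ii\Delta)b\,W_a\,\dd a .
\end{align*}
Because $b$ is smooth and $\mathsf S$ is a contraction on every $H^k$,
\begin{align*}
 \sup_{r\le h}\norm{Z_r}_{H^k}\le C_k\sup_{a\le h}|W_a|.
\end{align*}
Doob's inequality then gives $p$-th moments bounded by $C_ph^{1/2}$.

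For the $L^{7/2}(0,h;W^{7/8,7/2})$ part, use $H^2\hookrightarrow W^{7/8,7/2}$ together with Hölder in time on an interval of length $h\le1$. This bound is then controlled by the $C(0,h;H^2)$ bound just obtained, so \eqref{eq:short-convolution-s} follows.

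\emph{The bound \eqref{eq:short-convolution-endpoint}.} Write the difference at the endpoint as
\begin{align*}
 Z_h-bW_h=\int_0^h\bigl(\mathsf S(h-a)-\Id\bigr)b\,\dd W_a .
\end{align*}
This is a centered Gaussian variable in $H^1$. Its second moment, by the Itô isometry, equals
\begin{align*}
 \int_0^h\norm{(\mathsf S(h-a)-\Id)b}_{H^1}^2\,\dd a\le C\int_0^h(h-a)^2\norm b_{H^5}^2\,\dd a=Ch^3 ,
\end{align*}
using $\norm{(\mathsf S(t)-\Id)f}_{H^1}\le Ct\norm f_{H^3}$.

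Gaussian hypercontractivity (or Fernique's theorem) upgrades this second-moment bound to all $p$-th moments, giving \eqref{eq:short-convolution-endpoint} with rate $h^{3/2}$.

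There is no substantive obstacle here. The only point needing care is the supremum in time within \eqref{eq:short-convolution-s}, and the integration-by-parts step handles it cleanly, since $b$ lies in the domain of the generator $\gamma-\ii\Delta$ at every regularity.
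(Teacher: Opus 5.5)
Your proof is correct. For \eqref{eq:short-convolution-s} it is the paper's argument. You use the same deterministic integration by parts $Z_r=bW_r-\int_0^r(\gamma-\ii\Delta)\mathsf S(r-a)b\,W_a\,\dd a$, followed by a maximal inequality. The paper bounds the deterministic factors directly in $W^{7/8,7/2}$, while you pass through $H^2\hookrightarrow W^{7/8,7/2}$; this is the same estimate.

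For \eqref{eq:short-convolution-endpoint} your route differs slightly. The paper evaluates the same integration-by-parts formula at $r=s+h$. The remainder is then pathwise bounded by $Ch\sup_{s\le a\le s+h}|W_a-W_s|$, and the $h^{3/2}$ rate follows from Doob's inequality again. You instead write the remainder as the Wiener integral $\int_0^h(\mathsf S(h-a)-\Id)b\,\dd W_a$. You compute its second moment by the It\^o isometry using $\norm{(\mathsf S(t)-\Id)f}_{H^1}\le Ct\norm f_{H^3}$, and then upgrade to all $p$ by equivalence of Gaussian moments.

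Both routes give exactly $h^{3/2}$. The paper's version is pathwise, does not use Gaussianity, and reuses one formula for both estimates. Yours exploits the Gaussian structure and makes explicit where the regularity of $b$ enters the endpoint rate.

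A minor slip: you write $\norm b_{H^5}$ where your cited bound gives $\norm b_{H^3}$. This is harmless since $b$ is smooth.
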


\begin{proof}
It is enough to consider one profile.  Stochastic integration by parts gives
\begin{align}\label{eq:short-convolution-ibp}
 \int_s^r\e^{-(\gamma-\ii\Delta)(r-a)}b_\ell\,\dd W^\ell_a
 =b_\ell(W^\ell_r-W^\ell_s)
 -\int_s^r(\gamma-\ii\Delta)\e^{-(\gamma-\ii\Delta)(r-a)}
 b_\ell(W^\ell_a-W^\ell_s)\,\dd a .
\end{align}
Since the profiles are smooth, uniformly for $0\le r-a\le1$ the deterministic factors on the right are bounded in both $H^1$ and $W^{7/8,7/2}$.  The maximal inequality therefore gives
\begin{align*}
 \left(\E_s\sup_{s\le r\le s+h}
 \norm{Z^s_r}_{H^1\cap W^{7/8,7/2}}^p\right)^{1/p}
 \le C_ph^{1/2},
\end{align*}
which proves \eqref{eq:short-convolution-s}.  At $r=s+h$, the leading term in \eqref{eq:short-convolution-ibp} is $b_\ell(W^\ell_{s+h}-W^\ell_s)$, while the remaining integral is bounded in $H^1$ by
\begin{align*}
 Ch\sup_{s\le a\le s+h}|W_a-W_s|.
\end{align*}
Its conditional $L^p$ norm is $O(h^{3/2})$, proving
\eqref{eq:short-convolution-endpoint}.
\end{proof}

We now derive the frozen noise estimates through a short time mild stochastic Taylor expansion
tailored to the energy level cubic SNLS.  The underlying principle of
extracting short time stochastic leading terms directly from the mild
formulation is classical; see, for example,
\cite{JentzenKloeden2010,DaPratoJentzenRockner2019}.
The exponents and the inverse linearization estimate used here are specific
to the present energy--Strichartz setting.

Let $\cS(I)$ be defined by \eqref{eq:energy-strichartz-space}.  We use the adapted stopping times
\begin{align}\label{eq:adapted-localization}
 \tau_R=\inf\left\{t\le T:\sup_{r\le t}\norm{u_r}_{H^1}
 +\left(\int_0^t\norm{u_r}_{W^{7/8,7/2}}^{7/2}\,\dd r\right)^{2/7}\ge R\right\}\wedge T .
\end{align}
Since the solution $u\in C([0,T];H^1)$ and $\norm{u}_{\cS(0,T)}<\infty$ almost surely, $\tau_R\uparrow T$ almost surely.  
For deterministic $s<t$, let $u^0_{s,r}$, $s\le r\le t$, denote the zero noise solution of \eqref{eq:intro-spde} started from $u_s$ at time $s$ so that $u^0_{s,r}$ is $\mathcal F_s$-measurable for every $r\ge s$, and let $\widetilde Q^0_{s,r}$ be its inverse linearized evolution.  For the stochastic path put
\begin{align*}
 \widetilde Q_{s,r}=J_{s,r}^{-1}=Q_s^{-1}Q_r,\qquad Q_r=Q_s\widetilde Q_{s,r},
\end{align*}
and write
\begin{align*}
 d_{s,r}=u_r-u^0_{s,r},\qquad \Delta_sW=W_t-W_s,\qquad h=t-s.
\end{align*}

\begin{lemma}\label{lem:frozen-estimates}
For every $R<\infty$ there is $h_0=h_0(R)\in(0,1]$ such that, for every finite $p\ge2$ and deterministic $s<t$ with $h=t-s\le h_0$,
\begin{align}
 \bigl(\E_s[\one_{\{t\le\tau_R\}}\norm{d_{s,\cdot}}_{\cS(s,t)}^p]\bigr)^{1/p}
 &\le C_{p,R}h^{1/2}\one_{\{s<\tau_R\}},\label{eq:frozen-s}\\
 \bigl(\E_s[\one_{\{t\le\tau_R\}}\norm{d_{s,t}-B\Delta_sW}_{H^1}^p]\bigr)^{1/p}
 &\le C_{p,R}h^{13/14}\one_{\{s<\tau_R\}},\label{eq:frozen-endpoint}\\
 \bigl(\E_s[\one_{\{t\le\tau_R\}}\norm{\widetilde Q_{s,t}-\widetilde Q^0_{s,t}}_{\cL(L^2)}^p]\bigr)^{1/p}
 &\le C_{p,R}h^{13/14}\one_{\{s<\tau_R\}}.\label{eq:frozen-inverse}
\end{align}
Here $C_{p,R}$ may also depend on the fixed parameters $T,\gamma$, and $B$.
\end{lemma}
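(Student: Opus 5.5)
The plan is to work on the event $\{s<\tau_R\}$, conditionally on $\mathcal F_s$, and compare the stochastic solution $u$ with the frozen zero-noise solution $u^0_{s,\cdot}$ through the mild formulation restarted at $s$. Write
\begin{align*}
 d_{s,r}=Z^s_r-\ii\int_s^r\mathsf S(r-a)\bigl(\mathsf N(u_a)-\mathsf N(u^0_{s,a})\bigr)\,\dd a,\qquad s\le r\le t,
\end{align*}
where $Z^s$ is the stochastic convolution of the preceding lemma. On $\{t\le\tau_R\}$, the definition of $\tau_R$ bounds $\norm u_{\cS(s,t)}\le 2R$. The zero-noise solution $u^0_{s,\cdot}$ starts from $\norm{u_s}_{H^1}\le R$, so the deterministic local theory gives $\norm{u^0_{s,\cdot}}_{\cS(s,s+h_0)}\le C_R$ once $h_0=h_0(R)$ is small.

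\emph{Step 1: \eqref{eq:frozen-s}.} Apply \Cref{lem:periodic-toolkit} (using the $H^1$ version of the Strichartz bound, valid since $53/56<1$) and the cubic difference estimate \eqref{eq:cubic-difference-hs} with $s=1$. The Duhamel term is then bounded by
\begin{align*}
 C_R\,h^{3/7}\norm{d_{s,\cdot}}_{\cS(s,t)}.
\end{align*}
This comes from H\"older in time: on an interval of length $h$, $L^1_t$ against $L^{7/2}_tL^\infty$ squared produces the factor $h^{3/7}$. Choosing $h_0$ with $C_Rh_0^{3/7}\le1/2$ absorbs this term and gives
\begin{align*}
 \norm{d_{s,\cdot}}_{\cS(s,t)}\le 2\norm{Z^s}_{\cS(s,t)}
\end{align*}
pathwise on $\{t\le\tau_R\}$. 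Taking conditional moments and using \eqref{eq:short-convolution-s} yields $h^{1/2}$.

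\emph{Step 2: \eqref{eq:frozen-endpoint}.} At $r=t$, write
\begin{align*}
 d_{s,t}-B\Delta_sW=\bigl(Z^s_t-B\Delta_sW\bigr)-\ii\int_s^t\mathsf S(t-a)\bigl(\mathsf N(u_a)-\mathsf N(u^0_{s,a})\bigr)\,\dd a.
\end{align*}
The first term is $O(h^{3/2})$ in conditional $L^p$ by \eqref{eq:short-convolution-endpoint}. The Duhamel term is bounded in $H^1$ by
\begin{align*}
 C_R\,h^{3/7}\norm{d}_{\cS}\le C h^{3/7+1/2}=Ch^{13/14},
\end{align*}
which is exactly the stated rate. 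The $L^\infty$-factors are handled by H\"older in time as before, and the conditional moments of $\norm d_{\cS}$ come from Step 1.

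\emph{Step 3: \eqref{eq:frozen-inverse}.} This is the step I expect to be the main obstacle, since it concerns the inverse propagator rather than the forward one. Write $\widetilde Q_{s,r}=J_{s,r}^{-1}$ and $\widetilde Q^0_{s,r}=(J^0_{s,r})^{-1}$; both satisfy the backward equation
\begin{align*}
 \partial_r\widetilde Q=-\widetilde Q\,L_w(r),\qquad w=u\ \text{or}\ u^0_{s,\cdot}.
\end{align*}
Conjugating by the free group $\mathsf S$ as in \Cref{lem:ac-propagation}, their difference satisfies a Volterra equation with forcing $\widetilde Q^0(\mathbf M_u-\mathbf M_{u^0})$. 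The multiplier bound gives
\begin{align*}
 \norm{\mathbf M_u(r)-\mathbf M_{u^0}(r)}_{\cL(L^2)}\le C\bigl(\norm{u}_{L^\infty}+\norm{u^0}_{L^\infty}\bigr)\norm{d_{s,r}}_{L^\infty}.
\end{align*}
By H\"older, the $L^1(s,t)$ norm of this operator is at most
\begin{align*}
 C\,h^{3/7}\bigl(\norm u_{L^{7/2}L^\infty}+\norm{u^0}_{L^{7/2}L^\infty}\bigr)\norm d_{L^{7/2}L^\infty}\le C_Rh^{3/7}\norm d_{\cS}.
\end{align*}
The backward Gronwall bound \eqref{eq:l2-inverse-propagator-bound} keeps $\widetilde Q$ and $\widetilde Q^0$ uniformly bounded on the tube. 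Hence the operator-norm difference is $C_Rh^{3/7}\norm d_{\cS}$, and Step 1 again yields $h^{13/14}$.

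The indicator $\one_{\{s<\tau_R\}}$ appears because every bound is trivially zero off $\{t\le\tau_R\}$. Measurability of $u_s$ in $\mathcal F_s$ and independence of the fresh increments justify working with conditional expectations throughout.
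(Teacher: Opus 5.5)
Your proposal is correct and follows essentially the same route as the paper: the mild difference equation restarted at $s$ with the H\"older factor $h^{3/7}$ and pathwise absorption gives \eqref{eq:frozen-s}, the endpoint refinement combined with \eqref{eq:short-convolution-endpoint} gives \eqref{eq:frozen-endpoint}, and a Volterra/Gronwall comparison of the free-group-conjugated inverse propagators gives \eqref{eq:frozen-inverse}. The only slip is cosmetic: absorption yields $\norm{d_{s,\cdot}}_{\cS(s,t)}\le 2C\norm{Z^s}_{\cS(s,t)}$, where $C$ is the Strichartz constant, rather than $2\norm{Z^s}_{\cS(s,t)}$, and this changes nothing.
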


\begin{proof}
The proof is divided into three steps. 

\emph{Step 1: short path difference.}
By \eqref{eq:adapted-localization}, on $\{s<\tau_R\}$ one has
$\norm{u_s}_{H^1}\le R$.  The deterministic version of the local
energy--Strichartz bootstrap leading to
\eqref{eq:block-local-bootstrap}, with the stochastic convolution set
to zero and using \Cref{lem:periodic-toolkit}, therefore gives
$h_0=h_0(R)\in(0,1]$ and $C_R<\infty$ such that
\begin{align}\label{eq:frozen-zero-noise-bound}
 \norm{u^0_{s,\cdot}}_{\cS(s,s+h_0)}\le C_R.
\end{align}
On $\{s+h\le\tau_R\}$, the definition
\eqref{eq:adapted-localization} also gives a deterministic bound for
$\norm u_{\cS(s,s+h)}$.  The mild difference equation is
\begin{align}\label{eq:frozen-difference-mild}
 d_{s,r}=Z^s_r-\ii\int_s^r
 \e^{-(\gamma-\ii\Delta)(r-a)}
 \{\mathsf N(u_a)-\mathsf N(u^0_{s,a})\}\,\dd a .
\end{align}
The $CH^1$ Duhamel estimate, \eqref{eq:periodic-inhomogeneous}, and
the cubic difference estimate \eqref{eq:cubic-difference-hs}, exactly
as in the derivation of \eqref{eq:block-local-bootstrap}, give
\begin{align}\label{eq:frozen-absorption}
 \one_{\{s+h\le\tau_R\}}\norm{d_{s,\cdot}}_{\cS(s,s+h)}
 \le C\one_{\{s+h\le\tau_R\}}\norm{Z^s}_{\cS(s,s+h)}
 +C_Rh^{3/7}\one_{\{s+h\le\tau_R\}}
 \norm{d_{s,\cdot}}_{\cS(s,s+h)} .
\end{align}
Indeed, the case $s=1$ of \eqref{eq:cubic-difference-hs} gives
\begin{align*}
 \norm{\mathsf N(u)-\mathsf N(u^0)}_{H^1}
 &\le C(\norm u_{L^\infty}^2+\norm{u^0}_{L^\infty}^2)
 \norm d_{H^1}\\
 &\quad+C(\norm u_{H^1}+\norm{u^0}_{H^1})
 (\norm u_{L^\infty}+\norm{u^0}_{L^\infty})
 \norm d_{L^\infty}.
\end{align*}
Using \eqref{eq:adapted-localization},
\eqref{eq:frozen-zero-noise-bound}, the embedding
$W^{7/8,7/2}\hookrightarrow L^\infty$, and H\"older's inequality,
\begin{align*}
 \int_s^{s+h}
 (\norm{u_a}_{L^\infty}^2+\norm{u^0_{s,a}}_{L^\infty}^2)\,\dd a
 \le C_Rh^{3/7},
\end{align*}
and
\begin{align*}
 \int_s^{s+h}
 (\norm{u_a}_{L^\infty}+\norm{u^0_{s,a}}_{L^\infty})
 \norm{d_{s,a}}_{L^\infty}\,\dd a
 \le C_Rh^{3/7}
 \norm{d_{s,\cdot}}_{L^{7/2}(s,s+h;L^\infty)}.
\end{align*}
This proves \eqref{eq:frozen-absorption}.  Decrease $h_0$ so that
$C_Rh_0^{3/7}\le1/2$.  Absorption, conditional $L^p$ norms, and
\eqref{eq:short-convolution-s} then give \eqref{eq:frozen-s}.

\emph{Step 2: endpoint refinement.}
Evaluating \eqref{eq:frozen-difference-mild} at $s+h$ and using the
same $H^1$ Duhamel and cubic difference estimates as in
\eqref{eq:frozen-absorption} gives
\begin{align*}
 \one_{\{s+h\le\tau_R\}}
 \norm{d_{s,s+h}-Z^s_{s+h}}_{H^1}
 \le C_Rh^{3/7}\one_{\{s+h\le\tau_R\}}
 \norm{d_{s,\cdot}}_{\cS(s,s+h)}.
\end{align*}
Hence \eqref{eq:short-convolution-endpoint} and
\eqref{eq:frozen-s} yield
\begin{align*}
 \bigl(\E_s[\one_{\{s+h\le\tau_R\}}
 \norm{d_{s,s+h}-B\Delta_sW}_{H^1}^p]\bigr)^{1/p}
 \le C_{p,R}(h^{3/7+1/2}+h^{3/2})
 \le C_{p,R}h^{13/14},
\end{align*}
which proves \eqref{eq:frozen-endpoint}.

\emph{Step 3: inverse linearization stability.}
By \eqref{eq:linearized-operator}, the tangent equation along a path
$v$ can be written as
\begin{align*}
 \partial_r\xi=A\xi+\mathbf M(v_r)\xi,\qquad
 A=-\gamma+\ii\Delta,\qquad
 \mathbf M(v)\xi=-\ii\bigl(2|v|^2\xi+v^2\bar\xi\bigr).
\end{align*}
Define the interaction propagator
\begin{align*}
 \mathcal P^v_{s,r}=\e^{-(r-s)A}J^v_{s,r}.
\end{align*}
Then, in the strong sense
after application to a fixed vector in $L^2$,
\begin{align*}
 \partial_r\mathcal P^v_{s,r}
 =\mathsf V^v_s(r)\mathcal P^v_{s,r},\qquad
 \mathsf V^v_s(r)
 =\e^{-(r-s)A}\mathbf M(v_r)\e^{(r-s)A}.
\end{align*}
As the damping factors cancel under conjugation and the Schr\"odinger
group is unitary, one has 
\begin{align*}
 \norm{\mathsf V^v_s(r)}_{\cL(L^2)}
 &\le C\norm{v_r}_{L^\infty}^2,\\
 \norm{\mathsf V^v_s(r)-\mathsf V^w_s(r)}_{\cL(L^2)}
 &\le C(\norm{v_r}_{L^\infty}+\norm{w_r}_{L^\infty})
 \norm{v_r-w_r}_{L^\infty}.
\end{align*}
Taking $v=u$ and $w=u^0_{s,\cdot}$, and using
\eqref{eq:adapted-localization},
\eqref{eq:frozen-zero-noise-bound}, and H\"older's inequality, gives
\begin{align}
 \int_s^{s+h}\bigl(\norm{\mathsf V^u_s(r)}_{\cL(L^2)}
 +\norm{\mathsf V^{u^0}_s(r)}_{\cL(L^2)}\bigr)\,\dd r
 &\le C_Rh^{3/7},\label{eq:frozen-m-bound}\\
 \int_s^{s+h}
 \norm{\mathsf V^u_s(r)-\mathsf V^{u^0}_s(r)}_{\cL(L^2)}\,\dd r
 &\le C_Rh^{3/7}
 \norm{d_{s,\cdot}}_{\cS(s,s+h)}.\label{eq:frozen-m-difference}
\end{align}

Let $\mathcal R^v_{s,r}=(\mathcal P^v_{s,r})^{-1}$. Then, again in
the strong sense
\begin{align}\label{eq:Rvst}
 \partial_r\mathcal R^v_{s,r}
 =-\mathcal R^v_{s,r}\mathsf V^v_s(r),\qquad
 \mathcal R^v_{s,s}=\Id.
\end{align}
Applying the strong Volterra equations to fixed vectors, taking the
supremum over the unit ball of $L^2$, and using
\eqref{eq:frozen-m-bound} and Gronwall's inequality give
\begin{align*}
 \norm{\mathcal R^u_{s,s+h}-\mathcal R^{u^0}_{s,s+h}}_{\cL(L^2)}
 \le C_R\int_s^{s+h}
 \norm{\mathsf V^u_s(r)-\mathsf V^{u^0}_s(r)}_{\cL(L^2)}\,\dd r.
\end{align*}
Since
\begin{align*}
 \widetilde Q_{s,s+h}
 =\mathcal R^u_{s,s+h}\e^{-hA},\qquad
 \widetilde Q^0_{s,s+h}
 =\mathcal R^{u^0}_{s,s+h}\e^{-hA},
\end{align*}
and $\norm{\e^{-hA}}_{\cL(L^2)}=\e^{\gamma h}$, it follows from
\eqref{eq:frozen-m-difference} that
\begin{align*}
 \one_{\{s+h\le\tau_R\}}
 \norm{\widetilde Q_{s,s+h}-\widetilde Q^0_{s,s+h}}_{\cL(L^2)}
 \le C_Rh^{3/7}\one_{\{s+h\le\tau_R\}}
 \norm{d_{s,\cdot}}_{\cS(s,s+h)}.
\end{align*}
Taking conditional $L^p$ norms and using \eqref{eq:frozen-s} gives
\begin{align*}
 \bigl(\E_s[\one_{\{s+h\le\tau_R\}}
 \norm{\widetilde Q_{s,s+h}-\widetilde Q^0_{s,s+h}}_{\cL(L^2)}^p]\bigr)^{1/p}
 \le C_{p,R}h^{3/7+1/2}
 =C_{p,R}h^{13/14},
\end{align*}
which proves \eqref{eq:frozen-inverse}.
\end{proof}

\section*{Statements and declarations}

\textbf{Competing interests}.
The authors declare that they have no competing interests.

\medskip

\textbf{Use of generative artificial intelligence}.
OpenAI ChatGPT  was used to assist with proof auditing and manuscript preparation. The authors take full responsibility for the mathematical content.

\end{document}